\newtheorem{theorem}{\bf Theorem}[section]
\newtheorem{lemma}[theorem]{Lemma}
\newtheorem{corollary}[theorem]{Corollary}
\theoremstyle{definition}
\newtheorem{definition}[theorem]{Definition}
\newtheorem{proposition}[theorem]{Proposition}
\newcommand\Bx{\boldsymbol{x}}
\newcommand\By{\boldsymbol{y}}
\newcommand\Bz{\boldsymbol{z}}
\newcommand\Bv{\boldsymbol{v}}
\newcommand\Be{\boldsymbol{e}}
\title{An exponential improvement for Ramsey lower bounds}
\author{Jie Ma\footnote{School of Mathematical Sciences, University of Science and Technology of China, Hefei, Anhui 230026, and Yau Mathematical Sciences Center, Tsinghua University, Beijing 100084, China.}~~~~~~
Wujie Shen\footnote{Department of Mathematics and Yau Mathematical Sciences Center, Tsinghua University,  Beijing 100084, China.}~~~~~~
Shengjie Xie\footnote{School of Mathematical Sciences, University of Science and Technology of China, Hefei, Anhui 230026, China.
\newline\indent
{\indent \it 2020 Mathematics Subject Classification:} 05D10, 05C80. 
}
}
\date{\today}
\begin{document}

\maketitle

\begin{abstract}
We prove a new lower bound on the Ramsey number $r(\ell, C\ell)$ for any constant $C > 1$ and sufficiently large $\ell$, 
showing that there exists $\varepsilon=\varepsilon(C)> 0$ such that
\[
r(\ell, C\ell) \geq \left(p_C^{-1/2} + \varepsilon\right)^\ell,
\]
where $p_C \in (0, 1/2)$ is the unique solution to $C = \frac{\log p_C}{\log(1 - p_C)}$. 
This provides the first exponential improvement over the classical lower bound obtained by Erd\H{o}s in 1947.
\end{abstract}


\section{Introduction}

The \emph{Ramsey number} \( r(\ell, k) \) denotes the smallest positive integer \( n \) such that every red-blue edge coloring of the complete graph \( K_n \) on \( n \) vertices contains either a red clique \( K_\ell \) (a complete subgraph on \( \ell \) vertices with all edges red) or a blue clique \( K_k \) (a complete subgraph on \( k \) vertices with all edges blue).  
In 1930, Ramsey~\cite{Ramsey} proved that \( r(\ell, k) \) is finite for all \( \ell, k \in \mathbb{N} \).  
Since then, understanding the growth rate of \( r(\ell, k) \) has been a central problem in combinatorics for nearly a century (see the comprehensive survey \cite{CFS}).
The asymptotic study of Ramsey numbers naturally splits into two regimes:  
the case where \( \ell \) is fixed and \( k \to \infty \)  
(see~\cite{AKSz,BK10,BK21,CJMS,PGM,Kim,MV24,S83,Spencer1975,Spencer1977} and references therein),  
and the case where both \( \ell, k \to \infty \).  
In this paper, we investigate the Ramsey number \( r(\ell, k) \) in the latter regime, where both parameters $\ell$ and $k$ tend to infinity.

In 1935, Erd\H{o}s and Szekeres~\cite{EG} established the first non-trivial upper bound for Ramsey numbers, proving that \( r(\ell,k) \leq \binom{k+\ell-2}{\ell-1} \), and in particular that the diagonal Ramsey number satisfies \( r(\ell, \ell) \leq 4^\ell \).
This upper bound remained unchanged until the work of R\"odl (unpublished) and of Graham and R\"odl~\cite{GR} in the 1980s. 
Thomason~\cite{T1988} was the first to improve it by a polynomial factor in $\ell$ when $k$ and $\ell$ are of the same order.
Conlon~\cite{Conlon2009} achieved a landmark result by extending Thomason's quasi-randomness method, yielding a superpolynomial improvement when $k$ and $\ell$ are comparable in size.
This approach was later refined and optimized by Sah~\cite{Sah2023}, who showed that for any $\delta\in (0,1/2)$, there exists $c_\delta >0$ such that 
\(
r(\ell,k)\leq e^{-c_\delta (\log \ell)^2} \binom{k+\ell}{\ell}
\)
holds whenever $\ell/k\in [\delta,1]$ and $\ell\geq 1/c_{\delta}$.
A major breakthrough was achieved in 2023 by Campos, Griffiths, Morris and Sahasrabudhe \cite{CGMS}, who proved that
\begin{equation*}\label{equ:CGMS}
    r(\ell,k)\leq e^{-\ell/400+o(k)} \binom{k+\ell}{\ell}
\end{equation*}
holds for all integers $\ell\leq k$,
making the first exponential improvement over the Erd\H{o}s-Szekeres bound.
In particular, this implies the existence of a constant $\varepsilon>0$ such that $r(\ell,\ell)\leq (4-\varepsilon)^\ell$. 
Building on a reinterpretation of this method, 
Gupta, Ndiaye, Norin and Wei \cite{GNNW} improved both bounds, showing that
\(
r(\ell,k)\leq e^{-\ell/20+o(k)} \binom{k+\ell}{\ell}
\)
holds for all integers $\ell\leq k$, and in particular $r(\ell,\ell)\leq 3.8^{\ell+o(\ell)}$. 
More recently, Balister, Bollobás, Campos, Griffiths, Hurley, Morris, Sahasrabudhe and Tiba \cite{BBCGHMST} provided a different proof of these bounds, which also yields an exponential improvement in the upper bound for multicolor Ramsey numbers.

For the lower bound on the Ramsey number \( r(\ell, k) \) in the regime where both \(\ell\) and \(k\) tend to infinity, our knowledge remains essentially limited to the classical result of Erd\H{o}s~\cite{E1947} from 1947. In this seminal work, Erd\H{o}s introduced a probabilistic argument that laid the foundation for the probabilistic method in combinatorics (see~\cite{AS}). He established the first exponential lower bound on \( r(\ell, k) \) in the regime where \(k\) and \(\ell\) are of comparable size; specifically, for any fixed \( C \ge 1 \), 
\begin{equation}\label{equ:original_bound}
r(\ell, C\ell) = \Omega\bigl(\ell \cdot M_C^{\ell}\bigr) \text{~~~as~~~} \ell \to \infty,\footnote{A proof is given in Appendix~\ref{app:Erd-pro}, establishing the optimality of this bound via Erd\H{o}s’s first moment method.}
\end{equation}
where \( M_C := p_C^{-1/2} \) and \(p_C \in (0, 1/2]\) denotes the unique solution to \(C = \frac{\log p_C}{\log(1 - p_C)}.\)
Over the 78 years since Erd\H{o}s's proof, the only improvement has been a constant-factor refinement of the original bound, 
due to Spencer~\cite{Spencer1975} in 1975, via an application of the Lovász Local Lemma.

In this paper, we introduce a model called {\it the random sphere graph} and use it to obtain an exponential improvement over the classical lower bound~\eqref{equ:original_bound} for the Ramsey number \( r(\ell, C\ell) \), 
where \( C > 1 \) is any fixed constant and \( \ell \to \infty \).  
Our main result is stated below.

\begin{theorem}\label{thm1}
For any constant \( C > 1 \), there exist \( \varepsilon = \varepsilon(C) > 0 \) and $\ell_0=\ell_0(C)>0$ such that for all sufficiently large integers \( \ell\geq \ell_0(C)\),
\begin{equation}\label{equ:improved_bound}
r(\ell, C\ell) \geq (M_C + \varepsilon)^\ell,
\end{equation}
where \( M_C = p_C^{-1/2} \) and \( p_C \in (0, 1/2) \) satisfies \( C = \frac{\log p_C}{\log(1 - p_C)} \).
\end{theorem}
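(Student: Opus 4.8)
\medskip
\noindent\textit{Plan of proof.}
The plan is to replace Erd\H{o}s's binomial random graph by a geometric model --- the \emph{random sphere graph} --- in which monochromatic cliques of \emph{both} colours decay with a strictly larger exponent than in $G(n,p_{C})$, and then to run the first moment method. Fix a constant $\gamma=\gamma(C)>0$ and a probability $p=p(C)$ close to $p_{C}$, both to be chosen last; set $d=d(\ell)=\lceil\gamma\ell\rceil$ and let $\tau=\tau(\ell)>0$ be the threshold for which $\mathbb{P}[\langle v,w\rangle\ge\tau]=p$ when $v,w$ are independent uniform points of $S^{d-1}$ (so $\tau\asymp\ell^{-1/2}$). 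The random sphere graph $\mathbb{G}=\mathbb{G}(n;d,\tau)$ has vertex set $[n]$: draw $v_{1},\dots,v_{n}$ independently and uniformly from $S^{d-1}$ and colour $ij$ red if $\langle v_{i},v_{j}\rangle\ge\tau$ and blue otherwise. By exchangeability and linearity of expectation, the expected numbers of red $K_{\ell}$'s and of blue $K_{k}$'s in $\mathbb{G}$ equal $\binom{n}{\ell}R_{\ell}$ and $\binom{n}{k}B_{k}$ respectively, where $R_{\ell}$ (resp.\ $B_{k}$) is the probability that a fixed set of $\ell$ (resp.\ $k$) of the points $v_{i}$ has all pairwise inner products $\ge\tau$ (resp.\ $<\tau$). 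It therefore suffices to take $n=\lfloor (M_{C}+\varepsilon)^{\ell}\rfloor$ and show $\binom{n}{\ell}R_{\ell}+\binom{n}{k}B_{k}<1$: then some colouring produced by $\mathbb{G}$ avoids a red $K_{\ell}$ and a blue $K_{k}$, whence $r(\ell,k)>n$.

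The heart of the argument is a sharp estimate of $R_{\ell}$ and $B_{k}$. The tool I would use is the exact law of the Gram matrix $G=(\langle v_{i},v_{j}\rangle)$ of uniform points of $S^{d-1}$: for $d\ge\ell$ it has density proportional to $(\det G)^{(d-\ell-1)/2}$ on the set of correlation matrices, with an analogous description on the rank-$d$ stratum when $d<\ell$. Then $R_{\ell}$ is the ratio of $\int(\det G)^{(d-\ell-1)/2}\,dG$ over $\{G:\,G_{ij}\ge\tau\ \forall\,i\neq j\}$ to the same integral over all correlation matrices, and $B_{k}$ is the analogous ratio with the inequality reversed. Writing $(\det G)^{(d-\ell-1)/2}=\exp\!\big(\tfrac{d-\ell-1}{2}\sum_{i}\log\lambda_{i}(G)\big)$ displays these as Laplace integrals against a log-gas weight, and a large-deviation analysis --- of the constrained empirical spectral measure of $G$, or equivalently of the rescaled matrix $\sqrt{d}\,(G-I)$, whose bulk is a shifted semicircle/Marchenko--Pastur law tilted so as to push every off-diagonal entry past $s:=\tau\sqrt{d}$ --- should yield
\[
R_{\ell}\le\exp\!\big(-(\rho_{R}+o(1))\tbinom{\ell}{2}\big),\qquad B_{k}\le\exp\!\big(-(\rho_{B}+o(1))\tbinom{k}{2}\big),
\]
with $\rho_{R}=\rho_{R}(\gamma,\tau)$ and $\rho_{B}=\rho_{B}(\gamma,\tau)$ the values of explicit convex variational problems. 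For the independent model $G(n,p_{C})$ one has the equalities $\rho_{R}=\log(1/p_{C})$ and $\rho_{B}=\tfrac{1}{C}\log(1/p_{C})$, so what must be shown is the pair of \emph{strict} inequalities $\rho_{R}>\log(1/p_{C})$ and $\rho_{B}>\tfrac{1}{C}\log(1/p_{C})$.

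The mechanism behind both improvements is that, for finite $\gamma$, positive semidefiniteness of the relevant Gram matrix (and, when its dimension is smaller than the clique size, its rank deficiency) penalises these rare patterns beyond what independence predicts: forcing all off-diagonal entries above (resp.\ below) $\tau$ drags an eigenvalue of $G$ toward the boundary of the PSD cone --- the eigenvalue in the all-ones direction on the blue side, the bottom of the spectral bulk on the red side --- and paying for that costs extra probability. On the blue side this is active for every fixed $\gamma$; on the red side it switches on once $\gamma$ drops below a threshold governed by the conditional variance of a standard Gaussian above $\Phi^{-1}(1-p_{C})$. As $\gamma\to\infty$ with $p$ fixed the model reverts to $G(n,p)$, so $\rho_{R}\to\log(1/p)$ and $\rho_{B}\to\log(1/(1-p))$; pushing $\gamma$ down into the relevant window strictly increases both rates. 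Using the hypothesis $C>1$ to ensure that the two resulting constraints on $\gamma$ are compatible (after, if needed, perturbing $p$ slightly near $p_{C}$), one obtains $(\gamma,p)$ with $\rho_{R}>\log(1/p_{C})$ and $\rho_{B}>\tfrac{1}{C}\log(1/p_{C})$. Finally, setting $\eta:=\tfrac{1}{3}\min\{\rho_{R}-\log(1/p_{C}),\,C\rho_{B}-\log(1/p_{C})\}>0$ and $\varepsilon:=M_{C}(e^{\eta}-1)>0$, for all $\ell\ge\ell_{0}(C)$ the $o(1)$ errors are absorbed and $\binom{n}{\ell}R_{\ell}+\binom{n}{k}B_{k}<1$ with $n=\lfloor(M_{C}+\varepsilon)^{\ell}\rfloor$, which yields~\eqref{equ:improved_bound}.

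The hardest part will be the two clique-probability bounds. Because the Erd\H{o}s exponent is already optimal among independent-edge models, any non-sharp estimate of $R_{\ell}$ or $B_{k}$ destroys the gain, so one genuinely needs the \emph{exact} large-deviation rate for the event that all $\binom{\ell}{2}$ off-diagonal entries of a random correlation matrix lie on a prescribed side of a threshold of order $\ell^{-1/2}$; crude relaxations (for instance ``the $\ell$ points lie in a common spherical cap'') are useless here precisely because $\tau\to0$, and one must control the interaction between the spectral fluctuations of $G$ and the boundary of the PSD cone, together with the normalising integral over all correlation matrices. A secondary difficulty is the concluding optimisation: verifying that the variational quantities $\rho_{R},\rho_{B}$ clear the thresholds $\log(1/p_{C})$ and $\tfrac{1}{C}\log(1/p_{C})$ for \emph{every} $C>1$, and understanding the degeneracy that excludes $C=1$.
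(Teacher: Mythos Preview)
Your overall plan---replace $G(n,p)$ by a random sphere graph and run the first moment---is exactly the paper's, but two of the ingredients are wrong in ways that are not cosmetic.

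First, you have the threshold on the wrong side. With $p<1/2$ your $\tau$ is positive, so a red $K_\ell$ means $\ell$ vectors all pairwise \emph{close}; this event is \emph{enhanced} by geometry (if $v_2,v_3$ both lie in the small cap around $v_1$ they are closer to each other than two uniform points), so $R_\ell>p^{\binom{\ell}{2}}$ and your claimed $\rho_R>\log(1/p)$ goes the wrong way. The paper colours $ij$ red when $\langle v_i,v_j\rangle\le -c/\sqrt{k}$, a \emph{negative} threshold; then a red $K_\ell$ demands $\ell$ pairwise anti-correlated vectors, and it is this configuration that the PSD cone penalises (the all-ones eigenvalue is pushed to zero). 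But even after you fix the sign, it is simply not true that both colours are suppressed: with red $=$ anti-correlated one has $P_{\mathrm{red},r}<p^{\binom{r}{2}}$ while $\overline{P}_{\mathrm{blue},r}>(1-p)^{\binom{r}{2}}$. The entire proof hinges on the quantitative fact that the red gain beats the blue loss---the per-edge corrections are of order $a/p^2$ on the red side and $a/(1-p)^2$ on the blue side for the same constant $a$, and the elementary inequality $\frac{Cp_C^{\,2}}{(1-p_C)^2}=\frac{p_C^{\,2}\log p_C}{(1-p_C)^2\log(1-p_C)}<1$ for $p_C\in(0,\tfrac12)$ says the red improvement over $\binom{\ell}{2}$ edges outweighs the blue deterioration over $\binom{C\ell}{2}$ edges. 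One then takes $p$ slightly \emph{above} $p_C$ so that both bounds simultaneously beat the Erd\H{o}s values at $p_C$. Your ``both rates strictly increase'' picture, and hence the optimisation step you describe, does not reflect what actually happens.

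Second, the dimension must be quadratic in $\ell$, not linear. With $d=\gamma\ell$ the corrections above cease to be perturbative: once the clique size exceeds $\sqrt{d}\sim\sqrt{\ell}$, the conditional probability of extending a blue clique by one vertex rises to $1-o(1)$, so $\overline{P}_{\mathrm{blue},C\ell}=(1-p)^{o(\ell^2)}$, which overwhelms any $n$ of size $(M_C+\varepsilon)^\ell$ in the first-moment bound regardless of how $p$ is chosen. The paper takes $k=D^2\ell^2$ with a large constant $D\gg C$; then every geometric correction is uniformly of order $\ell/\sqrt{k}=1/D$ and can be computed to second order, which is precisely the accuracy needed. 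The analysis is not carried out via a Gram-matrix large-deviation principle but by writing $P_{\mathrm{red},\ell}$ as a telescoping product of conditional probabilities $\kappa_r=\mathbb{E}[P(\boldsymbol{x}[r])\mid \text{red }K_r]$, estimating each factor through the inner product of projections onto $\operatorname{span}(\boldsymbol{x}[r])$, and restricting throughout to ``perfect'' (nearly-orthogonal) sequences so that these projections stay controlled.
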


Using this result, we can immediately obtain the following corollary, 
which gives an exponential improvement on the Ramsey number \( r(\ell, k) \) in the general regime where $\ell$ and $k$ are of comparable size. 
For any $\delta\in (0,1/2)$, let $c_\delta>0$ be such that
\[
c_\delta=\min_{C\in [1/(1-\delta),1/\delta]} \left\{\frac{\varepsilon(C)}{2M_C}, \frac{1}{\ell_0(C)}\right\}.\footnote{Note that \(M_C\) is continuous in \(C\), \(\varepsilon(C)\) is also continuous in \(C\), and \(\ell_0(C)\) is increasing in \(C\) (as can be seen from the proof); therefore, the minimum is attained.}
\]
For all $\ell,k\in \mathbb{N}$, let \( \mathsf{Er}(\ell,k) \) denote the lower bound obtained by  Erd\H{o}s’s probabilistic argument~\cite{E1947}.

\begin{corollary}\label{coro}
For any $\delta\in (0,1/2)$, we have
\[
r(\ell,k)\geq \left(1+2c_{\delta}\right)^\ell\cdot (M_{k/\ell})^{\ell}\geq (1+c_\delta)^\ell \cdot \mathsf{Er}(\ell,k)
\]
whenever $\delta\leq \ell/k \leq 1-\delta$ and $\ell\gg 1/c_\delta$.
\end{corollary}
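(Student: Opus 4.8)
The plan is to derive Corollary~\ref{coro} from Theorem~\ref{thm1} by specializing to $C:=k/\ell$. Under the hypothesis $\delta\le\ell/k\le 1-\delta$ this ratio lies in the compact interval $I_\delta:=[\,1/(1-\delta),\,1/\delta\,]\subset(1,\infty)$, and since $C\ell=k$ is already an integer there is no rounding issue: Theorem~\ref{thm1} applies at this particular value of $C$. (The clause ``for any constant $C>1$'' there is merely a matter of exposition --- what the proof actually delivers is the inequality $r(\ell,k)\ge\bigl(M_{k/\ell}+\varepsilon(k/\ell)\bigr)^{\ell}$ for every admissible pair $(\ell,k)$ with $\ell\ge\ell_0(k/\ell)$.) The argument then splits into three steps: (i) verify that $c_\delta$, as defined just before the corollary, is a genuine positive constant; (ii) invoke Theorem~\ref{thm1} at $C=k/\ell$; (iii) bound $\mathsf{Er}(\ell,k)$ from above and compare.

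Step (i) amounts to showing $\inf_{C\in I_\delta}\varepsilon(C)/M_C>0$ and $\sup_{C\in I_\delta}\ell_0(C)<\infty$, so that the minimum defining $c_\delta$ over the compact set $I_\delta$ is attained and strictly positive. The direct route is to inspect the proof of Theorem~\ref{thm1}: the quantities $\varepsilon(C)$ and $\ell_0(C)$ are assembled from finitely many explicit expressions in $C$ and in $p_C$, and $p_C$ varies continuously with $C$ throughout $I_\delta$ --- the defining relation $C=\log p_C/\log(1-p_C)$ is solvable for $p_C$ by the implicit function theorem away from the degenerate point $C=1$, which $I_\delta$ avoids --- so $\varepsilon(C)/M_C$ is continuous and positive and $\ell_0(C)$ is continuous on $I_\delta$; positivity of $c_\delta$ is then immediate from compactness. (One could instead sidestep unwrapping the proof by combining monotonicity of $r(\ell,\cdot)$ with a finite subdivision of $I_\delta$, but this still needs $\varepsilon(C)$ to be bounded away from $0$ on $I_\delta$, so it saves little.) I expect this uniformity bookkeeping --- rather than anything structural --- to be the only real obstacle. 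Note also that $M_C\le M_{1/\delta}$ throughout $I_\delta$, since $p_C$ stays bounded away from $0$ there.

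For steps (ii) and (iii): since $\ell\ge 1/c_\delta\ge\ell_0(C)$ by the definition of $c_\delta$, Theorem~\ref{thm1} gives
\[
r(\ell,k)=r(\ell,C\ell)\ \ge\ \bigl(M_C+\varepsilon(C)\bigr)^{\ell}=M_C^{\ell}\bigl(1+\varepsilon(C)/M_C\bigr)^{\ell}\ \ge\ M_C^{\ell}(1+c_\delta)^{\ell}.
\]
On the other hand $\mathsf{Er}(\ell,k)\le A_\delta\,\ell\,M_C^{\ell}$ for some $A_\delta=A_\delta(\delta)>0$: this upper bound on Erd\H{o}s's quantity is the optimality assertion accompanying \eqref{equ:original_bound}, and since the underlying first moment computation (Appendix~\ref{app:Erd-pro}) depends continuously on $C$, the constant $A_\delta$ may be taken uniform over the compact interval $I_\delta$. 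Combining the two facts,
\[
r(\ell,k)\ \ge\ \frac{(1+c_\delta)^{\ell}}{A_\delta\,\ell}\,\mathsf{Er}(\ell,k)\ \ge\ (1+c_\delta/2)^{\ell}\,\mathsf{Er}(\ell,k),
\]
where the last inequality holds once $\ell$ exceeds a threshold $\ell_1(\delta)$, the polynomial factor $A_\delta\ell$ being absorbed into the exponential. Taking the constant in the corollary to be $\min\{c_\delta/2,\,1/\ell_1(\delta)\}$ and enlarging the hypothesis $\ell\ge 1/c_\delta$ correspondingly yields the claim; this mild shrinkage of the constant is exactly why the corollary is phrased with ``there exists $c_\delta>0$''.
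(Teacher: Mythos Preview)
Your proposal is correct and follows precisely the route the paper indicates: the paper does not give an explicit proof of the corollary but supplies the formula for $c_\delta$ just before it, and your argument is exactly the natural derivation---apply Theorem~\ref{thm1} at $C=k/\ell\in I_\delta$, use compactness and the continuity of the explicit expressions for $\varepsilon(C)$ and $\ell_0(C)$ (given at the end of Section~\ref{sec:final-proof}) to see $c_\delta>0$, then compare against the upper bound $\mathsf{Er}(\ell,k)=O(\ell\cdot M_C^{\ell})$ from Appendix~\ref{app:Erd-pro}. The one subtlety you handle that the paper elides is the need to shrink the constant slightly (your $c_\delta/2$) to absorb the polynomial factor $A_\delta\ell$; this is correct and is why the corollary is phrased existentially.
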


We remark that our proof also yields an improvement in the {\it almost} diagonal range, namely when \(k=\ell+o(\ell)\).
To be precise, for any function \(f(\ell)\) with \(\sqrt{\ell}\ll f(\ell)\ll \ell\), we can show (see the discussion at the end of Section~\ref{sec:final-proof}) that
\begin{equation}\label{equ:almost-diag}
r(\ell, \ell+f(\ell)) \;\geq\; e^{\Omega\left(f^2(\ell)/\ell\right)} \cdot \mathsf{Er}(\ell,\ell+f(\ell)).
\end{equation}

We also note that the problem of finding explicit constructions of Ramsey numbers constitutes an important and highly challenging line of research; see the classical constructions \cite{F77,FW81}, the recent advances \cite{BRSW,CZ19,L23}, and the references therein.

The rest of the paper is organized as follows. 
In Section~\ref{sec:random-sphere}, we introduce our random graph model, related notations, and discuss several basic geometric properties of this model. 
In Section~\ref{sec:redu-skete}, we prove Theorem~\ref{thm1} by reducing it to Theorem~\ref{thm2} and provide a sketch of Theorem~\ref{thm2}’s proof.
Section~\ref{sec:auxi-lemmas} collects several auxiliary lemmas.
In Section~\ref{Perfect sequences}, we introduce the crucial concept of \emph{perfect sequences} for unit vectors. 
Section~\ref{sec:estimation_q} provides preliminary estimates on perfect sequences, while Section~\ref{sec:from-perfect-to-general} shows that perfect sequences capture the essential behavior of our problem.
Section~\ref{sec:key-quan} contains the core technical arguments, where we derive estimates on key quantities. 
Finally, in Section~\ref{sec:final-proof}, we assemble all these estimates to complete the proof of Theorem~\ref{thm2}.
Throughout, for any constant $C>1$, we define $p_C\in (0,1/2)$ to be unique solution to \( C = \frac{\log p_C}{\log(1 - p_C)} \).
We denote \([r] := \{1, 2, \ldots, r\}\), and unless otherwise specified, all logarithms are base~$e$.
For a vector \(\Bx \in \mathbb{R}^n\), we define its standard Euclidean norm by \(|\Bx| = \sqrt{\langle \Bx, \Bx \rangle}\).

\section{The Random Sphere Graph  \( G_{k,p}(n) \)}\label{sec:random-sphere}
In this section, we introduce a random graph model based on geometric measure, 
which serves as the foundation for our Ramsey construction. 

Throughout this paper, let \( S^k \) denote the  \( k \)-dimensional unit sphere embedded in the \( (k+1)\)-dimensional Euclidean space \( \mathbb{R}^{k+1} \).
Let \( \operatorname{Vol}(\cdot) \) denote the standard surface measure on \( S^k \),
and let \( \Bx \) be a point sampled uniformly at random from \( S^k \).
For any Borel set \( A\subseteq S^k \), we define the {\it probability \( \mathbb{P}(A) \)} of $A$ to be the probability that $\Bx$ belongs to $A$, that is, 
\begin{equation}\label{equ:P(A)}
\mathbb{P}(A) = \mathbb{P}(\Bx\in A) = \frac{\operatorname{Vol}(A)}{\operatorname{Vol}(S^k)}.
\end{equation}
Let $\Be$ denote an arbitrary but fixed point on \( S^k \). 
For any \( p \in (0,\frac12] \), 
define $c_{k,p}\geq 0$ as the unique constant satisfying
\begin{equation}\label{equ:c_pk}
\mathbb{P}\left( \left\langle \Bx, \Be \right\rangle \leq -\frac{c_{k,p}}{\sqrt{k}} \right) = p,
\end{equation}
where \( \left\langle \Bx, \Be \right\rangle \) denotes the standard inner product between the vectors \( \Bx \) and \( \Be \).\footnote{We often treat a point on \( S^k \) and a unit vector interchangeably without distinction.}

\begin{definition}[\bf{Random Sphere Graphs}]\label{def:sphere_random_graph}
Let \( n, k \in \mathbb{N} \) and \( p \in (0, \tfrac{1}{2}] \). 
The \emph{random sphere graph} \( G_{k,p}(n) \) is defined as a complete graph on 
\( n \) vertices \( \Bx_1, \Bx_2, \ldots, \Bx_n \), 
equipped with a red-blue coloring on its edges, constructed as follows: (see Figure~\ref{fig: view of S1} for an illustration)
\begin{itemize}
    \item The vertices \( \Bx_1, \Bx_2, \ldots, \Bx_n \) 
    are sampled independently and uniformly from \( S^k \);
    \item Each edge \( \Bx_i \Bx_j \) is colored \emph{red} if 
    \( \langle \Bx_i, \Bx_j \rangle \leq -\frac{c_{k,p}}{\sqrt{k}} \), 
    and \emph{blue} otherwise.
\end{itemize}
\end{definition}

\medskip

It is evident that each edge of \( G_{k,p}(n) \) is colored red with probability \( p \) and blue with probability \( 1 - p \). 
However, due to intrinsic geometric constraints, 
the events for coloring edges may not be independent. 
This marks an essential difference from the Erd\H{o}s--R\'enyi random graph model.
In the following subsections, we formalize the notation underlying this random model and highlight several elementary geometric properties that motivate our analysis later.

\begin{figure}[H]
  \centering
  \includegraphics[width=1\textwidth]{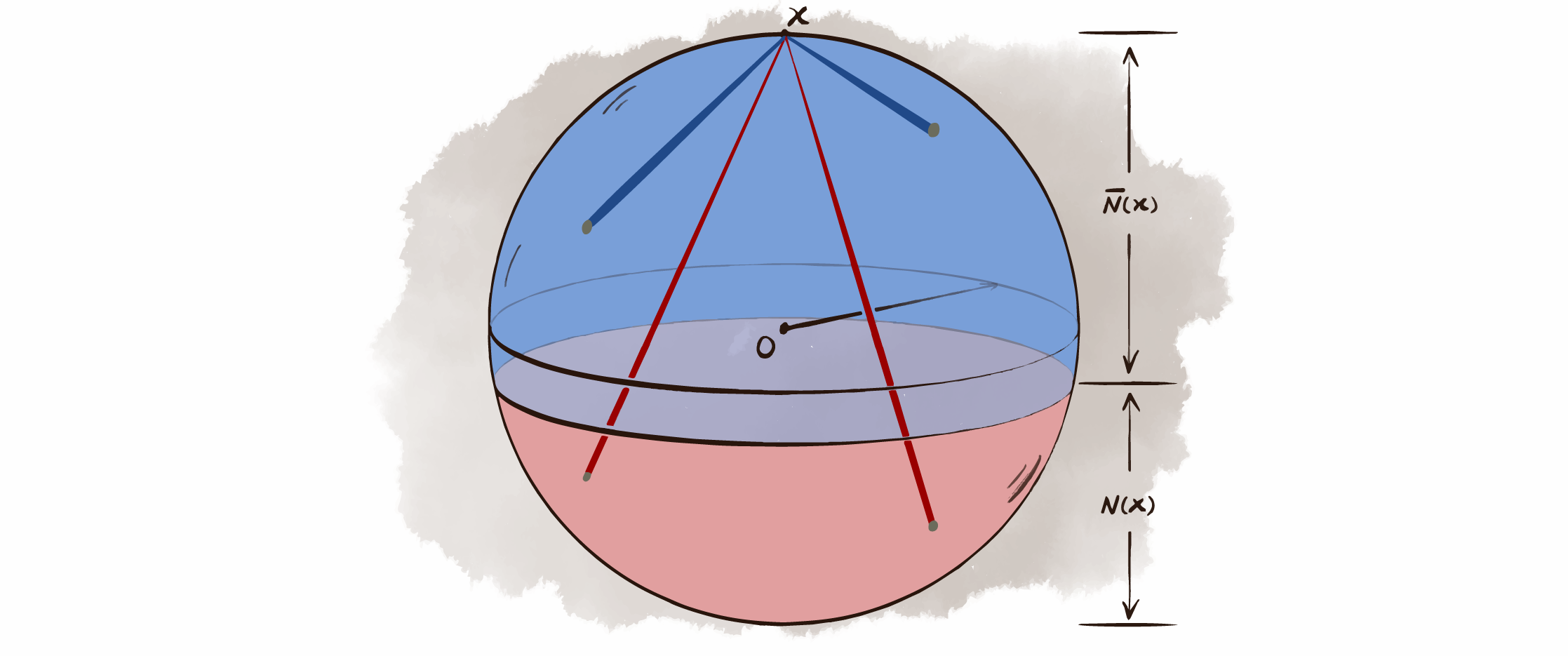}
  \caption{The random sphere graph.}
  \label{fig: view of S1}
  \end{figure}

Before proceeding, we offer a few remarks for historical context.
A relevant random graph model defined in Euclidean spaces, known as \emph{random geometric graphs}, has been extensively studied in probability theory and theoretical computer science (see, for example, \cite{DGLU} and the monograph~\cite{MP03}). 
Combinatorial constructions on the sphere date back almost sixty years.
In 1966, Erd\H{o}s \cite{E1966} employed a geometric graph on the Boolean cube (which is a subset of the sphere) to obtain an explicit construction for the off-diagonal Ramsey number \(r(3,k)\). 
A decade later, Bollob\'as and Erd\H{o}s introduced a random graph on the sphere in their celebrated paper  \cite{BE76} (see also \cite{FLZ}).
Their construction can be interpreted as the following random red/blue edge-coloring:
for two random points $\Bx, \By\in S^k$,
the edge is colored red if and only if \( \langle \Bx, \By \rangle \geq \frac{c_{k,p}}{\sqrt{k}} \). 
Thus, each edge is red with probability $p$, just as in our random sphere graph \( G_{k,p}(n) \).
However, we emphasize that the Bollob\'as-Erd\H{o}s model differs from the random sphere graph considered here.
Indeed, applying the same explanations as in Subsection~\ref{subsec:geometric_dependency} shows that, for their model, the two inequalities in \eqref{equ:geo-dependency} are reversed, 
making it inapplicable for the Ramsey construction.\footnote{Essentially, this is because the corresponding \eqref{equ:sum-two-base} for the Bollob\'as-Erd\H{o}s model would be strictly greater than 1.}
To the best of our knowledge, our work is the first to use this type of random model to investigate Ramsey numbers \(r(\ell, k)\) in the regime where \(\ell\) and \(k\) are of comparable size.

\subsection{Notation}

Let \( k, r \in \mathbb{N} \) and fix \( p \in (0, \tfrac{1}{2}] \). 
Let \( \Bx[r] := (\Bx_1, \Bx_2, \ldots, \Bx_r) \) be a sequence of points on \( S^k \).  
The graph \( G_p(\Bx[r]) \) defines the complete graph with vertex set \( \{\Bx_1, \Bx_2, \ldots, \Bx_r\} \), whose edges are colored red or blue according to the same rule used in the definition of \( G_{k,p}(n) \).  
We refer to \( G_p(\Bx[r]) \) as the \emph{induced subgraph} on \( \Bx[r] \).
Define the space \( (S^k)^r = \{ (\boldsymbol{x}_1, \ldots, \boldsymbol{x}_r) : \boldsymbol{x}_i \in S^k \text{ for all } 1 \leq i \leq r \} \).
A {\it random \( r \)-tuple} \( (\boldsymbol{x}_1, \ldots, \boldsymbol{x}_r )\) in \( (S^k)^r \) defines a tuple where each \( \boldsymbol{x}_i \) is sampled independently and uniformly from \( S^k \).
For any Borel set \( A \subseteq (S^k)^r\), we define \( \mathbb{P}(A) \) as the probability that a random \( r \)-tuple belongs to \( A \).
When \( r = 1 \), this coincides with the normalized surface measure as in \eqref{equ:P(A)}.

The following probabilities that a random \( r \)-tuple forms a monochromatic clique will play a central role in our analysis.

\begin{definition}\label{def:prod_redblue}
Let \( k, r \in \mathbb{N} \) and fix \( p \in (0, \tfrac{1}{2}] \). 
Let \( \Bx{[r]} = (\Bx_1, \Bx_2, \ldots, \Bx_r) \) be a random \( r \)-tuple in \( (S^k)^r \).  
We define \( P_{\mathrm{red}, r} \) as the probability that the induced subgraph \( G_p(\Bx{[r]}) \) forms a red clique, and \( \overline{P}_{\mathrm{blue}, r} \) as the probability that \( G_p(\Bx{[r]}) \) forms a blue clique.
\end{definition}

We also define neighborhoods and their associated probability measures for points and \( r\)-tuples.

\begin{definition}\label{def:sphere_neighborhood}
Let \(\Bx_1, \ldots, \Bx_r, \By \in S^k \) be given. 
Define \(\Bx[r]= (\Bx_1, \Bx_2, \ldots, \Bx_r)\).
\begin{itemize}
    \item[(1).] The \textit{red-neighborhood} \( N(\By) \) and the \textit{blue-neighborhood} \( \overline{N}(\By) \) of \( \By \) are defined as (see Figure~\ref{fig: view of S1})
    \[
    N(\By) := \left\{ \Bz \in S^k : \langle \By, \Bz \rangle \leq -\frac{c_{k,p}}{\sqrt{k}} \right\} \quad \text{and} \quad \overline{N}(\By) := \left\{ \Bz\in S^k : \langle \By, \Bz \rangle > -\frac{c_{k,p}}{\sqrt{k}} \right\},
    \]
    and their probability measures are given by
    \[
    P(\By) := \mathbb{P}(N(\By))\quad  \mbox{and}  \quad \overline{P}(\By) := \mathbb{P}(\overline{N}(\By)).
    \]
    
    \item[(2).] The \textit{red-neighborhood} \( N(\Bx[r]) \) and the \textit{blue-neighborhood} \( \overline{N}(\Bx[r]) \) of \( \Bx[r] \) are defined as (see Figure~\ref{fig2})
    \[
    N(\Bx[r]) := N(\Bx_1) \cap \cdots \cap N(\Bx_r) \quad \text{and} \quad \overline{N}(\Bx[r]) := \overline{N}(\Bx_1) \cap \cdots \cap \overline{N}(\Bx_r),
    \]
    and their probability measures are given by
    \begin{align*}
        &P(\Bx_1, \ldots, \Bx_r) = P(\Bx[r]) := \mathbb{P}(N(\Bx[r])) = \mathbb{P}(N(\Bx_1) \cap \cdots \cap N(\Bx_r)), \\
        &\overline{P}(\Bx_1, \ldots, \Bx_r) = \overline{P}(\Bx[r]) := \mathbb{P}(\overline{N}(\Bx[r])) = \mathbb{P}(\overline{N}(\Bx_1) \cap \cdots \cap \overline{N}(\Bx_r)).
    \end{align*}
\end{itemize}
\end{definition}

Loosely speaking, the average probability \( P_{\mathrm{red}, r} \) (from Definition~\ref{def:prod_redblue}) can be approximated by a product of  probabilities \( P(\boldsymbol{x}[s]) \) for \( 1 \leq s \leq r \) (from Definition~\ref{def:sphere_neighborhood}); 
see equations~\eqref{equ:decomp-of-kappas} and \eqref{equ:kappa-r} for the precise expression.
A similar relation holds for \( \overline{P}_{\mathrm{blue}, r} \), with the corresponding terms \( \overline{P}(\boldsymbol{x}[s]) \).

\begin{figure}[H]
  \centering
  \includegraphics[width=1.0\textwidth]{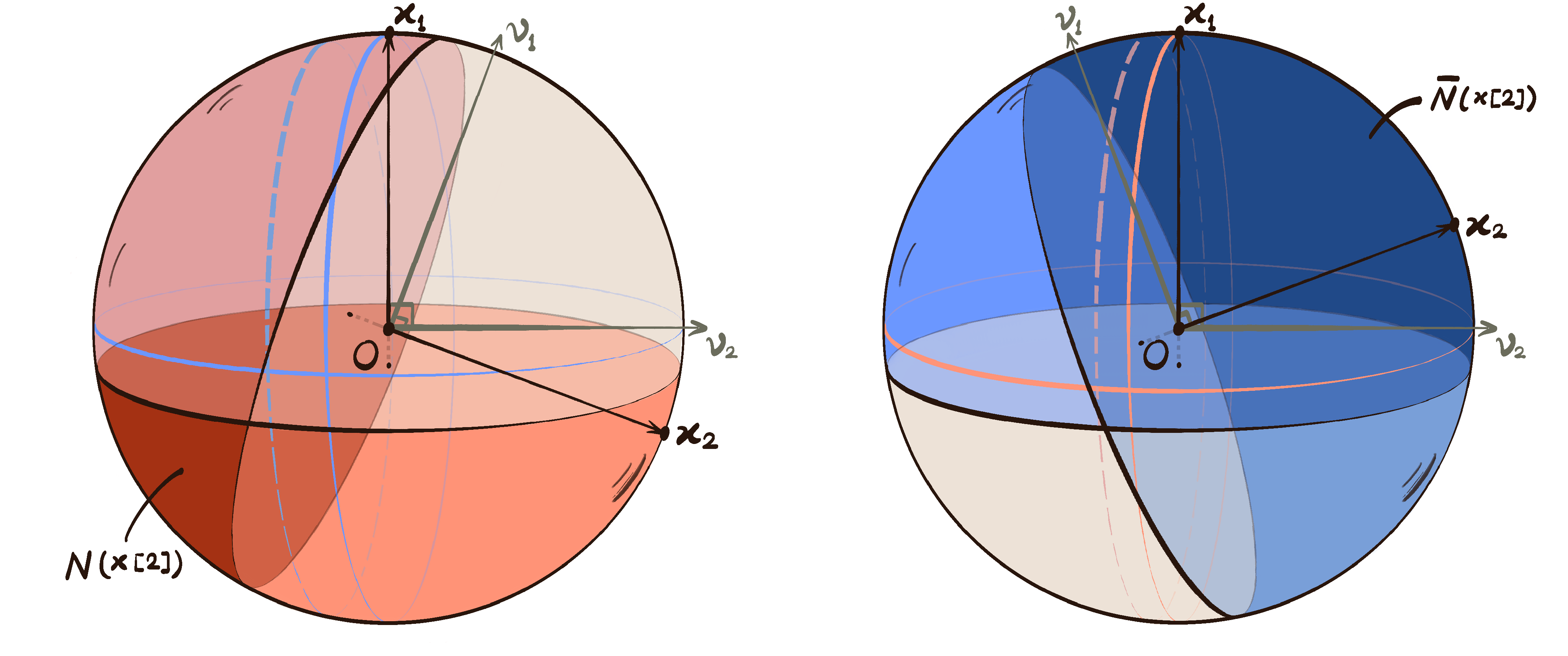}
  \caption{The red-neighborhood $N(\boldsymbol{x}[r])$ and the blue-neighborhood $\overline{N}(\boldsymbol{x}[r])$ for $r=2$.}
  \label{fig2}
  \end{figure}

\begin{figure}[H]
\centering
\includegraphics[width=1.0\textwidth]{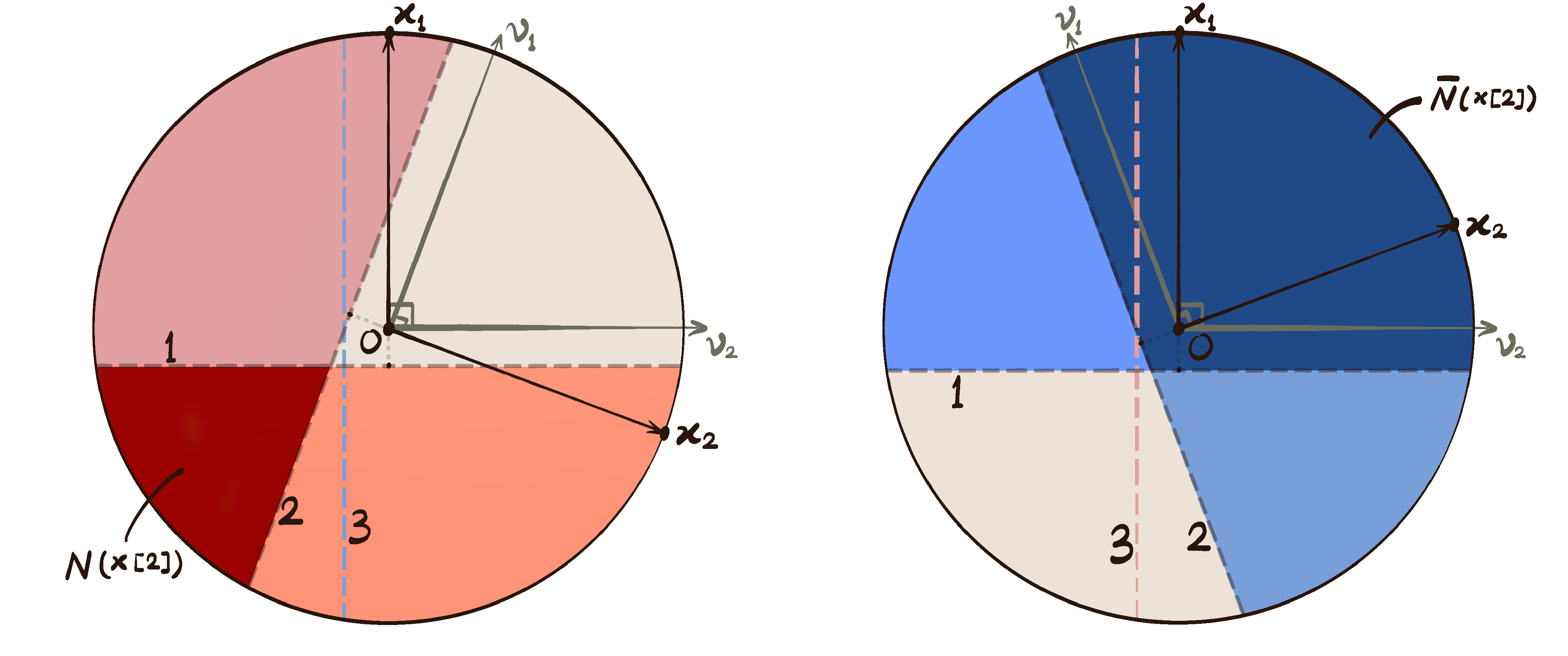}
\caption{Orthogonal projection of neighborhoods $N(\boldsymbol{x}[2])$ and $\overline{N}(\boldsymbol{x}[2])$ onto \( \operatorname{span}(\Bx_1, \Bx_2) \)}
\label{fig3}
\end{figure}

Many of our proofs rely on estimates involving orthogonal projections of vectors. 
Below, we summarize the definition of orthogonal projection used throughout this paper.

\begin{definition}\label{def:projection}
Let \( Y \subseteq \mathbb{R}^{k+1} \) be a linear subspace.  
The mapping \( \pi_Y: \mathbb{R}^{k+1} \to Y \) assigns to each \( \By \in \mathbb{R}^{k+1} \) its orthogonal projection onto \( Y \).  
For given vectors \( \Bx_1, \ldots, \Bx_r \in \mathbb{R}^{k+1} \),  
we write \( \pi_{[r]} \) for the special mapping \( \pi_Y \) where \( Y = \operatorname{span}(\Bx_1, \ldots, \Bx_r) \), that is,  
\[
\pi_{[r]}(\By) := \pi_{\operatorname{span}(\Bx_1, \ldots, \Bx_r)}(\By).
\]  
We often denote the projected image by \(\tilde{\boldsymbol{y}} := \pi_{[r]}(\boldsymbol{y})\) if there is no ambiguity.
\end{definition}

We refer the reader to Figure~\ref{fig3}, which illustrates the orthogonal projection of the neighborhoods of \( \Bx[2] = (\Bx_1, \Bx_2) \) from Figure~\ref{fig2} onto the subspace spanned by \( \Bx_1 \) and \( \Bx_2 \).

\subsection{Geometric Dependencies in \( G_{k,p}(n) \)}\label{subsec:geometric_dependency}

The geometric structure of random sphere graphs \( G_{k,p}(n) \) gives rise to essential mathematical features that differ fundamentally from those of classical random graph models, such as the Erdős–Rényi model \( \mathsf{ER}(n, p) \).  
Recall that \( \mathsf{ER}(n, p) \) denotes a random red-blue coloring of the edges of the complete graph \( K_n \), where each edge is independently colored red with probability \( p \) and blue with probability \( 1 - p \).
Analogous to Definition~\ref{def:prod_redblue}, we let \( P^{\mathsf{ER}}_{\mathrm{red}, r} \) denote the probability that any given set of \( r \) vertices forms a red clique \( K_r \) in \( \mathsf{ER}(n, p) \), and \( \overline{P}^{\mathsf{ER}}_{\mathrm{blue}, r} \) the probability that they form a blue clique \( K_r \).
Since edge colorings in \( \mathsf{ER}(n, p) \) are independent, it follows that for any \( r \geq 2 \),
\[
P^{\mathsf{ER}}_{\mathrm{red}, r} = p^{\binom{r}{2}} \quad \text{and} \quad \overline{P}^{\mathsf{ER}}_{\mathrm{blue}, r} = (1 - p)^{\binom{r}{2}}.
\]

However, the situation changes entirely in \( G_{k,p}(n) \) for all \( r \geq 3 \), 
assuming $k$ is sufficiently large.  
To illustrate this, consider the case \( r = 3 \). 
Recall from Definition~\ref{def:prod_redblue} that \( P_{\mathrm{red}, 3} \) denotes the probability that a random triple \( (\Bx_1, \Bx_2, \Bx_3) \) in \( (S^k)^3\) forms a red triangle in \( G_{k,p}(n) \).
Using the notation from Definition~\ref{def:sphere_neighborhood}, we can express this as
\[
P_{\mathrm{red}, 3}=\mathbb{P}\left(\Bx_1\Bx_2 \mbox{ is red}) \cdot \mathbb{P}(\Bx_3\in N(\Bx[2]) \ |\  \Bx_1\Bx_2 \mbox{ is red}\right)=p\cdot \mathbb{P}\left(N(\Bx[2]) \ |\  \Bx_1\Bx_2 \mbox{ is red}\right).
\]
Suppose $\Bx_1\Bx_2$ is red. 
As shown in Figure~\ref{fig2}, the red-neighborhood \( N(\Bx[2]) \) is precisely the intersection of the two light-red spherical caps associated with \( N(\Bx_1) \) and \( N(\Bx_2) \). 
To build geometric intuition, we consider the projection of \( N(\Bx[2]) \) onto the plane \( \operatorname{span}(\Bx_1, \Bx_2) \).
In Figure~\ref{fig3}, the projection of \( N(\Bx[2]) \) appears as the dark-red region \( A_{12} \), bounded by lines 1 and 2 representing \( N(\Bx_1) \) and \( N(\Bx_2) \), respectively. 
If we rotate \( \Bx_2 \) so that it becomes orthogonal to \( \Bx_1 \), then line 2 becomes line 3, 
and the resulting region $A_{13}$ (bounded by lines 1 and 3) has normalized surface measure approximately $p^2$.  
It is clear from Figure~\ref{fig3} that \( A_{12}\) is properly contained in \( A_{13}\), and hence 
\( \mathbb{P}\left(N(\Bx[2])\right)<p^2 \) holds whenever $\Bx_1\Bx_2$ is red.
This implies that 
\[
P_{\mathrm{red}, 3}=p\cdot \mathbb{P}\left(N(\Bx[2]) \ |\  \Bx_1\Bx_2 \mbox{ is red}\right)<p^3.
\]
Analogously, Figure~\ref{fig3} suggests that
\(
\overline{P}_{\mathrm{blue}, 3}=(1-p)\cdot \mathbb{P}(\overline{N}(\Bx[2]) \ |\  \Bx_1\Bx_2 \mbox{ is blue}) > (1-p)^3.
\)\footnote{We point out that it is not necessarily true that \( \mathbb{P}\left(\overline{N}(\Bx[2])\right) > (1-p)^2 \) for every blue edge \( \Bx_1\Bx_2 \), but this inequality holds on average.}
One can naturally extend the above argument to conclude that for any \( r \geq 3 \),
\begin{equation}\label{equ:geo-dependency}
P_{\mathrm{red}, r} < p^{\binom{r}{2}} \quad \text{and} \quad \overline{P}_{\mathrm{blue}, r} > (1 - p)^{\binom{r}{2}}
\end{equation}
hold in the random sphere graph \( G_{k,p}(n) \).
These properties highlight geometric dependencies in \( G_{k,p}(n) \), which set it apart from the Erdős–Rényi model, leaving room for potential improvements.

To prove the main result of this paper, we undertake a detailed analysis to accurately estimate $P_{\mathrm{red}, r}$ and $\overline{P}_{\mathrm{blue}, r}$. 
A sketch of the argument is given in the next section.

\section{Proof of Main Theorem: Reduction and Sketch}\label{sec:redu-skete}
In this section, we reduce the validation of our main result, Theorem~\ref{thm1}, to the following theorem and then provide a sketch of its proof.
We consider the Ramsey number $r(\ell, C\ell)$ for any fixed constant $C > 1$. 
Recall that $p_C$ denotes the unique real number in $(0,\frac12)$ satisfying $C = \frac{\log p_C}{\log(1-p_C)}$. 

\begin{theorem}\label{thm2}
For any constant \( C > 1 \), there exists \( \varepsilon_0 = \varepsilon_0(C) > 0 \) such that the following holds.

Let \( D = D(C) \) and \( \ell_0 = \ell_0(C) \) be constants with \( \ell_0 \gg D \gg C \).\footnote{Throughout this paper, we use \( D \gg C \) to mean that \( D \) is sufficiently large relative to \( C \).}
Then for every \( \ell \geq \ell_0 \) and \( k = D^2 \ell^2 \), there exists \( p = p(C, \ell) \in (p_C, \tfrac{1}{2}) \) such that in the random sphere graph \( G_{k,p}(n) \),
\begin{equation}\label{equ:improved_P}
P_{\mathrm{red}, \ell} \leq \left(p_C - \varepsilon_0 \tfrac{\ell}{\sqrt{k}}\right)^{\binom{\ell}{2}} 
\quad \mbox{and} \quad
\overline{P}_{\mathrm{blue}, C\ell} \leq \left(1 - p_C - \varepsilon_0 \tfrac{\ell}{\sqrt{k}}\right)^{\binom{C\ell}{2}}.
\end{equation}
\end{theorem}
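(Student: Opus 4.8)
The plan is to reduce both inequalities to a common scheme and then to exploit the geometric dependency displayed in~\eqref{equ:geo-dependency} quantitatively, tracking the gain as a function of $\ell/\sqrt{k}$. First I would set up the telescoping decomposition hinted at after Definition~\ref{def:sphere_neighborhood}: writing $P_{\mathrm{red},r}$ as an iterated expectation,
\[
P_{\mathrm{red},r} = \prod_{s=1}^{r-1} \mathbb{E}\bigl[\, P(\Bx[s]) \,\big|\, G_p(\Bx[s]) \text{ is a red clique}\,\bigr],
\]
and similarly for $\overline{P}_{\mathrm{blue},r}$ with $\overline{P}(\Bx[s])$. The classical Erd\H{o}s bound corresponds to replacing each factor by $p$ (resp.\ $1-p$); the content of the theorem is that, on average over a red (resp.\ blue) clique, the $s$-th factor is smaller than $p$ (resp.\ larger than $1-p$) by an amount that is $\Theta(s/\sqrt{k})$ for a positive proportion of the steps $s$. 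The projection picture in Figure~\ref{fig3} is the right geometric engine: after projecting $N(\Bx[s])$ onto $\operatorname{span}(\Bx_1,\dots,\Bx_s)$, a red clique forces the $\Bx_i$'s to be pairwise at obtuse angle at scale $c_{k,p}/\sqrt k$, which pushes the cap-intersection strictly inside the "orthogonal-reference" region of measure $p^{s}$; I would make this quantitative by a second-order (Gaussian/Laplace) expansion of the cap measures on $S^k$ valid in the regime $k=D^2\ell^2 \gg \ell^2$, so that each obtuse inner product of size $c/\sqrt k$ contributes a multiplicative factor $1-\Theta(c^2/k)\cdot(\text{const})$ relative to orthogonality, and there are $\binom{s}{2}$ such pairs.

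Next I would choose the parameter $p = p(C,\ell) \in (p_C, \tfrac12)$ slightly above $p_C$, by an amount of order $\ell/\sqrt k$; this is the degree of freedom that lets us trade the two inequalities against each other. Increasing $p$ past $p_C$ makes the blue bound $(1-p)^{\binom{C\ell}{2}}$ easier (the base shrinks) while making the red bound $p^{\binom{\ell}{2}}$ harder (the base grows), but the geometric gain from~\eqref{equ:geo-dependency} works \emph{for} us on the red side; the point is that at $p=p_C$ the exponents $\binom{\ell}{2}\log p_C$ and $\binom{C\ell}{2}\log(1-p_C)$ are balanced by the defining relation $C=\log p_C/\log(1-p_C)$, so a perturbation of $p$ of the right size and sign, combined with the respective geometric corrections, yields \eqref{equ:improved_P} on both sides simultaneously. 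Concretely I would write $P_{\mathrm{red},\ell} \le \exp\bigl(\binom{\ell}{2}\log p - c_1 \tfrac{\ell^3}{\sqrt k}\bigr)$ and $\overline{P}_{\mathrm{blue},C\ell} \le \exp\bigl(\binom{C\ell}{2}\log(1-p) + c_2 \tfrac{(C\ell)^3}{k}\cdot(\text{small})\bigr)$ (the blue error term must be controlled, not helpful), and then check that the choice $p-p_C \asymp \ell/\sqrt k$ absorbs the blue loss into the $\binom{C\ell}{2}$ term while the red gain of order $\ell^3/\sqrt k = \binom{\ell}{2}\cdot\Theta(\ell/\sqrt k)$ survives, giving exactly the claimed base $p_C - \varepsilon_0 \ell/\sqrt k$; matching powers on the blue side is then a Taylor expansion of $(1-p)^{\binom{C\ell}{2}}$ around $p_C$.

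The main obstacle I anticipate is controlling the conditional expectations $\mathbb{E}[P(\Bx[s]) \mid \text{red clique}]$ uniformly in $s$ up to $s=\ell$ (and $s = C\ell$): a single red edge only forces one obtuse angle, and the naive union of the $\binom{s}{2}$ pairwise constraints ignores correlations among the caps, so getting the honest $\binom{s}{2}$-fold product of $(1-\Theta(1/k))$ factors — rather than just $\Theta(1)$ of them — requires understanding the joint distribution of the projected vectors $\tilde\Bx_i = \pi_{[s]}(\Bx_i)$ conditioned on forming a red clique, and showing this conditioning does not degrade into a lower-dimensional degeneracy as $s$ grows toward $\sqrt k/D$. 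This is presumably where "perfect sequences" (Section~\ref{Perfect sequences}) enter: I would first prove the estimate for a well-conditioned ("perfect") sub-class of $s$-tuples where the Gram matrix of the $\tilde\Bx_i$ is close to the identity and the second-order cap expansion is uniformly valid, then show in the spirit of Section~\ref{sec:from-perfect-to-general} that the non-perfect tuples contribute negligibly to $P_{\mathrm{red},\ell}$ and $\overline{P}_{\mathrm{blue},C\ell}$, and finally assemble the per-step gains in Section~\ref{sec:final-proof} with the parameter choice above. A secondary technical point is that the error terms in the $S^k$ cap expansion must be shown to be genuinely lower order than the main gain across all scales $s \le \ell$, which is exactly what forces the choice $k = D^2\ell^2$ with $D \gg C$.
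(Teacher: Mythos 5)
Your high-level plan (telescoping into conditional factors, restricting to well-conditioned ``perfect'' tuples, perturbing $p$ slightly above $p_C$) matches the paper's, but the quantitative structure you describe contains an error the argument cannot recover from. You write $\overline{P}_{\mathrm{blue},C\ell} \le \exp\bigl(\binom{C\ell}{2}\log(1-p) + c_2\,(C\ell)^3/k\bigr)$, treating the blue penalty as subleading relative to the red gain $\Theta(\ell^3/\sqrt k)$. In fact the blue penalty is of the \emph{same} order $\Theta(\ell^3/\sqrt k)$; your own earlier sentence, that the $s$-th blue factor exceeds $1-p$ by $\Theta(s/\sqrt k)$, already implies this once accumulated over the whole triangle, so your later $(C\ell)^3/k$ is internally inconsistent. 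Because the two corrections compete at the same order, the crux of the paper's proof is a numerical comparison between the red gain's coefficient $1/p^2$ and the blue penalty's coefficient $C/(1-p)^2$: the inequality $Cp_C^2/(1-p_C)^2 < 1$ in~\eqref{equ:p^2logp}, derived from the defining relation $C = \log p_C/\log(1-p_C)$, is the only place where $C > 1$ enters, and Proposition~\ref{prop:bound=p+1-p} is devoted to threading $p$ through the window this inequality opens. With your scaling the proof would close trivially for every $C \ge 1$, including $C = 1$, where the authors explicitly say the method fails.

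Two secondary issues. First, your per-pair expansion ``$1-\Theta(c^2/k)$'' is off by a factor of $\sqrt k$: the cap-intersection measure is a \emph{first-order}, not second-order, function of the inner product. An inner product $-c/\sqrt k$ shifts the effective threshold for a new point by $\Theta(1/k)$, and because the spherical density near that threshold scales as $\Theta(\sqrt k)$ (Lemma~\ref{limrk3}), the net probability shift is $\Theta(1/\sqrt k)$ --- consistent with your later $\ell^3/\sqrt k$, but not with the mechanism you describe. Second, the step you flag as the ``main obstacle'' --- estimating $\mathbb{E}\bigl[P(\boldsymbol{x}[s]) \mid \text{red clique}\bigr]$ under the highly correlated conditioning on the Gram structure --- is where the bulk of the paper's work lies: it bounds $Q_{[r]}(\boldsymbol{y})$ by an expected inner product of projections (Theorem~\ref{ratiored}), then evaluates that expectation through a spectral and dual-basis analysis of the Gram matrix of the perfect sequence (Lemmas~\ref{lem:eigenvalues} and~\ref{lem:coefficient_expectations}, Theorem~\ref{thm:projection-expectation}). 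Your proposal names the obstacle but supplies no mechanism to surmount it.
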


\subsection{Reduction to Theorem~\ref{thm2}}\label{subsec:reduction}
We now show that Theorem~\ref{thm1} follows directly from Theorem~\ref{thm2}.

\begin{proof}[\bf Proof of Theorem~\ref{thm1}, using Theorem~\ref{thm2}.]
Fix any constant $C>1$. 
Let \( D = D(C) \) and \( \ell_0 = \ell_0(C) \) be constants, and let $\ell$ be any integer satisfying \( \ell\geq \ell_0 \gg D \gg C \).
Let $k=D^2\ell^2$.
Then there exist \( \varepsilon_0 = \varepsilon_0(C) \) and \( p = p(C, \ell) \in (p_C, \tfrac{1}{2}) \) such that the conclusion of Theorem~\ref{thm2} holds.
Let
\begin{equation}\label{equ:ramsey-n}
\varepsilon = \frac{M_C^3\varepsilon_0}{6 D} \quad \mbox{and} \quad n=(M_C+\varepsilon)^\ell,
\end{equation}
where $M_C=p_C^{-1/2}=(1-p_C)^{-C/2}$. 
Then the probability \( P^*\) that there exists a red clique \( K_\ell \) or a blue clique \( K_{C\ell}\) in the random sphere graph \( G_{k,p}(n) \) satisfies
\begin{align*}
P^*\leq&\,  \binom{n}{\ell} P_{\mathrm{red}, \ell} + \binom{n}{C\ell} \overline{P}_{\mathrm{blue}, C\ell}\\
\leq&\, \frac{n^\ell}{2}\left(p_C-\varepsilon_0\frac{\ell}{\sqrt{k}}\right)^{\frac{\ell(\ell-1)}{2}}
+\frac{n^{C\ell}}{2}\left(1-p_C-\varepsilon_0\frac{\ell}{\sqrt{k}}\right)^{\frac{C\ell(C\ell-1)}{2}}\\
\leq&\, \frac{n^\ell}{2}\left(p_C-\frac{\varepsilon_0}{2}\frac{\ell}{\sqrt{k}}\right)^{\frac{\ell^2}{2}}
+\frac{n^{C\ell}}{2}\left(1-p_C-\frac{\varepsilon_0}{2}\frac{\ell}{\sqrt{k}}\right)^{\frac{C^2\ell^2}{2}}\\
=&\, \frac{M_C^{\ell^2}}{2}\cdot\left(1+\frac{\varepsilon}{M_C}\right)^{\ell^2}\cdot\left(1-\frac{\varepsilon_0}{2p_C D}\right)^{\frac{\ell^2}{2}}\cdot p_C^{\frac{\ell^2}{2}}\\
&+\frac{M_C^{C\ell^2}}{2}\cdot\left(1+\frac{\varepsilon}{M_C}\right)^{C\ell^2}\cdot\left(1-\frac{\varepsilon_0}{2(1-p_C)D}\right)^{\frac{C^2\ell^2}{2}}\cdot \left(1-p_C\right)^{\frac{C^2\ell^2}{2}}\\
=&\,\frac12 \left(1+\frac{\varepsilon}{M_C}\right)^{\ell^2}\cdot\left(1-\frac{\varepsilon_0}{2p_C D}\right)^{\frac{\ell^2}{2}}
+\frac12 \left(1+\frac{\varepsilon}{M_C}\right)^{C\ell^2}\cdot\left(1-\frac{\varepsilon_0}{2(1-p_C)D}\right)^{\frac{C^2\ell^2}{2}},
\end{align*}
where the first inequality holds by the union bound, 
the second inequality follows from Theorem~\ref{thm2},
and the third inequality holds for sufficiently large $\ell\geq \ell_0$. 
In particular, it suffices to choose $\ell_0 \gtrsim \frac{D}{\varepsilon_0}$, which may be larger than the constant $\ell_0(C)$ given in Theorem~\ref{thm2}. 

Since \(p_C\geq \frac{1}{C+1}\) for any \(C\geq 1\), we have
\[
\frac{3\varepsilon}{M_C}=\frac{\varepsilon_0}{2p_CD}\leq \frac{\varepsilon_0C}{2(1-p_C)D},
\] 
which implies
\[
1-\frac{\varepsilon_0}{2p_CD}= 1 - 3\frac{\varepsilon}{M_C}\leq \left(1+\frac{\varepsilon}{M_C} \right)^{-3}
\]
and
\[
1-\frac{\varepsilon_0}{2(1-p_C)D}\leq 1-\frac{3\varepsilon}{CM_C}\leq \left(1-\frac{\varepsilon}{CM_C}\right)^3\leq \left(1+\frac{\varepsilon}{CM_C}\right)^{-3}\leq \left(1+\frac{\varepsilon}{M_C} \right)^{-\frac{3}{C}}.
\]
This yields
\[
P^*\leq \frac12 \left(1+\frac{\varepsilon}{M_C}\right)^{-\frac{\ell^2}{2}}+\frac12 \left(1+\frac{\varepsilon}{M_C}\right)^{-\frac{C\ell^2}{2}}
< 1.
\]
Hence, there exists an instance of $G_{k,p}(n)$ that contains neither a red $K_\ell$ nor a blue $K_{C\ell}$.
This shows that $r(\ell, C\ell)> n=(M_C+\varepsilon)^\ell$, finishing the proof of Theorem~\ref{thm1}.
\end{proof}

\subsection{Proof Sketch of Theorem~\ref{thm2}}\label{sec:proof_sketch}
The proof of Theorem~\ref{thm2} essentially requires determining both \( P_{\mathrm{red}, \ell} \) and \( \overline{P}_{\mathrm{blue}, C\ell} \), the probabilities of forming monochromatic cliques in the random sphere graph \( G_{k,p}(n) \), up to second-order terms.

By symmetry, we focus on sketching the proof from the perspective of \( P_{\mathrm{red}, \ell} \).
Let $\Bx[\ell]=(\Bx_1,...,\Bx_\ell)$ be a random $\ell$-tuple in $(S^k)^\ell$. 
For every $r\in [\ell]$, we interpret $P_{\mathrm{red}, r}$ as the probability of the event $A_{\mathrm{red}, r}$ that $\Bx[r]=(\Bx_1,...,\Bx_r)$ induces a red clique $K_r$ in $G_{k,p}(n)$.
Then \( P_{\mathrm{red}, \ell} \) admits the following simple decomposition:
\begin{equation}\label{equ:decomp-of-kappas}
P_{\mathrm{red}, \ell}=\kappa_1 \cdots \kappa_{\ell-1}, \mbox{ where } \kappa_{r}=\frac{P_{\mathrm{red}, r+1}}{P_{\mathrm{red}, r}} \mbox{ for every } 1\leq r\leq \ell-1.
\end{equation}
Let $Y_r$ denote the event $\Bx_{r+1}\in N(\Bx[r])$, so \( A_{\mathrm{red}, r+1}=Y_r\wedge A_{\mathrm{red}, r}\).\footnote{Throughout this paper, the symbol \( \wedge \) denotes the joint occurrence of two or more events.}
We can rewrite $\kappa_{r}$ as
\begin{equation}\label{equ:kappa-r}
\kappa_{r}=\frac{P_{\mathrm{red}, r+1}}{P_{\mathrm{red}, r}}=\frac{\mathbb{P}\big(Y_r\wedge A_{\mathrm{red}, r}\big)}{\mathbb{P}(A_{\mathrm{red}, r})}
=\mathbb{P}\big(Y_r \ |\ A_{\mathrm{red}, r} \big)
=\mathbb{E}\big[\boldsymbol{1}_{Y_r} \ |\ A_{\mathrm{red}, r}\big]
=\mathbb{E}\big[P(\Bx[r]) \ |\ A_{\mathrm{red}, r}\big],
\end{equation}
where the last equality holds by the law of total expectation and the fact that $\mathbb{E}_{\Bx_{r+1}}\big[\boldsymbol{1}_{Y_r}\big]=\mathbb{P}(Y_r)=\mathbb{P}(N(\Bx[r]))=P(\Bx[r])$.
Similarly, we define 
\begin{equation}\label{equ:decomp-of-kappas-bar}
\overline{P}_{\mathrm{blue}, C\ell}=\overline{\kappa}_1 \cdots \overline{\kappa}_{C\ell-1}, \mbox{ where } \overline{\kappa}_{r}=\frac{\overline{P}_{\mathrm{blue}, r+1}}{\overline{P}_{\mathrm{blue}, r}} \mbox{ for every } 1\leq r\leq C\ell-1.
\end{equation}

\noindent{\bf Choice of the dimension $k$.}
Before we proceed to estimate the clique probabilities, we first discuss the choice of the dimension \( k \), which plays a critical role in the analysis. 
While its feasible value (i.e., $k\approx\ell^2$) will become clear from the precise expressions for clique probabilities later on, 
it is important to observe from the beginning that $k$ needs to be appropriately scaled in terms of $\ell$.
To motivate this, note that for two independent random vectors \( \boldsymbol{x}, \boldsymbol{y} \in S^k \), their typical inner product satisfies
\begin{equation}\label{equ:random-inner-product}
\big|\langle \boldsymbol{x}, \boldsymbol{y} \rangle\big| = \Theta\left(\frac{1}{\sqrt{k}}\right).
\end{equation}
If \( k \to \infty \), then such vectors become nearly orthogonal, 
and the random sphere graph \( G_{k,p}(n) \) approaches the Erdős–Rényi random graph \( \mathsf{ER}(n,p) \),
thereby losing the geometric structure essential to our analysis.
Conversely, if \( k \) is small (say $k\ll \ell^2)$, dependencies between edges become substantial, and the blue-clique probability $\overline{P}_{\mathrm{blue}, C\ell}$ deviates significantly from its expected value.
To see this, observe that from \eqref{equ:random-inner-product} and the discussion in Subsection~\ref{subsec:geometric_dependency}, one can derive that
\begin{equation}\label{equ:x[r]/x[r-1]}
\overline{P}(\boldsymbol{x}[r])
\approx \overline{P}(\boldsymbol{x}[r-1]) \cdot (1-p) \cdot \left(1 + \Omega\left(\tfrac{1}{\sqrt{k}}\right)\right)^{r - 1} \mbox{ for every } r\geq 3.
\end{equation}
This implies that \( \overline{P}(\boldsymbol{x}[r])\approx (1-p)^r \cdot \left(1 + \Omega\left(\tfrac{1}{\sqrt{k}}\right)\right)^{\binom{r}{2}} \) for $r=O(\sqrt{k})$, 
and \( \overline{P}(\boldsymbol{x}[r])\approx \overline{P}(\boldsymbol{x}[r-1]) \cdot (1-o(1))\approx (1-p)^{o(r)}\) for \( r\gg \sqrt{k} \).
Combing this with \eqref{equ:decomp-of-kappas-bar}, the analogous expression \eqref{equ:kappa-r} on $\overline{\kappa}_r$ and the assumption \( \ell \gg \sqrt{k} \), 
we obtain that
\[
\overline{P}_{\mathrm{blue}, C\ell} \gtrsim (1-p)^{o(\ell^2)} \gg (1-p)^{\binom{C\ell}{2}},
\]
which forces $n$ to be supremely small under the union bound requirement.
In summary, the dimension \( k \) must be carefully chosen in an intermediate regime, so as to balance between limiting dependencies and preserving geometric structure.

\bigskip

\noindent{\bf The Overall Strategy.}
The problem now reduces to estimating the quantities \( \kappa_r \) and \( \overline{\kappa}_r \).
Based on the earlier observations (e.g., \eqref{equ:x[r]/x[r-1]}), 
it is natural to expect that
$\frac{\kappa_{r+1}}{\kappa_r} < p$ and $\frac{\overline{\kappa}_{r+1}}{\overline{\kappa}_r} > 1 - p$.
What matters for our purposes, however, is not just these inequalities, but the relative size of the deviations: the gap \( p - \frac{\kappa_{r+1}}{\kappa_r} \) needs to significantly exceed the gap \( \frac{\overline{\kappa}_{r+1}}{\overline{\kappa}_r} - (1 - p) \).
Thus, the central part of our analysis is to estimate these gaps quantitatively in \( G_{k,p}(n) \).
We show that there exists a constant \( a_{k,p}=\left(\frac{e^{-c^2}}{2\pi}\right)^{3/2} > 0 \),  where $c:=c_{k,p}$ is from \eqref{equ:c_pk},\footnote{We will show in Lemma~\ref{prop:convergence_of_cpk} that $c_{k,p}$ converges as $k\to \infty$ so it is absolutely bounded.} such that for all \( r = O(\sqrt{k}) \),
\begin{equation}\label{equ:k/k-rough}
\frac{\kappa_{r+1}}{\kappa_r} \lesssim p - \frac{a_{k,p}}{p^2} \cdot \frac{r}{\sqrt{k}}
\quad \text{and} \quad
\frac{\overline{\kappa}_{r+1}}{\overline{\kappa}_r} \lesssim 1 - p + \frac{a_{k,p}}{(1 - p)^2} \cdot \frac{r}{\sqrt{k}}.
\end{equation}
Combining this with \eqref{equ:decomp-of-kappas} and \eqref{equ:decomp-of-kappas-bar}, and choosing \( k \approx \ell^2 \), we get
\[
P_{\mathrm{red}, \ell} \lesssim \left(p - \frac{a_{k,p}}{p^2} \cdot \frac{\ell}{3\sqrt{k}}\right)^{\binom{\ell}{2}}
\quad \text{and} \quad
\overline{P}_{\mathrm{blue}, C\ell} \lesssim \left(1 - p + \frac{a_{k,p}}{(1 - p)^2} \cdot \frac{C\ell}{3\sqrt{k}}\right)^{\binom{C\ell}{2}}.
\]
A key observation is that if \( p \) is sufficiently close to \( p_C \), then the sum of the two base terms
\begin{equation}\label{equ:sum-two-base}
\left(p - \frac{a_{k,p}}{p^2} \cdot \frac{\ell}{3\sqrt{k}}\right) + \left(1 - p + \frac{a_{k,p}}{(1 - p)^2} \cdot \frac{C\ell}{3\sqrt{k}}\right)
= 1 + \frac{a_{k,p} \cdot \ell}{3\sqrt{k}} \left(\frac{C}{(1 - p)^2} - \frac{1}{p^2}\right)
\end{equation}
is strictly less than 1, since
\begin{equation}\label{equ:p^2logp}
\frac{C \cdot p^2}{(1 - p)^2} \approx \frac{p_C^2 \log p_C}{(1 - p_C)^2 \log(1 - p_C)} < 1 \quad \text{for any } p_C \in (0, \tfrac{1}{2}).
\end{equation}
Therefore, we can choose some \( p \in (p_C, \tfrac{1}{2}) \) and a constant \( \varepsilon_0 = \varepsilon_0(C) > 0 \) such that
\[
p - \frac{a_{k,p}}{p^2} \cdot \frac{\ell}{3\sqrt{k}} \leq p_C - \varepsilon_0 \frac{\ell}{\sqrt{k}}
\quad \text{and} \quad
1 - p + \frac{a_{k,p}}{(1 - p)^2} \cdot \frac{C\ell}{3\sqrt{k}} \leq 1 - p_C - \varepsilon_0 \frac{\ell}{\sqrt{k}},
\]
completing the proof of Theorem~\ref{thm2}.

\bigskip

\noindent{\bf Breakdown into Key Quantities.}
We now examine \eqref{equ:k/k-rough} in more detail.
Recall from \eqref{equ:kappa-r} that \( \kappa_r=\mathbb{E}\big[P(\Bx[r]) \ |\ A_{\mathrm{red}, r}\big]\).
To estimate \(\kappa_r\), we consider \( P(\Bx[r]) \) for any given sequence of vectors $\Bx[r]=(\Bx_1,...,\Bx_r)\in (S^k)^r$ via the following expression:
\[
P(\Bx[r])=\prod_{s=0}^{r-1} Q_{[s]}(\Bx_{s+1}).
\]
Here, for every $0\leq s\leq r$ the quantity \( Q_{[s]}: S^k \to [0,1] \) is defined as
\[
Q_{[s]}(\boldsymbol{y}) := \frac{P(\boldsymbol{x}_1, \ldots, \boldsymbol{x}_s, \boldsymbol{y})}{P(\boldsymbol{x}_1, \ldots, \boldsymbol{x}_s)} \mbox{~ for each ~} \By\in S^k,
\]
where we use the notation from Definition~\ref{def:sphere_neighborhood}.\footnote{For the exact notation \( Q_{[s]}(\boldsymbol{y})\) used in the proof, see Definition~\ref{def:Qr}.}
Recall the projection $\pi_{[s]}(\cdot) := \pi_{\operatorname{span}(\Bx_1, \ldots, \Bx_s)}(\cdot)$ from Definition~\ref{def:projection}.
Under the assumption that $\Bx[s]\in (S^k)^s$ and $\By\in S^k$ are in generic position, 
we are able to bound $ Q_{[s]}(\boldsymbol{y})$ using the expectation of the inner product between the projection \(\pi_{[s]}(\By)\) of $\By$ and the projection of a random vector (see Theorem~\ref{ratiored}), as follows:
\[
 Q_{[s]}(\boldsymbol{y}) \lesssim p-\sqrt{\frac{k}{2\pi}} e^{-\frac{c^{2}}{2}}\cdot \mathbb{E}_{\boldsymbol{z}}[\langle \pi_{[s]}(\By),\pi_{[s]}(\Bz) \rangle],
\]
where the vector $\boldsymbol{z}$ is sampled uniformly from $N(\boldsymbol{x}[s])$, and the constant $c:=c_{k,p}$ is given by \eqref{equ:c_pk}.
Returning to the setting of \( \kappa_r=\mathbb{E}\big[P(\Bx[r]) \ |\ A_{\mathrm{red}, r}\big]\),
the expectation above becomes one taken over the joint distribution of the random vectors \(\By = \Bx_{s+1}\) and \(\Bz\).\footnote{More precisely, the expectation is taken over the following random variables: \(\Bx[r] \in (S^k)^r\) is sampled uniformly conditioned on the event \(A_{\mathrm{red}, r}\), and \(\Bz \in N(\Bx[s])\) is sampled uniformly and independently of \(\Bx[r] \setminus \Bx[s]\).}
Under these circumstances, \(\pi_{[s]}(\By)\) and \(\pi_{[s]}(\Bz)\) both behave almost like the Gaussian random vector \(\boldsymbol{w}\) (of dimension \(s\)), subject to the restrictions \(\langle \boldsymbol{w}, \Bx_i\rangle \leq -c/\sqrt{k}\) for all \(i \in [s]\).
We then prove in Section~\ref{sec:key-quan} that
\[
\mathbb{E}_{\By, \Bz}[\langle \pi_{[s]}(\By),\pi_{[s]}(\Bz) \rangle] \approx \frac{e^{-c^2}}{2\pi p^2}\cdot \frac{s}{k}.  
\]
The upper bound on \(\frac{\kappa_{r+1}}{\kappa_r}\) in \eqref{equ:k/k-rough} is essentially derived from the above estimates; 
similarly, so is \(\frac{\overline{\kappa}_{r+1}}{\overline{\kappa}_r}\).
However, random vectors may exhibit highly variable behavior, making computation intractable. 
To overcome this, we introduce a useful notion called a \textit{perfect sequence} (see Definition~\ref{dfn:perfect-sequence}). 
Roughly speaking, a sequence of vectors in \(S^k\) is said to be perfect if the vectors are nearly vertical to each other.
We then work with all of the above quantities in the corresponding ``perfect'' setting instead.
This turns out to be sufficient, as the probability that random vectors form a non-perfect sequence is sufficiently small (see Section~\ref{sec:from-perfect-to-general}).

\section{Auxiliary Lemmas}\label{sec:auxi-lemmas}
Before we proceed to the proof of Theorem~\ref{thm2}, we first establish several auxiliary but useful lemmas concerning the random sphere graph and the standard normal distribution.

Throughout the rest of the paper,
let \(\Phi(x) := \frac{1}{\sqrt{2\pi}} \int_{-\infty}^x e^{-t^2/2}\, dt\) denote the cumulative distribution function (CDF) of the standard normal distribution, and let \(\phi(x) := \Phi'(x) = \frac{1}{\sqrt{2\pi}} e^{-x^2/2}\) denote its probability density function (PDF).  
We denote by \(\Phi^{-1} : (0,1) \to \mathbb{R}\) the corresponding quantile function, defined by the property that \(\Phi\bigl(\Phi^{-1}(p)\bigr) = p\) for all \(p \in (0,1)\).

The first lemma shows that the constant $c_{k,p}$ given by \eqref{equ:c_pk} converges to $\Phi^{-1}(1-p)$ as $k\to \infty$.

\begin{lemma}\label{prop:convergence_of_cpk}
Fix any $p \in (0,1/2)$. 
The constant $c_{k,p}$ given by \eqref{equ:c_pk} satisfies
\begin{equation}\label{cpk}
c_{k,p} = \Phi^{-1}(1-p) + O\left(\frac{1}{{k}}\right).
\end{equation}
In particular, we have \(\lim_{k \to \infty} c_{k,p}= \Phi^{-1}(1-p).\)
\end{lemma}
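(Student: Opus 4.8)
The statement concerns the asymptotics, as $k\to\infty$, of the threshold $c_{k,p}$ defined by the exact equation $\mathbb{P}\!\left(\langle \Bx,\Be\rangle \le -\tfrac{c_{k,p}}{\sqrt k}\right)=p$, where $\Bx$ is uniform on $S^k$. The plan is to compare the distribution of $\sqrt{k}\,\langle \Bx,\Be\rangle$ with the standard normal distribution and control the error quantitatively. First I would recall the explicit density of the one-dimensional marginal $t := \langle\Bx,\Be\rangle$ of the uniform measure on $S^k$: it is supported on $[-1,1]$ with density proportional to $(1-t^2)^{(k-2)/2}$, normalized by the Beta-type constant $\tfrac{1}{B(1/2,k/2)}$ (equivalently $\tfrac{\Gamma((k+1)/2)}{\sqrt\pi\,\Gamma(k/2)}$). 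Substituting $t = u/\sqrt k$, the density of $u=\sqrt k\,\langle\Bx,\Be\rangle$ becomes $f_k(u) = \tfrac{1}{\sqrt k\,B(1/2,k/2)}\bigl(1-u^2/k\bigr)^{(k-2)/2}$ for $|u|\le \sqrt k$.

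Next I would establish the pointwise convergence $f_k(u)\to\phi(u)$ together with a quantitative rate. Two ingredients are needed: (i) $\bigl(1-u^2/k\bigr)^{(k-2)/2} = e^{-u^2/2}\bigl(1+O(u^4/k)\bigr)$ uniformly for $u$ in any fixed bounded interval, obtained by taking logarithms and expanding $\log(1-x) = -x - x^2/2 - \cdots$; and (ii) the normalizing constant satisfies $\tfrac{1}{\sqrt k\,B(1/2,k/2)} = \tfrac{1}{\sqrt{2\pi}}\bigl(1+O(1/k)\bigr)$, which follows from Stirling's formula applied to $\Gamma((k+1)/2)/\Gamma(k/2) = \sqrt{k/2}\,(1+O(1/k))$. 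Combining, $f_k(u) = \phi(u)\bigl(1 + O(1/k) + O(u^4/k)\bigr)$ on bounded intervals, and one also needs a crude tail bound — e.g. that $\mathbb{P}(|u| > T)$ decays exponentially in $T^2$, uniformly in $k$, which again comes from the density estimate plus $(1-u^2/k)^{k/2}\le e^{-u^2/2}$ — so that the contribution of $|u|$ large is negligible. Integrating $f_k$ from $-\infty$ to $-c_{k,p}$ and matching against $\Phi(-c_{k,p})$ then gives $\Phi(-c_{k,p}) = p + O(1/k)$ (the $u^4/k$ term integrates to a constant times $1/k$ since we only integrate over a bounded region once the tails are discarded, and $c_{k,p}$ is bounded — which itself follows from $\Phi(-c_{k,p})=p+o(1)$ being bounded away from $0$ and $1$). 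Finally, inverting via $\Phi^{-1}$: since $\Phi^{-1}$ is smooth with bounded derivative near the value $1-p\in(1/2,1)$, from $\Phi(-c_{k,p}) = p + O(1/k)$ one concludes $-c_{k,p} = \Phi^{-1}(p) + O(1/k)$, i.e. $c_{k,p} = \Phi^{-1}(1-p) + O(1/k)$ by the symmetry $\Phi^{-1}(1-p) = -\Phi^{-1}(p)$.

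The main obstacle I anticipate is bookkeeping the error terms carefully enough to land on the stated $O(1/k)$ rather than a weaker $O(1/\sqrt k)$ or $O(\log k/k)$. The delicate point is that the local expansion $(1-u^2/k)^{(k-2)/2} = e^{-u^2/2}(1+O(u^4/k))$ has an error growing with $u$, so one must cut off at a window like $|u|\le \sqrt{\log k}$ (where the relative error is $O((\log k)^2/k)$ — slightly worse than $1/k$) and argue that the tail beyond contributes only $O(e^{-c\log k}) = O(k^{-c})$, which for large enough cutoff constant is $o(1/k)$. To actually achieve a clean $O(1/k)$ one should instead not truncate the $u^4/k$ term naively but keep the full Taylor expansion of $\log(1-u^2/k)$ to second order, giving an error $e^{-u^2/2}(1 + O(u^4/k))$ whose integral against the Gaussian weight over all of $\mathbb{R}$ converges absolutely to something of size $O(1/k)$ — the Gaussian decay kills the polynomial growth. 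Combined with the $O(1/k)$ from the normalizing constant and the boundedness and smoothness of $\Phi^{-1}$ near $1-p$, this yields the claimed bound. The remaining steps — the Beta density formula, Stirling, and the tail estimate — are standard and I would cite or state them without detailed proof.
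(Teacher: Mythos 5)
Your plan is correct and matches the paper's proof in all essentials: both express the marginal of $\langle\Bx,\Be\rangle$ as a beta-type density, compare the (rescaled) density with the Gaussian $\phi$ with a quantitative $O(1/k)$ error, pin down the normalizing constant, and then invert via the smoothness of $\Phi^{-1}$ near $1-p$. The paper executes the same comparison after a trigonometric substitution $t=\sin\theta$ (writing $\cos^{k-1}\theta\approx e^{-(k-1)\theta^2/2}$ on the short interval $[0,\arcsin(c/\sqrt k)]$ and using the Wallis-type asymptotic for the normalizer), whereas you work directly with $(1-u^2/k)^{(k-2)/2}$ and Stirling — a cosmetic difference, not a different argument.
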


\begin{proof}
Let \(\boldsymbol{e} \in \mathbb{R}^{k+1}\) be a fixed unit vector,
and let \(\boldsymbol{x}\) be a random point uniformly distributed on \(S^k\).
Then for any \(a \in (-1,1)\), the spherical cap probability \( \mathbb{P}(\langle \boldsymbol{x}, \boldsymbol{e} \rangle \leq a) \) satisfies that
\begin{equation}\label{sphereprob}
\mathbb{P}(\langle \boldsymbol{x}, \boldsymbol{e} \rangle \leq a) = \frac{\operatorname{Vol}\left(\{\boldsymbol{x} \in S^k : \langle \boldsymbol{x}, \boldsymbol{e} \rangle \leq a\}\right)}{\operatorname{Vol}(S^k)} = \frac{\displaystyle\int_{-1}^a (1 - t^2)^{\frac{k-2}{2}} dt}{\displaystyle\int_{-1}^1 (1 - t^2)^{\frac{k-2}{2}} dt}.
\end{equation}
Hence, the definition of the constant $c:= c_{k,p}$ in \eqref{equ:c_pk} can be translated into the following formula
\[
\int^{-\frac{c}{\sqrt k}}_{-1} (1-t^2)^{\frac{k-2}{2}} dt = p \int_{-1}^1 (1-t^2)^{\frac{k-2}{2}} dt.
\]
Substituting $t = \sin\theta$ and applying Wallis' formula, this becomes
\begin{equation}\label{equ:spherical-cap-prob}
\int_{\arcsin(\frac{c}{\sqrt k})}^{\frac{\pi}{2}} \cos^{k-1}\theta d\theta= p\int_{-\frac{\pi}{2}}^{\frac{\pi}{2}} \cos^{k-1}\theta d\theta = p \sqrt{\frac{2\pi}{k}}\left(1+O\left(\frac{1}{k}\right)\right).
\end{equation}
Using the following approximation which follows by the Taylor expansion of cosine,
\[
\left|\cos^{k-1}\theta - e^{-\frac{(k-1)\theta^2}{2}}\right| = O\left(\frac{1}{k}\right) \quad \text{for } \theta \in \left(0, \arcsin(\frac{c}{\sqrt k})\right),
\]
we can replace the cosine integral with a Gaussian integral and derive
\[
\int_0^{\arcsin(\frac{c}{\sqrt k})} e^{-\frac{(k-1)\theta^2}{2}} d\theta = \int_0^{\arcsin(\frac{c}{\sqrt k})}\cos^{k-1}\theta d\theta + O\left(\frac{1}{k\sqrt{k}}\right) = \left(\frac{1}{2}-p\right)\sqrt{\frac{2\pi}{k}}\left(1+O\left(\frac{1}{k}\right)\right),
\]
where the last equality uses \eqref{equ:spherical-cap-prob}.
Substituting \(x=\sqrt{k-1}\cdot \theta\), we have
\[
\frac{1}{\sqrt{2\pi}}\int_0^{\sqrt{k-1}\cdot\arcsin(\frac{c}{\sqrt k})}e^{-\frac{x^2}{2}}dx = \left(\frac{1}{2}-p\right)\sqrt{\frac{k-1}{k}}\left(1+O\left(\frac{1}{k}\right)\right)=\frac{1}{2}-p+O\left(\frac{1}{k}\right).
\]

This shows that 
\(\Phi\left(\sqrt{k-1}\cdot\arcsin(\frac{c}{\sqrt k})\right)=1-p+O\left(\frac{1}{k}\right)\), implying that
\[
\Phi^{-1}(1-p) = \sqrt{k-1}\arcsin\left(\frac{c}{\sqrt{k}}\right) + O\left(\frac{1}{k}\right) = c + O\left(\frac{1}{k}\right),
\]
where the last equality follows from the Taylor expansion of the function \( \arcsin \).  
\end{proof}

The following lemma gives a concentration inequality for the probability that random points sampled uniformly from the sphere \(S^k\) are nearly mutually orthogonal.
In particular, it serves as a key tool for the analysis of perfect and non-perfect sequences (to be defined in Section~\ref{Perfect sequences}). 

\begin{lemma}\label{Volumenonperfect}
Fix a constant \(C > 1\), and define \(\alpha_C := \max\left\{1000,\, 20\sqrt{C \log(10/p_C)}\right\}\).
Let \(\ell_0 \gg D \gg C\) be as specified in Theorem~\ref{thm2}. 
Suppose that \(\ell \geq \ell_0\) and \(k = D^2 \ell^2\).  
Let \(X_r \subset \mathbb{R}^{k+1}\) be an \(r\)-dimensional subspace, where \(1 \leq r \leq C\ell\).  
Then for a random vector \(\boldsymbol{y}\) sampled uniformly from the unit sphere \(S^k\), the orthogonal projection \(\pi_{[r]}: \mathbb{R}^{k+1} \to X_r\) satisfies the following tail bound:
\[
\mathbb{P}\left(|\pi_{[r]}(\boldsymbol{y})| > \frac{\alpha_C \sqrt{\ell}}{2 \sqrt{k}}\right) \leq \left(\frac{p_C}{10}\right)^{C \ell}.
\]
\end{lemma}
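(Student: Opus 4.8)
The plan is to reduce the statement to a standard concentration fact about the squared norm of the projection of a uniform point on $S^k$ onto a fixed $r$-dimensional subspace. First I would recall the well-known distributional identity: if $\boldsymbol{y}$ is uniform on $S^k \subset \mathbb{R}^{k+1}$ and $X_r$ is any fixed $r$-dimensional subspace, then $|\pi_{[r]}(\boldsymbol{y})|^2$ has the same law as $\frac{\|\boldsymbol{g}_{[r]}\|^2}{\|\boldsymbol{g}\|^2}$, where $\boldsymbol{g} = (g_1,\dots,g_{k+1})$ is a standard Gaussian vector in $\mathbb{R}^{k+1}$ and $\boldsymbol{g}_{[r]}$ is its projection onto $X_r$ (equivalently, the first $r$ coordinates after an orthonormal change of basis). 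Thus $|\pi_{[r]}(\boldsymbol{y})|^2 \sim \mathrm{Beta}(r/2, (k+1-r)/2)$, with mean $\frac{r}{k+1}$. The event in question is $|\pi_{[r]}(\boldsymbol{y})|^2 > \frac{\alpha_C^2 \ell}{4k}$; since $r \le C\ell$ and $k = D^2\ell^2$ with $D \gg C$, the threshold $\frac{\alpha_C^2 \ell}{4k}$ dwarfs the mean $\frac{r}{k+1} \le \frac{C\ell}{k}$ by a factor of order $\alpha_C^2/C \gg 1$, so this is a genuine upper-tail large-deviation event.

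Next I would carry out the tail estimate explicitly. Writing $A = \|\boldsymbol{g}_{[r]}\|^2 \sim \chi^2_r$ and $B = \|\boldsymbol{g}\|^2 - \|\boldsymbol{g}_{[r]}\|^2 \sim \chi^2_{k+1-r}$ independent, the event $\frac{A}{A+B} > t$ with $t = \frac{\alpha_C^2\ell}{4k}$ is contained in $\{A > t(k+1)/2\} \cup \{B < (k+1)/2\}$ (choosing the split point so both pieces are comfortably in the tail, using $t \ll 1$ and $k+1-r \ge k/2$). For the first piece, a Chernoff/Laplace-transform bound for $\chi^2_r$ gives $\mathbb{P}(A > s) \le e^{-(s - r - r\log(s/r))/2}$ for $s > r$; plugging $s = t(k+1)/2 \gtrsim \alpha_C^2 \ell /8$ and $r \le C\ell$, the exponent is at least of order $\alpha_C^2 \ell/16$ (the linear term alone beats everything), which is far more than $C\ell \log(10/p_C)$ once $\alpha_C \ge 20\sqrt{C\log(10/p_C)}$. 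For the second piece, $\mathbb{P}(\chi^2_m < m/2) \le e^{-c m}$ with $m = k+1-r \ge k/2 = D^2\ell^2/2$, which is superexponentially small in $\ell$ and hence also bounded by $(p_C/10)^{C\ell}/2$ for $\ell \ge \ell_0 \gg D$. Summing the two pieces gives the claimed bound $(p_C/10)^{C\ell}$; the constant $\alpha_C \ge 1000$ provides slack to absorb the various factors of $2$ and lower-order terms without tracking them carefully.

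The main obstacle is purely bookkeeping: making sure the constant $\alpha_C = \max\{1000,\, 20\sqrt{C\log(10/p_C)}\}$ is actually large enough so that the exponent $\tfrac12\bigl(s - r - r\log(s/r)\bigr)$ in the $\chi^2_r$ Chernoff bound exceeds $C\ell\log(10/p_C)$ uniformly over $1 \le r \le C\ell$ (the worst case being $r$ close to $C\ell$, where the $-r\log(s/r)$ correction is least favorable, but since $s/r \gtrsim \alpha_C^2/(4CD^2)\cdot\ell$ — wait, more carefully $s/r \gtrsim \alpha_C^2\ell/(4k) \cdot k/(C\ell) = \alpha_C^2/(4C)$ is a constant, so $s - r - r\log(s/r) \ge s(1 - o(1))$ still holds when $\alpha_C^2/(4C)$ is large). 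The other mild technical point is justifying the Beta-distribution identity and the subgaussian $\chi^2$ tail bounds cleanly; both are completely standard (see e.g.\ Laurent--Massart), and I would cite them rather than reprove them. Everything else — the choice of the split point, the $\chi^2_{k+1-r}$ lower tail — is routine and robust given the generous constants.
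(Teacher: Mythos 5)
Your proof is correct but takes a genuinely different route from the paper's. Both start from the same distributional identity (the squared projection length is $\mathrm{Beta}(r/2,(k{+}1{-}r)/2)$), but from there the paper reduces without loss of generality to $r=C\ell$, passes to the integral ratio $\int_{\theta_1}^{\pi/2}\sin^{C\ell-1}\theta\cos^{k-C\ell}\theta\,d\theta \big/ \int_0^{\pi/2}\sin^{C\ell-1}\theta\cos^{k-C\ell}\theta\,d\theta$, and then bounds numerator and denominator by elementary calculus (monotonicity of the integrand past its peak, explicit interval of length $\theta_2$ for the lower bound). You instead use the $\chi^2$ representation $|\pi_{[r]}(\boldsymbol{y})|^2 \overset{d}{=} A/(A+B)$ with $A\sim\chi^2_r$, $B\sim\chi^2_{k+1-r}$, split the event into $\{A>t(k{+}1)/2\}\cup\{B<(k{+}1)/2\}$, and invoke standard $\chi^2$ Laplace-transform tail bounds. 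Your split is valid (if $B\ge(k{+}1)/2$ and $A/(A{+}B)>t$ then $A>t(A{+}B)\ge t(k{+}1)/2$), the first tail gives an exponent $\tfrac12(s-r-r\log(s/r))\ge 0.4\,s \gtrsim \alpha_C^2\ell/20$ since $r/s \le 8C/\alpha_C^2 \le 1/100$, and the second tail is $e^{-\Theta(k)}=e^{-\Theta(D^2\ell^2)}$, superexponentially small; the constant $\alpha_C\ge 20\sqrt{C\log(10/p_C)}$ leaves ample slack, as you note. The trade-off: your argument is shorter, handles all $r$ uniformly without the reduction step, and is robust against bookkeeping errors, but relies on citing external concentration inequalities (Laurent--Massart), whereas the paper's calculus-based bound is fully self-contained within elementary analysis. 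One small imprecision worth tightening in a final write-up: you describe the main exponent as $s(1-o(1))$, but since $r/s$ is bounded away from zero only by a constant (not tending to zero), this is really $s(1-\epsilon)$ for a fixed small $\epsilon$ determined by $8C/\alpha_C^2$, which is what the explicit constant $\alpha_C$ is designed to control.
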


\begin{proof}
Let \(X' \subseteq \mathbb{R}^{k+1}\) be a \(C\ell\)-dimensional linear subspace such that \(X_r \subseteq X'\). Then,
\[
\mathbb{P}\left(\left|\pi_{[r]}(\boldsymbol{y})\right| > \frac{\alpha_C \sqrt{\ell}}{2\sqrt{k}}\right) \leq \mathbb{P}\left(\left|\pi_{X'}(\boldsymbol{y})\right| > \frac{\alpha_C \sqrt{\ell}}{2\sqrt{k}}\right).
\]
Hence, it suffices to consider the case when \(r = C\ell\). 
From now on, we assume $X_r=X'$.

Let $\By$ be a random vector sampled uniformly from \(S^k\).
Define \(u = |\pi_{X'}(\By)|^2\). 
Then, \(u \in [0,1]\) follows a Beta distribution with parameters \(\left(\frac{C\ell}{2}, \frac{k - C\ell + 1}{2}\right)\); see Appendix~\ref{app:beta} for a complete proof. 
That is, the PDF of \(u\) is  
\[
f(u) = \frac{u^{\frac{C\ell}{2} - 1} (1 - u)^{\frac{k - C\ell - 1}{2}}}{B\left(\frac{C\ell}{2}, \frac{k - C\ell + 1}{2}\right)}, \mbox{ where } B(x,y)=\int_{0}^1 t^{x-1}(1-t)^{y-1}dt \mbox{~denotes the beta function}.
\]
Let \(\theta_1:= \arcsin\left(\frac{\alpha_C \sqrt{\ell}}{2\sqrt{k}}\right)\). A straightforward calculation then yields that
\begin{equation}\label{integral111}
\begin{aligned}
\mathbb{P}\left(\left|\pi_{X'}(\boldsymbol{y})\right| > \frac{\alpha_C \sqrt{\ell}}{2\sqrt{k}}\right) 
&= \frac{\displaystyle\int_{\frac{\alpha_C^2 \ell}{4k}}^1 u^{\frac{C\ell}{2} - 1} (1 - u)^{\frac{k - C\ell - 1}{2}} \, du}{\displaystyle\int_0^1 u^{\frac{C\ell}{2} - 1} (1 - u)^{\frac{k - C\ell - 1}{2}} \, du} = \frac{\displaystyle\int_{\theta_1}^{\frac{\pi}{2}} \sin^{C\ell - 1}\theta \, \cos^{k - C\ell}\theta \, d\theta}{\displaystyle\int_0^{\frac{\pi}{2}} \sin^{C\ell - 1}\theta \, \cos^{k - C\ell}\theta \, d\theta},
\end{aligned}
\end{equation}
where the second equality follows from the substitution \(u = \sin^2\theta\).
We begin by bounding the numerator of \eqref{integral111} from above. 
Using the elementary inequalities that $2x\geq \arcsin x \geq x \geq \arctan x$ for all $0\leq x\ll 1$, together with the conditions that $k=D^2\ell^2\gg C\ell$ and $\alpha_C \geq 4\sqrt{C}$, we obtain
\begin{equation}\label{equ:theta_1}
\frac{\alpha_C \sqrt{\ell}}{\sqrt{k}} \geq \theta_1=\arcsin\left(\frac{\alpha_C\sqrt{\ell}}{2\sqrt{k}}\right)\geq \frac{\alpha_C\sqrt{\ell}}{2\sqrt{k}}\geq \sqrt{\frac{C\ell-1}{k-C\ell}}\geq \arctan\left(\sqrt{\frac{C\ell-1}{k-C\ell}}\right).
\end{equation}
This implies that for every $\theta_1 \leq \theta \leq \frac{\pi}{2}$, we have
\[
\frac{d}{d\theta}\left(\sin^{C\ell-1}\theta \cos^{k-C\ell}\theta\right)=\left((C\ell-1)-(k-C\ell)\tan^2\theta \right)\left(\sin^{C\ell-2}\theta \cos^{k-C\ell+1}\theta\right)\leq 0,
\]
and hence, \(\sin^{C\ell-1}\theta \cos^{k-C\ell}\theta\leq \sin^{C\ell-1}\theta_1 \cos^{k-C\ell}\theta_1\). 
Therefore, the numerator of \eqref{integral111} satisfies
\begin{equation}\label{eq:upper_bound}
\int_{\theta_1}^{\frac{\pi}{2}} \sin^{C\ell-1}\theta \cos^{k-C\ell}\theta \,d\theta \leq \frac{\pi}{2} \sin^{C\ell-1}\theta_1 \cos^{k-C\ell}\theta_1 \leq \frac{\pi}{2}\cdot \theta_1^{C\ell-1}\cdot \left(1 - \frac{\theta_1^2}{3}\right)^{k-C\ell},
\end{equation}
where the last inequality uses the facts that $\sin\theta_1 \leq \theta_1$ and $\cos\theta_1 \leq 1 - \frac{\theta_1^2}{3}$.

Next we bound the denominator of \eqref{integral111} from below. 
Since $k=D^2\ell^2\gg C\ell$, similarly as before,
\begin{equation}\label{equ:theta_2}
    1\gg \frac{2\sqrt{C\ell}}{\sqrt{k}}\geq \theta_2:= \arctan\left(\sqrt{\frac{C\ell-1}{k-C\ell}}\right)\geq \frac{2}{3}\sqrt{\frac{C\ell-1}{k-C\ell}}\geq \frac{1}{2}\sqrt{\frac{\ell}{k}}.
\end{equation}
For every $\theta_2 \leq \theta \leq 2\theta_2<\pi/2$, 
we have $\tan^2\theta\geq \tan^2\theta_2=\frac{C\ell-1}{k-C\ell}$ and thus
\[
\frac{d}{d\theta}\left(\sin^{C\ell-1}\theta \cos^{k-C\ell}\theta\right)
=\left((C\ell-1)-(k-C\ell)\tan^2\theta \right)\left(\sin^{C\ell-2}\theta \cos^{k-C\ell+1}\theta\right)\leq 0,
\]
implying that $\sin^{C\ell-1}\theta \cos^{k-C\ell}\theta\geq \sin^{C\ell-1}(2\theta_2) \cos^{k-C\ell}(2\theta_2)$. 
Using this, we can derive
\begin{equation}\label{eq:lower_bound}
\begin{aligned}
\int_0^{\pi/2} \sin^{C\ell-1}\theta \cos^{k-C\ell}\theta \,d\theta 
&\geq \int_{\theta_2}^{2\theta_2} \sin^{C\ell-1}\theta \cos^{k-C\ell}\theta \,d\theta \\
&\geq \theta_2 \sin^{C\ell-1}(2\theta_2) \cos^{k-C\ell}(2\theta_2) 
\geq \frac{1}{2}\sqrt{\frac{\ell}{k}}\cdot \theta_2^{C\ell-1}\left(1 - 2\theta_2^2\right)^{k-C\ell},
\end{aligned}
\end{equation}
where the last inequality uses \eqref{equ:theta_2} and the estimates
$\sin(2\theta_2) \geq \theta_2$ and $\cos(2\theta_2) \geq 1 - 2\theta_2^2$.

Substituting the bounds from \eqref{eq:upper_bound} and \eqref{eq:lower_bound} into \eqref{integral111}, we have
\begin{equation}\label{equ:pi_X'}
\begin{aligned}
\mathbb{P}\left(\left|\pi_{X'}(\boldsymbol{y})\right| > \frac{\alpha_C \sqrt{\ell}}{2\sqrt{k}}\right) 
&\leq \pi\sqrt{\frac{k}{\ell}}\cdot \frac{\theta_1^{C\ell-1}\left(1 - \frac{1}{3}\theta_1^2\right)^{k - C\ell}}{\theta_2^{C\ell-1}\left(1 - 2\theta_2^2\right)^{k - C\ell}}
\leq  \pi D\ell\cdot \left(\frac{\theta_1}{\theta_2}\right)^{C\ell} \exp\left(3k\theta_2^2 - k\theta_1^2/6\right) \\
&\leq \pi D\ell\cdot \exp\left(\ell\cdot \big(C\log(2 \alpha_C)+12C - \alpha_C^2/24\big)\right), 
\end{aligned}
\end{equation}
where the second inequality holds since \(\left(1 - \frac{1}{3}\theta_1^2\right)^{k-C\ell} \leq \left(\exp\left(-\frac{1}{3}\theta_1^2 \right)\right)^{k/2}=\exp(-\frac16k\theta_1^2)\) and \(\left(1 - 2\theta_2^2\right)^{k-C\ell} \geq \left(\exp(-3\theta_2^2)\right)^k=\exp(-3k\theta_2^2)\),
and the last inequality follows from \eqref{equ:theta_1} and \eqref{equ:theta_2}.
Recall that \( \alpha_C = \max\{1000, 20\sqrt{C \log(10/p_C)}\} \). 
We claim that
\begin{equation}\label{equ:-1/150}
    C\log(2 \alpha_C)+12C - \frac{\alpha_C^2}{24}\leq -\frac{\alpha_C^2}{60}.
\end{equation}
To see this, note that $p_C^2\cdot C\leq \frac{p_C^2\cdot C}{(1-p_C)^2}=\frac{p_C^2 \log p_C}{(1-p_C)^2 \log (1-p_C)}<1$ (because of \eqref{equ:p^2logp}), which implies that $\log C\leq 2\log (1/p_C)$.
Combining with $\alpha_C\geq 20\sqrt{C \log(10/p_C)}$, this implies that
\[
\frac{C\log(2\alpha_C)}{\alpha_C^2}\leq \frac{\log 40+\frac12\log C+\frac12\log\log(10/p_C)}{400\log(10/p_C)}\leq \frac{1}{200}+\frac{1}{400}+\frac{1}{800}<\frac{1}{100},
\]
where the second inequality follows by the facts that $\log 40<4$, $\log(10/p_C)>\log 10>2$ and $\log\log(10/p_C)<\log(10/p_C)$.
This, together with $\alpha_C^2\geq 800C$, would imply \eqref{equ:-1/150}.

Finally, substituting the bound \eqref{equ:-1/150} into \eqref{equ:pi_X'}, 
we derive 
\[
\mathbb{P}\left(\left|\pi_{X'}(\boldsymbol{y})\right| > \frac{\alpha_C \sqrt{\ell}}{2\sqrt{k}}\right) \leq \pi D\ell \cdot \exp\left(-\frac{\alpha_C^2\ell}{60}\right)\leq \exp\left(-\frac{\alpha_C^2\ell}{100}\right)\leq \left(\frac{p_C}{10}\right)^{C\ell},
\]
where the last inequality holds since $\alpha^2_C\geq 100C\log(10/p_C)$,
completing the proof.
\end{proof}

The next lemma is crucial in the proof of Theorem~\ref{ratiored}, particularly for obtaining estimates on the probability ratios \(Q_{[r]}(\By)\) and \(\overline{Q}_{[r]}(\By)\).

\begin{lemma}\label{limrk3}
Fix a constant $C>1$.
Let \(\ell_0 \gg D \gg C\) be as specified in Theorem~\ref{thm2}. 
Suppose that \(\ell \geq \ell_0\) and \(k = D^2 \ell^2\).  
Fix a constant $A_0 > 0$. Then for any \(A \in (-A_0, A_0)\), define
\[
H := -\frac{c}{\sqrt{k}} - \frac{A}{D\sqrt{k}},
\]
where $c:=c_{k,p}$ is given by \eqref{equ:c_pk}.
Let $1\leq r\leq C\ell$.
For a random vector $\By$ sampled uniformly from $S^{k-r} \subseteq \mathbb{R}^{k-r+1}$ and any fixed unit vector $\boldsymbol{e} \in \mathbb{R}^{k-r+1}$, we have
\[
\mathbb{P}\left(\langle \boldsymbol{y}, \boldsymbol{e} \rangle \leq H\right) = p - \frac{A}{\sqrt{2\pi}D}e^{-c^2/2} + O\left(\frac{1}{D^2}\right),
\]
where the convergence is uniform over all $A \in (-A_0, A_0)$, and the implied constant in $O(\frac{1}{D^2})$ depends only on $A_0$.
\end{lemma}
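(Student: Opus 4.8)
\textbf{Proof proposal for Lemma~\ref{limrk3}.}

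The plan is to reduce the spherical-cap probability on $S^{k-r}$ to a Gaussian integral, exactly as in the proof of Lemma~\ref{prop:convergence_of_cpk}, and then perform a first-order Taylor expansion of that integral about the threshold $-c/\sqrt{k}$ (not about $-c/\sqrt{k-r+1}$), keeping track of the perturbation $-\tfrac{A}{D\sqrt{k}}$. First I would invoke the exact formula \eqref{sphereprob}, applied with ambient dimension $k-r+1$ in place of $k+1$, to write
\[
\mathbb{P}\left(\langle \boldsymbol{y}, \boldsymbol{e} \rangle \leq H\right) = \frac{\displaystyle\int_{-1}^{H} (1-t^2)^{\frac{k-r-2}{2}}\,dt}{\displaystyle\int_{-1}^{1} (1-t^2)^{\frac{k-r-2}{2}}\,dt}.
\]
Substituting $t = \sin\theta$ and then rescaling by $x = \sqrt{k-r-1}\cdot\theta$ turns the numerator into $\tfrac{1}{\sqrt{2\pi}}\int_{-\infty}^{\sqrt{k-r-1}\cdot\arcsin(H)} e^{-x^2/2}\,dx$ up to a multiplicative $1+O(1/k)$ error and an additive $O(1/k)$ error, using the same Taylor bound $|\cos^{m}\theta - e^{-m\theta^2/2}| = O(1/m)$ on the relevant range of $\theta$. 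So the probability equals $\Phi\!\left(\sqrt{k-r-1}\cdot\arcsin(H)\right) + O(1/k)$.

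Next I would expand the argument of $\Phi$. Since $H = -\tfrac{c}{\sqrt{k}} - \tfrac{A}{D\sqrt{k}}$ and $k = D^2\ell^2$ while $r \le C\ell$, we have $|H| = O(1/\sqrt{k}) = O(1/(D\ell))$, so $\arcsin(H) = H + O(H^3) = H + O(1/(D^3\ell^3))$; and $\sqrt{k-r-1} = \sqrt{k}\cdot\sqrt{1 - (r+1)/k} = \sqrt{k}\,(1 + O(r/k)) = \sqrt{k} + O(r/\sqrt{k})$. Multiplying, $\sqrt{k-r-1}\cdot\arcsin(H) = \sqrt{k}\cdot H + O\!\left(\tfrac{r}{\sqrt{k}}\cdot\tfrac{1}{\sqrt{k}}\right) = -c - \tfrac{A}{D} + O\!\left(\tfrac{r}{k}\right) = -c - \tfrac{A}{D} + O\!\left(\tfrac{1}{D^2}\right)$, where in the last step I use $r \le C\ell$ and $k = D^2\ell^2$ to bound $r/k \le C/(D^2\ell) = O(1/D^2)$. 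Now a first-order Taylor expansion of $\Phi$ about $-c$ gives
\[
\Phi\!\left(-c - \tfrac{A}{D} + O(\tfrac{1}{D^2})\right) = \Phi(-c) - \phi(-c)\cdot\tfrac{A}{D} + O\!\left(\tfrac{1}{D^2}\right),
\]
since $\phi$ is bounded and Lipschitz so the second-order term is $O(1/D^2)$ and the $O(1/D^2)$ shift contributes $O(1/D^2)$ as well. Finally, $\Phi(-c) = 1 - \Phi(c)$; by the defining relation \eqref{equ:c_pk} together with Lemma~\ref{prop:convergence_of_cpk} we have $\Phi(c) = \Phi(c_{k,p}) = (1-p) + O(1/k)$, hence $\Phi(-c) = p + O(1/k)$. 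Combining, and using $\phi(-c) = \phi(c) = \tfrac{1}{\sqrt{2\pi}}e^{-c^2/2}$, we obtain
\[
\mathbb{P}\left(\langle \boldsymbol{y}, \boldsymbol{e} \rangle \leq H\right) = p - \frac{A}{\sqrt{2\pi}D}\,e^{-c^2/2} + O\!\left(\frac{1}{D^2}\right),
\]
absorbing all $O(1/k)$ terms into $O(1/D^2)$ since $k = D^2\ell^2 \gg D^2$.

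The main obstacle is bookkeeping rather than conceptual: one must be careful that the Gaussian approximation error is genuinely $O(1/k)$ uniformly over the range $1 \le r \le C\ell$ (so that the exponent $\tfrac{k-r-2}{2}$ is still $\Theta(k)$, which holds since $k = D^2\ell^2 \gg C\ell \ge r$), and that the error from replacing $c_{k,p}$ by $c_{k-r,p}$ — which is what the sphere $S^{k-r}$ would ``naturally'' produce — is correctly accounted for. In fact I deliberately avoid ever introducing $c_{k-r,p}$; I expand everything in terms of the fixed constant $c = c_{k,p}$ and the explicit threshold $H$, so the only place dimension enters nontrivially is through $\sqrt{k-r-1}$ versus $\sqrt{k}$, whose discrepancy is exactly the $O(r/k) = O(1/D^2)$ term. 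One should also double-check that $c_{k,p}$ is bounded uniformly in $k$ (guaranteed by Lemma~\ref{prop:convergence_of_cpk}) so that all appearances of $\phi(c)$, $e^{-c^2/2}$ and the Lipschitz constants are $O(1)$.
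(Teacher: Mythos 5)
Your proof is correct and takes essentially the same route as the paper: spherical-cap formula $\to$ Gaussian integral $\to$ first-order Taylor expansion of $\Phi$, with $\Phi(-c)=p+O(1/k)$ and $\phi(-c)=\tfrac{1}{\sqrt{2\pi}}e^{-c^2/2}$ supplying the leading and first-order terms (the paper equivalently rewrites $-c$ as $\Phi^{-1}(p)+O(1/k)$ before expanding, which is the same thing). The one technical difference is how the Gaussian-approximation error is controlled: you appeal to the pointwise bound $|\cos^{m}\theta-e^{-m\theta^2/2}|=O(1/m)$ ``on the relevant range of $\theta$,'' whereas the paper uses Wallis' formula together with the pointwise monotonicity $e^{-mt^2/2}\ge(1-t^2)^{m/2}$ on $[-1,1]$ to sandwich the partial-integral difference by the total-integral difference $O(1/(k\sqrt{k}))$, which cleanly avoids any range-splitting; both fall comfortably within the $O(1/D^2)$ error budget.
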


\begin{proof}
Let us begin by recalling two well-known estimates: first, Wallis' formula
\[
\int_0^1 (1 - t^2)^{\frac{k - r - 2}{2}} \, dt = \int_0^{\frac{\pi}{2}} \cos^{k - r - 1} x \, dx = \frac{\sqrt{\pi}}{\sqrt{2(k - r - 2)}} + O\left(\frac{1}{k \sqrt{k}}\right);
\]
second, the Gaussian integral
\(\int_{-\infty}^{\infty} e^{-\frac{k - r - 2}{2} t^2} \, dt = \frac{\sqrt{2\pi}}{\sqrt{k - r - 2}}\).
Using these, along with the fact that $e^{-\frac{k-r-2}{2} t^2} - \left(1 - t^2\right)^{\frac{k-r-2}{2}} \geq 0$, we derive that for all $t \in [-1,1]$, 
\begin{align*}
0\leq&\int_{-\infty}^{H} e^{-\frac{k-r-2}{2} t^2} dt - \int_{-1}^H \left(1 - t^2\right)^{\frac{k-r-2}{2}} dt\\
\leq&\int_{-\infty}^{\infty} e^{-\frac{k-r-2}{2} t^2} dt - \int_{-1}^1 \left(1 - t^2\right)^{\frac{k-r-2}{2}} dt = O\left(\frac{1}{k\sqrt k}\right).
\end{align*}
This, together with \eqref{sphereprob}, implies that
\begin{equation}\label{equ:xeleqH}
\begin{aligned}
\mathbb{P}\left(\langle \boldsymbol{y}, \boldsymbol{e} \rangle \leq H\right) 
&= \frac{\displaystyle\int_{-1}^{H} \left(1 - t^2\right)^{\frac{k-r-2}{2}} dt}{\displaystyle\int_{-1}^1 \left(1 - t^2\right)^{\frac{k-r-2}{2}} dt}= \frac{\displaystyle\int_{-\infty}^{H} e^{-\frac{k-r-2}{2} t^2} dt + O\left(\frac{1}{k\sqrt{k}}\right)}{\displaystyle\int_{-\infty}^{\infty} e^{-\frac{k-r-2}{2} t^2} dt + O\left(\frac{1}{k\sqrt{k}}\right)} \\
&= \frac{\displaystyle\int_{-\infty}^{H} e^{-\frac{k-r-2}{2} t^2} dt}{\displaystyle\int_{-\infty}^{\infty} e^{-\frac{k-r-2}{2} t^2} dt} + O\left(\frac{1}{k}\right)= \int_{-\infty}^{-(c + \frac{A}{D})\frac{\sqrt{k-r-2}}{\sqrt{k}}} \frac{1}{\sqrt{2\pi}} e^{-\frac{t^2}{2}} dt + O\left(\frac{1}{k}\right)\\
&=\Phi\left(-\left(c + \frac{A}{D}\right)\frac{\sqrt{k-r-2}}{\sqrt{k}}\right) + O\left(\frac{1}{k}\right), 
\end{aligned}
\end{equation}
where \(\Phi\) denotes the CDF of the standard normal distribution.
By Lemma~\ref{prop:convergence_of_cpk}, we have 
\begin{align*}
-\left(c + \frac{A}{D}\right)\sqrt{\frac{k-r-2}{k}} &= -\left(c+\frac{A}{D}\right)\left(1 - \frac{r+2}{2k} + O\left(\frac{r^2}{k^2}\right)\right)= -c - \frac{A}{D} + O\left(\frac{r}{k}\right) \\
&= -\Phi^{-1}(1-p) - \frac{A}{D} + O\left(\frac{\ell}{k}\right) = \Phi^{-1}(p) - \frac{A}{D} + O\left(\frac{1}{D\sqrt{k}}\right).
\end{align*}
Combining this with \eqref{equ:xeleqH} and using the Taylor expansion of $\Phi$, we obtain
\[
\begin{aligned}
\mathbb{P}\left(\langle \boldsymbol{y}, \boldsymbol{e} \rangle \leq H\right) &= \Phi\left(\Phi^{-1}(p) - \frac{A}{D} + O\left(\frac{1}{D\sqrt{k}}\right)\right) + O\left(\frac{1}{k}\right) \\
&= p - \frac{A}{D} \phi(\Phi^{-1}(p)) + O\left(\frac{1}{D^2}\right) = p - \frac{A}{\sqrt{2\pi} D} e^{-\frac{c^2}{2}} + O\left(\frac{1}{D^2}\right),
\end{aligned}
\]
where the last equality follows from Lemma~\ref{prop:convergence_of_cpk} that $\Phi^{-1}(p)=-c+O(\frac{1}{k})$, completing the proof.
\end{proof}

We conclude with a lemma on technical properties of the standard norm distribution, used to estimate expectations in Lemma~\ref{lem:coefficient_expectations}.

\begin{lemma}\label{lem:normal_conditional_expectation}
Let \(X \sim \mathcal{N}(0,1)\) denote a standard normal random variable.
Then the conditional expectation \(\mu(t) := \mathbb{E}[X \mid X \geq t]\) satisfies
\[
\mu(t) = \frac{\phi(t)}{1 - \Phi(t)}=\frac{\displaystyle e^{-\frac{t^2}{2}}}{\displaystyle\int_{t}^{\infty}e^{-\frac{s^2}{2}}ds}, 
\quad \mbox{and moreover,} \quad 
\left|\mu'(t)\right| \leq 100 \quad \text{for all } t \in \mathbb{R}.
\]
\end{lemma}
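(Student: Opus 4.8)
The claimed identity $\mu(t)=\phi(t)/(1-\Phi(t))$ is just the standard formula for the mean residual of a Gaussian: I would write $\mathbb{E}[X\mid X\ge t]=\frac{1}{1-\Phi(t)}\int_t^\infty x\phi(x)\,dx$ and observe that $x\phi(x)=-\phi'(x)$, so the integral evaluates to $\phi(t)$; expanding $\phi$ and $1-\Phi$ gives the second displayed form. The content of the lemma is the uniform bound $|\mu'(t)|\le 100$, and that is what the rest of the proof would address.

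\medskip

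\noindent\textbf{Bounding $\mu'$.} Differentiating $\mu(t)=\phi(t)/(1-\Phi(t))$ and using $\phi'(t)=-t\phi(t)$ and $(1-\Phi(t))'=-\phi(t)$, I get
\[
\mu'(t)=\frac{-t\phi(t)(1-\Phi(t))+\phi(t)^2}{(1-\Phi(t))^2}=\phi(t)\cdot\frac{\phi(t)-t(1-\Phi(t))}{(1-\Phi(t))^2}=\mu(t)\bigl(\mu(t)-t\bigr).
\]
So it suffices to control $\mu(t)$ and $\mu(t)-t$. The standard Gaussian Mills-ratio inequalities give, for $t\ge 0$,
\[
\frac{t}{t^2+1}\le \frac{1-\Phi(t)}{\phi(t)}\le \frac{1}{t},
\]
equivalently $t\le \mu(t)\le t+\tfrac1t$ for $t>0$; hence $0<\mu(t)-t\le \min\{1/t,\,\mu(t)\}$ and therefore $0<\mu'(t)=\mu(t)(\mu(t)-t)\le \mu(t)/t\le 1+1/t^2$, which is at most, say, $2$ once $t\ge 1$. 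For $0\le t\le 1$ one bounds $\mu(t)\le \mu(0)+1=\sqrt{2/\pi}+1<2$ (using that $\mu$ is increasing and $\mu(0)=\sqrt{2/\pi}$) and $\mu(t)-t\le\sqrt{2/\pi}<1$, so $\mu'(t)\le 2$ there too. For $t<0$ one uses the decomposition $\mu(t)=\mathbb{E}[X\mid X\ge t]$; since $\mathbb{P}(X\ge t)\ge 1/2$ we get $0<\mu(t)\le \mathbb{E}[X\,\mathbf 1_{X\ge t}]/\mathbb{P}(X\ge t)\le 2\,\mathbb{E}[X\mathbf 1_{X\ge 0}]=2\phi(0)=\sqrt{2/\pi}$, and $\mu(t)-t\le \mu(0)+|t|$; a short case split (say $|t|\le 10$ versus $|t|>10$, handling the tail $t\to-\infty$ where $\mu(t)\to 0$ like $\phi(t)$ and $\mu(t)-t$ stays bounded by a constant multiple of $|t|$ while $\mu(t)$ decays super-polynomially) shows $|\mu'(t)|\le 100$ comfortably.

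\medskip

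\noindent\textbf{Main obstacle.} There is no deep obstacle; the only mildly delicate region is $t\to -\infty$, where naively $\mu(t)-t\to+\infty$ but $\mu(t)\to 0^+$ exponentially fast, so their product $\mu'(t)$ still tends to $0$. The cleanest way to make this rigorous is to note $\mu(t)\le \phi(t)/\mathbb{P}(X\ge t)\le 2\phi(t)$ for $t\le 0$, so $|\mu'(t)|=\mu(t)|\mu(t)-t|\le 2\phi(t)(2\phi(t)+|t|)$, and the right-hand side is a continuous function of $t$ on $(-\infty,0]$ tending to $0$ at $-\infty$, hence bounded; a crude numerical bound on its maximum (attained near $t=-1$ or so) is well under $100$. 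Combining the three regimes ($t<0$, $0\le t\le 1$, $t\ge 1$) yields $|\mu'(t)|\le 100$ for all $t\in\mathbb{R}$, with enormous room to spare. Throughout, the only external inputs are the Gaussian Mills-ratio bounds and the elementary identity $x\phi(x)=-\phi'(x)$, both entirely standard.
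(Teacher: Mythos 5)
Your proof follows essentially the same route as the paper: obtain the formula $\mu(t)=\phi(t)/(1-\Phi(t))$, differentiate to get $\mu'(t)=\mu(t)\bigl(\mu(t)-t\bigr)$, then use Mills-ratio bounds $t\le\mu(t)\le t+1/t$ for $t\ge 1$ and a bounded-denominator argument for $t\le 1$. The only substantive difference is in the small-$t$ regime: the paper simply notes $1-\Phi(t)\ge 1-\Phi(1)\ge e^{-2}/\sqrt{2\pi}$ for all $t\le 1$, then bounds the two terms of $\mu'(t)$ by the global suprema of $|t|e^{-t^2/2}$ and $e^{-t^2}$, obtaining $e^2+e^4<100$ in one stroke; you split further into $[0,1]$ and $(-\infty,0)$. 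Your $[0,1]$ step contains unjustified (and mildly circular) claims — you invoke monotonicity of $\mu$, which is not established, and the bound $\mu(t)-t\le\sqrt{2/\pi}$ amounts to assuming $\mu'\le 1$, the very thing being proved. These are easily patched (e.g.\ $\mu(t)\le\phi(0)/(1-\Phi(1))$ for $t\in[0,1]$, then $\mu(t)-t\le\mu(t)$ suffices, giving $\mu'\le\mu^2\lesssim 7$), and your negative-$t$ argument via $\mu(t)\le 2\phi(t)$ is sound, so the overall proof is correct with generous slack; the paper's two-regime treatment just avoids these loose ends more cleanly.
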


\begin{proof}
The formula for \(\mu(t)\) follows from a straightforward calculation:
\[\mu(t)=\frac{\displaystyle\frac{1}{\sqrt{2\pi}}\int_{t}^{\infty}se^{-\frac{s^2}{2}}ds}{\displaystyle \frac{1}{\sqrt{2\pi}}\int_{t}^{\infty}e^{-\frac{s^2}{2}}ds}=\frac{\displaystyle e^{-\frac{t^2}{2}}}{\displaystyle \int_{t}^{\infty}e^{-\frac{s^2}{2}}ds}=\frac{\phi(t)}{1-\Phi(t)}.\] 
Differentiating yields that
$\mu'(t) = \frac{\phi'(t)(1 - \Phi(t)) + \phi(t)^2}{(1 - \Phi(t))^2} = \mu(t)^2 - t\mu(t).$
For \(t \geq 1\), we obtain
 \[t \leq \mu(t)=\frac{\displaystyle e^{-\frac{t^2}{2}}}{\displaystyle\int_{t}^{\infty}e^{-\frac{s^2}{2}}ds}\leq\frac{\displaystyle(1+1/t^2)e^{-\frac{t^2}{2}}}{\displaystyle\int_{t}^{\infty}(1+1/s^2)e^{-\frac{s^2}{2}}ds} =\frac{\displaystyle(1+1/t^2)e^{-\frac{t^2}{2}}}{\displaystyle e^{-\frac{t^2}{2}}/t}= \frac{t^2+1}{t},\]
implying \(0\leq \mu'(t)=\mu(t)^2 - t\mu(t)\leq 2\). 
For \(t \leq 1\), since $1-\Phi(t)\geq1-\Phi(1)\geq\frac{1}{\sqrt{2\pi}}e^{-2}$, 
we have 
\[
|\mu'(t)| \leq \left|\frac{\phi'(t)}{1 - \Phi(t)}\right|+\left|\frac{ \phi(t)^2}{(1 - \Phi(t))^2}\right|\leq e^2\sup_{t\in\mathbb{R}}|te^{-\frac{t^2}{2}}|+e^4\sup_{t\in\mathbb{R}}|e^{-{t^2}}|\leq e^2+e^4<100.
\]
Putting everything together, we conclude that \(\left|\mu'(t)\right| \leq 100\) for all \(t \in \mathbb{R}\).
\end{proof}

\section{Perfect Sequences}\label{Perfect sequences}
Before defining perfect sequences, we first set up the parameters used throughout the remainder of the paper.  
We fix a constant \( C > 1 \).  
Let \( p_C \in (0, 1/2) \) be as defined earlier, and set \( M_C=p_C^{-1/2}\). 
Let \(\alpha_C\) be as given in Lemma~\ref{Volumenonperfect}.  
Let \(\ell_0 \gg D \gg C\) be as specified in Theorem~\ref{thm2}.  
Throughout, we assume \( p \in \bigl(p_C, \tfrac{1}{2}\bigr) \), \(\ell \ge \ell_0\), and \( k = D^2 \ell^2 \).  
Let \( c := c_{k, p} \) be defined by \eqref{equ:c_pk}.  
Our focus is on the Ramsey number \( r(\ell, C\ell) \) in the setting of the random sphere graph \( G_{k,p}(n) \).

Let \(\Bx[r] = (\Bx_1, \ldots, \Bx_r)\) be a sequence of points on the sphere \(S^k\subseteq \mathbb{R}^{k+1}\).  
For each \(i \in [r]\), define  
\[
X_i = \operatorname{span}(\Bx_1, \ldots, \Bx_i).
\]
For each \(1\leq i \leq r-1\), denote by \(\delta_{i+1} \in [0, \frac{\pi}{2}]\) the angle between the vector \(\Bx_{i+1}\) and the \(i\)-dimensional subspace \(X_{i}\).
Recall the projection $\pi_{[i]}(\cdot): \mathbb{R}^{k+1}\to X_i$ from Definition~\ref{def:projection}.

\begin{definition}[\bf{Perfect Sequences}]\label{dfn:perfect-sequence}
For \(2 \leq r \leq C \ell\),
a sequence $\boldsymbol{x}{[r]}= (\Bx_1, \ldots, \Bx_r)\in (S^k)^r$ is called \emph{perfect}, if for every $1 \leq i \leq r-1$,
\[
|\pi_{[i]}(\boldsymbol{x}_{i+1})|=\cos\delta_{i+1} \leq \frac{\alpha_C\sqrt{\ell}}{\sqrt{k}}.
\]
By convention, any singleton \((\boldsymbol{x}_1) \in S^k\) is considered perfect.
\end{definition}

It can be easily verified that any subsequence of a perfect sequence is perfect; we refer to this property as \emph{monotonicity}. 
In view of Lemma~\ref{Volumenonperfect}, we expect that the probability of a random sequence failing to be perfect is extremely small.  
With this in mind, we now introduce modified notations tailored to the setting of perfect sequences.  
First, we define the ``perfect'' variants of neighborhoods and their corresponding probability measures.

\begin{definition}\label{perfectneighborhood}
For any perfect sequence $\Bx[r] = (\Bx_1,\ldots,\Bx_r)$, we define
\begin{itemize}
    \item The \emph{perfect red-neighborhood}
    \[
    N_{\mathrm{per}}(\Bx{[r]}) := \left\{\boldsymbol{y} \in N(\Bx[r]) : (\Bx[r],\By) \text{ is perfect}\right\},\footnote{Here and throughout, we write \( (\Bx[r], \By) = (\Bx_1, \ldots, \Bx_r, \By) \).}
    \]
    with corresponding probability measure 
    \[
    P_{\mathrm{per}}(\Bx[r]) := \mathbb{P}\left( N_{\mathrm{per}}(\Bx[r])\right).
    \]
    \item The \emph{perfect blue-neighborhood}
    \[
    \overline{N}_{\mathrm{per}}(\Bx[r]) := \left\{\boldsymbol{y} \in \overline{N}(\Bx{[r]}) : (\Bx[r],\By) \text{ is perfect}\right\},
    \]
    with corresponding probability measure 
    \[
    \overline{P}_{\mathrm{per}}(\Bx[r]) := \mathbb{P}\left( \overline{N}_{\mathrm{per}}(\Bx[r])\right).
    \]
\end{itemize}
\end{definition}

The following definition provides the ``perfect'' analogues of Definition~\ref{def:prod_redblue} and of the quantities \(\kappa_r\) and \(\overline{\kappa}_r\) (see, e.g., \eqref{equ:kappa-r}).  
These are the primary objects of study in the rest of the paper.

\begin{definition}\label{def:perfect_probs}
For \(1 \leq r \leq C \ell\), let \( \boldsymbol{x}[r] = (\boldsymbol{x}_1, \ldots, \boldsymbol{x}_r) \) be a random \( r \)-tuple in \( (S^k)^r \).
\begin{itemize}
    \item Let \( A_{\mathrm{red},r} \) denote the event that \( G_p(\boldsymbol{x}[r]) \) forms a red clique, and let \( \overline{A}_{\mathrm{blue},r} \) denote the event that \( G_p(\boldsymbol{x}[r]) \) forms a blue clique.

    \item Let \( B_r \) denote the event that the sequence \( \boldsymbol{x}[r] \) is perfect.\footnote{For notational convenience, we treat each of \( A_{\mathrm{red},1} \), \( \overline{A}_{\mathrm{blue},1} \), and \( B_1 \) as events that always occur.}

    \item Define \( P_{\mathrm{red},r}^{\mathrm{per}}:= \mathbb{P}(A_{\mathrm{red},r} \wedge B_r) \) ~and~ \( \overline{P}_{\mathrm{blue},r}^{\mathrm{per}}:= \mathbb{P}(\overline{A}_{\mathrm{blue},r} \wedge B_r)\).

    \item Define $\kappa_r^{\mathrm{per}}:=\mathbb{E}\left[P_{\mathrm{per}}(\boldsymbol{x}{[r]})\ |\ A_{\mathrm{red},r}\wedge B_r\right]$ ~and~ $\overline{\kappa}_r^{\mathrm{per}}:=\mathbb{E}\left[\overline{P}_{\mathrm{per}}(\boldsymbol{x}{[r]})\ |\ \overline{A}_{\mathrm{blue},r}\wedge B_r\right].$
\end{itemize}
\end{definition}

We conclude this section with two simple lemmas.  
The first lemma connects the quantities \( P_{\mathrm{red}, r}^{\mathrm{per}} \) and \( \kappa_r^{\mathrm{per}} \),  
and is analogous to the decomposition in \eqref{equ:decomp-of-kappas} presented in the proof sketch of Theorem~\ref{thm2}.  
For any event \( A \), let \(\boldsymbol{1}_A\) denote its indicator random variable.

\begin{lemma}\label{lem:prob_of_perfect}
For any \(1 \leq r \leq C \ell\), we have 
\[
\kappa_r^{\mathrm{per}}=\frac{P_{\mathrm{red},r+1 }^{\mathrm{per}}}{P_{\mathrm{red},r }^{\mathrm{per}}} 
\quad \mbox{and} \quad 
\overline{\kappa}_r^{\mathrm{per}}=\frac{\overline{P}_{\mathrm{blue},r+1 }^{\mathrm{per}}}{\overline{P}_{\mathrm{blue},r }^{\mathrm{per}}}.
\]
Equivalently,
$$
P_{\mathrm{red},r+1}^{\mathrm{per}}= \kappa_1^{\mathrm{per}} \cdots \kappa_{r}^{\mathrm{per}} 
\quad \mbox{and} \quad 
\overline{P}_{\mathrm{blue},r+1}^{\mathrm{per}}= \overline{\kappa}_1^{\mathrm{per}} \cdots \overline{\kappa}_{r}^{\mathrm{per}}.
$$
\end{lemma}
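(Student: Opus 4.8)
The plan is to prove the one‑step identities $P_{\mathrm{red},r+1}^{\mathrm{per}} = \kappa_r^{\mathrm{per}}\, P_{\mathrm{red},r}^{\mathrm{per}}$ and $\overline{P}_{\mathrm{blue},r+1}^{\mathrm{per}} = \overline{\kappa}_r^{\mathrm{per}}\, \overline{P}_{\mathrm{blue},r}^{\mathrm{per}}$ directly from the definitions via the tower property, and then to read off the displayed ratio and product formulas as formal consequences.

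I would begin by isolating the role of the last vector. Since $A_{\mathrm{red},r+1}$ is the event that all $\binom{r+1}{2}$ edges among $\boldsymbol{x}_1,\dots,\boldsymbol{x}_{r+1}$ are red, and the edge $\boldsymbol{x}_i\boldsymbol{x}_{r+1}$ is red exactly when $\langle\boldsymbol{x}_{r+1},\boldsymbol{x}_i\rangle\le -c_{k,p}/\sqrt{k}$, i.e.\ when $\boldsymbol{x}_{r+1}\in N(\boldsymbol{x}_i)$, we get
$$A_{\mathrm{red},r+1} = A_{\mathrm{red},r}\wedge\{\boldsymbol{x}_{r+1}\in N(\boldsymbol{x}[r])\}.$$
Likewise, by Definition~\ref{dfn:perfect-sequence} the perfectness conditions for the $(r{+}1)$-tuple with index $i\le r-1$ depend only on $\boldsymbol{x}_1,\dots,\boldsymbol{x}_r$ (this is exactly monotonicity), so $B_{r+1} = B_r\wedge\{|\pi_{[r]}(\boldsymbol{x}_{r+1})|\le \alpha_C\sqrt{\ell}/\sqrt{k}\}$. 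Combining the two, and recalling from Definition~\ref{perfectneighborhood} that, for a perfect sequence $\boldsymbol{x}[r]$, a point $\boldsymbol{y}$ lies in $N_{\mathrm{per}}(\boldsymbol{x}[r])$ iff $\boldsymbol{y}\in N(\boldsymbol{x}[r])$ and $|\pi_{[r]}(\boldsymbol{y})|\le \alpha_C\sqrt{\ell}/\sqrt{k}$, I obtain the key identity
$$\boldsymbol{1}_{A_{\mathrm{red},r+1}\wedge B_{r+1}} = \boldsymbol{1}_{A_{\mathrm{red},r}\wedge B_r}\cdot\boldsymbol{1}_{\{\boldsymbol{x}_{r+1}\in N_{\mathrm{per}}(\boldsymbol{x}[r])\}},$$
which is valid with the convention that the last indicator is $0$ off $B_r$; this causes no ambiguity, since by monotonicity $B_{r+1}\subseteq B_r$, so both sides vanish on the complement of $B_r$, and on $B_r$ the set $N_{\mathrm{per}}(\boldsymbol{x}[r])$ is defined.

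Next I would condition on $\boldsymbol{x}[r]$: the event $A_{\mathrm{red},r}\wedge B_r$ and the set $N_{\mathrm{per}}(\boldsymbol{x}[r])$ are functions of $\boldsymbol{x}[r]$ alone, whereas $\boldsymbol{x}_{r+1}$ is uniform on $S^k$ and independent of $\boldsymbol{x}[r]$, so $\mathbb{E}[\boldsymbol{1}_{\{\boldsymbol{x}_{r+1}\in N_{\mathrm{per}}(\boldsymbol{x}[r])\}}\mid\boldsymbol{x}[r]] = P_{\mathrm{per}}(\boldsymbol{x}[r])$ on $B_r$. Taking expectations in the key identity and applying the tower property gives
$$P_{\mathrm{red},r+1}^{\mathrm{per}} = \mathbb{E}\big[\boldsymbol{1}_{A_{\mathrm{red},r}\wedge B_r}\, P_{\mathrm{per}}(\boldsymbol{x}[r])\big] = \mathbb{P}(A_{\mathrm{red},r}\wedge B_r)\cdot\mathbb{E}\big[P_{\mathrm{per}}(\boldsymbol{x}[r])\ \big|\ A_{\mathrm{red},r}\wedge B_r\big] = P_{\mathrm{red},r}^{\mathrm{per}}\cdot\kappa_r^{\mathrm{per}},$$
and dividing by $P_{\mathrm{red},r}^{\mathrm{per}}$ — which is positive, since red $r$-cliques occur with positive probability and, by Lemma~\ref{Volumenonperfect}, almost every such tuple is perfect — yields $\kappa_r^{\mathrm{per}} = P_{\mathrm{red},r+1}^{\mathrm{per}}/P_{\mathrm{red},r}^{\mathrm{per}}$. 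The blue statement follows identically, replacing $N,N_{\mathrm{per}},A_{\mathrm{red}}$ by $\overline{N},\overline{N}_{\mathrm{per}},\overline{A}_{\mathrm{blue}}$, the only change being that "$\boldsymbol{x}_i\boldsymbol{x}_{r+1}$ red" becomes "$\boldsymbol{x}_i\boldsymbol{x}_{r+1}$ blue", i.e.\ $\boldsymbol{x}_{r+1}\in\overline{N}(\boldsymbol{x}_i)$. Finally, iterating the one-step identity down to $r=1$ and using the convention $P_{\mathrm{red},1}^{\mathrm{per}} = \overline{P}_{\mathrm{blue},1}^{\mathrm{per}} = 1$ (since $A_{\mathrm{red},1},\overline{A}_{\mathrm{blue},1},B_1$ hold surely) produces the telescoping products $P_{\mathrm{red},r+1}^{\mathrm{per}} = \kappa_1^{\mathrm{per}}\cdots\kappa_r^{\mathrm{per}}$ and $\overline{P}_{\mathrm{blue},r+1}^{\mathrm{per}} = \overline{\kappa}_1^{\mathrm{per}}\cdots\overline{\kappa}_r^{\mathrm{per}}$. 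There is no substantive obstacle here; the only care needed is in the bookkeeping of the two event decompositions — in particular verifying that the single new constraint imposed on $\boldsymbol{x}_{r+1}$ is exactly the one cutting $N(\boldsymbol{x}[r])$ down to $N_{\mathrm{per}}(\boldsymbol{x}[r])$ — and in recording that the denominators are nonzero so that the conditional expectations are well defined.
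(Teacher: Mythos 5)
Your proof is correct and follows essentially the same route as the paper's: decompose $A_{\mathrm{red},r+1}\wedge B_{r+1}$ as $(A_{\mathrm{red},r}\wedge B_r)\wedge\{\boldsymbol{x}_{r+1}\in N_{\mathrm{per}}(\boldsymbol{x}[r])\}$, condition on $\boldsymbol{x}[r]$, apply the tower property to obtain the one-step ratio, and telescope from the base case $P_{\mathrm{red},1}^{\mathrm{per}}=1$. The only cosmetic difference is that you make the event decomposition fully explicit and remark on nonvanishing denominators, whereas the paper phrases the inductive step directly via the auxiliary event $Y$ and the identity $\mathbb{E}[\boldsymbol{1}_A\boldsymbol{1}_Y]=\mathbb{E}[\boldsymbol{1}_A\,P_{\mathrm{per}}(\boldsymbol{x}[r])]$.
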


\begin{proof}
We proceed by induction on $r$ to prove that \( P_{\mathrm{red},r+1}^{\mathrm{per}}= \kappa_1^{\mathrm{per}} \cdots \kappa_{r}^{\mathrm{per}} \). 
Since \( P_{\mathrm{red}, 1}^{\mathrm{per}} = 1 \), the base case \( r = 1 \) holds trivially.  
Assume the statement holds for some \( r \geq 1 \).
Let $\boldsymbol{x}{[r+1]} = (\boldsymbol{x}_1, \ldots, \boldsymbol{x}_{r+1})$ 
be a random \( (r+1)\)-tuple in \( (S^k)^{r+1} \). 
Define $A=A_{\mathrm{red},r}\wedge B_r$.
So $A$ depends only on $\boldsymbol{x}[r]$, with 
\[
P_{\mathrm{red},r}^{\mathrm{per}}=\mathbb{P}(A) \quad \mbox{and} \quad 
\kappa_r^{\mathrm{per}}=\mathbb{E}\left[P_{\mathrm{per}}(\boldsymbol{x}{[r]})\ |\ A\right].
\]
Let $Y$ be the event that $\boldsymbol{x}_{r+1} \in N_{\mathrm{per}}(\boldsymbol{x}{[r]})$. 
Then $\mathbb{E}[\boldsymbol{1}_{Y}|~\boldsymbol{x}{[r]}] = \mathbb{P}(Y|~\boldsymbol{x}{[r]}) = P_{\mathrm{per}}(\boldsymbol{x}{[r]})$. 
Using this, we can derive that $P_{\mathrm{red},r+1}^{\mathrm{per}} 
= \mathbb{P}(A \wedge Y)$ is equal to
\begin{equation*}
\begin{aligned}
\mathbb{E}[\boldsymbol{1}_{A} \cdot \boldsymbol{1}_{Y}] = \mathbb{E}\big[\boldsymbol{1}_{A}\cdot \mathbb{E}[\boldsymbol{1}_{Y} |~ \boldsymbol{x}[r]]\big] = \mathbb{E}\big[\boldsymbol{1}_{A} \cdot P_{\mathrm{per}}(\boldsymbol{x}[r])\big] 
= \mathbb{E}\big[P_{\mathrm{per}}(\boldsymbol{x}[r]) \mid {A}\big] \cdot \mathbb{P}(A) 
= \kappa_{r}^{\mathrm{per}} \cdot P_{\mathrm{red},r}^{\mathrm{per}},
\end{aligned}
\end{equation*}
where the first equality holds since $A$ depends only on $\boldsymbol{x}[r]$ and
the third equality follows from the definition of conditional expectation.
Now, the statement for $P_{\mathrm{red},r+1}^{\mathrm{per}}$ follows easily by induction. 
We omit the analogous proof for $\overline{P}_{\mathrm{blue},r+1}^{\mathrm{per}}$.
\end{proof}

The last lemma highlights an important property that partly reflects the coherent definition of perfect sequences. 
This will play a crucial role in the proofs presented in the next section.

\begin{lemma}\label{lem:hat(z)}
For \(1 \leq r \leq C\ell\), let \(\Bx[r] \in (S^k)^r\) be a given perfect sequence. 
Let \(\Bz\) be a random vector uniformly distributed in either \(N_{\mathrm{per}}(\Bx[r])\) or  \(\overline{N}_{\mathrm{per}}(\Bx[r])\), with the orthogonal decomposition \(\Bz = \tilde{\Bz} + \hat{\Bz}\), 
where \(\tilde{\Bz} = \pi_{[r]}(\Bz) \in X_r\) and \(\hat{\Bz} \in X_r^{\perp}\).
Then the normalized random vector \(\frac{\hat{\Bz}}{|\hat{\Bz}|}\) is independent of \(\tilde{\Bz}\) and is uniformly distributed on \(S^k\cap X_{r}^{\perp}\).
\end{lemma}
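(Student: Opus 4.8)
The key structural fact to exploit is that the membership conditions defining $N_{\mathrm{per}}(\Bx[r])$ and $\overline{N}_{\mathrm{per}}(\Bx[r])$ depend on $\Bz$ only through its projection $\tilde{\Bz}=\pi_{[r]}(\Bz)$ onto $Y:=\operatorname{span}(\Bx[r])$. Indeed, the condition $\Bz\in N(\Bx[r])$ is $\langle \Bx_i,\Bz\rangle\le -c/\sqrt{k}$ for all $i\le r$, and since each $\Bx_i\in Y$ we have $\langle\Bx_i,\Bz\rangle=\langle\Bx_i,\tilde{\Bz}\rangle$; the same holds for $\overline{N}$. Moreover the perfectness condition for $(\Bx[r],\Bz)$ adds only the single requirement $|\pi_{[r]}(\Bz)|=|\tilde{\Bz}|\le \alpha_C\sqrt{\ell}/\sqrt{k}$ (the other perfectness inequalities for $\Bx[1],\ldots,\Bx[r]$ are already guaranteed since $\Bx[r]$ is perfect). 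Hence $N_{\mathrm{per}}(\Bx[r])$ and $\overline{N}_{\mathrm{per}}(\Bx[r])$ are each of the form $\{\Bz\in S^k:\pi_Y(\Bz)\in E\}$ for some Borel set $E\subseteq Y$ with $E$ contained in the open unit ball of $Y$.

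The plan is then to invoke the following standard decomposition of the uniform measure on $S^k$ under an orthogonal splitting $\mathbb{R}^{k+1}=Y\oplus Y^\perp$ with $\dim Y=r$, $\dim Y^\perp=k+1-r$: writing $\Bz=\tilde{\Bz}+\hat{\Bz}$, the law of $\Bz$ uniform on $S^k$ is obtained by (i) sampling $u:=|\tilde{\Bz}|^2\in[0,1]$ from the $\mathrm{Beta}(r/2,(k+1-r)/2)$ distribution, (ii) sampling $\tilde{\Bz}/|\tilde{\Bz}|$ uniformly on the unit sphere $S^{r-1}$ of $Y$, and (iii) sampling $\hat{\Bz}/|\hat{\Bz}|$ uniformly on the unit sphere $S^{k-r}$ of $Y^\perp$, with these three ingredients mutually independent (and $|\hat{\Bz}|=\sqrt{1-u}$ determined by $u$). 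This is exactly the same fact already used in the proof of Lemma~\ref{Volumenonperfect} to identify $|\pi_{X'}(\By)|^2$ as a Beta variable; I would cite it in the same spirit. Now conditioning $\Bz$ to lie in $N_{\mathrm{per}}(\Bx[r])=\{\pi_Y(\Bz)\in E\}$ is a conditioning on $\tilde{\Bz}$ alone, i.e.\ on the pair $(u,\tilde{\Bz}/|\tilde{\Bz}|)$; it leaves the conditional law of $\hat{\Bz}/|\hat{\Bz}|$ unchanged — still uniform on $S^{k-r}$ — and keeps it independent of $\tilde{\Bz}$. The identical argument applies verbatim to $\overline{N}_{\mathrm{per}}(\Bx[r])$. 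One should note $\mathbb{P}(N_{\mathrm{per}}(\Bx[r]))>0$ and $\mathbb{P}(\overline{N}_{\mathrm{per}}(\Bx[r]))>0$ so the conditioning is well-defined; this follows because $E$ has positive measure (it contains a neighborhood of a suitable point, using $r\le C\ell\ll\sqrt{k}$ so that the perfectness ball is non-degenerate relative to the caps).

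I do not expect a genuine obstacle here — the statement is essentially a rotational-invariance bookkeeping lemma. The one point requiring a little care is making precise that \emph{both} the defining event for $N_{\mathrm{per}}$ (resp.\ $\overline{N}_{\mathrm{per}}$) and the perfectness constraint are $\sigma(\tilde{\Bz})$-measurable, so that conditioning on $\Bz$ lying in the set is the same as conditioning on a ($\tilde{\Bz}$-measurable) event; once that is established, the independence of $\hat{\Bz}/|\hat{\Bz}|$ from $\tilde{\Bz}$ and its uniformity on $S^{k-r}$ are immediate from the product structure of the unconditioned law. A secondary routine check is that $\pi_{[r]}(\Bz)=\tilde{\Bz}$ with $|\tilde{\Bz}|<1$ almost surely, so $\hat{\Bz}\neq\boldsymbol{0}$ and $\hat{\Bz}/|\hat{\Bz}|$ is well-defined with probability one.
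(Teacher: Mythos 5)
Your proposal is correct and takes essentially the same approach as the paper: observe that the defining condition for $N_{\mathrm{per}}(\Bx[r])$ (or $\overline{N}_{\mathrm{per}}(\Bx[r])$) is a function of $\tilde{\Bz}$ alone, and then use the rotational-invariance product structure of the uniform measure on $S^k$ to conclude that conditioning on such an event leaves $\hat{\Bz}/|\hat{\Bz}|$ uniform on $S^{k-r}$ and independent of $\tilde{\Bz}$. The paper's version is terser (it states the measurability observation and then says ``the conclusion follows directly''); you make explicit the Beta/sphere-product decomposition underlying that last step, which is a reasonable addition but not a different route.
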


\begin{proof}
Let \(\Bx[r] = (\Bx_1, \ldots, \Bx_r)\).  
By symmetry, assume $\Bz$ is sampled uniformly at random from \(N_{\mathrm{per}}(\Bx[r])\).
Since \(\Bz - \tilde{\Bz} = \hat{\Bz} \in X_r^{\perp}\),  
we have \(\langle \Bz, \Bx_i \rangle = \langle \tilde{\Bz}, \Bx_i \rangle\) for each \(i \in [r]\).  
Combined with the definition of perfect sequences, this shows that the event \(\Bz \in N_{\mathrm{per}}(\Bx[r])\) depends only on \(\tilde{\Bz}\) and is thus independent of the choice of \(\frac{\hat{\Bz}}{|\hat{\Bz}|}\).  
The conclusion then follows directly.
\end{proof}

\section{Preliminary Estimates on Perfect Sequences}\label{sec:estimation_q}
In this section, we begin our study of perfect sequences.  
Recall the parameters we set up at the beginning of Section~\ref{Perfect sequences}.
As we proceed to estimate various probabilistic quantities, it is important to clarify the following convention used throughout:  
\begin{itemize}
    \item The big-O notation \( O(\cdot) \) denotes terms whose implicit constant may depend on \( C, p_C, \alpha_C, p, \) and \( c \), but \emph{not} on \( D \).
\end{itemize}

In this section, we aim to prove two results about perfect sequences, both of which rely on Lemma~\ref{limrk3} and Lemma~\ref{lem:hat(z)}.  
The first result establishes upper bounds for the central quantities \( Q_{[r]}(\By) \) and \( \overline{Q}_{[r]}(\By) \), whose formal definitions are given below.

\begin{definition}\label{def:Qr}
For $1\leq r\leq C\ell$,  let \((\Bx[r], \By)\in (S^k)^{r+1}\) be a perfect sequence.  Define
\[
Q_{[r]}(\boldsymbol{y}) := \frac{P_{\mathrm{per}}(\Bx[r], \By)}{P_{\mathrm{per}}(\Bx[r])}
\quad \mbox{and} \quad
\overline{Q}_{[r]}(\By):=\frac{\overline{P}_{\mathrm{per}}(\Bx[r],\By)}{\overline{P}_{\mathrm{per}}(\Bx[r])}.
\]
\end{definition}

Note that, in particular, we have \(Q_{[0]}(\By)=P_{\mathrm{per}}(\By)\leq p\) and \(\overline{Q}_{[0]}(\By)=\overline{P}_{\mathrm{per}}(\By)\leq 1-p\).

\begin{theorem}\label{ratiored}
Let \( (\Bx[r], \By) \in (S^k)^{r+1} \) be a given perfect sequence. Then the following hold:
\begin{equation}\label{eq:Qrx}
    Q_{[r]}(\By) \leq p - \sqrt{\frac{k}{2\pi}}\, e^{-\frac{c^2}{2}}\, \mathbb{E}_{\Bz}\bigl[\langle  \pi_{[r]}(\By) ,  \pi_{[r]}(\Bz) \rangle\bigr] + O\left(\frac{1}{D^2}\right),
\end{equation}
where \(\Bz\) denotes the random vector sampled uniformly from \( N_{\mathrm{per}}(\Bx[r]) \), and
\begin{equation}\label{eq:Qrx2}
    \overline{Q}_{[r]}(\By) \leq 1 - p + \sqrt{\frac{k}{2\pi}}\, e^{-\frac{c^2}{2}}\, \mathbb{E}_{\Bz}\bigl[\langle  \pi_{[r]}(\By) , \pi_{[r]}(\Bz) \rangle\bigr] + O\left(\frac{1}{D^2}\right),
\end{equation}
where \(\Bz\) denotes the random vector sampled uniformly from \(\overline{N}_{\mathrm{per}}(\Bx[r])\).
\end{theorem}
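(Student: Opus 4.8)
\textbf{Proof plan for Theorem~\ref{ratiored}.}
The plan is to estimate $Q_{[r]}(\By) = P_{\mathrm{per}}(\Bx[r],\By)/P_{\mathrm{per}}(\Bx[r])$ by re-expressing both numerator and denominator in terms of an integral over the common ambient sphere, and then extracting the first-order geometric correction. First I would write $P_{\mathrm{per}}(\Bx[r],\By) = \mathbb{P}\bigl(\Bw \in N_{\mathrm{per}}(\Bx[r]) \wedge \langle \Bw, \By\rangle \le -\tfrac{c}{\sqrt k}\bigr)$ for a uniform random $\Bw \in S^k$, and similarly $P_{\mathrm{per}}(\Bx[r]) = \mathbb{P}(\Bw \in N_{\mathrm{per}}(\Bx[r]))$; dividing gives $Q_{[r]}(\By) = \mathbb{P}_{\Bz}\bigl(\langle \Bz,\By\rangle \le -\tfrac{c}{\sqrt k}\bigr)$ where $\Bz$ is uniform on $N_{\mathrm{per}}(\Bx[r])$. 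Next I decompose $\Bz = \tilde\Bz + \hat\Bz$ as in Lemma~\ref{lem:hat(z)} and likewise $\By = \tilde\By + \hat\By$ with respect to $X_r = \operatorname{span}(\Bx[r])$; then $\langle \Bz,\By\rangle = \langle \tilde\Bz,\tilde\By\rangle + \langle \hat\Bz,\hat\By\rangle$. The point of Lemma~\ref{lem:hat(z)} is that, conditioned on $\tilde\Bz$, the direction $\hat\Bz/|\hat\Bz|$ is uniform on $S^{k-r}$ and independent of $\tilde\Bz$; so conditionally on $\tilde\Bz$, the quantity $\langle \hat\Bz/|\hat\Bz|, \hat\By/|\hat\By|\rangle$ behaves like the inner product of a fixed unit vector with a uniform point on $S^{k-r}$.

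Conditioning on $\tilde\Bz$ (hence on $|\hat\Bz| = \sqrt{1-|\tilde\Bz|^2}$), the event $\langle \Bz,\By\rangle \le -\tfrac{c}{\sqrt k}$ becomes $\langle \hat\Bz, \hat\By\rangle \le -\tfrac{c}{\sqrt k} - \langle\tilde\Bz,\tilde\By\rangle$, i.e. $\langle \hat\Bz/|\hat\Bz|, \Be\rangle \le H$ with $\Be := \hat\By/|\hat\By|$ and $H := \bigl(-\tfrac{c}{\sqrt k} - \langle\tilde\Bz,\tilde\By\rangle\bigr)/\bigl(|\hat\By|\sqrt{1-|\tilde\Bz|^2}\bigr)$. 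Here I use that the sequence is perfect: $|\tilde\By| = |\pi_{[r]}(\By)| \le \alpha_C\sqrt{\ell}/\sqrt k$ and likewise $|\tilde\Bz| \le \alpha_C\sqrt\ell/\sqrt k$ (the latter because $\Bz \in N_{\mathrm{per}}(\Bx[r])$ means $(\Bx[r],\Bz)$ is perfect), so $|\langle\tilde\Bz,\tilde\By\rangle| \le \alpha_C^2\ell/k = O(1/D^2)\cdot \tfrac{1}{\sqrt k}\cdot\sqrt k$... more precisely $|\langle\tilde\Bz,\tilde\By\rangle| = O(\ell/k) = O(1/(D\sqrt k))$, and $|\hat\By|\sqrt{1-|\tilde\Bz|^2} = 1 + O(\ell/k)$. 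Thus $H = -\tfrac{c}{\sqrt k} - \tfrac{\sqrt k\,\langle\tilde\Bz,\tilde\By\rangle}{\sqrt k} + O(\tfrac{1}{D^2\sqrt k})$, which has exactly the shape $-\tfrac{c}{\sqrt k} - \tfrac{A}{D\sqrt k}$ required by Lemma~\ref{limrk3} with $A/D = \sqrt k\,\langle\tilde\Bz,\tilde\By\rangle + O(1/D^2)$. Applying Lemma~\ref{limrk3} (with $k-r$ in place of $k$, absorbing the $O(r/k)$ shift into the error) gives
\[
\mathbb{P}\bigl(\langle \Bz,\By\rangle \le -\tfrac{c}{\sqrt k}\,\big|\,\tilde\Bz\bigr) = p - \frac{1}{\sqrt{2\pi}}e^{-c^2/2}\cdot \sqrt k\,\langle\tilde\Bz,\tilde\By\rangle + O\Bigl(\frac{1}{D^2}\Bigr).
\]
Taking expectation over $\tilde\Bz$ and using $\langle\tilde\Bz,\tilde\By\rangle = \langle\pi_{[r]}(\Bz),\pi_{[r]}(\By)\rangle$ yields \eqref{eq:Qrx}; the blue case \eqref{eq:Qrx2} is symmetric, replacing $N_{\mathrm{per}}$ by $\overline N_{\mathrm{per}}$, $p$ by $1-p$, and the inequality direction by its complement (so the sign of the linear term flips).

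The main obstacle will be controlling the error terms uniformly and honestly — in particular verifying that the various approximations ($|\hat\By| = 1 + O(\ell/k)$, $\sqrt{1-|\tilde\Bz|^2} = 1 - O(\ell/k)$, the replacement of the sphere dimension $k$ by $k-r$ inside Lemma~\ref{limrk3}, and the Taylor remainder of $\Phi$) all collapse into the stated $O(1/D^2)$ rather than merely $o(1)$, and that the constant in $O(\cdot)$ genuinely does not depend on $D$ (per the convention fixed at the start of Section~\ref{sec:estimation_q}). A secondary subtlety is the ``generic position'' caveat hinted at in the proof sketch: one must ensure $X_r$ is genuinely $r$-dimensional and $\hat\By \ne 0$, but since we only need an upper bound and the degenerate configurations form a measure-zero set (and in any case $Q_{[r]}(\By) \le p + O(1/D^2)$ trivially), this can be handled by a limiting or ignoring argument. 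Everything else is bookkeeping: peeling the conditional probability via Lemma~\ref{lem:hat(z)}, identifying the parameter $A$, and invoking Lemma~\ref{limrk3}.
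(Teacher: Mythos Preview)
Your approach is essentially identical to the paper's: decompose $\By$ and $\Bz$ orthogonally with respect to $X_r$, use Lemma~\ref{lem:hat(z)} to make $\hat\Bz/|\hat\Bz|$ uniform on $S^{k-r}$ conditionally on $\tilde\Bz$, rewrite the event $\langle\Bz,\By\rangle\le -c/\sqrt k$ as a threshold condition with $H = -c/\sqrt k - \langle\tilde\By,\tilde\Bz\rangle + O(\ell/(k\sqrt k))$, apply Lemma~\ref{limrk3}, and average over $\tilde\Bz$.

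There is one genuine slip. You assert
\[
P_{\mathrm{per}}(\Bx[r],\By)=\mathbb{P}\bigl(\Bw\in N_{\mathrm{per}}(\Bx[r])\ \wedge\ \langle\Bw,\By\rangle\le -\tfrac{c}{\sqrt k}\bigr),
\]
and hence $Q_{[r]}(\By)=\mathbb{P}_{\Bz}\bigl(\langle\Bz,\By\rangle\le -c/\sqrt k\bigr)$, as an \emph{equality}. This is false: membership in $N_{\mathrm{per}}(\Bx[r],\By)$ requires $(\Bx[r],\By,\Bw)$ to be perfect, i.e.\ $|\pi_{\operatorname{span}(\Bx[r],\By)}(\Bw)|\le \alpha_C\sqrt\ell/\sqrt k$, which is strictly stronger than the condition $|\pi_{[r]}(\Bw)|\le \alpha_C\sqrt\ell/\sqrt k$ defining $N_{\mathrm{per}}(\Bx[r])$. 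What holds is the inclusion $N_{\mathrm{per}}(\Bx[r],\By)\subseteq N_{\mathrm{per}}(\Bx[r])\cap N(\By)$, coming from the monotonicity of perfect sequences, and therefore only
\[
Q_{[r]}(\By)\ \le\ \mathbb{P}_{\Bz}\bigl(\Bz\in N(\By)\bigr).
\]
The paper makes exactly this step explicit. Since the theorem only asks for an upper bound, your argument survives once you downgrade the equality to $\le$ and cite monotonicity; but as written, the identity you start from is not correct.
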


\begin{proof}
For conciseness, we present the full proof for \eqref{eq:Qrx} only, as the proof for \eqref{eq:Qrx2} follows analogously.  
Suppose that \((\Bx{[r]}, \By)\) is a perfect sequence,  
and let \(\Bz\) be sampled uniformly at random from \( N_{\mathrm{per}}(\Bx{[r]}) \). Consider the orthogonal decompositions:
\[
\By = \tilde{\By} + \hat{\By}
\quad \mbox{and} \quad
\Bz = \tilde{\Bz} + \hat{\Bz},
\]
where \(\tilde{\By} = \pi_{[r]}(\By)\) and \(\tilde{\Bz} = \pi_{[r]}(\Bz)\) belong to \(\operatorname{span}(\Bx{[r]})\), while \(\hat{\By}\) and \(\hat{\Bz}\) lie in \(\left(\operatorname{span}(\Bx{[r]})\right)^{\perp}\).
Since $\langle {\boldsymbol{y}}, {\boldsymbol{z}} \rangle=\langle \tilde{\boldsymbol{y}}, \tilde{\boldsymbol{z}} \rangle+\langle \hat{\boldsymbol{y}}, \hat{\boldsymbol{z}} \rangle$, we have $\boldsymbol{z} \in N(\boldsymbol{y})$ if and only if
\begin{equation}\label{equ:Q[r]-H}
\left\langle \frac{\hat{\boldsymbol{y}}}{|\hat{\boldsymbol{y}}|}, \frac{\hat{\boldsymbol{z}}}{|\hat{\boldsymbol{z}}|} \right\rangle \leq -\frac{1}{|\hat{\boldsymbol{y}}||\hat{\boldsymbol{z}}|} \left( \frac{c}{\sqrt{k}} + \langle \tilde{\boldsymbol{y}}, \tilde{\boldsymbol{z}} \rangle \right) =: H.
\end{equation}
Both $(\boldsymbol{x}{[r]},\boldsymbol{y})$ and $(\boldsymbol{x}{[r]},\boldsymbol{z})$ are perfect, so we have \(|\tilde{\boldsymbol{y}}|, |\tilde{\boldsymbol{z}}| \leq \frac{\alpha_C\sqrt{\ell}}{\sqrt{k}},\) which implies that
\begin{equation}\label{equ:2tilde}
|\langle \tilde{\boldsymbol{y}}, \tilde{\boldsymbol{z}} \rangle| \leq |\tilde{\boldsymbol{y}}||\tilde{\boldsymbol{z}}| = O\left(\frac{\ell}{k}\right)=O\left(\frac{1}{D\sqrt{k}}\right).
\end{equation}
By the unit norm condition (i.e., $|\tilde{\boldsymbol{y}}|^2 + |\hat{\boldsymbol{y}}|^2 =|\boldsymbol{y}|^2 = 1$), we obtain
\( 1 - \frac{\alpha_C^2 \ell}{k} \leq |\hat{\boldsymbol{y}}|^2, |\hat{\boldsymbol{z}}|^2 \leq 1, \)
yielding
\begin{equation}\label{equ:2hat}
\frac{1}{|\hat{\boldsymbol{y}}||\hat{\boldsymbol{z}}|} = 1 + O\left(\frac{\ell}{k}\right)=1+O\left(\frac{1}{D\sqrt{k}}\right).
\end{equation}
Note that all bounds above hold uniformly over all \(\By\) and \(\Bz\).
We can derive from (\ref{equ:2tilde}) and (\ref{equ:2hat}) that
\begin{equation}\label{H001}
H =-\left(\frac{c}{\sqrt{k}} + \langle \tilde{\boldsymbol{y}}, \tilde{\boldsymbol{z}} \rangle\right)\cdot\left(1+O\left(\frac{\ell}{k}\right)\right)= -\frac{c}{\sqrt{k}} - \langle \tilde{\boldsymbol{y}}, \tilde{\boldsymbol{z}} \rangle + O\left(\frac{\ell}{k\sqrt{k}}\right),
\end{equation}
where the term $- \langle \tilde{\boldsymbol{y}}, \tilde{\boldsymbol{z}} \rangle + O\left(\frac{\ell}{k\sqrt{k}}\right)$ is of the order $O\left(\frac{1}{D\sqrt{k}}\right)$.

We note that the random vector $\Bz$ is completely determined by the two random variables $\frac{\hat{\boldsymbol{z}}}{|\hat{\boldsymbol{z}}|}$ and  $\tilde{\boldsymbol{z}}$,
so $\Bz$ can be regarded as the joint distribution of  $\frac{\hat{\boldsymbol{z}}}{|\hat{\boldsymbol{z}}|}$ and  $\tilde{\boldsymbol{z}}$. 
By Lemma~\ref{lem:hat(z)}, 
\(\frac{\hat{\Bz}}{|\hat{\Bz}|}\) is independent of \(\tilde{\Bz}\), and it is uniformly distributed on \(S^{k - r}\).
We can derive the following for any fixed \(\tilde{\boldsymbol{z}}\):
\begin{equation}\label{equ:py}
\begin{aligned}
& \mathbb{P}\left(\boldsymbol{z} \in N(\boldsymbol{y}) \mid \tilde{\boldsymbol{z}}\right) 
=\mathbb{P}\left(\left\langle\frac{\hat{\boldsymbol{z}}}{|\hat{\boldsymbol{z}}|},\frac{\hat{\boldsymbol{y}}}{|\hat{\boldsymbol{y}}|}\right\rangle\leq H \Bigm| \tilde{\boldsymbol{z}}\right)\\
= & p - \sqrt{\frac{k}{2\pi}}e^{-\frac{1}{2}c^2}\cdot \left(\langle \tilde{\boldsymbol{y
}}, \tilde{\boldsymbol{z}} \rangle + O\left(\frac{\ell}{k\sqrt{k}}\right)\right) + O\left(\frac{1}{D^2}\right)
=p - \sqrt{\frac{k}{2\pi}}e^{-\frac{1}{2}c^2}\cdot \langle \tilde{\boldsymbol{y
}}, \tilde{\boldsymbol{z}} \rangle + O\left(\frac{1}{D^2}\right),
\end{aligned}
\end{equation}
where the first equality uses \eqref{equ:Q[r]-H}
and the second equality follows from Lemma~\ref{limrk3}, \eqref{H001} as above, and 
the fact that \(\frac{\hat{\Bz}}{|\hat{\Bz}|}\) remains uniformly distributed on \(S^{k - r}\) when fixing \(\tilde{\Bz}\) (because that \(\frac{\hat{\Bz}}{|\hat{\Bz}|}\) and \(\tilde{\Bz}\) are independent).
We emphasize that the $O\left(\frac{1}{D^2}\right)$ term above holds uniformly over all \(\tilde{\Bz}\). 
Viewing $\Bz$ as a random variable uniformly distributed in \(N_{\mathrm{per}}(\boldsymbol{x}[r])\),
we can derive that
\begin{align*}
Q_{[r]}(\boldsymbol{y}) &= \frac{P_{\mathrm{per}}(\boldsymbol{x}[r],\boldsymbol{y})}{P_{\mathrm{per}}(\boldsymbol{x}[r])}=\frac{\mathbb{P}(N_{\mathrm{per}}(\boldsymbol{x}[r],\boldsymbol{y}))}{\mathbb{P}(N_{\mathrm{per}}(\boldsymbol{x}[r]))} \leq\frac{\mathbb{P}(N_{\mathrm{per}}(\boldsymbol{x}[r])\cap N(\boldsymbol{y}))}{\mathbb{P}(N_{\mathrm{per}}(\boldsymbol{x}[r]))}= \mathbb{P}(\Bz\in N(\By))\\
&=\mathbb{E}_{\tilde{\boldsymbol{z}}}\left[\mathbb{P}(\Bz\in N(\boldsymbol{y}) \mid \tilde{\boldsymbol{z}}) \right] = p - \sqrt{\frac{k}{2\pi}}e^{-\frac{c^2}{2}} \mathbb{E}_{\tilde{\boldsymbol{z}}}[\langle \tilde{\boldsymbol{y}}, \tilde{\boldsymbol{z}} \rangle] + O\left(\frac{1}{D^2}\right),
\end{align*}
where the inequality follows from the monotonicity of perfect sequences (i.e., if \((\Bx[r], \By, \Bz)\) is perfect, then so is \((\Bx[r], \Bz)\)), and the last equality uses \eqref{equ:py}.
Note that \( \mathbb{E}_{\tilde{\Bz}}\bigl[\langle \tilde{\By}, \tilde{\Bz} \rangle\bigr] = \mathbb{E}_{\Bz}\bigl[\langle \tilde{\By}, \tilde{\Bz} \rangle\bigr], \)
where \(\tilde{\Bz}\) depends only on \(\Bz\), and \(\Bz\) is sampled uniformly from \(N_{\mathrm{per}}(\Bx[r])\). This proves \eqref{eq:Qrx}.
\end{proof}

Using similar arguments, we establish a lower bound on the probability of common neighborhoods for perfect sequences.
Recall that \( p \in \bigl(p_C, \tfrac{1}{2}\bigr) \) in the random sphere graph \( G_{k,p}(n) \).

\begin{lemma}\label{volume}
Let \(1\leq r\leq C\ell\), and let \(\boldsymbol{x}{[r]} = (\boldsymbol{x}_1, \ldots, \boldsymbol{x}_r)\in (S^k)^r\) be a perfect sequence. 
Then 
\begin{equation}\label{eq:vol_bounds2}
\frac{\operatorname{Vol}(N_{\mathrm{per}}(\boldsymbol{x}[r]))}{\operatorname{Vol}(N(\Bx[r-1]))} \geq \frac{p_C}{2} 
\quad \text{and} \quad 
\frac{\operatorname{Vol}(\overline{N}_{\mathrm{per}}(\boldsymbol{x}[r]))}{\operatorname{Vol}(\overline{N}(\Bx[r-1]))} \geq  \frac{p_C}{2}.
\end{equation}
In particular, this implies that
\begin{equation}\label{eq:vol_bounds}
\mathbb{P}{(N(\Bx[r]))} \geq \mathbb{P}(N_{\mathrm{per}}(\boldsymbol{x}[r])) \geq \left(\frac{p_C}{2}\right)^r 
\quad \text{and} \quad 
\mathbb{P}{(\overline{N}(\Bx[r]))} \geq \mathbb{P}(\overline{N}_{\mathrm{per}}(\boldsymbol{x}[r])) \geq \left(\frac{p_C}{2}\right)^r.
\end{equation}
\end{lemma}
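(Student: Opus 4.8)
The plan is to prove \eqref{eq:vol_bounds2} and \eqref{eq:vol_bounds} together by induction on $r$. First note that, for a perfect sequence $\Bx[r]$, the bound \eqref{eq:vol_bounds} follows from \eqref{eq:vol_bounds2} for $\Bx[r]$ and \eqref{eq:vol_bounds} for the prefix $\Bx[r-1]$ (which is perfect by monotonicity): indeed $\mathbb{P}(N_{\mathrm{per}}(\Bx[r]))=\frac{\operatorname{Vol}(N_{\mathrm{per}}(\Bx[r]))}{\operatorname{Vol}(N(\Bx[r-1]))}\cdot\mathbb{P}(N(\Bx[r-1]))\ge\frac{p_C}{2}\left(\frac{p_C}{2}\right)^{r-1}$, while $\mathbb{P}(N(\Bx[r]))\ge\mathbb{P}(N_{\mathrm{per}}(\Bx[r]))$ trivially (with the convention $N(\Bx[0])=S^k$), and the blue case is identical. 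For the base case $r=1$ one has $N_{\mathrm{per}}(\Bx_1)=\{\By\in S^k:\langle\By,\Bx_1\rangle\in[-\alpha_C\sqrt{\ell}/\sqrt{k},\,-c/\sqrt{k}]\}$, so Lemma~\ref{Volumenonperfect} (whose threshold $\alpha_C\sqrt{\ell}/(2\sqrt{k})$ is at most the one in Definition~\ref{dfn:perfect-sequence}) gives $\mathbb{P}(N_{\mathrm{per}}(\Bx_1))\ge p-(p_C/10)^{C\ell}>p_C/2$, and similarly $\mathbb{P}(\overline{N}_{\mathrm{per}}(\Bx_1))\ge (1-p)-(p_C/10)^{C\ell}>p_C/2$.

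For the inductive step, fix $r\ge2$ and a perfect sequence $\Bx[r]$; I prove the red bound in \eqref{eq:vol_bounds2} (the blue bound is entirely analogous, and in fact has more room since $1-p>\tfrac12$). Since $N_{\mathrm{per}}(\Bx[r])\subseteq N(\Bx[r])\subseteq N(\Bx[r-1])$, the ratio in question equals the conditional probability $\mathbb{P}(\By\in N_{\mathrm{per}}(\Bx[r])\mid\By\in N(\Bx[r-1]))$ with $\By$ uniform on $S^k$, which is at least $\mathbb{P}(\langle\By,\Bx_r\rangle\le-\tfrac{c}{\sqrt{k}}\mid\By\in N(\Bx[r-1]))-\mathbb{P}(|\pi_{[r]}(\By)|>\tfrac{\alpha_C\sqrt{\ell}}{\sqrt{k}}\mid\By\in N(\Bx[r-1]))$. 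By Lemma~\ref{Volumenonperfect} applied to $\operatorname{span}(\Bx[r])$, together with the induction hypothesis $\mathbb{P}(N(\Bx[r-1]))\ge(p_C/2)^{r-1}$, the subtracted term is at most $(p_C/10)^{C\ell}/(p_C/2)^{r-1}\le 5^{-C\ell}$. For the main term I reuse the decomposition from the proof of Theorem~\ref{ratiored}: writing $\By=\tilde\By+\hat\By$ relative to $W':=\operatorname{span}(\Bx[r-1])$, which has dimension exactly $r-1$ by perfectness, the event $\By\in N(\Bx[r-1])$ depends only on $\tilde\By=\pi_{[r-1]}(\By)$, so by the argument of Lemma~\ref{lem:hat(z)} applied to $W'$, conditional on that event the vector $\hat\By/|\hat\By|$ is uniform on the unit sphere $S^{k-r+1}$ of $(W')^{\perp}$ and independent of $\tilde\By$. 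With $\Bw:=(\Bx_r-\pi_{[r-1]}(\Bx_r))/\sin\delta_r$, a unit vector in $(W')^{\perp}$ satisfying $\sin\delta_r\ge\sqrt{1-\alpha_C^2\ell/k}$, we have $\langle\By,\Bx_r\rangle\le-\tfrac{c}{\sqrt{k}}$ if and only if $\langle\hat\By/|\hat\By|,\Bw\rangle\le H_1(\tilde\By):=\bigl(-\tfrac{c}{\sqrt{k}}-\langle\tilde\By,\pi_{[r-1]}(\Bx_r)\rangle\bigr)/\bigl(\sin\delta_r\,|\hat\By|\bigr)$.

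The crux is the event $F:=\{|\tilde\By|\le\alpha_C\sqrt{\ell}/\sqrt{k}\}$. By Lemma~\ref{Volumenonperfect} applied to $W'$ and the induction hypothesis, the conditional probability of $F^{c}$ given $\By\in N(\Bx[r-1])$ is at most $5^{-C\ell}$; and on $F$ one has $|\langle\tilde\By,\pi_{[r-1]}(\Bx_r)\rangle|\le|\tilde\By|\,|\pi_{[r-1]}(\Bx_r)|\le\alpha_C^2\ell/k=O(\tfrac{1}{D\sqrt{k}})$ and $\sin\delta_r|\hat\By|=1-O(\tfrac1{D^2})$, whence $H_1(\tilde\By)=-\tfrac{c}{\sqrt{k}}+O(\tfrac{1}{D\sqrt{k}})$, so in particular $H_1(\tilde\By)\ge-\tfrac{c}{\sqrt{k}}-\tfrac{A_0}{D\sqrt{k}}$ for a constant $A_0=A_0(C)$. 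Consequently, for every $\tilde\By\in F$, monotonicity of the spherical-cap function and Lemma~\ref{limrk3} (applied on $S^{k-r+1}$) give $\mathbb{P}(\langle\hat\By/|\hat\By|,\Bw\rangle\le H_1(\tilde\By)\mid\tilde\By)\ge p-\tfrac{A_0}{\sqrt{2\pi}D}e^{-c^2/2}+O(\tfrac1{D^2})\ge p-O(\tfrac1D)$. Integrating over the conditional law of $\tilde\By$ and using $\mathbb{P}(F\mid\By\in N(\Bx[r-1]))\ge1-5^{-C\ell}$ yields $\mathbb{P}(\langle\By,\Bx_r\rangle\le-\tfrac{c}{\sqrt{k}}\mid\By\in N(\Bx[r-1]))\ge p-O(\tfrac1D)-5^{-C\ell}$, and hence the ratio is $\ge p-O(\tfrac1D)-2\cdot5^{-C\ell}\ge p_C-O(\tfrac1D)-2\cdot5^{-C\ell}\ge p_C/2$, the last inequality holding once $D=D(C)$ is taken sufficiently large and $\ell\ge\ell_0$. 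In the blue case one gets instead $H_1(\tilde\By)\le-\tfrac{c}{\sqrt{k}}+\tfrac{A_0}{D\sqrt{k}}$ on $F$, and Lemma~\ref{limrk3} in complementary form (as in the proof of \eqref{eq:Qrx2}, using $\mathbb{P}(\langle\hat\By/|\hat\By|,\Bw\rangle>H_1)=1-\mathbb{P}(\langle\hat\By/|\hat\By|,\Bw\rangle\le H_1)$) gives $\mathbb{P}(\langle\hat\By/|\hat\By|,\Bw\rangle>H_1(\tilde\By)\mid\tilde\By)\ge(1-p)-O(\tfrac1D)$, leading to a lower bound $(1-p)-O(\tfrac1D)-2\cdot5^{-C\ell}>\tfrac12-O(\tfrac1D)\ge p_C/2$; then \eqref{eq:vol_bounds} follows from \eqref{eq:vol_bounds2} as in the first paragraph.

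The step I expect to be the main obstacle is controlling the conditional distribution of $\tilde\By=\pi_{[r-1]}(\By)$ given $\By\in N(\Bx[r-1])$: since this conditioning is on a set of exponentially small measure, a priori $|\tilde\By|$ could be inflated far beyond its unconditional scale $O(\sqrt{r/k})$, which would ruin the estimate $H_1(\tilde\By)=-\tfrac{c}{\sqrt{k}}+O(\tfrac1{D\sqrt{k}})$. The resolution is exactly that the induction provides the lower bound $\mathbb{P}(N(\Bx[r-1]))\ge(p_C/2)^{r-1}$, which — because $r-1\le C\ell$ — still dominates the tail bound $(p_C/10)^{C\ell}$ of Lemma~\ref{Volumenonperfect} after division, so that $F^{c}$ remains exponentially unlikely even conditionally. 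Everything else is the decomposition-plus-Gaussian-approximation computation already carried out in the proof of Theorem~\ref{ratiored}, now applied to a ratio whose denominator is $\operatorname{Vol}(N(\Bx[r-1]))$ instead of $\operatorname{Vol}(N_{\mathrm{per}}(\Bx[r]))$.
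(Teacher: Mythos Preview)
Your proof is correct and follows essentially the same approach as the paper's: both argue by induction on $r$, use the orthogonal decomposition of $\By$ relative to $\operatorname{span}(\Bx[r-1])$ so that $\hat\By/|\hat\By|$ is uniform on the orthogonal sphere, bound the inner product $\langle\tilde\By,\pi_{[r-1]}(\Bx_r)\rangle$ using perfectness to get $H_1=-c/\sqrt{k}+O(1/(D\sqrt{k}))$, apply Lemma~\ref{limrk3} for the cap probability, and control the ``non-perfect'' tail by combining Lemma~\ref{Volumenonperfect} with the inductive lower bound $\mathbb{P}(N(\Bx[r-1]))\ge(p_C/2)^{r-1}$. The only cosmetic difference is that the paper samples $\By$ from $N_{\mathrm{per}}(\Bx[r])=N_r\cap M_r$ (so perfectness is built in) and bounds $\operatorname{Vol}(N_{r+1}\cap M_r)/\operatorname{Vol}(N_r\cap M_r)$ before adjusting by the factor $\operatorname{Vol}(N_r\cap M_r)/\operatorname{Vol}(N_r)\ge 1-2^{-C\ell}$, whereas you sample $\By$ from $N(\Bx[r-1])$ and handle the projection tail separately via your event $F$; the two organizations are equivalent.
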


\begin{proof}
In this proof, we write \(N_s=N(\Bx[s])\) and \(\overline{N}_s=\overline{N}(\Bx[s])\) for a fixed sequence \(\Bx[s]\).  
Recall from Definition \ref{def:projection} that \(\pi_{[s]}\) denotes the orthogonal projection from \(\mathbb{R}^{k+1}\) to \(\operatorname{span}(\boldsymbol{x}[s])\).  
Define
\[
M_s := \left\{\boldsymbol{y}\in S^k: \left|\pi_{[s]}(\boldsymbol{y})\right| \leq \frac{\alpha_C\sqrt{\ell}}{\sqrt{k}}\right\}.
\]
Let \(1\le s\le C\ell\) and \(\Bx[s]\) be a perfect sequence. Then \(N_{\mathrm{per}}(\boldsymbol{x}[s])=N_s \cap M_s\).  
By Lemma \ref{Volumenonperfect},
\begin{equation}\label{eq:vol_estimate}
\frac{\operatorname{Vol}(S^k\setminus M_s)}{\operatorname{Vol}(S^k)}=\mathbb{P}(\boldsymbol{y}\notin M_s)
= \mathbb{P}\left(\left|\pi_{[s]}(\boldsymbol{y})\right|> \frac{\alpha_C\sqrt{\ell}}{\sqrt{k}}\right)
\leq \mathbb{P}\left(\left|\pi_{[s]}(\boldsymbol{y})\right|> \frac{\alpha_C\sqrt{\ell}}{2\sqrt{k}}\right)
\leq \left(\frac{p_C}{10}\right)^{C \ell},
\end{equation}
where \(\By\) is sampled uniformly at random from \(S^k\).  
Then, as long as \(\operatorname{Vol}(N_s)\geq \left(\frac{p_C}{2}\right)^s \operatorname{Vol}(S^k)\),  
\begin{equation}\label{eq:ratio_bound}
\frac{\operatorname{Vol}(N_{\mathrm{per}}(\boldsymbol{x}[s]))}{\operatorname{Vol}(N_s)}
= \frac{\operatorname{Vol}(N_s \cap M_s)}{\operatorname{Vol}(N_s)}
\geq 1-\frac{\operatorname{Vol}(S^k\setminus M_s)}{\operatorname{Vol}(N_s)}
\geq 1 - \frac{\left(p_C/10\right)^{C \ell}}{\left(p_C/2\right)^{s}}
\geq 1 - 2^{-C \ell}.
\end{equation}

We proceed induction on \( r \) to prove the first inequality of \eqref{eq:vol_bounds2}, i.e.,
\( \frac{\operatorname{Vol}\bigl(N_{\mathrm{per}}(\boldsymbol{x}[r])\bigr)}{\operatorname{Vol}(N_{r-1})} \geq \frac{p_C}{2}.\)
The base case $r=1$ holds trivially by \eqref{eq:vol_estimate}. 
Suppose this holds for all integers $s$ up to some $r\geq 1$.
Consider any perfect sequence $\boldsymbol{x}{[r+1]}=(\boldsymbol{x}{[r]},\boldsymbol{x}_{r+1}) \in (S^k)^{r+1}$.
The inductive hypothesis implies that $\frac{\operatorname{Vol}(N_s)}{\operatorname{Vol}(N_{s-1})}\geq \frac{\operatorname{Vol}\bigl(N_{\mathrm{per}}(\boldsymbol{x}[s])\bigr)}{\operatorname{Vol}(N_{s-1})} \geq \frac{p_C}{2}$ for all $s\in [r]$, hence 
\begin{equation}\label{eq:vol_bounds3}
\operatorname{Vol}(N_r)\geq \operatorname{Vol}(N_{\mathrm{per}}(\boldsymbol{x}[r]))\geq \left(\frac{p_C}{2}\right)^r \cdot \operatorname{Vol}(S^k).
\end{equation}
By \eqref{eq:ratio_bound} (with $s=r$), this implies that 
\begin{equation}\label{equ:N_r-S^k}
\frac{\operatorname{Vol}(N_r \cap M_r)}{\operatorname{Vol}(N_r)}\geq 1-2^{-C\ell}.
\end{equation}
Let \(\boldsymbol{y}\) be a vector sampled uniformly at random from \( N_{\mathrm{per}}(\boldsymbol{x}{[r]}) \). 
Define 
\[
\tilde{\boldsymbol{y}} = \pi_{[r]}(\boldsymbol{y})
\mbox{ and } 
\hat{\boldsymbol{y}} = \boldsymbol{y} - \tilde{\boldsymbol{y}};
\quad \mbox{ similarly, define~} 
\tilde{\boldsymbol{x}}_{r+1} = \pi_{[r]}(\boldsymbol{x}_{r+1}) 
\mbox{ and } 
\hat{\boldsymbol{x}}_{r+1} = \boldsymbol{x}_{r+1} - \tilde{\boldsymbol{x}}_{r+1}.
\]
The following arguments are similar to that of Theorem~\ref{ratiored}.
Using the fact $\langle \boldsymbol{y}, \boldsymbol{x}_{r+1} \rangle = \langle \tilde{\boldsymbol{y}}, \tilde{\boldsymbol{x}}_{r+1} \rangle + \langle \hat{\boldsymbol{y}}, \hat{\boldsymbol{x}}_{r+1} \rangle$,
we derive that \( \By\in N_{r+1}\) (which is equivalent to $\By\in N(\Bx_{r+1})$) if and only if
\begin{equation}\label{eq:angle_condition}
\left\langle \frac{\hat{\boldsymbol{y}}}{|\hat{\boldsymbol{y}}|}, \frac{\hat{\boldsymbol{x}}_{r+1}}{|\hat{\boldsymbol{x}}_{r+1}|} \right\rangle \leq \frac{1}{|\hat{\boldsymbol{y}}||\hat{\boldsymbol{x}}_{r+1}|}\left(-\frac{c}{\sqrt{k}} - \langle \tilde{\boldsymbol{y}}, \tilde{\boldsymbol{x}}_{r+1} \rangle\right) =: H.
\end{equation}
Since \(\boldsymbol{y},\boldsymbol{x}_{r+1} \in M_r\), we have \(|\tilde{\boldsymbol{y}}|, |\tilde{\boldsymbol{x}}_{r+1}| \leq \frac{\alpha_C\sqrt{\ell}}{\sqrt{k}}\). 
Following the same bounds as in (\ref{equ:2tilde}) and (\ref{equ:2hat}), 
the right-hand side of \eqref{eq:angle_condition} simplifies to 
\( H = -\frac{c}{\sqrt{k}} + O\left(\frac{1}{D\sqrt{k}}\right).\)
By Lemma~\ref{lem:hat(z)}, 
\(\frac{\hat{\boldsymbol{y}}}{|\hat{\boldsymbol{y}}|}\) is uniformly distributed in \(S^{k-r}\) and is independent of \(\tilde{\boldsymbol{y}}\).
Therefore, for any fixed $\tilde{\boldsymbol{y}}$, by Lemma~\ref{limrk3} we have
\begin{equation}\label{eq:conditional_vol_tildex}
\frac{\operatorname{Vol}(\pi_{[r]}^{-1}(\tilde{\boldsymbol{y}}) \cap N_{r+1})}{\operatorname{Vol}(\pi_{[r]}^{-1}(\tilde{\boldsymbol{y}})\cap S^k)}= \mathbb{P}\left(\left\langle \frac{\hat{\boldsymbol{y}}}{|\hat{\boldsymbol{y}}|}, \frac{\hat{\boldsymbol{x}}_{r+1}}{|\hat{\boldsymbol{x}}_{r+1}|} \right\rangle\leq -\frac{c}{\sqrt{k}}+O\left(\frac{1}{D\sqrt{k}}\right)\right)
= ~ p - O\left(\frac{1}{D}\right) > \frac{2p_C}{3},
\end{equation}
where the last inequality holds since \( p\in (p_C,1/2) \) and \(D\gg C\). 
Using \eqref{eq:conditional_vol_tildex} and integrating over all possible $\tilde{\By}=\pi_{[r]}(\By)\in \pi_{[r]}( N_{\mathrm{per}}(\boldsymbol{x}{[r]}))$, we can show that
\[
\operatorname{Vol}(N_{r+1} \cap M_r)\geq \frac{2p_C}{3} \cdot\operatorname{Vol}(N_r \cap M_r).
\]
This, together with \eqref{equ:N_r-S^k}, implies that
\begin{equation}\label{eq:final_estimate}
\frac{\operatorname{Vol}(N_{r+1})}{\operatorname{Vol}(N_r)} 
\geq \frac{\operatorname{Vol}(N_{r+1} \cap M_r)}{\operatorname{Vol}(N_r)} = \frac{\operatorname{Vol}(N_r \cap M_r)}{\operatorname{Vol}(N_r)}\cdot \frac{\operatorname{Vol}(N_{r+1} \cap M_r)}{\operatorname{Vol}(N_r \cap M_r)} \geq \left(1 - 2^{-C \ell}\right) \frac{2p_C}{3} > \frac{3p_C}{5}
\end{equation}
and \( \operatorname{Vol}(N_{r+1})\geq \left(\frac{p_C}{2}\right)^{r+1} \operatorname{Vol}(S^k) \).
Applying \eqref{eq:ratio_bound} again (with $s=r+1$), and using \eqref{eq:final_estimate}, we have
\[
\frac{\operatorname{Vol}(N_{\mathrm{per}}(\boldsymbol{x}[r+1]))}{\operatorname{Vol}(N_r)}=\frac{\operatorname{Vol}(N_{\mathrm{per}}(\boldsymbol{x}[r+1]))}{\operatorname{Vol}(N_{r+1})}\cdot \frac{\operatorname{Vol}(N_{r+1})}{\operatorname{Vol}(N_r)}\geq \left(1-2^{-C\ell}\right)\cdot \frac{\operatorname{Vol}(N_{r+1})}{\operatorname{Vol}(N_r)} > \frac{p_C}{2}.
\]
This completes the proof of the first inequality in \eqref{eq:vol_bounds2}.  
The second inequality in \eqref{eq:vol_bounds2} follows analogously.\footnote{For the blue case, the sign in \eqref{eq:angle_condition} is reversed, and consequently the estimate in \eqref{eq:conditional_vol_tildex} becomes \(1 - p + O(1/D)\), which remains at least \(p + O(1/D) > \tfrac{2}{3} p_C\); the other arguments remain unchanged.}
As for \eqref{eq:vol_bounds}, its proof proceeds in exactly the same way as that of \eqref{eq:vol_bounds3}.
\end{proof}

\section{From Perfect to General}\label{sec:from-perfect-to-general}
In this section, we show that the ``perfect'' variants (see, e.g., Definition~\ref{def:perfect_probs}) are the principal cases for monochromatic clique probabilities (Definition~\ref{def:prod_redblue}) and their generalizations.
In particular, as a special case of a more general result, we obtain the following.

\begin{theorem}\label{thm:perfect_nonperfect2}
For every $2\leq r\leq C\ell$, we have
\begin{equation}
P_{\mathrm{red},r}\leq \left(1+2^{-C\ell}\right)P_{\mathrm{red},r}^{\mathrm{per}}\quad \mbox{and} \quad \overline{P}_{\mathrm{blue},r}\leq \left(1+2^{-C\ell}\right)\overline{P}_{\mathrm{blue},r}^{\mathrm{per}}.
\end{equation}
\end{theorem}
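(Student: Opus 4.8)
The plan is to decompose $P_{\mathrm{red},r}$ according to how the random $r$-tuple $\boldsymbol{x}[r]=(\boldsymbol{x}_1,\ldots,\boldsymbol{x}_r)$ fails to be perfect, and to show that the non-perfect part is negligibly small compared to the perfect part. Recall that $P_{\mathrm{red},r}=\mathbb{P}(A_{\mathrm{red},r})$ while $P_{\mathrm{red},r}^{\mathrm{per}}=\mathbb{P}(A_{\mathrm{red},r}\wedge B_r)$, so it suffices to prove
\[
\mathbb{P}(A_{\mathrm{red},r}\wedge \neg B_r)\leq 2^{-C\ell}\,\mathbb{P}(A_{\mathrm{red},r}\wedge B_r).
\]
First I would set up a ``first non-perfect index'': for $2\leq i\leq r$, let $E_i$ be the event that $\boldsymbol{x}[i-1]$ is perfect but $(\boldsymbol{x}[i-1],\boldsymbol{x}_i)$ is not, i.e.\ $B_{i-1}$ holds but $|\pi_{[i-1]}(\boldsymbol{x}_i)|>\frac{\alpha_C\sqrt{\ell}}{\sqrt{k}}$. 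By monotonicity of perfect sequences, $\neg B_r$ is the disjoint union $\bigcup_{i=2}^r E_i$, so $\mathbb{P}(A_{\mathrm{red},r}\wedge\neg B_r)=\sum_{i=2}^r\mathbb{P}(A_{\mathrm{red},r}\wedge E_i)$. The main work is to bound each term $\mathbb{P}(A_{\mathrm{red},r}\wedge E_i)$ in terms of $P_{\mathrm{red},r}^{\mathrm{per}}$.

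The key idea for a single term is a ``swap'' argument that expands the tuple $\boldsymbol{x}[i-1]$ by one perfect vertex at position $i$ instead of the bad one. Concretely, condition on $\boldsymbol{x}[i-1]$ satisfying $A_{\mathrm{red},i-1}\wedge B_{i-1}$; given such a perfect red clique on $i-1$ vertices, the conditional probability that $\boldsymbol{x}_i\in N(\boldsymbol{x}[i-1])$ but $(\boldsymbol{x}[i-1],\boldsymbol{x}_i)$ is not perfect is at most $\mathbb{P}(\boldsymbol{x}_i\notin M_{i-1})\leq (p_C/10)^{C\ell}$ by Lemma~\ref{Volumenonperfect}, whereas the conditional probability that $\boldsymbol{x}_i\in N_{\mathrm{per}}(\boldsymbol{x}[i-1])$ is at least $(p_C/2)^{i-1}\geq (p_C/2)^{C\ell}$ by Lemma~\ref{volume} (equation~\eqref{eq:vol_bounds}). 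For the remaining vertices $\boldsymbol{x}_{i+1},\ldots,\boldsymbol{x}_r$ one must check that adding them to a red clique is ``not harder'' when the $i$-th vertex is perfect than when it is the bad one; the cleanest way is to bound the probability that $\boldsymbol{x}_{i+1},\ldots,\boldsymbol{x}_r$ extend $(\boldsymbol{x}[i-1],\boldsymbol{x}_i)$ to a red $K_r$ by $\mathbb{P}\big(N(\boldsymbol{x}_i)\text{-type constraints}\big)\leq 1$ in the bad case, while in the perfect case this extension probability is bounded below using Lemma~\ref{volume} again (each further neighborhood has measure at least $(p_C/2)$ times the previous). Multiplying through, each $\mathbb{P}(A_{\mathrm{red},r}\wedge E_i)$ is at most $\big((p_C/10)^{C\ell}/(p_C/2)^{C\ell}\big)\cdot(\text{something}\leq P_{\mathrm{red},r}^{\mathrm{per}})$, i.e.\ at most $5^{-C\ell}P_{\mathrm{red},r}^{\mathrm{per}}$; summing over the at most $C\ell$ values of $i$ gives $C\ell\cdot 5^{-C\ell}\leq 2^{-C\ell}$ for $\ell\geq\ell_0$, as required. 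The blue case is identical, using the second inequalities in \eqref{eq:vol_bounds2} and \eqref{eq:vol_bounds}.

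The main obstacle I anticipate is making the ``extension probability comparison'' for the tail vertices $\boldsymbol{x}_{i+1},\ldots,\boldsymbol{x}_r$ fully rigorous, since the conditioning couples these vertices with $\boldsymbol{x}_i$ through the red-clique constraints. The right framework is probably to prove a stronger statement by induction on $r$ — something like: for every perfect $(i-1)$-tuple inducing a red clique, the conditional probability that $(\boldsymbol{x}_i,\ldots,\boldsymbol{x}_r)$ completes a red $K_r$ and $\boldsymbol{x}[r]$ is \emph{not} perfect is at most $2^{-C\ell}$ times the conditional probability that it completes a red $K_r$ and $\boldsymbol{x}[r]$ \emph{is} perfect. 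This reduces the whole statement to the one-step estimate above (comparing $\mathbb{P}(\boldsymbol{x}_i\in N(\boldsymbol{x}[i-1])\setminus N_{\mathrm{per}}(\boldsymbol{x}[i-1]))$ against $\mathbb{P}(\boldsymbol{x}_i\in N_{\mathrm{per}}(\boldsymbol{x}[i-1]))$) together with the observation that, once $\boldsymbol{x}_i$ is fixed, the red-clique-completion events for $(\boldsymbol{x}_{i+1},\ldots,\boldsymbol{x}_r)$ in the two scenarios are governed by neighborhoods of comparable measure — with the perfect-scenario measure controlled from below by Lemma~\ref{volume} and the non-perfect-scenario measure controlled trivially from above by $1$. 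The bookkeeping of these nested conditional expectations, rather than any single hard estimate, is where the care is needed; all the genuinely quantitative inputs are already supplied by Lemmas~\ref{Volumenonperfect} and~\ref{volume}.
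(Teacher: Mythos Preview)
Your decomposition by the \emph{first} non-perfect index does not give a usable bound, and the induction you sketch does not rescue it. The problem is quantitative: conditioning on a perfect red $(i-1)$-clique $\boldsymbol{x}[i-1]$, your upper bound for the ``bad'' branch is $(p_C/10)^{C\ell}$ (throwing away all the red-clique constraints on $\boldsymbol{x}_{i+1},\ldots,\boldsymbol{x}_r$), while your lower bound for the ``good'' branch---the probability that $\boldsymbol{x}_i,\ldots,\boldsymbol{x}_r$ extend $\boldsymbol{x}[i-1]$ to a \emph{perfect} red $r$-clique---is only of order $(p_C/2)^{r(r-i+1)}$ from Lemma~\ref{volume}. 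When $i$ is small (say $i=2$) and $r$ is of order $C\ell$, the ratio bad/good is roughly $(p_C/10)^{C\ell}\cdot (2/p_C)^{\Theta(\ell^2)}$, which diverges. A single factor of $(p_C/10)^{C\ell}$ cannot beat the $(p_C/2)^{-\Theta(\ell^2)}$ cost of building the rest of the perfect clique. Your proposed induction on $r$ does not help: once $\boldsymbol{x}_i$ is bad, the initial segment is no longer perfect, so the inductive hypothesis (which is only stated for perfect initial segments) is unavailable for the tail.

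What the paper does instead is decompose by the \emph{full} non-perfect profile $J\subseteq\{2,\ldots,r\}$, not just the first bad index, and then apply a measure-preserving reordering $\varphi$ that moves all the ``good'' indices to the front (Lemma~\ref{lem:prob_nonperfect}). The point of the reordering is that the first $t=r-|J|$ coordinates of $\varphi(\boldsymbol{x}[r])$ form a genuine perfect sequence, while each of the remaining $|J|$ coordinates contributes an independent factor $(p_C/10)^{C\ell}$ from Lemma~\ref{Volumenonperfect}. This yields $P_{\mathrm{red},r}\le \sum_{t}\binom{r}{r-t}(p_C/10)^{C\ell(r-t)}P_{\mathrm{red},t}^{\mathrm{per}}$, and now the exponent $C\ell(r-t)$ scales with $r-t$ and does beat the $(p_C/2)^{-r(r-t)}$ cost of going from $P_{\mathrm{red},t}^{\mathrm{per}}$ to $P_{\mathrm{red},r}^{\mathrm{per}}$. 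The reordering lemma is the missing idea; without it (or something equivalent that extracts one small factor per bad index rather than just one overall), the argument cannot close.
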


To facilitate our estimates in the non-perfect cases, we introduce the notion of the \emph{non-perfect number} for sequences of points in $S^k$.

\begin{definition}
For any $r \in \mathbb{N}^*$, we say a sequence $\boldsymbol{x}{[r]} = (\boldsymbol{x}_1, \ldots, \boldsymbol{x}_r)$ of points in $S^k$ has \emph{non-perfect number} $t$, if there exists a subset $J\subseteq \{2,\ldots,r\}$ of size $t$ satisfying the following dichotomy:
\begin{align*}
|\pi_{[i-1]}(\boldsymbol{x}_i)| > \alpha_C\frac{\sqrt\ell}{\sqrt k} & \quad \text{for } i \in J, \\
|\pi_{[i-1]}(\boldsymbol{x}_i)| \leq \alpha_C\frac{\sqrt\ell}{\sqrt k} & \quad \text{for } i \in \{2,\ldots,r\}\setminus J.
\end{align*}
The set \( J \) is called the \emph{non-perfect profile} of $\Bx[r]$.
If \(J=\emptyset\) or \( J = \{r-t + 1, \dots, r\} \) consists of $t\geq 1$ consecutive indices ending at \( r \), then we say the sequence \(\boldsymbol{x}[r]\) is \emph{faithful}.
\end{definition}

Note that a sequence has non-perfect number \( 0 \) if and only if it is a perfect sequence.
The following definition generalizes both Definition~\ref{def:prod_redblue} and Definition~\ref{def:perfect_probs}, and will be used in the next section.

\begin{definition}\label{def:rsperfect}
Let \( 0\leq s<r\leq C\ell\) be integers, and consider a sequence \( \Bx[r] = (\Bx_1, \ldots, \Bx_r) \in S^k \) in which the first \( s \) points form a fixed perfect sequence $\Bx[s]$, while the remaining \( r - s \) points are sampled uniformly and independently from \( S^k \).
Under this setup, we define the following: 
\begin{itemize}
    \item Let \( P_{\mathrm{red},r}(\boldsymbol{x}{[s]}) \) be the probability that \( G_p(\boldsymbol{x}{[r]}) \) forms a red clique, and let \( P_{\mathrm{red},r}^{\mathrm{per}}(\boldsymbol{x}{[s]}) \) be the probability that \( G_p(\boldsymbol{x}{[r]}) \) forms a red clique while \(\boldsymbol{x}{[r]}\) forms a perfect sequence.\footnote{We point out that if the induced subgraph \( G_p(\boldsymbol{x}{[s]}) \) on the given sequence \(\boldsymbol{x}{[s]}\) is not a red clique, then both probabilities defined here equal zero.}
    \item Let $\overline{P}_{\mathrm{blue},r}(\boldsymbol{x}{[s]})$ be the probability that $G_p(\boldsymbol{x}{[r]})$ forms a blue clique, and $\overline{P}_{\mathrm{blue},r}^{\mathrm{per}}(\boldsymbol{x}{[s]})$ be the probability that $G_p(\boldsymbol{x}{[r]})$ forms a blue clique while $\boldsymbol{x}{[r]}$ forms a perfect sequence.
\end{itemize}
\end{definition}

For the case $s=0$, we view $\boldsymbol{x}{[0]}$ as the empty initial condition, and thus we have 
\begin{equation}\label{equ:P(x[0])}
P_{\mathrm{red},r}(\boldsymbol{x}{[0]}) = P_{\mathrm{red},r}
\quad \text{and} \quad 
\overline{P}_{\mathrm{blue},r}(\boldsymbol{x}{[0]}) = \overline{P}_{\mathrm{blue},r}.
\end{equation}

\begin{lemma}\label{lem:prob_nonperfect}
Let \( 0\leq s<r\leq C\ell\) and $\Bx[s]$ be a fixed perfect sequence. Then the following hold:
\[
P_{\mathrm{red},r}(\boldsymbol{x}{[s]})\leq \sum_{t=s}^r \binom{r-s}{r-t}\left(\frac{p_C}{10}\right)^{C\ell(r-t)}P_{\mathrm{red},t}^{\mathrm{per}}(\boldsymbol{x}{[s]}),
\]
\[
\overline{P}_{\mathrm{blue},r}(\boldsymbol{x}{[s]})\leq \sum_{t=s}^r \binom{r-s}{r-t}\left(\frac{p_C}{10}\right)^{C\ell(r-t)}\overline{P}_{\mathrm{blue},t}^{\mathrm{per}}(\boldsymbol{x}{[s]}).
\]
\end{lemma}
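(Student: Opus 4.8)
## Proof Proposal for Lemma~\ref{lem:prob_nonperfect}

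The plan is to decompose the probability that $G_p(\Bx[r])$ forms a red clique according to the \emph{non-perfect profile} $J \subseteq \{s+1, \ldots, r\}$ of the sequence $\Bx[r]$, conditioning on which coordinates beyond position $s$ fail the perfectness inequality $|\pi_{[i-1]}(\Bx_i)| \le \alpha_C \sqrt{\ell}/\sqrt{k}$. Since $\Bx[s]$ is already a fixed perfect sequence, the only possible "bad" indices are among $\{s+1,\ldots,r\}$, so summing over profiles of size $j = r-t$ (for $s \le t \le r$) gives a bound of the shape $\sum_{t} \binom{r-s}{r-t} (\text{prob.\ of one fixed profile of size } r-t)$, where the binomial counts the number of such profiles. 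The target is then to show that for any fixed profile $J$ of size $r-t$, the contribution is at most $(p_C/10)^{C\ell(r-t)} P_{\mathrm{red},t}^{\mathrm{per}}(\Bx[s])$.

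The key step is to estimate the probability that $G_p(\Bx[r])$ forms a red clique \emph{and} has exactly profile $J$, by revealing the points $\Bx_{s+1}, \ldots, \Bx_r$ one at a time. I would process the indices in increasing order. For each index $i \notin J$, we need $\Bx_i \in N(\Bx[i-1])$ (to maintain the red clique) and $|\pi_{[i-1]}(\Bx_i)| \le \alpha_C\sqrt{\ell}/\sqrt{k}$ (so $i$ is not in the profile) — together these say exactly $\Bx_i \in N_{\mathrm{per}}(\Bx[i-1])$ if $\Bx[i-1]$ were perfect; but more care is needed because the earlier revealed points may not yet form a perfect sequence. For each index $i \in J$, we need $\Bx_i \in N(\Bx[i-1])$ and $|\pi_{[i-1]}(\Bx_i)| > \alpha_C\sqrt{\ell}/\sqrt{k}$; the latter event has probability at most $(p_C/10)^{C\ell}$ by Lemma~\ref{Volumenonperfect} (applied with the $r$-dimensional subspace $X_{i-1} = \operatorname{span}(\Bx[i-1])$, noting $i-1 \le C\ell$), and crucially this tail bound holds \emph{uniformly} over all choices of $\Bx[i-1]$, so we may pull this factor out without worrying about the conditioning. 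After the $j = |J| = r-t$ bad indices each contribute a factor $(p_C/10)^{C\ell}$, what remains is precisely the probability that the subsequence obtained by \emph{deleting} the bad coordinates forms a red clique that is perfect — matching $P_{\mathrm{red},t}^{\mathrm{per}}(\Bx[s])$ — after one checks that deleting the bad indices from a red clique with profile $J$ yields a perfect red clique on the remaining $t$ points (this uses monotonicity of perfect sequences and the fact that the red-clique property is inherited by subsequences).

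The main obstacle I anticipate is handling the conditioning carefully: when I reveal $\Bx_i$ for $i \in J$ and want to bound its contribution by $(p_C/10)^{C\ell}$, I need the event $\{|\pi_{[i-1]}(\Bx_i)| > \alpha_C\sqrt{\ell}/\sqrt{k}\}$ to be controlled regardless of the positions of the previously revealed points (which span some subspace of dimension at most $i-1 \le C\ell$); Lemma~\ref{Volumenonperfect} gives exactly this uniform bound, but one must verify the hypotheses (the subspace dimension is in the allowed range $1 \le \cdot \le C\ell$) and argue that the factorization of the probability over the reveal order is legitimate — essentially an application of the chain rule for conditional probabilities, where at the step for $i \in J$ one discards the (nonnegative) conditional probability of maintaining the red clique and keeps only the tail factor, while at the steps for $i \notin J$ one matches up with the perfect-neighborhood events. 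A secondary subtlety is the bookkeeping that reduces the "red clique with profile $J$" event on $r$ points to the "perfect red clique" event $P_{\mathrm{red},t}^{\mathrm{per}}(\Bx[s])$ on the surviving $t$ points; this should follow from the observation that the projection conditions defining perfectness for the surviving subsequence are implied by (a subset of) the conditions $|\pi_{[i-1]}(\Bx_i)| \le \alpha_C\sqrt{\ell}/\sqrt{k}$ for $i \notin J$, together with monotonicity. The blue case is entirely analogous, replacing $N$ by $\overline{N}$ throughout.
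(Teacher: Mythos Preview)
Your approach is correct and takes a slightly different route from the paper's. The paper introduces an explicit reordering map $\varphi:(S^k)^r\to (S^k)^r$ that moves the indices in the profile $J$ to the end of the sequence, producing a \emph{faithful} sequence; since $\varphi$ is volume-preserving, injective on sequences with a given profile $J$, and preserves the graph $G_p$, the probability of ``red clique with profile $J$'' is bounded by the probability that $\Bx[t]$ is a perfect red clique and each of $\Bx_{t+1},\ldots,\Bx_r$ has large projection onto its predecessor span --- which then factors via Lemma~\ref{Volumenonperfect}. You instead work in place: relax $\{\text{red clique on }\Bx[r]\}\cap\{\text{profile}=J\}$ to $G\cap\bigcap_{i\in J}B_i$, where $G=\{(\Bx_{\ell_1},\ldots,\Bx_{\ell_t})\text{ is a perfect red clique}\}$ and $B_i=\{|\pi_{[i-1]}(\Bx_i)|>\alpha_C\sqrt{\ell/k}\}$, and then integrate out the bad-index points. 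This avoids $\varphi$ entirely; what the reordering map buys in the paper (separating ``good'' and ``bad'' structurally) is replaced in your argument by the monotonicity step you flag as the ``secondary subtlety''. One point to tighten in your write-up: the chain rule in the natural order $s+1,\ldots,r$ does not let you pull out the factor $(p_C/10)^{C\ell}$ at $i\in J$ while later good-index conditions still reference $\Bx_i$ through $\pi_{[\cdot]}$ and $N(\Bx[\cdot])$; you must \emph{first} pass to the relaxed event $G$ (so that no remaining condition involves the bad-index points), and \emph{then} integrate out the $\Bx_i$, $i\in J$, from the largest index down. With that order of operations your argument goes through cleanly.
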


\begin{proof}
For any $r\in \mathbb{N}$, we define a map \( \tilde\varphi: (S^k)^r \to (S^k)^r \) as follows.
Consider any $\Bx[r]\in (S^k)^r$. 
Let $J=\{j_1,\ldots, j_{r-t}\}$ denote the non-perfect profile of $\Bx[r]$ for some $1\leq t\leq r$, with $2\leq j_1< \ldots <j_{r-t}\leq r$. 
Let $[r]\setminus J=\{\ell_1,\ldots,\ell_t\}$ with $1=\ell_1<\ldots<\ell_t\leq r$.
Define $\By[r]=\tilde\varphi(\Bx[r])$ by letting
\[
(\By_1,\ldots, \By_r)=\tilde\varphi(\Bx{[r]}) := (\Bx_{\ell_1},\ldots, \Bx_{\ell_t}, \Bx_{j_1},\ldots, \Bx_{j_{r-t}}),
\]
where we also define \(\varphi:[r]\to[r]\) satisfying $\By_i =\boldsymbol{x}_{\varphi(i)}$ for all $i\in [r]$. 
For each $i\in [r]$, define 
\[ 
Y_i:= \operatorname{span}(\By_1,\By_2,\ldots,\By_i)=\operatorname{span}(\Bx_{\varphi(1)}, \Bx_{\varphi(2)}, \ldots, \Bx_{\varphi(i)}).
\]
Then we have the following properties:
\begin{itemize}
    \item[(1).] For each \(2\leq i\leq t\), we have $\varphi(i)=\ell_i\in \{2,\ldots, r\}\setminus J$ and $Y_{i-1} \subseteq \operatorname{span}(\Bx_1, \Bx_2, ...,\Bx_{\varphi(i)-1})$, implying
    \[
    |\pi_{Y_{i-1}}(\By_i)| =|\pi_{Y_{i-1}}(\Bx_{\varphi(i)})| \leq |\pi_{[\varphi(i)-1]}(\boldsymbol{x}_{\varphi(i)})| \leq \alpha_C\frac{\sqrt\ell}{\sqrt k}.\footnote{Here, $\pi_{[q]}(\cdot)$ denotes the projection of points in $S^k$ onto $\operatorname{span}(\Bx_1,\ldots,\Bx_{q})$.}
    \]
    This shows that $(\By_1, \By_2, \ldots, \By_t)$ is a perfect sequence.
    \item[(2).] For each \( t+1\leq i\leq r \), since $\varphi(i)=j_{i-t}\in J$ and $Y_{i-1}\supseteq \operatorname{span}(\Bx_1, \Bx_2, ...,\Bx_{\varphi(i)-1})$, we have
    \[
    |\pi_{Y_{i-1}}(\By_i)| =|\pi_{Y_{i-1}}(\Bx_{\varphi(i)})| \geq |\pi_{[\varphi(i)-1]}(\boldsymbol{x}_{\varphi(i)})| > \alpha_C\frac{\sqrt\ell}{\sqrt k}.
    \]
\end{itemize}
Combining both properties, we see that the non-perfect profile of $\By[r]$ is $J'=\{t+1,t+2,\ldots,r\}$, 
hence $\By[r]=\tilde\varphi(\Bx[r])$ is always a faithful sequence with the same non-perfect number of $\boldsymbol{x}{[r]}$.

We now aim to prove the first inequality of this lemma.
Consider the setting from  Definition~\ref{def:rsperfect}, where $\Bx[s]$ is a fixed perfect sequence, and the remaining $r-s$ points in $\Bx[r]$ are sampled uniformly and independently from $S^k$. 
If \( \boldsymbol{x}{[s]} \) does not induce a red clique, then both sides of the first inequality are zero, and the inequality holds trivially.
So we assume that \( \boldsymbol{x}{[s]} \) induces a red clique $G_p(\Bx[s])$.

Let $r-t$ denote the non-perfect number of $\Bx[r]$. 
Since $\Bx[s]$ is perfect, we have $s\leq t\leq r$, and every element in the non-perfect profile $J=\{j_1<\ldots < j_{r-t}\}$ of $\Bx[r]$ must be strictly greater than \( s \). 
Hence, the number of distinct choices for \( J \) is at most \( \binom{r - s}{r - t} \).
Note that \( \boldsymbol{y}{[r]} = \tilde\varphi(\boldsymbol{x}{[r]}) \) is a faithful sequence with the same non-perfect number $r-t$, obtained from \( \boldsymbol{x}{[r]} \) by reordering the positions of the points. 
It is important to observe that both sequences induce the same graph, i.e., \( G_p(\boldsymbol{x}{[r]})= G_p(\boldsymbol{y}{[r]}) \).
Moreover, the map \( \tilde\varphi \), when restricted to all sequences \( \boldsymbol{x}{[r]} \) with a given non-perfect profile \( J \), is injective and preserves volume measure (because it is an isometric embedding with the standard Euclidean metric).
Therefore, we can derive that the probability that \( G_p\bigl(\boldsymbol{x}{[r]}\bigr) \) forms a red clique with a given non-perfect profile \( J \) is at most the probability \( p^*_{|J|} \) that \( G_p\bigl(\boldsymbol{x}{[r]}\bigr) \) forms a red clique and \( \boldsymbol{x}{[r]} \) is faithful with non-perfect number \( |J| \).
Putting all above together, and applying Lemma~\ref{Volumenonperfect}, we obtain 
\begin{align*}
& P_{\mathrm{red},r}(\boldsymbol{x}{[s]}) = \sum_{t=s}^r \mathbb{P}\bigl(  G_p(\Bx[r]) \mbox{ forms a red clique with } |J|=r-t \bigr) \leq \sum_{t=s}^r \binom{r-s}{r-t} \cdot p^*_{r-t}\\
\leq & \sum_{t=s}^r \binom{r-s}{r-t} \cdot \mathbb{P}\left(
\boldsymbol{x}{[t]} \mbox{ is perfect, } G_p(\boldsymbol{x}{[t]}) \mbox{ is a red clique, and }
|\pi_{[i-1]}(\boldsymbol{x}_i)|>\tfrac{\alpha_C\sqrt{\ell}}{\sqrt{k}}\ \forall t<i\leq r \right) \\
\leq & \sum_{t=s}^r \binom{r-s}{r-t}\left(\frac{p_C}{10}\right)^{C\ell(r-t)}P_{\mathrm{red},t}^{\mathrm{per}}(\boldsymbol{x}{[s]}).
\end{align*}
This proves the first inequality.
The second inequality follows by identical arguments under the analogous condition, so we omit the details.
\end{proof}

Now we are able to prove the main result of this section, as follows.

\begin{theorem}\label{thm:perfect_nonperfect}
Let $2\leq r\leq C\ell$ and $0\leq s\leq r-1$. 
For any fixed perfect sequence $\boldsymbol{x}{[s]}$, 
we have
\begin{equation}\label{equ:P_{red,r}x[s]}
P_{\mathrm{red},r}(\boldsymbol{x}{[s]})\leq \left(1+2^{-C\ell}\right)P_{\mathrm{red},r}^{\mathrm{per}}(\boldsymbol{x}{[s]})
\quad \mbox{and} \quad
\overline{P}_{\mathrm{blue},r}(\boldsymbol{x}{[s]})\leq \left(1+2^{-C\ell}\right)\overline{P}_{\mathrm{blue},r}^{\mathrm{per}}(\boldsymbol{x}{[s]}).
\end{equation}
\end{theorem}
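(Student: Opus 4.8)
The plan is to combine the crude bound from Lemma~\ref{lem:prob_nonperfect} with a ``reverse monotonicity'' estimate that controls $P_{\mathrm{red},t}^{\mathrm{per}}(\boldsymbol{x}{[s]})$ in terms of $P_{\mathrm{red},r}^{\mathrm{per}}(\boldsymbol{x}{[s]})$ for every $s\le t\le r$. First I would dispose of the trivial case: if $G_p(\boldsymbol{x}{[s]})$ is not a red clique, both sides of the first inequality vanish, so we may assume $G_p(\boldsymbol{x}{[s]})$ is a red clique (the blue case being symmetric). Then Lemma~\ref{lem:prob_nonperfect} gives
\[
P_{\mathrm{red},r}(\boldsymbol{x}{[s]})\le \sum_{t=s}^{r}\binom{r-s}{r-t}\left(\frac{p_C}{10}\right)^{C\ell(r-t)}P_{\mathrm{red},t}^{\mathrm{per}}(\boldsymbol{x}{[s]}),
\]
so it suffices to prove $P_{\mathrm{red},t}^{\mathrm{per}}(\boldsymbol{x}{[s]})\le \left(2/p_C\right)^{C\ell(r-t)}P_{\mathrm{red},r}^{\mathrm{per}}(\boldsymbol{x}{[s]})$ for each $t$, i.e.\ that extending a perfect red clique loses at most a factor $(p_C/2)^{C\ell}$ per added vertex.

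For this reverse estimate I would run a telescoping argument. For $s\le j\le r$ let $E_j$ be the event that $\boldsymbol{x}{[j]}$ is a perfect red clique, so $\mathbb{P}(E_j)=P_{\mathrm{red},j}^{\mathrm{per}}(\boldsymbol{x}{[s]})$ (with $\mathbb{P}(E_s)=1$ by our assumption, consistently with the convention in Lemma~\ref{lem:prob_nonperfect}) and $\boldsymbol{1}_{E_{j+1}}=\boldsymbol{1}_{E_j}\cdot\boldsymbol{1}_{\boldsymbol{x}_{j+1}\in N_{\mathrm{per}}(\boldsymbol{x}{[j]})}$. Since $\boldsymbol{x}_{j+1}$ is sampled independently and uniformly, conditioning on $\boldsymbol{x}{[j]}$ yields $\mathbb{P}(E_{j+1})=\mathbb{E}\big[\boldsymbol{1}_{E_j}\,P_{\mathrm{per}}(\boldsymbol{x}{[j]})\big]$, and on $E_j$ the sequence $\boldsymbol{x}{[j]}$ is perfect, so \eqref{eq:vol_bounds} of Lemma~\ref{volume} gives $P_{\mathrm{per}}(\boldsymbol{x}{[j]})\ge (p_C/2)^{j}\ge (p_C/2)^{C\ell}$, using $j\le C\ell$. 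Hence $\mathbb{P}(E_{j+1})\ge (p_C/2)^{C\ell}\,\mathbb{P}(E_j)$ for all $s\le j\le r-1$ (the step $j=0$, relevant only when $s=0$, being immediate since $\mathbb{P}(E_1)=1$), and iterating from $j=t$ to $j=r-1$ gives $P_{\mathrm{red},r}^{\mathrm{per}}(\boldsymbol{x}{[s]})=\mathbb{P}(E_r)\ge (p_C/2)^{C\ell(r-t)}\,P_{\mathrm{red},t}^{\mathrm{per}}(\boldsymbol{x}{[s]})$, which is exactly what is needed.

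Plugging this back into the displayed bound and using $(p_C/10)\cdot(2/p_C)=1/5$, each summand is at most $\binom{r-s}{r-t}5^{-C\ell(r-t)}P_{\mathrm{red},r}^{\mathrm{per}}(\boldsymbol{x}{[s]})$; summing over $u=r-t\in\{0,\dots,r-s\}$ gives, by the binomial theorem, $\big(1+5^{-C\ell}\big)^{r-s}\le \big(1+5^{-C\ell}\big)^{C\ell}\le 1+2^{-C\ell}$, the last inequality holding because $\ell\ge \ell_0$ is large enough that $2C\ell\cdot 5^{-C\ell}\le 2^{-C\ell}$. This proves the first inequality of \eqref{equ:P_{red,r}x[s]}; the second follows verbatim, replacing $N_{\mathrm{per}}$, $P_{\mathrm{per}}$, and the first half of \eqref{eq:vol_bounds} by $\overline{N}_{\mathrm{per}}$, $\overline{P}_{\mathrm{per}}$, and its second half, and invoking the blue half of Lemma~\ref{lem:prob_nonperfect}.

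The main obstacle is conceptual rather than computational: a priori $P_{\mathrm{red},t}^{\mathrm{per}}(\boldsymbol{x}{[s]})$ can exceed $P_{\mathrm{red},r}^{\mathrm{per}}(\boldsymbol{x}{[s]})$ by a factor exponential in $\ell(r-t)$, since in the worst case extending a perfect red clique by one vertex succeeds only with probability of order $(p_C/2)^{C\ell}$. The point is that this blow-up is \emph{exactly} absorbed by the much sharper decay $(p_C/10)^{C\ell(r-t)}$ coming from Lemma~\ref{lem:prob_nonperfect} (ultimately the volume concentration of Lemma~\ref{Volumenonperfect}): the product $5^{-C\ell(r-t)}$ is small enough that all $t<r$ terms together contribute at most $2^{-C\ell}$. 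Isolating and proving the clean worst-case lower bound $(p_C/2)^{C\ell}$ per extension, through Lemma~\ref{volume}, is the heart of the argument; everything else is bookkeeping.
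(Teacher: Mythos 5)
Your proof is correct and follows essentially the same strategy as the paper's: apply Lemma~\ref{lem:prob_nonperfect}, establish the reverse bound $P_{\mathrm{red},t}^{\mathrm{per}}(\boldsymbol{x}{[s]})\le c^{r-t}\,P_{\mathrm{red},r}^{\mathrm{per}}(\boldsymbol{x}{[s]})$ via Lemma~\ref{volume}, and absorb the geometric sum. The only cosmetic differences are that the paper works directly with an infimum over perfect extensions and bounds $\binom{r-s}{r-t}\le r^{r-t}$, while you phrase the telescoping via indicator events and close the sum with the binomial theorem; both yield the same conclusion.
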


\begin{proof}
We consider the first inequality.  
We may assume that \( \boldsymbol{x}{[s]} \) induces the red clique \( G_p(\boldsymbol{x}{[s]}) \); otherwise, both sides equal zero, and the inequality holds trivially.
For any \( s\leq t <r \),
consider any perfect sequence \( \boldsymbol{x}{[t]} \) whose first \( s \) points form the given perfect sequence \( \boldsymbol{x}{[s]} \).  
Using \eqref{eq:vol_bounds} in Lemma~\ref{volume}, we have
\(
\operatorname{Vol}\bigl(N_{\mathrm{per}}(\boldsymbol{x}{[t]})\bigr)
\geq 
\left( \frac{p_C}{2} \right)^{r} 
\operatorname{Vol}(S^k).
\)
This implies that for any $s\leq t < r$,
\[
P^{\mathrm{per}}_{\mathrm{red},\,t+1}\bigl(\boldsymbol{x}{[s]}\bigr)
\geq 
\inf_{\Bx[t]\ \mathrm{ perfect}}\left(\frac{
    \operatorname{Vol}\bigl(N_{\mathrm{per}}(\boldsymbol{x}{[t]})\bigr)
}{
    \operatorname{Vol}(S^k)
}\right)
\cdot
P^{\mathrm{per}}_{\mathrm{red},\,t}\bigl(\boldsymbol{x}{[s]}\bigr)
\geq 
\left( \frac{p_C}{2} \right)^{r}
\cdot
P^{\mathrm{per}}_{\mathrm{red},\,t}\bigl(\boldsymbol{x}{[s]}\bigr),
\]
and thus
\[
P^{\mathrm{per}}_{\mathrm{red},\,r}\bigl(\boldsymbol{x}{[s]}\bigr)
\geq 
\left( \frac{p_C}{2} \right)^{r(r-t)}
\cdot
P^{\mathrm{per}}_{\mathrm{red},\,t}\bigl(\boldsymbol{x}{[s]}\bigr).
\]
Combining this with Lemma~\ref{lem:prob_nonperfect}, we can then derive
\begin{equation}
\begin{aligned}
P_{\mathrm{red},r}(\boldsymbol{x}{[s]})\leq& P_{\mathrm{red},r}^{\mathrm{per}}(\boldsymbol{x}{[s]})+\sum_{t=s}^{r-1} \binom{r-s}{r-t}\left(\frac{p_C}{10}\right)^{C\ell(r-t)}P_{\mathrm{red},t}^{\mathrm{per}}(\boldsymbol{x}{[s]})\\
\leq& P_{\mathrm{red},r}^{\mathrm{per}}(\boldsymbol{x}{[s]})+\left(\sum_{t=s}^{r-1} r^{r-t}\left(\frac{p_C}{10}\right)^{C\ell(r-t)}\left(\frac{p_C}{2}\right)^{-r(r-t)}\right)\cdot P_{\mathrm{red},r}^{\mathrm{per}}(\boldsymbol{x}{[s]})\\
\leq& \left(1+\sum_{t=s}^{r-1} 4^{-C\ell(r-t)}\right)\cdot P_{\mathrm{red},r}^{\mathrm{per}}(\boldsymbol{x}{[s]})
\leq \left(1+2^{-C\ell}\right)\cdot P_{\mathrm{red},r}^{\mathrm{per}}(\boldsymbol{x}{[s]}),\notag
\end{aligned}
\end{equation}
where the third inequality follows from the analysis below (as \( r \leq C\ell \) and \( \ell \) is sufficiently large)
$$r^{r-t}\left(\frac{p_C}{10}\right)^{C\ell(r-t)}\cdot\left(\frac{p_C}{2}\right)^{-r(r-t)}\leq\left(C\ell\cdot 5^{-C\ell}\right)^{r-t}\leq 4^{-C\ell(r-t)}.$$
This completes the proof of the first inequality of Theorem~\ref{thm:perfect_nonperfect}. 
The proof of the second inequality follows analogously through similar arguments.
\end{proof}

In view of \eqref{equ:P(x[0])}, we see that Theorem~\ref{thm:perfect_nonperfect2} follows directly from the case \( s = 0 \) of Theorem~\ref{thm:perfect_nonperfect}.

\section{Estimates on Key Quantities}\label{sec:key-quan}
This section focuses on deriving a crucial estimate for computing $\kappa_{r}^{\mathrm{per}}$ and $\overline{\kappa}_{r}^{\mathrm{per}}$. 
Let $\Bx[r]$ be a random $r$-tuple in \( (S^k)^r \).
Recall Definitions~\ref{perfectneighborhood} and \ref{def:Qr}, which imply that
\begin{equation}\label{equ:kappa=P_per}
\kappa_r^{\mathrm{per}}=\mathbb{E}\left[P_{\mathrm{per}}(\boldsymbol{x}{[r]})\ |\ A_{\mathrm{red},r}\wedge B_r\right],
\quad \mbox{ where } \quad
P_{\mathrm{per}}(\boldsymbol{x}{[r]})=\prod_{s=0}^{r-1} Q_{[s]}(\Bx_{s+1}),
\end{equation}
and an analogous expression for \(\overline{\kappa}_{r}^{\mathrm{per}}\).
Since our analysis proceeds by fixing each perfect subsequence \(\boldsymbol{x}[s]\) for \(0 \le s < r\), 
the central quantity to estimate, as guided by Theorem~\ref{ratiored}, is
\[
\mathbb{E}\left[\langle \pi_{[s]}(\boldsymbol{x}_{s+1}), \pi_{[s]}(\boldsymbol{z}) \rangle \mid A_{\mathrm{red},r}\wedge B_r\right],
\]
where $\boldsymbol{z}$ is sampled uniformly at random from $N_{\mathrm{per}}(\boldsymbol{x}{[s]})$, independently of $\Bx_{s+1},\ldots, \Bx_r$.

Our goal in this section is to prove the following statement.

\begin{theorem}\label{thm:projection-expectation}
Let \(0 \leq s < r \leq C\ell\), and fix a perfect sequence \(\boldsymbol{x}[s] = (\boldsymbol{x}_1, \ldots, \boldsymbol{x}_s)\). 
Independently sample \(\boldsymbol{x}_{s+1}, \ldots, \boldsymbol{x}_r\) uniformly from \(S^k\), and recall \(A_{\mathrm{red},r}\), \(\overline{A}_{\mathrm{blue},r}\), and \(B_r\) from Definition~\ref{def:perfect_probs}.
\begin{itemize}
\item If \(G_p(\boldsymbol{x}[s])\) forms a red clique, and \(\boldsymbol{z}\) is sampled uniformly at random from \(N_{\mathrm{per}}(\boldsymbol{x}[s])\), independently of \(\boldsymbol{x}_{s+1}, \ldots, \boldsymbol{x}_r\), then
\begin{equation}\label{eq:proj-expectation3}
\mathbb{E}\big[\langle \pi_{[s]}(\boldsymbol{x}_{s+1}), \pi_{[s]}(\boldsymbol{z}) \rangle\mid A_{\mathrm{red},r}\wedge B_r\big]=  \frac{e^{-c^2}}{2\pi p^2} \cdot \frac{s}{k} + O\left(\frac{\ell}{Dk}\right).
\end{equation}
\item If \(G_p(\boldsymbol{x}{[s]})\) forms a blue clique, and \(\boldsymbol{z}\) is sampled uniformly at random from \(\overline{N}_{\mathrm{per}}(\boldsymbol{x}[s])\), independently of \(\boldsymbol{x}_{s+1},\ldots,\boldsymbol{x}_r\), then
\begin{equation}\label{eq:proj-expectation4}
\mathbb{E}\big[\langle \pi_{[s]}(\boldsymbol{x}_{s+1}), \pi_{[s]}(\boldsymbol{z}) \rangle\mid \overline{A}_{\mathrm{blue},r}\wedge B_r\big] = \frac{e^{-c^2}}{2\pi (1-p)^2} \cdot \frac{s}{k} + O\left(\frac{\ell}{Dk}\right).
\end{equation}
\end{itemize}
\end{theorem}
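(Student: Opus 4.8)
The plan is to compute the conditional expectation $\mathbb{E}\big[\langle \pi_{[s]}(\boldsymbol{x}_{s+1}), \pi_{[s]}(\boldsymbol{z}) \rangle \mid A_{\mathrm{red},r}\wedge B_r\big]$ by first exchanging the order of conditioning: condition on the perfect subsequence $\boldsymbol{x}[s+1]$ (equivalently on the pair $(\tilde{\boldsymbol{x}}_{s+1},\boldsymbol{x}[s])$, where $\tilde{\boldsymbol{x}}_{s+1}=\pi_{[s]}(\boldsymbol{x}_{s+1})$), and observe that $\boldsymbol{z}$ is independent of everything except through $N_{\mathrm{per}}(\boldsymbol{x}[s])$. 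Since $\boldsymbol{z}$ is sampled independently of $\boldsymbol{x}_{s+1},\dots,\boldsymbol{x}_r$, we have $\mathbb{E}[\langle \tilde{\boldsymbol{x}}_{s+1},\pi_{[s]}(\boldsymbol{z})\rangle \mid \text{all}] = \langle \tilde{\boldsymbol{x}}_{s+1},\, \mathbb{E}_{\boldsymbol{z}}[\pi_{[s]}(\boldsymbol{z})]\rangle$, so the problem decouples into: (i) the mean projection $\boldsymbol{m}:=\mathbb{E}_{\boldsymbol{z}}[\pi_{[s]}(\boldsymbol{z})]$ of a uniform point in $N_{\mathrm{per}}(\boldsymbol{x}[s])$ onto $X_s=\operatorname{span}(\boldsymbol{x}[s])$, a deterministic vector depending only on the fixed $\boldsymbol{x}[s]$; and (ii) the conditional expectation of $\langle \tilde{\boldsymbol{x}}_{s+1},\boldsymbol{m}\rangle$ given the clique-and-perfect event.

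For step (i), I would use the near-orthogonality built into a perfect sequence: the generators $\boldsymbol{x}_1,\dots,\boldsymbol{x}_s$ are nearly an orthonormal basis of $X_s$ (pairwise inner products $O(\sqrt{\ell/k})$ by perfectness applied iteratively), so $\pi_{[s]}(\boldsymbol{z})$ decomposes approximately as $\sum_{i=1}^s \langle \boldsymbol{z},\boldsymbol{x}_i\rangle \boldsymbol{x}_i$ up to a correction from the Gram matrix being $I+O(\sqrt{\ell/k})$. Each coordinate $\langle \boldsymbol{z},\boldsymbol{x}_i\rangle$ is, on $N_{\mathrm{per}}(\boldsymbol{x}[s])\subseteq N(\boldsymbol{x}_i)$, a conditioned spherical-cap coordinate: it is at most $-c/\sqrt{k}$, and by the Gaussian approximation (as in Lemma~\ref{prop:convergence_of_cpk} and Lemma~\ref{lem:normal_conditional_expectation}) its conditional expectation is $\approx -\frac{1}{\sqrt k}\cdot\frac{\phi(c)}{p} = -\frac{1}{\sqrt{k}}\cdot\frac{e^{-c^2/2}}{\sqrt{2\pi}\,p}$. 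The perfectness restriction changes $N(\boldsymbol{x}_i)$ only on a set of relative measure $\le (p_C/10)^{C\ell}$ by Lemma~\ref{Volumenonperfect}, which is negligible, so $\boldsymbol{m} = -\frac{e^{-c^2/2}}{\sqrt{2\pi}\,p\sqrt{k}}\sum_{i=1}^s \boldsymbol{x}_i + (\text{error})$, with the error controlled by the $O(\sqrt{\ell/k})$ deviations of the Gram matrix from identity.

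For step (ii), I would argue that conditioning on $A_{\mathrm{red},r}\wedge B_r$ affects $\tilde{\boldsymbol{x}}_{s+1}$ only through the event that $\boldsymbol{x}_{s+1}\in N(\boldsymbol{x}[s])$ and $(\boldsymbol{x}[s],\boldsymbol{x}_{s+1})$ is perfect — the conditions imposed by $\boldsymbol{x}_{s+2},\dots,\boldsymbol{x}_r$ are independent of $\boldsymbol{x}_{s+1}$ once one integrates them out (this uses Lemma~\ref{lem:hat(z)}: the common red-neighborhood of the later points, intersected appropriately, depends on $\boldsymbol{x}_{s+1}$ only through $\tilde{\boldsymbol{x}}_{s+1}$, and since $|\tilde{\boldsymbol{x}}_{s+1}|=O(\sqrt{\ell/k})$ is tiny, this dependence contributes only lower-order terms). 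Writing $\boldsymbol{x}_{s+1}=\tilde{\boldsymbol{x}}_{s+1}+\hat{\boldsymbol{x}}_{s+1}$, each coordinate $\langle\tilde{\boldsymbol{x}}_{s+1},\boldsymbol{x}_i\rangle=\langle\boldsymbol{x}_{s+1},\boldsymbol{x}_i\rangle$ is, conditioned on $\boldsymbol{x}_{s+1}\in N(\boldsymbol{x}[s])\subseteq N(\boldsymbol{x}_i)$, again a conditioned cap coordinate with conditional mean $\approx -\frac{e^{-c^2/2}}{\sqrt{2\pi}\,p\sqrt{k}}$. Hence $\mathbb{E}[\langle\tilde{\boldsymbol{x}}_{s+1},\boldsymbol{m}\rangle] \approx \big(\frac{e^{-c^2/2}}{\sqrt{2\pi}\,p\sqrt{k}}\big)^2 \cdot s = \frac{e^{-c^2}}{2\pi p^2}\cdot\frac{s}{k}$, where the factor $s$ counts the $s$ coordinates, each contributing the product of two conditional means (and the cross terms between distinct coordinates $i\ne j$ vanish to leading order by the near-independence of different cap-conditionings, again up to $O(\sqrt{\ell/k})$ Gram corrections). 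Collecting all error terms — the Gram-matrix deviations of order $O(\sqrt{\ell/k})$ multiplying the main term of order $s/k = O(\ell/k)$, the $(p_C/10)^{C\ell}$ perfectness corrections, the $O(1/k)$ in Lemma~\ref{prop:convergence_of_cpk}, and the $O(1/(k\sqrt k))$-type Gaussian-approximation errors — all fit inside $O(\ell/(Dk))$ since $\sqrt{\ell/k}=O(1/D)$. The blue case is identical with $p$ replaced by $1-p$ throughout.

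The main obstacle I anticipate is step (ii): making rigorous the claim that conditioning on the \emph{full} event $A_{\mathrm{red},r}\wedge B_r$ (which couples $\boldsymbol{x}_{s+1}$ to all later points through the clique structure) reduces, up to $O(\ell/(Dk))$ error, to conditioning only on $\boldsymbol{x}_{s+1}\in N_{\mathrm{per}}(\boldsymbol{x}[s])$. One must control the Radon–Nikodym derivative of the conditional law of $\boldsymbol{x}_{s+1}$ between these two conditionings; the key leverage is that this derivative depends on $\boldsymbol{x}_{s+1}$ only via $\tilde{\boldsymbol{x}}_{s+1}$ (by Lemma~\ref{lem:hat(z)} applied to the later points), and $\tilde{\boldsymbol{x}}_{s+1}$ ranges over a ball of radius $O(\sqrt{\ell/k})$, so the derivative is $1+O(\sqrt{\ell/k})$ uniformly — but checking this uniformity carefully, and verifying that the quantities $P^{\mathrm{per}}_{\mathrm{red},r}(\boldsymbol{x}[s+1])$ as functions of $\tilde{\boldsymbol{x}}_{s+1}$ have the required Lipschitz behavior (which should follow from Theorem~\ref{ratiored} and Lemma~\ref{volume}), is where the bulk of the technical work lies.
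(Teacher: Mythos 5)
Your step (i) — computing the mean projection \(\boldsymbol{m}=\mathbb{E}_{\boldsymbol{z}}[\pi_{[s]}(\boldsymbol{z})]\) coordinate-wise via the conditioned Gaussian tail and the near-orthogonality of the generators — is in the spirit of the paper's Lemma~\ref{lem:coefficient_expectations} and Lemma~\ref{lem:key-tech} and is essentially fine, though the paper needs the sharper Frobenius-norm bound of Lemma~\ref{lem:eigenvalues} (\(\|\mathbf{X}^T\mathbf{X}-\mathbf{I}\|_F^2=O(1/D^2)\), not just \(O(\sqrt{\ell/k})\) pairwise) so that the accumulated error over \(s=\Theta(\ell)\) coordinates stays at \(O(\ell/(Dk))\) after a Cauchy--Schwarz step. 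You gesture at Gram corrections of order \(\sqrt{\ell/k}\) but do not make that accumulation quantitative.

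The real divergence is step (ii), and there is a genuine gap there. You propose to show that the conditional law of \(\boldsymbol{x}_{s+1}\) given \(A_{\mathrm{red},r}\wedge B_r\) is close to uniform on \(N_{\mathrm{per}}(\boldsymbol{x}[s])\) by controlling the Radon--Nikodym derivative \(\rho(\boldsymbol{x}_{s+1})\propto P^{\mathrm{per}}_{\mathrm{red},r}(\boldsymbol{x}[s+1])\), and you assert \(\rho=1+O(\sqrt{\ell/k})\) ``uniformly'' because \(\tilde{\boldsymbol{x}}_{s+1}\) ranges over a ball of radius \(O(\sqrt{\ell/k})\). That bound does not follow from the radius alone: one must multiply by the logarithmic Lipschitz constant of \(P^{\mathrm{per}}_{\mathrm{red},r}(\boldsymbol{x}[s+1])\) in \(\tilde{\boldsymbol{x}}_{s+1}\), and this probability is a product over the \(r-s-1\) subsequent steps, each of which (via Theorem~\ref{ratiored}) contributes a relative variation of order up to \(1/D\) in \(\tilde{\boldsymbol{x}}_{s+1}\). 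With \(r-s=\Theta(\ell)\) the naive accumulated variation is \(O(\ell/D)\), which is not small. Whether the variations cancel to something acceptable, or whether they are sufficiently uncorrelated with \(\langle\tilde{\boldsymbol{x}}_{s+1},\boldsymbol{m}\rangle\) that the bias is negligible, is precisely the hard question, and it is not addressed by the tools the paper provides (Theorem~\ref{ratiored}, Lemma~\ref{volume}).

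The paper avoids this entirely by an exchangeability argument. Using Theorem~\ref{thm:perfect_nonperfect} one can drop the perfectness condition \(B_r\) at the cost of a negligible \(O(2^{-C\ell})\) error; the remaining event \(A_{\mathrm{red},r}\) is symmetric under permutation of \(\boldsymbol{x}_{s+1},\ldots,\boldsymbol{x}_r\), so \(\boldsymbol{x}_{s+1}\) can be swapped to the last slot; one then re-imposes perfectness for the reordered sequence. After this swap the conditional law of the swapped point is simply uniform on \(N_{\mathrm{per}}(\boldsymbol{y}[r-1])\) for a (random, but once fixed, deterministic) perfect red prefix \(\boldsymbol{y}[r-1]\), and Lemma~\ref{lem:projection-expectation2} then gives exactly the claimed expectation, with the crucial feature that the answer is independent of \(r\). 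This reordering is what makes the tilt problem you identify disappear; without it, you would need a new Lipschitz-type estimate on \(P^{\mathrm{per}}_{\mathrm{red},r}(\boldsymbol{x}[s+1])\) in \(\tilde{\boldsymbol{x}}_{s+1}\), accumulated carefully over the remaining \(r-s-1\) steps, which is a substantial piece of extra work not present in the paper and not sketched convincingly in your proposal.
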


The remainder of this section is devoted to proving this result, divided into three subsections.

\subsection{Spectral Properties of Perfect Sequences}\label{subsec:eigenvalues}
To facilitate the analysis of the desired expectations, we employ spectral arguments to study relevant vectors associated with perfect sequences. 
We begin with the definitions of these concepts. 

\begin{definition}\label{def:eigenvalues}
Let \(1 \leq r \leq C\ell\) and \( \Bx[r]=(\Bx_1,\ldots, \Bx_r)\) be a perfect sequence in \( (S^k)^r\subseteq (\mathbb{R}^{k+1})^r \). 
\begin{itemize}
    \item Let \(\mathbf{X} \in \mathbb{R}^{(k+1)\times r}\) be the matrix whose \( i^{\text{th}} \) column is the vector \(\boldsymbol{x}_i\) for each \( i \in [r]\).\footnote{We will view all vectors in \( \mathbb{R}^{k+1}\) as column vectors.}
    \item For each \( i \in [r] \), let \(\boldsymbol{v}_i \in \operatorname{span}(\boldsymbol{x}_1, \ldots, \boldsymbol{x}_r)\) be the unique vector satisfying \(\langle \boldsymbol{v}_i, \boldsymbol{x}_i \rangle = 1\) and orthogonal to \(\operatorname{span}\bigl(\{\boldsymbol{x}_j : j \in [r] \setminus \{i\}\}\bigr)\); see Figures~\ref{fig2} and \ref{fig3}.  
    Let \(\mathbf{V} \in \mathbb{R}^{(k+1)\times r}\) denote the matrix whose \( i^{\text{th}} \) column is the vector \(\boldsymbol{v}_i\) for each \( i \in [r] \). Equivalently, \(\mathbf{V} = \mathbf{X}(\mathbf{X}^T\mathbf{X})^{-1}\), i.e., \(\mathbf{V}\) is the Moore–Penrose pseudoinverse of \(\mathbf{X}^T\).
    \item For each \( i \in [r] \), let \( X_i := \operatorname{span}(\boldsymbol{x}_1, \ldots, \boldsymbol{x}_i) \), and define \( X_0 := \{\boldsymbol{0}\} \). Construct the orthonormal basis \(\{\boldsymbol{e}_i\}_{i=1}^r\) for the space \( X_r \), where for each \( i \in [r] \), the vector \(\boldsymbol{e}_i \in X_i \cap X_{i-1}^\perp\) is the unique unit vector satisfying \(\langle \boldsymbol{x}_i, \boldsymbol{e}_i \rangle > 0\).
    \item Let \(\lambda_1 \ge \cdots \ge \lambda_r \ge 0\) denote the eigenvalues of \(\mathbf{X}^T \mathbf{X}\), and let \(0 \le \mu_1 \le \cdots \le \mu_r\) denote the eigenvalues of \(\mathbf{V}^T \mathbf{V}\).
\end{itemize}
\end{definition}

We first show that the common neighborhoods \(N(\Bx[r])\) and \(\overline{N}(\Bx[r])\) can be characterized as a corner generated by the previously defined vectors \(\Bv_1,\ldots,\Bv_r\).

\begin{proposition}\label{prop:common_neighborhood}
Let \(\Bx[r]=(\boldsymbol{x}_1,\ldots,\boldsymbol{x}_r)\) be a perfect sequence.
Let \(\mathbf{X}\), \(\boldsymbol{v}_1,\ldots,\boldsymbol{v}_r\), and \(\mathbf{V}\) be defined as in Definition~\ref{def:eigenvalues}. 
Then we have
\begin{align*}
N(\boldsymbol{x}{[r]}) &= \left\{\boldsymbol{y} \in S^k : \pi_{[r]}(\boldsymbol{y}) = \sum_{i=1}^r a_i\boldsymbol{v}_i \text{ with } \ a_i \leq -\frac{c}{\sqrt{k}} \text{ for all }  i \in [r]\right\}, \\
\overline{N}(\boldsymbol{x}{[r]}) &= \left\{\boldsymbol{y} \in S^k : \pi_{[r]}(\boldsymbol{y}) = \sum_{i=1}^r a_i\boldsymbol{v}_i \text{ with } \ a_i > -\frac{c}{\sqrt{k}} \text{ for all }  i \in [r]\right\}.
\end{align*}
\end{proposition}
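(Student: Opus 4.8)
The plan is to unwind the definitions and use the fact that $\{\boldsymbol{v}_i\}$ is the dual basis to $\{\boldsymbol{x}_i\}$ inside the subspace $X_r = \operatorname{span}(\boldsymbol{x}_1,\ldots,\boldsymbol{x}_r)$. First I would record the key duality relation: by the defining properties of $\boldsymbol{v}_i$ in Definition~\ref{def:eigenvalues}, we have $\langle \boldsymbol{v}_i, \boldsymbol{x}_j\rangle = \delta_{ij}$ for all $i,j \in [r]$ (the Kronecker delta), which is exactly the statement that $\mathbf{V}^T\mathbf{X} = I_r$, consistent with $\mathbf{V} = \mathbf{X}(\mathbf{X}^T\mathbf{X})^{-1}$ (here $\mathbf{X}^T\mathbf{X}$ is invertible because a perfect sequence is in particular linearly independent — the near-orthogonality condition $|\pi_{[i-1]}(\boldsymbol{x}_i)| \le \alpha_C\sqrt{\ell}/\sqrt{k} < 1$ forces $\delta_{i+1} > 0$, so each $\boldsymbol{x}_{i+1} \notin X_i$).

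Next, since $\boldsymbol{v}_1,\ldots,\boldsymbol{v}_r$ form a basis of $X_r$, any $\boldsymbol{y} \in S^k$ has a unique expansion $\pi_{[r]}(\boldsymbol{y}) = \sum_{i=1}^r a_i \boldsymbol{v}_i$ for real coefficients $a_i$. The crucial computation is then, for each $j \in [r]$,
\[
\langle \boldsymbol{y}, \boldsymbol{x}_j\rangle = \langle \pi_{[r]}(\boldsymbol{y}), \boldsymbol{x}_j\rangle = \Bigl\langle \sum_{i=1}^r a_i\boldsymbol{v}_i,\, \boldsymbol{x}_j\Bigr\rangle = \sum_{i=1}^r a_i \langle \boldsymbol{v}_i, \boldsymbol{x}_j\rangle = a_j,
\]
where the first equality uses $\boldsymbol{x}_j \in X_r$ so that $\boldsymbol{y} - \pi_{[r]}(\boldsymbol{y}) \perp \boldsymbol{x}_j$, and the last uses the duality relation. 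Thus the coefficient $a_j$ in the $\boldsymbol{v}$-expansion of $\pi_{[r]}(\boldsymbol{y})$ is literally the inner product $\langle \boldsymbol{y}, \boldsymbol{x}_j\rangle$.

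With this identity in hand the proposition is immediate: by Definition~\ref{def:sphere_neighborhood}, $\boldsymbol{y} \in N(\boldsymbol{x}[r])$ iff $\langle \boldsymbol{y}, \boldsymbol{x}_j\rangle \le -c/\sqrt{k}$ for all $j \in [r]$, which by the displayed identity is equivalent to $a_j \le -c/\sqrt{k}$ for all $j$; similarly $\boldsymbol{y} \in \overline{N}(\boldsymbol{x}[r])$ iff $a_j > -c/\sqrt{k}$ for all $j$. This gives both claimed descriptions. I do not anticipate a genuine obstacle here; the only point requiring a word of care is verifying that $\mathbf{X}^T\mathbf{X}$ is invertible (equivalently that the perfect sequence is linearly independent) so that the $\boldsymbol{v}_i$ are well-defined and form a basis — this follows from the perfectness hypothesis as noted above, and I would state it explicitly at the start of the proof.
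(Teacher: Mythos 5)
Your proof is correct and follows essentially the same route as the paper's: the key identity $a_j = \langle \boldsymbol{y}, \boldsymbol{x}_j\rangle$ is derived from the duality $\langle \boldsymbol{v}_i, \boldsymbol{x}_j\rangle = \delta_{ij}$ (equivalently $\mathbf{V}^T\mathbf{X} = I_r$), which the paper writes in compact matrix notation and you write out componentwise. Your additional remark verifying that $\mathbf{X}^T\mathbf{X}$ is invertible (so that the $\boldsymbol{v}_i$ are well-defined) is a reasonable bit of extra care that the paper leaves implicit.
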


\begin{proof}
Consider the projection \(\pi_{[r]}(\boldsymbol{y}) = \sum_{i=1}^r a_i \boldsymbol{v}_i\) onto the space $X_r$. 
Then we have
\begin{equation}\label{equ:yx_i=a_i}
\begin{aligned}
(\langle \boldsymbol{y}, \boldsymbol{x}_1 \rangle, \ldots, \langle \boldsymbol{y}, \boldsymbol{x}_r \rangle) &= \boldsymbol{y}^T \mathbf{X} = (\pi_{[r]}(\boldsymbol{y}))^T \cdot \mathbf{X} \\
&= (a_1, \ldots, a_r) \mathbf{V}^T \cdot \mathbf{X}  = (a_1, \ldots, a_r) \cdot (\mathbf{X}^T\mathbf{X})^{-1} \mathbf{X}^T \mathbf{X} = (a_1, \ldots, a_r).
\end{aligned}
\end{equation}
By Definition~\ref{def:sphere_neighborhood}, 
we obtain that \(\boldsymbol{y} \in N(\boldsymbol{x}{[r]})\) if and only if \(a_i=\langle \boldsymbol{y}, \boldsymbol{x}_i \rangle \leq -\frac{c}{\sqrt{k}}\) for all \(i\in [r]\);
similarly, \(\boldsymbol{y} \in \overline{N}(\boldsymbol{x}{[r]})\) if and only if \(a_i=\langle \boldsymbol{y}, \boldsymbol{x}_i \rangle > -\frac{c}{\sqrt{k}}\) for all \(i\in [r]\),
finishing the proof.
\end{proof}

The following lemma will be useful in the coming proof. 
It offers multiple handy and broadly applicable formulas for various vectors under the perfect-sequence condition.

\begin{lemma}\label{lem:eigenvalues}
Let \(\Bx[r]=(\boldsymbol{x}_1,\ldots,\boldsymbol{x}_r)\) be a perfect sequence.
Let \(\mathbf{X}\), \(\boldsymbol{v}_1,\ldots,\boldsymbol{v}_r\), \(\mathbf{V}\), \(\lambda_1 \ge \cdots \ge \lambda_r\), and \(\mu_1 \le \cdots \le \mu_r\)  be defined as in Definition~\ref{def:eigenvalues}. Then the following hold:
\begin{itemize}
\item[(1)] For all $i \in [r]$, we have $\mu_i = \lambda_i^{-1}$ with $\lambda_i, \mu_i = 1 + O\left(\frac{1}{D}\right)$, and $|\boldsymbol{v}_i| = 1 + O\left(\frac{1}{D}\right)$.

\item[(2)] For a matrix $A=(a_{ij})_{m\times n}$,  let \( \| A \|_F = \sqrt{\sum_{i=1}^m \sum_{j=1}^n a_{ij}^2}\) denote its {\it Frobenius norm}. Then 
\begin{align*}
\sum_{i=1}^r (\lambda_i - 1)^2 = \|\mathbf{X}^T\mathbf{X} - \mathbf{I}\|_F^2 = O\left(\frac{1}{D^2}\right), \\
\sum_{i=1}^r (\mu_i - 1)^2 = \|\mathbf{V}^T\mathbf{V} - \mathbf{I}\|_F^2 = O\left(\frac{1}{D^2}\right).
\end{align*}
In particular, for each $i \in [r]$, let $V_r(i) = \operatorname{span}\bigl(\{\boldsymbol{v}_j : j \in [r] \setminus \{i\}\}\bigr)$. 
Then we have
\[
\sum_{i=1}^r |\pi_{V_r(i)}(\boldsymbol{v}_i)|^2 = O\left(\frac{1}{D^2}\right).
\]
\item[(3)] Let \(\boldsymbol{y} \in S^k\) be a vector such that \((\boldsymbol{x}[r], \boldsymbol{y})\) is perfect, and let \(\pi_{[r]}(\boldsymbol{y}) = \sum_{i=1}^r a_i \boldsymbol{v}_i\). Then $\sum_{i=1}^r a_i^2=O\left(\frac{\ell}{k}\right)$ and for any $I\subseteq [r]$,
\[
\sum_{i\in I}a_i^2 = \left(1+O\left(\frac{1}{D}\right)\right)\cdot \left|\sum_{i\in I} a_i\boldsymbol{v}_i\right|^2.
\]
\end{itemize}
\end{lemma}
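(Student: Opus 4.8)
The plan is to reduce everything to two facts about the Gram matrices associated with a perfect sequence $\Bx[r]$: namely that $\mathbf{V}^T\mathbf{V} = (\mathbf{X}^T\mathbf{X})^{-1}$, and that $\mathbf{X}^T\mathbf{X}$ is within $O(1/D)$ of the identity in operator norm, with Frobenius distance only $O(1/D)$. First I would note that a perfect sequence is automatically linearly independent (since $|\pi_{[i]}(\Bx_{i+1})| \le \alpha_C\sqrt{\ell}/\sqrt{k} < 1$ forces $\Bx_{i+1} \notin X_i$), so $\mathbf{X} \in \mathbb{R}^{(k+1)\times r}$ has full column rank $r$, the matrix $\mathbf{V} = \mathbf{X}(\mathbf{X}^T\mathbf{X})^{-1}$ is well defined, and $\mathbf{V}^T\mathbf{V} = (\mathbf{X}^T\mathbf{X})^{-1}$. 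Hence the eigenvalues $\mu_1 \le \cdots \le \mu_r$ of $\mathbf{V}^T\mathbf{V}$ are exactly $\lambda_1^{-1} \le \cdots \le \lambda_r^{-1}$, i.e.\ $\mu_i = \lambda_i^{-1}$, so all of (1) will follow once the $\lambda_i$ are pinned down.

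The crucial estimate is $\|\mathbf{X}^T\mathbf{X} - \mathbf{I}\|_F^2 = O(1/D^2)$, and this is where the one real obstacle lies. The diagonal of $\mathbf{X}^T\mathbf{X} - \mathbf{I}$ vanishes since $|\Bx_i| = 1$; the danger is the off-diagonal part, since bounding each entry separately by $|\langle\Bx_i,\Bx_j\rangle| \le \alpha_C\sqrt{\ell}/\sqrt{k}$ and summing over the $\approx r^2 = O(\ell^2)$ entries only gives $O(\ell/D^2)$, which is too weak. The remedy is to group by the larger index: for $i < j$ one has $\langle\Bx_i,\Bx_j\rangle = \langle\Bx_i, \pi_{[j-1]}(\Bx_j)\rangle$ because $\Bx_i \in X_{j-1}$, so $\sum_{i<j}\langle\Bx_i,\Bx_j\rangle^2 = |\mathbf{X}_{j-1}^T\pi_{[j-1]}(\Bx_j)|^2 \le \|\mathbf{X}_{j-1}\|_{\mathrm{op}}^2\,|\pi_{[j-1]}(\Bx_j)|^2 \le \|\mathbf{X}_{j-1}\|_{\mathrm{op}}^2 \cdot \alpha_C^2\ell/k$, where $\mathbf{X}_{j-1}$ is the submatrix formed by the first $j-1$ columns. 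A short induction on $m$ then shows $\|\mathbf{X}_m\|_{\mathrm{op}}^2 \le 2$ for all $m \le r$: assuming it for all indices $< m$, the displayed bound gives $\|\mathbf{X}_m^T\mathbf{X}_m - \mathbf{I}\|_F^2 \le 4(m-1)\alpha_C^2\ell/k \le 4C\alpha_C^2/D^2 < 1$ (using $m \le r \le C\ell$ and $k = D^2\ell^2$), hence $\|\mathbf{X}_m\|_{\mathrm{op}}^2 \le 1 + \|\mathbf{X}_m^T\mathbf{X}_m - \mathbf{I}\|_{\mathrm{op}} < 2$ for $D$ large. Feeding $\|\mathbf{X}_{j-1}\|_{\mathrm{op}}^2 \le 2$ back into the estimate yields $\|\mathbf{X}^T\mathbf{X} - \mathbf{I}\|_F^2 = 2\sum_{j=2}^r\sum_{i<j}\langle\Bx_i,\Bx_j\rangle^2 \le 4C\alpha_C^2/D^2 = O(1/D^2)$, whence $\|\mathbf{X}^T\mathbf{X} - \mathbf{I}\|_{\mathrm{op}} = O(1/D)$ and $\lambda_i = 1 + O(1/D)$. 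Then $\mu_i = \lambda_i^{-1} = 1 + O(1/D)$ (as $\lambda_i \ge 1/2$), $\|\mathbf{V}^T\mathbf{V} - \mathbf{I}\|_F^2 = \sum_i(\mu_i-1)^2 = O\bigl(\sum_i(\lambda_i-1)^2\bigr) = O(1/D^2)$, and $|\Bv_i|^2 = (\mathbf{V}^T\mathbf{V})_{ii} = 1 + O(1/D)$, proving (1) and the Frobenius bounds of (2).

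For the last assertion of (2) I would exploit that $\Bx_i$ is the unit dual-basis vector of $\Bv_i$: from $\langle\Bv_i,\Bx_j\rangle = \delta_{ij}$ the vector $\Bx_i$ is orthogonal to $V_r(i) = \operatorname{span}\{\Bv_j : j \ne i\}$ and, being a unit vector in $X_r$, spans $V_r(i)^\perp \cap X_r$; hence the component of $\Bv_i$ orthogonal to $V_r(i)$ is $\langle\Bv_i,\Bx_i\rangle\Bx_i = \Bx_i$, so $|\pi_{V_r(i)}(\Bv_i)|^2 = |\Bv_i|^2 - 1$. Summing, $\sum_{i=1}^r|\pi_{V_r(i)}(\Bv_i)|^2 = \operatorname{tr}(\mathbf{V}^T\mathbf{V}) - r = \sum_{i=1}^r(\lambda_i^{-1} - 1)$. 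Since $\sum_i\lambda_i = \operatorname{tr}(\mathbf{X}^T\mathbf{X}) = \sum_i|\Bx_i|^2 = r$, we have $\sum_i(\lambda_i - 1) = 0$, and the algebraic identity $\lambda^{-1} - 1 = (1-\lambda) + (1-\lambda)^2/\lambda$ gives $\sum_i(\lambda_i^{-1}-1) = \sum_i(\lambda_i-1)^2/\lambda_i \le 2\sum_i(\lambda_i-1)^2 = O(1/D^2)$, as needed. Finally, for (3), write $\pi_{[r]}(\By) = \sum_{i=1}^r a_i\Bv_i$. For any $I \subseteq [r]$, $\bigl|\sum_{i\in I}a_i\Bv_i\bigr|^2 = \mathbf{a}_I^T(\mathbf{V}^T\mathbf{V})_I\,\mathbf{a}_I$ where $\mathbf{a}_I = (a_i)_{i\in I}$ and $(\mathbf{V}^T\mathbf{V})_I$ is the principal submatrix on $I$; since $\|(\mathbf{V}^T\mathbf{V})_I - \mathbf{I}\|_{\mathrm{op}} \le \|\mathbf{V}^T\mathbf{V} - \mathbf{I}\|_{\mathrm{op}} = O(1/D)$, this equals $(1 + O(1/D))\sum_{i\in I}a_i^2$, the stated identity. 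Taking $I = [r]$ and using that $(\Bx[r],\By)$ is perfect, so $|\pi_{[r]}(\By)| \le \alpha_C\sqrt{\ell}/\sqrt{k}$, gives $\sum_{i=1}^r a_i^2 = (1+O(1/D))|\pi_{[r]}(\By)|^2 = O(\ell/k)$, completing the proof.
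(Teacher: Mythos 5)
Your proof is correct, and at two points it takes a genuinely different route from the paper's argument.

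First, the order of establishing the spectral facts is reversed. The paper proves $\lambda_i = 1 + O(1/D)$ directly via Courant--Fischer, by expanding an arbitrary unit vector $\boldsymbol{y} \in X_r$ in the Gram--Schmidt basis $\{\boldsymbol{e}_i\}$ and splitting $\sum_i \langle \boldsymbol{y}, \boldsymbol{x}_i\rangle^2$ into a diagonal term and two cross terms controlled by AM--GM; only afterwards does it bound $\|\mathbf{X}^T\mathbf{X}-\mathbf{I}\|_F^2$, using the just-proved eigenvalue bounds (via Cauchy interlacing for the submatrices $\mathbf{X}_{[s]}$) to estimate $\sum_{i<j}\langle\boldsymbol{x}_i,\boldsymbol{x}_j\rangle^2$. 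You instead bound the Frobenius norm first with a self-contained bootstrap: grouping the off-diagonal entries by the larger index, writing $\sum_{i<j}\langle\boldsymbol{x}_i,\boldsymbol{x}_j\rangle^2 = |\mathbf{X}_{j-1}^T\pi_{[j-1]}(\boldsymbol{x}_j)|^2 \le \|\mathbf{X}_{j-1}\|_{\mathrm{op}}^2\cdot\alpha_C^2\ell/k$, and then running an induction that keeps $\|\mathbf{X}_m\|_{\mathrm{op}}^2 \le 2$. That cleanly yields $\|\mathbf{X}^T\mathbf{X}-\mathbf{I}\|_F^2 = O(1/D^2)$ and hence $\lambda_i = 1 + O(1/D)$ in one pass. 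Both arguments hinge on the same grouping-by-$j$ trick and the same $O(\ell/k)$ bound on $|\pi_{[j-1]}(\boldsymbol{x}_j)|^2$, but your bootstrap avoids doing the Courant--Fischer expansion separately.

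Second, and more strikingly, for $\sum_i|\pi_{V_r(i)}(\boldsymbol{v}_i)|^2$ you found an exact identity the paper does not use. Since $\boldsymbol{x}_i$ is a unit vector orthogonal to all $\boldsymbol{v}_j$ with $j\ne i$, it spans $V_r(i)^\perp\cap X_r$, and the dual-basis relation $\langle\boldsymbol{v}_i,\boldsymbol{x}_i\rangle=1$ makes the orthogonal component of $\boldsymbol{v}_i$ equal to $\boldsymbol{x}_i$ exactly, so $|\pi_{V_r(i)}(\boldsymbol{v}_i)|^2 = |\boldsymbol{v}_i|^2 - 1$. Summing gives $\operatorname{tr}(\mathbf{V}^T\mathbf{V}) - r = \sum_i(\lambda_i^{-1}-1)$; the leading-order cancellation is then handled by $\operatorname{tr}(\mathbf{X}^T\mathbf{X})=r$ (i.e.\ $\sum_i(\lambda_i-1)=0$) and the algebraic identity $\lambda^{-1}-1 = (1-\lambda) + (1-\lambda)^2/\lambda$, leaving only the quadratic term which is $O(1/D^2)$. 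The paper instead relates $|\pi_{V_r(i)}(\boldsymbol{v}_i)|^2$ to $\sum_{j\ne i}\langle\boldsymbol{v}_i,\boldsymbol{v}_j\rangle^2$ up to a $(1+O(1/D))$ factor and bounds the latter by $\|\mathbf{V}^T\mathbf{V}-\mathbf{I}\|_F^2$. Your version is more elementary (no projection-norm-ratio estimate needed) and makes the mechanism of the cancellation transparent. The treatment of part (3) via operator-norm control of the principal submatrix $(\mathbf{V}^T\mathbf{V})_I$ matches the paper's approach.
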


\begin{proof}
Let $\boldsymbol{x}[r]=(\boldsymbol{x}_1,\ldots,\boldsymbol{x}_r)$ be a fixed perfect sequence.
For item (1), we first observe that
\begin{align*}
\mathbf{V}^T\mathbf{V} &= (\mathbf{X}^T\mathbf{X})^{-1}\mathbf{X}^T\mathbf{X}(\mathbf{X}^T\mathbf{X})^{-1}= (\mathbf{X}^T\mathbf{X})^{-1}.
\end{align*}
This proves that \(\mu_i = \lambda_i^{-1} \) for all $i \in [r]$.
Note that $\By^{T} \mathbf{X}\mathbf{X}^T \By=\sum_{i=1}^r \langle \boldsymbol{y}, \boldsymbol{x}_i \rangle^2$, and $\mathbf{X}^T\mathbf{X}$ and $\mathbf{X}\mathbf{X}^T$ share identical nonzero eigenvalues. 
By the Courant-Fischer theorem, we have
\begin{align}
\lambda_1 &= \sup_{\substack{\boldsymbol{y} \in X_r \\ |\boldsymbol{y}|=1}} \sum_{i=1}^r \langle \boldsymbol{y}, \boldsymbol{x}_i \rangle^2 
\qquad \mbox{and} \qquad
\lambda_r = \inf_{\substack{\boldsymbol{y} \in X_r \\ |\boldsymbol{y}|=1}} \sum_{i=1}^r \langle \boldsymbol{y}, \boldsymbol{x}_i \rangle^2. \label{eq:max_eigenvalue}
\end{align}
Recall the orthonormal basis $\{\boldsymbol{e}_i\}_{i=1}^r$ defined in Definition \ref{def:eigenvalues}.  
For all $1\leq i, j\leq r$, we have
\begin{equation}\label{equ:projx}
\langle \boldsymbol{x}_i, \boldsymbol{e}_j \rangle^2 = 
\begin{cases}
O\left(\frac{\ell}{k}\right), & 1 \leq j < i, \\
1 - O\left(\frac{\ell}{k}\right), & j = i, \\
0, & i<j\leq r.
\end{cases}
\end{equation}
Consider any unit vector $\boldsymbol{y} = \sum_{i=1}^r b_i \boldsymbol{e}_i\in X_r$ (with $\sum_{i=1}^r b_i^2 = 1$). 
It follows that
\begin{align*}
\sum_{i=1}^r \langle \boldsymbol{y}, \boldsymbol{x}_i \rangle^2 
&= \sum_{i=1}^r \left(\sum_{j=1}^r b_j \langle \boldsymbol{e}_j, \boldsymbol{x}_i \rangle\right)^2 =\sum_{i=1}^r\left(b_i\langle\boldsymbol{e}_i,\boldsymbol{x}_i\rangle+\left\langle \boldsymbol{x}_i,\sum_{j=1}^{i-1}b_j\boldsymbol{e}_j\right\rangle \right)^2\\
&=\underbrace{\sum_{i=1}^r b_i^2\langle\boldsymbol{e}_i,\boldsymbol{x}_i\rangle^2}_{(\mathrm{I})} + \underbrace{2\sum_{i=1}^{r}b_i\langle\boldsymbol{e}_i,\boldsymbol{x}_i\rangle \left\langle \boldsymbol{x}_i,\sum_{j=1}^{i-1}b_j\boldsymbol{e}_j\right\rangle}_{(\mathrm{II})} + \underbrace{\sum_{i=1}^r \left\langle \boldsymbol{x}_i,\sum_{j=1}^{i-1}b_j\boldsymbol{e}_j\right\rangle^2}_{(\mathrm{III})}\\
&= \left(1+O\left(\frac{1}{D}\right)\right)\cdot \sum_{i=1}^r b_i^2\langle\boldsymbol{e}_i,\boldsymbol{x}_i\rangle^2 + \left(1+O(D)\right)\cdot \sum_{i=1}^r \left\langle \boldsymbol{x}_i,\sum_{j=1}^{i-1}b_j\boldsymbol{e}_j\right\rangle^2,
\end{align*}
where the last inequality holds because the absolute value of \(\mathrm{(II)}\) is at most \( \frac{1}{D}\,\mathrm{(I)} + D\,\mathrm{(III)}\), by the AM–GM inequality (i.e., \(2ab \le \frac{1}{D}a^2 + D b^2\)).
Using \eqref{equ:projx} and the identity \(\sum_{i=1}^r b_i^2 = 1\), 
we have \( \sum_{i=1}^r b_i^2\langle\boldsymbol{e}_i,\boldsymbol{x}_i\rangle^2 = 1 + O(\frac{\ell}{k}) \).
Moreover, since \(\sum_{j=1}^{i-1} b_j\boldsymbol{e}_j \in X_{i-1}\) and \(\sum_{j=1}^{i-1} b_j^2 \leq 1\),  we can get
\[
\left\langle\boldsymbol{x}_i,\sum_{j=1}^{i-1}b_j\boldsymbol{e}_j\right\rangle^2 = \left\langle \pi_{[i-1]}(\boldsymbol{x}_i),\sum_{j=1}^{i-1}b_j\boldsymbol{e}_j\right\rangle^2
\leq|\pi_{[i-1]}(\boldsymbol{x}_i)|^2\cdot \left|\sum_{j=1}^{i-1} b_j\boldsymbol{e}_j\right|^2 \leq \alpha_C^2 {\frac{\ell}{ k}}\cdot\left(\sum_{j=1}^{i-1} b_j^2\right)\leq \alpha_C^2 {\frac{\ell}{ k}},
\] 
where the second last inequality follows from the fact that $\Bx[r]$ is a perfect sequence.
Combining all above bounds, and noting that \(r \le C \ell\) and \(k = D^2 \ell^2\), we obtain that for any unit vector $\By\in X_r$,
\[
\sum_{i=1}^r \langle \boldsymbol{y}, \boldsymbol{x}_i \rangle^2 = \left(1+O\left(\frac{1}{D}\right)\right)\cdot \left(1 + O\left(\frac{\ell}{k}\right)\right)+(1+O(D))\cdot O\left(\frac{r\ell}{k}\right)=1+O\left(\frac{1}{D}\right).
\]
This, together with \eqref{eq:max_eigenvalue}, implies that for every $i\in [r]$,
\( \lambda_i = 1 + O\left(\frac{1}{D}\right) \) and thus \( \mu_i=\lambda_i^{-1} = 1 + O\left(\frac{1}{D}\right) \).
We apply the Courant-Fischer theorem for the matrix \( \mathbf{V}^T\mathbf{V} \):
for any unit vector $\By\in \mathbb{R}^r$, we have $\mu_1\leq \By^{T} \mathbf{V}^T\mathbf{V} \By\leq \mu_r$.
For each $i\in [r]$,
by taking $\By$ to be the unit vector $\By_i\in \mathbb{R}^r$, where the $i^{\text{th}}$ entry is 1,
we derive $|\boldsymbol{v}_i|^2=\By_i^{T} \mathbf{V}^T\mathbf{V} \By_i=1+O\left(\frac{1}{D}\right)$ and thus $|\boldsymbol{v}_i|=1+O\left(\frac{1}{D}\right)$, proving item (1).

For any \(I \subseteq [r]\), let \(\mathbf{V}_I \in \mathbb{R}^{(k+1)\times |I|}\) denote the matrix whose columns are the vectors \(\boldsymbol{v}_i\) for all \(i \in I\); define \(\mathbf{X}_I\) analogously.
By Cauchy’s eigenvalue interlacing theorem, it is easy to obtain that

\medskip

{\noindent \bf Property \((\star)\).} all \(|I|\) eigenvalues of \(\mathbf{V}_I^T \mathbf{V}_I\) (respectively, \(\mathbf{X}_I^T \mathbf{X}_I\)) are equal to \(1 + O\bigl(\frac{1}{D}\bigr)\), and the same holds for the nonzero eigenvalues of \(\mathbf{V}_I \mathbf{V}_I^T\) (respectively, \(\mathbf{X}_I \mathbf{X}_I^T\)).

\medskip

Now we prove item (2). 
Fix $s\in [r-1]$ and let $\boldsymbol{z}_s := \pi_{[s]}(\boldsymbol{x}_{s+1})$. 
We have
\begin{equation}\label{equ:proj_xs+1}
\sum_{i=1}^s\langle \boldsymbol{x}_{s+1}, \boldsymbol{x}_i \rangle^2=\sum_{i=1}^s \langle \boldsymbol{z}_s, \boldsymbol{x}_i \rangle^2=\boldsymbol{z}_s^T\mathbf{X}_{[s]}\mathbf{X}_{[s]}^T\boldsymbol{z}_{s}
= \left(1 + O\left(\frac{1}{D}\right)\right)\cdot |\boldsymbol{z}_s|^2
=O\left(\frac{\ell}{k}\right), 
\end{equation}
where the second last equality follows from Property \((\star)\) above and the Courant–Fischer theorem, and the final equality holds because \(\boldsymbol{x}[r]\) is perfect.
Since \(\lvert \boldsymbol{x}_i \rvert = 1\) for all \(1 \le i \le r\), it follows from \eqref{equ:proj_xs+1} that the following Frobenius norm satisfies
\begin{equation*}
    \sum_{i=1}^r (\lambda_i - 1)^2=\|\mathbf{X}^T\mathbf{X} - \mathbf{I}\|_F^2 = \sum_{\substack{1 \leq i,j \leq r \\ i \neq j}} \langle \boldsymbol{x}_i, \boldsymbol{x}_j \rangle^2 = 2\sum_{j=1}^r \sum_{1 \leq i < j} \langle \boldsymbol{x}_i, \boldsymbol{x}_j \rangle^2 = O\left(\frac{\ell^2}{k}\right) = O\left(\frac{1}{D^2}\right). 
\end{equation*}
Using the fact that \(\mu_i^{-1} = \lambda_i = 1 + O\left(\frac{1}{D}\right)\) for each \(i \in [r]\), we deduce from the above equation that
\begin{equation*}
\|\mathbf{V}^T\mathbf{V} - \mathbf{I}\|_F^2 = \sum_{i=1}^r (\mu_i - 1)^2 = \sum_{i=1}^r \frac{(\lambda_i - 1)^2}{\lambda_i^2} = O\left(\frac{1}{D^2}\right). \label{eq:dual_frob_bound}
\end{equation*}
Next, consider the projections $\boldsymbol{u}_i := \pi_{V_r(i)}(\boldsymbol{v}_i)$, 
where $V_r(i) = \operatorname{span}\bigl(\{\boldsymbol{v}_j : j \in [r] \setminus \{i\}\}\bigr)$.
Using the same discussion as in \eqref{equ:proj_xs+1}, we can obtain that for any $i\in [r]$, 
\[
\sum_{j\in [r]\backslash \{i\}} \langle \boldsymbol{v}_i, \boldsymbol{v}_j \rangle^2 = \boldsymbol{u}_i^T\mathbf{V}_{[r]\backslash \{i\}}\mathbf{V}_{[r]\backslash \{i\}}^T \boldsymbol{u}_{i}
= \left(1 + O\left(\frac{1}{D}\right)\right)\cdot |\boldsymbol{u}_i|^2.
\]
This implies the final desired equality of item (2) as follows:
\begin{align*}
\sum_{i=1}^r |\pi_{V_r(i)}(\boldsymbol{v}_i)|^2=\left(1 + O\left(\frac{1}{D}\right)\right)\cdot \sum_{1\leq i\neq j\leq r }\langle \boldsymbol{v}_i, \boldsymbol{v}_j \rangle^2\leq \left(1 + O\left(\frac{1}{D}\right)\right)\cdot \|\mathbf{V}^T\mathbf{V} - \mathbf{I}\|_F^2 = O\left(\frac{1}{D^2}\right). 
\end{align*}

It remains to prove item (3). 
Consider any \(\boldsymbol{y} \in S^k\) such that \((\boldsymbol{x}[r], \boldsymbol{y})\) is perfect.
Let \(\pi_{[r]}(\boldsymbol{y}) = \sum_{i=1}^r a_i \boldsymbol{v}_i\). 
Let $\boldsymbol{a}=(a_1,\ldots,a_r)^T$. 
Then by the same arguments as above, we have 
\[
\left(1+O\left(\frac{1}{D}\right)\right)\cdot\boldsymbol{a}^T \boldsymbol{a}=\boldsymbol{a}^T \mathbf{V}^T\mathbf{V}\boldsymbol{a}= \left|\sum_{i=1}^r a_i\Bv_i\right|^2=\left|\pi_{[r]}(\boldsymbol{y})\right|^2=O\left(\frac{\ell}{k}\right),
\]
where the last equality follows by the perfectness of the sequence \((\boldsymbol{x}[r], \boldsymbol{y})\). 
This implies  $\sum_{i=1}^r a_i^2=O\left(\frac{\ell}{k}\right)$.
For any $I\subseteq [r]$, by replacing \(\boldsymbol{a}\) and \(\mathbf{V}\) with the vector \(\boldsymbol{a}_I = (a_i : i \in I)^T\) and the matrix \(\mathbf{V}_I\) in the leftmost equality above, it follows that
\(
\sum_{i \in I} a_i^2 
= \left( 1 + O\left( \frac{1}{D} \right) \right) 
\cdot \left\lvert \sum_{i \in I} a_i \boldsymbol{v}_i \right\rvert^2.
\)
This completes the proof of Lemma~\ref{lem:eigenvalues}. 
\end{proof}

We conclude this subsection with the next lemma,
which provides an estimate of the difference between the orthonormal basis \(\Be_1,\ldots, \Be_r\) and \(\Bv_1,\ldots, \Bv_r\). 

\begin{lemma}\label{lem:dist_e_v}
Let \(\Bx[r]=(\boldsymbol{x}_1,\ldots,\boldsymbol{x}_r)\) be a perfect sequence.
Let 
\(\mathbf{X}\), \(\boldsymbol{e}_1,\ldots,\boldsymbol{e}_r\), \(\mathbf{V}\), and  \(\boldsymbol{v}_1,\ldots,\boldsymbol{v}_r\) be defined as in Definition~\ref{def:eigenvalues}. 
Let \(V_r(i) = \operatorname{span}\bigl(\{\boldsymbol{v}_j : j \in [r] \setminus \{i\}\}\bigr) \). Then for all \(i \in [r] \), we have
\[
\langle \boldsymbol{v}_i, \boldsymbol{e}_i \rangle = 1 + O\left(\frac{\ell}{k}\right
)\quad\text{and}\quad\left| \pi_{V_r(i)}(\boldsymbol{e}_i) \right| = O\left(\frac{\sqrt{\ell}}{\sqrt{k}}\right
).
\]
\end{lemma}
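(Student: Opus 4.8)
The plan is to reduce everything to two elementary linear-algebra facts about the biorthogonal systems $\{\boldsymbol{x}_i\}$, $\{\boldsymbol{v}_i\}$, $\{\boldsymbol{e}_i\}$ spanning the common space $X_r$, and then feed in the perfectness hypothesis $|\pi_{[i-1]}(\boldsymbol{x}_i)| \le \alpha_C\sqrt{\ell}/\sqrt{k}$ once at the very end. First I would record that $\mathbf{V}^T\mathbf{X} = (\mathbf{X}^T\mathbf{X})^{-1}\mathbf{X}^T\mathbf{X} = \mathbf{I}$, i.e. $\langle \boldsymbol{v}_i, \boldsymbol{x}_j\rangle = \delta_{ij}$ for all $i,j \in [r]$ (this is just the defining property of the $\boldsymbol{v}_i$). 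A consequence I would establish next is the identification
\[
V_r(i) := \operatorname{span}\bigl(\{\boldsymbol{v}_j : j \in [r]\setminus\{i\}\}\bigr) = \boldsymbol{x}_i^{\perp} \cap X_r ,
\]
which holds because $\langle \boldsymbol{v}_j, \boldsymbol{x}_i\rangle = 0$ for $j \ne i$ gives $V_r(i) \subseteq \boldsymbol{x}_i^{\perp}\cap X_r$, and both sides have dimension $r-1$ (the $\boldsymbol{v}_j$ are linearly independent since $\mathbf{V}$ has full column rank, and $\boldsymbol{x}_i \ne \boldsymbol{0}$ lies in $X_r$). In particular $X_r = V_r(i) \oplus \mathbb{R}\boldsymbol{x}_i$ is an orthogonal decomposition with $|\boldsymbol{x}_i| = 1$.

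For the first identity, I would write $\boldsymbol{x}_i = \langle \boldsymbol{x}_i, \boldsymbol{e}_i\rangle\,\boldsymbol{e}_i + \pi_{[i-1]}(\boldsymbol{x}_i)$, using that $\{\boldsymbol{e}_1,\dots,\boldsymbol{e}_i\}$ is an orthonormal basis of $X_i$ and $\boldsymbol{e}_i \perp X_{i-1}$. Pairing with $\boldsymbol{v}_i$ and using $\langle \boldsymbol{v}_i,\boldsymbol{x}_i\rangle = 1$ together with $\pi_{[i-1]}(\boldsymbol{x}_i) \in X_{i-1} \subseteq \operatorname{span}(\{\boldsymbol{x}_j : j \ne i\}) \perp \boldsymbol{v}_i$ kills the second term, leaving $1 = \langle \boldsymbol{x}_i,\boldsymbol{e}_i\rangle\,\langle \boldsymbol{v}_i,\boldsymbol{e}_i\rangle$, hence $\langle \boldsymbol{v}_i,\boldsymbol{e}_i\rangle = \langle \boldsymbol{x}_i,\boldsymbol{e}_i\rangle^{-1}$. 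Since $|\boldsymbol{x}_i|=1$ decomposes orthogonally as $\langle \boldsymbol{x}_i,\boldsymbol{e}_i\rangle^2 + |\pi_{[i-1]}(\boldsymbol{x}_i)|^2 = 1$ and $\langle \boldsymbol{x}_i,\boldsymbol{e}_i\rangle > 0$, perfectness gives $\langle \boldsymbol{x}_i,\boldsymbol{e}_i\rangle = \sqrt{1 - O(\ell/k)} = 1 + O(\ell/k)$, whence $\langle \boldsymbol{v}_i,\boldsymbol{e}_i\rangle = 1 + O(\ell/k)$ (recall $\alpha_C$ is absorbed into the $O(\cdot)$ constant by our convention).

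For the second identity, I would use the orthogonal decomposition $\boldsymbol{e}_i = \pi_{V_r(i)}(\boldsymbol{e}_i) + \langle \boldsymbol{e}_i, \boldsymbol{x}_i\rangle\,\boldsymbol{x}_i$ valid for the vector $\boldsymbol{e}_i \in X_i \subseteq X_r$, coming from $X_r = V_r(i)\oplus\mathbb{R}\boldsymbol{x}_i$. Taking norms (Pythagoras) gives $|\pi_{V_r(i)}(\boldsymbol{e}_i)|^2 = 1 - \langle \boldsymbol{e}_i,\boldsymbol{x}_i\rangle^2 = |\pi_{[i-1]}(\boldsymbol{x}_i)|^2 = O(\ell/k)$, so $|\pi_{V_r(i)}(\boldsymbol{e}_i)| = O(\sqrt{\ell}/\sqrt{k})$, as desired. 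The only genuinely non-mechanical step is the identification $V_r(i) = \boldsymbol{x}_i^{\perp}\cap X_r$; once that is in hand, both statements are immediate consequences of Pythagoras and the single quantitative input from the definition of a perfect sequence. (The case $i=1$, where $X_0 = \{\boldsymbol{0}\}$ and $\boldsymbol{e}_1 = \boldsymbol{x}_1$, is trivial and can be checked separately in one line.)
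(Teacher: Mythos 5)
Your proof is correct and follows essentially the same route as the paper: both prove the first identity by pairing $\boldsymbol{x}_i = \langle \boldsymbol{x}_i,\boldsymbol{e}_i\rangle\boldsymbol{e}_i + \pi_{[i-1]}(\boldsymbol{x}_i)$ with $\boldsymbol{v}_i$ and using the biorthogonality $\langle\boldsymbol{v}_i,\boldsymbol{x}_j\rangle=\delta_{ij}$, and both rely on $\boldsymbol{x}_i\perp V_r(i)$ for the second. For the second bound you go one step further than the paper by explicitly identifying $V_r(i)=\boldsymbol{x}_i^\perp\cap X_r$ (so $X_r = V_r(i)\oplus\mathbb{R}\boldsymbol{x}_i$ orthogonally) and applying Pythagoras to get the exact identity $|\pi_{V_r(i)}(\boldsymbol{e}_i)| = |\pi_{[i-1]}(\boldsymbol{x}_i)|$, whereas the paper projects $\boldsymbol{e}_i=(\boldsymbol{x}_i-\pi_{[i-1]}(\boldsymbol{x}_i))/|\cdot|$ onto $V_r(i)$, uses $\pi_{V_r(i)}(\boldsymbol{x}_i)=\boldsymbol{0}$, and settles for the slightly weaker upper bound $|\pi_{V_r(i)}(\boldsymbol{e}_i)|\le |\pi_{[i-1]}(\boldsymbol{x}_i)|/|\boldsymbol{x}_i-\pi_{[i-1]}(\boldsymbol{x}_i)| = (1+O(\ell/k))\,O(\sqrt{\ell/k})$; your version is tight, but both yield the stated $O(\sqrt{\ell}/\sqrt{k})$.
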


\begin{proof}
Recall that for each \(i \in [r]\), \(\boldsymbol{e}_i\) is the unit vector in \(X_i \cap X_{i-1}^\perp\) satisfying \(\langle \boldsymbol{x}_i, \boldsymbol{e}_i \rangle >0\). So 
\begin{equation}\label{eq:xi_for_ei}
\boldsymbol{e}_i = \frac{\boldsymbol{x}_i - \pi_{[i-1]}(\boldsymbol{x}_i)}{\left| \boldsymbol{x}_i - \pi_{[i-1]}(\boldsymbol{x}_i) \right|
}.
\end{equation}
By Definition~\ref{def:eigenvalues},
we have \(\langle\boldsymbol{v}_i,\boldsymbol{x}_j\rangle=0\) for all \(j\neq i\), 
thus by \eqref{eq:xi_for_ei}
\begin{align*}
1 = \langle \boldsymbol{v}_i, \boldsymbol{x}_i \rangle = \langle \boldsymbol{v}_i, \boldsymbol{x}_i - \pi_{[i-1]}(\boldsymbol{x}_i) \rangle + \langle \boldsymbol{v}_i, \pi_{[i-1]}(\boldsymbol{x}_i) \rangle = \left| \boldsymbol{x}_i - \pi_{[i-1]}(\boldsymbol{x}_i) \right| \cdot \langle \boldsymbol{v}_i, \boldsymbol{e}_i \rangle.
\end{align*}
Then using the fact that $\Bx[r]$ is a perfect sequence, we can derive
\[
\langle \boldsymbol{v}_i, \boldsymbol{e}_i \rangle = \frac{1}{\left| \boldsymbol{x}_i - \pi_{[i-1]}(\boldsymbol{x}_i) \right|}
= \frac{1}{\sqrt{1 - \left| \pi_{[i-1]}(\boldsymbol{x}_i) \right|^2}} = 
\frac{1}{\sqrt{1 - O\left({\ell}/{k}\right)}} = 1 + O\left(\frac{\ell}{k}\right).
\]
By the property \(\langle \boldsymbol{v}_j, \boldsymbol{x}_i \rangle = 0\) for all \(j \neq i\), we have \(\pi_{V_r(i)}(\boldsymbol{x}_i) = \boldsymbol{0}\).
This, together with  \eqref{eq:xi_for_ei}, implies
\begin{align*}
\left| \pi_{V_r(i)}(\boldsymbol{e}_i) \right| 
&= 
\left| \pi_{V_r(i)} \left( \frac{\boldsymbol{x}_i - \pi_{[i-1]}(\boldsymbol{x}_i)}{\left| \boldsymbol{x}_i - \pi_{[i-1]}(\boldsymbol{x}_i) \right|} \right) \right| = 
\frac{1}{\left| \boldsymbol{x}_i - \pi_{[i-1]}(\boldsymbol{x}_i) \right|} \left| \pi_{V_r(i)} \left( -\pi_{[i-1]}(\boldsymbol{x}_i) \right) \right| \\
&
\leq \frac{ \left| \pi_{[i-1]}(\boldsymbol{x}_i) \right| }{ \left| \boldsymbol{x}_i - \pi_{[i-1]}(\boldsymbol{x}_i) \right| } 
\leq \left(1 + O\left(\frac{\ell}{k}\right)\right) \cdot O\left(\frac{\sqrt{\ell}}{\sqrt{k}}\right) = 
O\left(\frac{\sqrt{\ell}}{\sqrt{k}}\right),
\end{align*}
completing the proof. \end{proof}

\subsection{Estimate of \(\mathbb{E}[\langle\pi_{[r]}(\boldsymbol{y}),\boldsymbol{e}_i\rangle]\):  Single Random Projection}\label{sec:estimation_of_xy}

In this subsection, we present the core technical step in proving Theorem~\ref{thm:projection-expectation}, as follows. 

\begin{lemma}\label{lem:key-tech}
Let \(1 \leq r \leq C\ell\), and let \(\boldsymbol{x}{[r]} = (\boldsymbol{x}_1,\ldots,\boldsymbol{x}_r)\) be a perfect sequence. 
Let \(\boldsymbol{v}_1,\ldots,\boldsymbol{v}_r\), \( \Be_1,\ldots, \Be_r\) and \(V_r(s) = \operatorname{span}\bigl(\{\boldsymbol{v}_i : i \in [r] \setminus \{s\}\}\bigr) \) for $s\in [r]$ be defined as in Definition~\ref{def:eigenvalues}.
If \(\boldsymbol{y}\) is sampled uniformly at random from  \(N_{\mathrm{per}}(\boldsymbol{x}{[r]})\),
then for each $s\in [r]$,
\begin{equation}\label{eq:expection2}
\mathbb{E}[\langle\pi_{[r]}(\boldsymbol{y}),\boldsymbol{e}_s\rangle]= -\left(1 + O\left(\frac{1}{D}\right)\right)\cdot\frac{e^{-c^2/2}}{p\sqrt{2\pi k}} + O\left(\frac{\sqrt{\ell}}{\sqrt{k}}|\pi_{V_r(s)}(\boldsymbol{v}_s)|\right).
\end{equation}
Analogously, if \(\boldsymbol{y}\) is sampled uniformly at random from \(\overline{N}_{\mathrm{per}}(\boldsymbol{x}{[r]})\), then for each \( s\in [r]\), 
\begin{equation}\label{eq:expection2b}
\mathbb{E}[\langle\pi_{[r]}(\boldsymbol{y}),\boldsymbol{e}_s\rangle]= \left(1 + O\left(\frac{1}{D}\right)\right)\cdot\frac{e^{-c^2/2}}{(1-p)\sqrt{2\pi k}} + O\left(\frac{\sqrt{\ell}}{\sqrt{k}}|\pi_{V_r(s)}(\boldsymbol{v}_s)|\right).
\end{equation}
\end{lemma}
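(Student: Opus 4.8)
The plan is to parametrize $\pi_{[r]}(\By)$ in the basis $\{\Bv_i\}_{i=1}^r$ dual to $\{\Bx_i\}$, rather than the orthonormal basis $\{\Be_i\}$, because Proposition~\ref{prop:common_neighborhood} tells us that $N(\Bx[r])$ becomes the simple ``corner'' $\{\sum a_i\Bv_i : a_i\le -c/\sqrt k\ \forall i\}$ in these coordinates, and the perfect condition is a mild additional constraint (each $a_i$ is small, by Lemma~\ref{lem:eigenvalues}(3)). First I would write $\langle \pi_{[r]}(\By),\Be_s\rangle = \sum_{i=1}^r a_i \langle \Bv_i,\Be_s\rangle$. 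The coefficient $\langle \Bv_i,\Be_s\rangle$ is controlled by Lemma~\ref{lem:dist_e_v}: the diagonal term $\langle\Bv_s,\Be_s\rangle = 1+O(\ell/k)$, while for $i\ne s$ we have $\Be_s\perp\Bx_i$ is false in general, but we can write $\langle \Bv_i,\Be_s\rangle$ in terms of $\pi_{V_r(s)}(\Be_s)$ — indeed since $\{\Bv_i : i\ne s\}$ spans $V_r(s)$, the sum $\sum_{i\ne s} a_i\langle\Bv_i,\Be_s\rangle = \langle \sum_{i\ne s}a_i\Bv_i,\,\pi_{V_r(s)}(\Be_s)\rangle$, whose absolute value is at most $|\sum_{i\ne s}a_i\Bv_i|\cdot|\pi_{V_r(s)}(\Be_s)| = O(\sqrt{\ell/k})\cdot O(\sqrt{\ell/k})$ by Lemma~\ref{lem:eigenvalues}(3) and Lemma~\ref{lem:dist_e_v}. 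Wait — that gives $O(\ell/k)$, which is stronger than the claimed error; but the stated error term $O(\frac{\sqrt\ell}{\sqrt k}|\pi_{V_r(s)}(\Bv_s)|)$ is phrased to be summable later, so I should instead keep the dependence on $|\pi_{V_r(s)}(\Bv_s)|$ explicit by relating $\pi_{V_r(s)}(\Be_s)$ back to $\pi_{V_r(s)}(\Bv_s)$ via $\Be_s$ and $\Bv_s$ being nearly parallel (both essentially the unit normal to $X_{s-1}$ adjusted). So the off-diagonal contribution is $O(\frac{\sqrt\ell}{\sqrt k}\,|\pi_{V_r(s)}(\Bv_s)|)$ plus lower order.

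The heart of the argument is therefore to compute $\mathbb{E}[a_s]$ where $(a_1,\dots,a_r)$ is uniform (with respect to the pushforward of surface measure on $S^k$) over the perfect corner $\{a_i\le -c/\sqrt k\}\cap\{\text{perfect}\}$. By the symmetry/independence structure I would condition: fix $\tilde\By = \pi_{[r]}(\By)$, equivalently fix $(a_1,\dots,a_r)$; the fiber is a sphere $S^{k-r}$ of radius $\sqrt{1-|\tilde\By|^2}$, and the density of $\tilde\By$ on $X_r$ is proportional to $(1-|\tilde\By|^2)^{(k-r-1)/2}$, which — since $|\tilde\By|^2 = (1+O(1/D))\sum a_i^2 = O(\ell/k)$ on the perfect region — behaves like a Gaussian $\exp(-\frac{k}{2}\sum a_i^2(1+o(1)))$. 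So $(a_1,\dots,a_r)$ is, up to the $(1+O(1/D))$ distortion coming from $\mathbf V^T\mathbf V\ne\mathbf I$ (Lemma~\ref{lem:eigenvalues}(2)) and up to the negligible truncation enforcing perfectness, distributed like independent $\mathcal N(0, 1/k)$ variables each conditioned to lie below $-c/\sqrt k$. Then $\mathbb{E}[a_s] = \frac{1}{\sqrt k}\,\mathbb{E}[Z \mid Z\le -c] $ where $Z\sim\mathcal N(0,1)$, and $\mathbb{E}[Z\mid Z\le -c] = -\phi(c)/\Phi(-c) = -\frac{1}{\sqrt{2\pi}}e^{-c^2/2}/p$, giving exactly the main term $-\frac{e^{-c^2/2}}{p\sqrt{2\pi k}}(1+O(1/D))$. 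The $O(1/D)$ relative error absorbs the metric distortion between the $a_i$-coordinates and genuine orthonormal coordinates, the discrepancy between the Beta-distribution density and the Gaussian, and the effect of the perfectness truncation (which changes masses by a factor $1+O(2^{-C\ell})$ via Lemma~\ref{Volumenonperfect}/Lemma~\ref{volume}). The blue case \eqref{eq:expection2b} is identical with $-c$ replaced by the complementary half-space, giving $\mathbb{E}[Z\mid Z>-c] = \phi(c)/(1-\Phi(-c)) = \frac{1}{\sqrt{2\pi}}e^{-c^2/2}/(1-p)$; here one should invoke Lemma~\ref{lem:normal_conditional_expectation} for the closed form and its boundedness.

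I expect the main obstacle to be making the ``$(a_1,\dots,a_r)$ is approximately i.i.d.\ truncated Gaussian'' step rigorous with control $O(1/D)$ on the \emph{relative} error of a \emph{single} coordinate's expectation, since naive bounds on the joint density only give control on the whole vector. The clean way is: (i) use Lemma~\ref{lem:eigenvalues}(3) to replace $\sum a_i^2$ in the exponent by $|\tilde\By|^2$ exactly and then note $(1-u)^{(k-r-1)/2} = e^{-\frac{k-r-1}{2}u}(1+O(u^2 k))$ uniformly for $u=O(\ell/k)$ so $O(u^2k)=O(\ell^2/k)=O(1/D^2)$; (ii) factor the resulting Gaussian weight $\exp(-\frac{k-r-1}{2}\boldsymbol a^T\mathbf V^T\mathbf V\boldsymbol a)$ and, since $\mathbf V^T\mathbf V = \mathbf I + E$ with $\|E\|_F = O(1/D)$, compare $\mathbb{E}[a_s]$ under this weight to $\mathbb{E}[a_s]$ under the product weight $\exp(-\frac{k}{2}\sum a_i^2)$ by a first-order Gaussian-perturbation estimate (the ratio of normalizing constants and the shift in the marginal mean are both $O(\|E\|_F)=O(1/D)$ relative), restricted to the corner $\{a_i\le -c/\sqrt k\}$ — the truncation only helps since it makes the region bounded; (iii) finally restore the perfectness constraint, which removes a set of measure $\le (p_C/10)^{C\ell}$ relative to the unconstrained corner by Lemma~\ref{volume}, hence changes $\mathbb{E}[a_s]$ by at most $O(2^{-C\ell})\cdot\sup|a_s| = O(2^{-C\ell}\sqrt{\ell/k})$, which is absorbed. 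Everything else is routine Gaussian-integral bookkeeping and the coefficient estimates already supplied by Lemmas~\ref{lem:dist_e_v} and~\ref{lem:eigenvalues}.
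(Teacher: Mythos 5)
The overall framework of your argument — switch to the dual $\{\Bv_i\}$ coordinates, recognize $N(\Bx[r])$ as the corner $\{a_i \le -c/\sqrt k\}$, observe the density is approximately Gaussian, reduce to a truncated‐normal expectation — matches the paper's strategy, and your final Gaussian identity $\mathbb{E}[Z\mid Z\le -c]=-\phi(c)/p$ is correct. But there is a genuine gap in step (ii), the ``first-order Gaussian-perturbation estimate.'' You claim that replacing the quadratic form $\boldsymbol a^T\mathbf V^T\mathbf V\,\boldsymbol a$ by $\sum a_i^2$ shifts the marginal mean of $a_s$ by only $O(\|E\|_F)=O(1/D)$ \emph{relatively}. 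This is false for the worst-case index $s$. Under the correlated Gaussian, conditioning on the other coordinates (which sit near $-\mu/\sqrt k$ with $\mu=\Theta(1)$) shifts the mean of $a_s$ by roughly $-\tfrac{\mu}{\sqrt k}\sum_{i\ne s}E_{si}$, and $\bigl|\sum_{i\ne s}E_{si}\bigr|\le\sqrt{r}\,\|E_{s,\cdot}\|_2\approx\sqrt r\,|\pi_{V_r(s)}(\Bv_s)|$. Since $r$ can be $\Theta(\ell)$ and $|\pi_{V_r(s)}(\Bv_s)|$ can be as large as $\Theta(1/D)$ for an individual $s$ (the Frobenius bound $\sum_s|\pi_{V_r(s)}(\Bv_s)|^2=O(1/D^2)$ only controls the average), this shift is $O\bigl(\tfrac{\sqrt\ell}{\sqrt k}|\pi_{V_r(s)}(\Bv_s)|\bigr)$, which \emph{relative} to the main term $\Theta(1/\sqrt k)$ is as large as $O(\sqrt\ell/D)$ — not $O(1/D)$, and indeed unbounded as $\ell\to\infty$.

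You also misattribute the source of the $|\pi_{V_r(s)}(\Bv_s)|$ error. You try to extract it from the off-diagonal basis coefficients $\langle\Bv_i,\Be_s\rangle$, but as you yourself notice this only contributes $O(\sqrt{\ell/k})\cdot O(\sqrt{\ell/k})=O(\ell/k)=O(1/(D\sqrt k))$, which is swallowed by the $(1+O(1/D))$ relative factor on the main term. The dominant $|\pi_{V_r(s)}(\Bv_s)|$-error really lives inside $\mathbb{E}[a_s]$ and is precisely what you are losing in (ii). The paper's Lemma~\ref{lem:coefficient_expectations} surfaces it cleanly by \emph{conditioning} on the remaining coordinates $\boldsymbol\omega_{\overline s}=(a_i)_{i\ne s}$ and completing the square: the substitution $t=a_s|\Bv_s|+\bigl\langle\tfrac{\Bv_s}{|\Bv_s|},\sum_{i\ne s}a_i\Bv_i\bigr\rangle$ isolates the one-dimensional truncated-Gaussian integral, and the correlation shows up explicitly as an additive shift of size $\bigl|\bigl\langle\tfrac{\Bv_s}{|\Bv_s|},\sum_{i\ne s}a_i\Bv_i\bigr\rangle\bigr|=O\bigl(\tfrac{\sqrt\ell}{\sqrt k}|\pi_{V_r(s)}(\Bv_s)|\bigr)$. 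To repair your approach you would need to redo (ii) as a coordinate-wise conditional computation (or a perturbation estimate that tracks the $s$-th row of $E$ individually rather than the Frobenius norm), because a uniform $O(1/D)$ relative bound is neither true nor strong enough for the downstream Cauchy--Schwarz summation in Lemma~\ref{lem:projection-expectation2}.
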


The essential part of this lemma is formulated in the next lemma, which we prove first.

\begin{lemma}\label{lem:coefficient_expectations}
Let \( \Bx[r],\boldsymbol{v}_1,\ldots,\boldsymbol{v}_r\) and \( V_r(s) \) be given as in Lemma~\ref{lem:key-tech}.
Consider the vector \(\boldsymbol{y}\) sampled uniformly at random from  \(N_{\mathrm{per}}(\boldsymbol{x}{[r]})\), with \(\pi_{[r]}(\boldsymbol{y}) = \sum_{i=1}^r a_i \boldsymbol{v}_i\).
Then for each \( s\in [r]\), 
\begin{equation}\label{equ:Eai}
\mathbb{E}[a_s] = -\left(1 + O\left(\frac{1}{D}\right)\right)\cdot\frac{e^{-c^2/2}}{p \sqrt{2\pi k}} + O\left(\frac{\sqrt{\ell}}{\sqrt{k}}|\pi_{V_r(s)}(\boldsymbol{v}_s)|\right).
\end{equation}

Analogously, consider the vector \(\boldsymbol{y}\) sampled uniformly at random from \(\overline{N}_{\mathrm{per}}(\boldsymbol{x}{[r]})\), with \(\pi_{[r]}(\boldsymbol{y}) = \sum_{i=1}^r a_i \boldsymbol{v}_i\). Then for each \( s\in [r]\), 
\begin{equation}\label{equ:Eai3}
\mathbb{E}[a_s] = \left(1 + O\left(\frac{1}{D}\right)\right)\cdot \frac{e^{-c^2/2}}{(1-p)\sqrt{2\pi k}} + O\left(\frac{\sqrt{\ell}}{\sqrt{k}}|\pi_{V_r(s)}(\boldsymbol{v}_s)|\right).
\end{equation}
\end{lemma}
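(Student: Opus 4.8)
The plan is to peel off from $\By$ the single real coordinate that carries $a_s$, reducing the statement to a one-dimensional conditional expectation of a near-Gaussian variable on the sphere, and then to run the approximation already used in the proofs of Lemma~\ref{prop:convergence_of_cpk} and Lemma~\ref{limrk3}, carefully tracking the error against the perturbations permitted by perfectness. First I would observe, via \eqref{equ:yx_i=a_i}, that $a_s=\langle\By,\Bx_s\rangle=\langle\pi_{[r]}(\By),\Bx_s\rangle$, and that $N_{\mathrm{per}}(\Bx[r])$ is entirely determined by $\pi_{[r]}(\By)$, so the whole computation lives in $X_r$. Put $W_s:=\operatorname{span}(\Bx_j:j\neq s)$ and $\boldsymbol w_s:=\Bv_s/|\Bv_s|$. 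From $\langle\Bv_j,\Bx_s\rangle=\delta_{js}$ one gets $\Bx_s\perp V_r(s)$, $W_s^{\perp}\cap X_r=\mathbb{R}\Bv_s$, and the orthogonal decomposition $\Bv_s=\Bx_s+\pi_{V_r(s)}(\Bv_s)$; hence $|\Bv_s|^2=1+|\pi_{V_r(s)}(\Bv_s)|^2=1+O(1/D^2)$ by Lemma~\ref{lem:eigenvalues}(2), while $\pi_{W_s}(\Bx_s)=\Bx_s-\Bv_s/|\Bv_s|^2$ satisfies $|\pi_{W_s}(\Bx_s)|^2=1-|\Bv_s|^{-2}\le|\pi_{V_r(s)}(\Bv_s)|^2$. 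Writing $\pi_{[r]}(\By)=\boldsymbol\eta+\xi\boldsymbol w_s$ with $\boldsymbol\eta:=\pi_{W_s}(\By)$ and $\xi:=\langle\By,\boldsymbol w_s\rangle$, this yields
\[
a_s=\langle\boldsymbol\eta,\pi_{W_s}(\Bx_s)\rangle+\frac{\xi}{|\Bv_s|},
\]
and since perfectness of $(\Bx[r],\By)$ forces $|\boldsymbol\eta|\le|\pi_{[r]}(\By)|\le\alpha_C\sqrt\ell/\sqrt k$, Cauchy--Schwarz bounds the first term pointwise by $\frac{\alpha_C\sqrt\ell}{\sqrt k}|\pi_{V_r(s)}(\Bv_s)|$. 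Taking expectations, $\mathbb{E}[a_s]=(1+O(1/D^2))\,\mathbb{E}[\xi]+O\!\big(\tfrac{\sqrt\ell}{\sqrt k}|\pi_{V_r(s)}(\Bv_s)|\big)$, so it suffices to evaluate $\mathbb{E}[\xi]$.

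To evaluate $\mathbb{E}[\xi]$ I would condition on $\boldsymbol\eta$. The constraints $\langle\By,\Bx_j\rangle\le-c/\sqrt k$ for $j\neq s$ depend only on $\boldsymbol\eta$ (as $\Bx_j\in W_s$); the constraint for $j=s$ becomes $\xi\le\theta(\boldsymbol\eta):=|\Bv_s|\big(-c/\sqrt k-\langle\boldsymbol\eta,\pi_{W_s}(\Bx_s)\rangle\big)$; and perfectness becomes $\xi^2\le\alpha_C^2\ell/k-|\boldsymbol\eta|^2$. Thus for admissible $\boldsymbol\eta$ the conditional density of $\xi$ is proportional to $(1-|\boldsymbol\eta|^2-\xi^2)^{(k-r-1)/2}$ on that set; replacing it by a Gaussian of variance $(1-|\boldsymbol\eta|^2)/(k-r-1)=(1+O(1/D^2))/k$ exactly as in Lemma~\ref{prop:convergence_of_cpk} and Lemma~\ref{limrk3}, and noting that for admissible $\boldsymbol\eta$ with $|\boldsymbol\eta|\le\tfrac12\alpha_C\sqrt\ell/\sqrt k$ the perfectness truncation of $\xi$ removes only an $e^{-\Omega(\ell)}$ tail, one obtains $\mathbb{E}[\xi\mid\boldsymbol\eta]=-\tfrac1{\sqrt k}\mu\big(-\sqrt k\,\theta(\boldsymbol\eta)\big)+O(\tfrac1{D^2\sqrt k})$, where $\mu(t)=\phi(t)/(1-\Phi(t))$ is the globally Lipschitz function of Lemma~\ref{lem:normal_conditional_expectation} (and $\mathbb{E}[Z\mid Z\le t]=-\mu(-t)$ for all $t$, so the sign of $\theta(\boldsymbol\eta)$ is immaterial). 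Since $-\sqrt k\,\theta(\boldsymbol\eta)=|\Bv_s|c+|\Bv_s|\sqrt k\langle\boldsymbol\eta,\pi_{W_s}(\Bx_s)\rangle=c+O(1/D^2)+O\big(\sqrt\ell\,|\pi_{V_r(s)}(\Bv_s)|\big)$ by the first paragraph, the bound $|\mu'|\le100$ converts this into $\mathbb{E}[\xi\mid\boldsymbol\eta]=-\tfrac{\mu(c)}{\sqrt k}+O(\tfrac1{D^2\sqrt k})+O\big(\tfrac{\sqrt\ell}{\sqrt k}|\pi_{V_r(s)}(\Bv_s)|\big)$, uniformly over such $\boldsymbol\eta$. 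For the remaining $\boldsymbol\eta$ I would use only $|\mathbb{E}[\xi\mid\boldsymbol\eta]|\le|\pi_{[r]}(\By)|\le\alpha_C\sqrt\ell/\sqrt k$, together with the bound — from Lemma~\ref{volume} (controlling the conditional law by the uniform law on $S^k$ up to a factor $(2/p_C)^{r}$) and Lemma~\ref{Volumenonperfect} applied to the projection $\pi_{W_s}$ — that these $\boldsymbol\eta$ carry conditional probability at most $(2/p_C)^{r}(p_C/10)^{C\ell}=e^{-\Omega(\ell)}$. Averaging over $\boldsymbol\eta$ then gives $\mathbb{E}[\xi]=-\tfrac{\mu(c)}{\sqrt k}+O(\tfrac1{D^2\sqrt k})+O\big(\tfrac{\sqrt\ell}{\sqrt k}|\pi_{V_r(s)}(\Bv_s)|\big)$. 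Since $1-\Phi(c)=p+O(1/k)$ by Lemma~\ref{prop:convergence_of_cpk}, $\mu(c)=\phi(c)/(1-\Phi(c))=\tfrac{e^{-c^2/2}}{p\sqrt{2\pi}}(1+O(1/k))$; absorbing $O(1/(D^2\sqrt k))$ into a factor $1+O(1/D^2)$ on the main term and substituting into the displayed identity for $\mathbb{E}[a_s]$ proves \eqref{equ:Eai}. For \eqref{equ:Eai3} the point $\By$ is uniform on $\overline N_{\mathrm{per}}(\Bx[r])$, so every ``$\le$'' turns into ``$\ge$'': the relevant quantity is $\mathbb{E}[\xi\mid\xi\ge\theta(\boldsymbol\eta)]$, which involves $\phi/\Phi$ in place of $\phi/(1-\Phi)$, and $\Phi(c)=1-p+O(1/k)$ replaces $1-\Phi(c)$, yielding the main term $\tfrac{e^{-c^2/2}}{(1-p)\sqrt{2\pi k}}$; the rest of the argument is unchanged.

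The main obstacle is the uniform-in-$\boldsymbol\eta$ control in the second step: one must (i) push the Gaussian approximation of the conditional density of $\xi$ through with an error that is still negligible after being weighed against the $\Theta(1/\sqrt k)$ size of $\mathbb{E}[\xi]$ (and, later, after summation over $s$), which is routine but must be carried out in the manner of Lemma~\ref{limrk3}; and (ii) show that the $\boldsymbol\eta$ for which the perfectness truncation of $\xi$ is not negligible carry exponentially small conditional mass, which is exactly where Lemma~\ref{volume} and Lemma~\ref{Volumenonperfect} are invoked. By contrast, the linear-algebra identities of the first paragraph and the evaluation of $\mu(c)$ are bookkeeping; note in particular that the per-$s$ error term $O\big(\tfrac{\sqrt\ell}{\sqrt k}|\pi_{V_r(s)}(\Bv_s)|\big)$ is deliberately left in terms of $|\pi_{V_r(s)}(\Bv_s)|$ rather than its worst-case bound $O(1/D)$, so that the sum over $s$ can later be estimated by Cauchy--Schwarz using $\sum_s|\pi_{V_r(s)}(\Bv_s)|^2=O(1/D^2)$ from Lemma~\ref{lem:eigenvalues}(2).
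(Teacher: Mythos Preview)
Your proposal is correct and follows essentially the same approach as the paper: condition on the coordinates orthogonal to the $\Bv_s$-direction, approximate the resulting one-dimensional density by a Gaussian, control the perfectness truncation via Lemmas~\ref{Volumenonperfect} and~\ref{volume}, and then invoke the Lipschitz bound of Lemma~\ref{lem:normal_conditional_expectation} at $c$. The only notable difference is that you work in the orthogonal coordinates $(\boldsymbol\eta,\xi)$ from the outset, whereas the paper starts in the $(a_i)$-coordinates and then performs the affine substitution $t=a_s|\Bv_s|+\langle\boldsymbol w_s,\sum_{i\ne s}a_i\Bv_i\rangle$ --- which is exactly your $\xi$ --- so your presentation is a mild streamlining of the same argument (and your observation that the constraints for $j\ne s$ depend only on $\boldsymbol\eta$ is what makes this coordinate choice clean).
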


\begin{proof}
Fix a perfect sequence \(\boldsymbol{x}{[r]} = (\boldsymbol{x}_1,\ldots,\boldsymbol{x}_r) \) and 
let \(\boldsymbol{v}_1,\ldots,\boldsymbol{v}_r\) be defined from \(\boldsymbol{x}{[r]}\) as in Definition~\ref{def:eigenvalues}.
Let \(\boldsymbol{y}\) be the vector sampled uniformly at random from \(N_{\mathrm{per}}(\boldsymbol{x}{[r]})\), and let \(\pi_{[r]}(\boldsymbol{y}) = \sum_{i=1}^r a_i \boldsymbol{v}_i\).
From now on, we regard \(\boldsymbol{y}\) as the joint distribution of the random variables \(a_i\) for \(i \in [r]\).
We fix an index \(s \in [r]\), and aim to prove \eqref{equ:Eai} for this choice of \(s\). 
Write \( \boldsymbol{\omega}_{\overline{s}} \triangleq \bigl(a_1,\ldots,a_{s-1},a_{s+1},\ldots, a_r\bigr) \) and
define the event \(\mathcal{A}\triangleq \left\{ \boldsymbol{\omega}_{\overline{s}}: \left|\sum_{i \in [r]\backslash \{s\} } a_i \boldsymbol{v}_i\right| \leq (\alpha_C - 2)\frac{\sqrt{\ell}}{\sqrt{k}}\right\}.\)

We first claim that to prove \eqref{equ:Eai}, it suffices to show the following for the conditional expectation
\begin{equation}\label{equ:E(a_s)}
\mathbb{E}\bigl[ a_s \mid \boldsymbol{\omega}_{\overline{s}} \bigr]
=
-\frac{e^{-c^2/2}}{p\sqrt{2\pi k}}
\left( 1 + O\left(\frac{1}{D}\right) \right)
+ O\left( \frac{\sqrt{\ell}}{\sqrt{k}} \,\bigl| \pi_{V_r(s)}(\boldsymbol{v}_s) \bigr| \right),
\end{equation}
where \( \boldsymbol{\omega}_{\overline{s}}\in \mathcal{A} \) denotes an arbitrary but fixed outcome \( \bigl(a_1,\ldots,a_{s-1},a_{s+1},\ldots, a_r\bigr)\) satisfying the event \(\mathcal{A}\).
To see this, we begin by estimating the probability \(\mathbb{P}(\mathcal{A})\).
By item (3) of Lemma~\ref{lem:eigenvalues},
\[
|\pi_{[r]}(\boldsymbol{y})|=\left(1+O\left(\frac{1}{D}\right)\right)\cdot \sqrt{\sum_{i\in [r]} a_i^2}\geq \left(1+O\left(\frac{1}{D}\right)\right)\cdot \sqrt{\sum_{i\in [r]\backslash \{s\}} a_i^2}= \left(1+O\left(\frac{1}{D}\right)\right)\cdot \left| \sum_{i\in [r]\backslash \{s\}} a_i\Bv_i\right|.
\]
Hence, the complement $\mathcal{A}^c$ of the event $\mathcal{A}$ satisfies that
\begin{equation*}\label{equ:1-pF}
\begin{aligned}
\mathbb{P}(\mathcal{A}^c)\leq &\ \mathbb{P}\left(\By\in S^k: |\pi_{[r]}(\boldsymbol{y})|\geq  \left(1+O\left(\frac{1}{D}\right)\right)\cdot \bigl(\alpha_C-2\bigr)\frac{\sqrt{\ell}}{\sqrt{k}}\right) \\
\leq &\, \operatorname{Vol}\left(\boldsymbol{y} \in S^k : |\pi_{[r]}(\boldsymbol{y})| \geq \frac{\alpha_C\sqrt{\ell}}{2\sqrt{k}}\right) \left. \middle/ \right. \operatorname{Vol}(N_{\mathrm{per}}(\boldsymbol{x}[r]))
\leq {\left(\frac{p_C}{10}\right)^{C\ell}}\cdot{\left(\frac{p_C}{2}\right)^{-r}} \leq 5^{-C\ell},
\end{aligned}
\end{equation*}
where the second inequality uses the fact that $D \gg \alpha_C\geq 10$ and the second last inequality follows from Lemma~\ref{Volumenonperfect} and Lemma~\ref{volume}. 
Using item (3) of Lemma~\ref{lem:eigenvalues} again, we have
\[
1=|\boldsymbol{y}|^2\geq |\pi_{[r]}(\boldsymbol{y})|^2=\left|\sum_{i=1}^r a_i \boldsymbol{v}_i\right|^2
=\left(1+O\left(\frac{1}{D}\right)\right)\cdot \sum_{i=1}^r a_i^2\geq \frac14 \sum_{i=1}^r a_i^2,
\]
implying that \(|a_i|\leq 2\) for all \(i\in [r]\). 
Under the assumption that \eqref{equ:E(a_s)} holds for every \( \boldsymbol{\omega}_{\overline{s}}\in \mathcal{A} \), 
applying the law of total expectation, we can derive the desired estimation \eqref{equ:Eai} in the following:
\begin{align*}
\mathbb{E}[a_s] &= \mathbb{P}(\mathcal{A}) \cdot \mathbb{E}\left[\ a_s \mid \mathcal{A}\ \right] +(1-\mathbb{P}(\mathcal{A}))\cdot\mathbb{E}[\ a_s \mid \mathcal{A}^c\ ] \\
&= \left(1 - O(5^{-C\ell})\right)\cdot\left(-\frac{e^{-c^2/2}}{p\sqrt{2\pi k}}\left(1 + O\left(\frac{1}{D}\right)\right) + O\left(\frac{\sqrt{\ell}}{\sqrt{k}}|\pi_{V_r(s)}(\boldsymbol{v}_s)|\right)\right)\ + O(5^{-C\ell}) \\
&= -\frac{e^{-c^2/2}}{p\sqrt{2\pi k}}\left(1 + O\left(\frac{1}{D}\right)\right) + O\left(\frac{\sqrt{\ell}}{\sqrt{k}}|\pi_{V_r(s)}(\boldsymbol{v}_s)|\right),
\end{align*}
where the last equality holds by the fact that \(O(5^{-C\ell})=O\left(\frac{1}{D\sqrt{k}}\right) \). 

The remainder of this proof is devoted to establishing \eqref{equ:E(a_s)} for every \(\boldsymbol{\omega}_{\overline{s}} \in \mathcal{A}\).
By Proposition~\ref{prop:common_neighborhood}, 
\[
N_{\mathrm{per}}(\boldsymbol{x}{[r]}) = \left\{\boldsymbol{y}=(a_1,\ldots, a_r) \in S^k \, \left| \, a_i \leq -\frac{c}{\sqrt{k}} \text{ for all }  i \in [r] \text{ and } \left|\pi_{[r]}(\boldsymbol{y})\right|\right.=\left|\sum_{i=1}^r a_i \boldsymbol{v}_i\right| \leq \alpha_C\frac{\sqrt{\ell}}{\sqrt{k}} \right\}.
\]
Let \(I(\boldsymbol{\omega}_{\overline{s}})\) denote the set of all values \(a_s\) such that \(\boldsymbol{y} = (a_1, \ldots, a_r) \in N_{\mathrm{per}}(\boldsymbol{x}{[r]})\) while  \( \boldsymbol{\omega}_{\overline{s}}\) is fixed.

We claim that for any fixed \(\boldsymbol{\omega}_{\overline{s}} \in \mathcal{A}\), \(I(\boldsymbol{\omega}_{\overline{s}})=[-A_s, -\frac{c}{\sqrt{k}}]\) is an interval with
\(
\frac{\sqrt{\ell}}{\sqrt{k}} \leq A_s \leq \frac{2\alpha_C\sqrt{\ell}}{\sqrt{k}}.
\)
Note that, from the above expression of \(N_{\mathrm{per}}(\boldsymbol{x}{[r]})\), 
the domain \(I(\boldsymbol{\omega}_{\overline{s}})\) of \(a_s\) is determined by the conditions \(a_s \le -\frac{c}{\sqrt{k}}\) and 
\(
\left|\sum_{i=1}^r a_i \boldsymbol{v}_i\right| \le \alpha_C \frac{\sqrt{\ell}}{\sqrt{k}},
\)
the latter of which can be viewed as a quadratic inequality in the variable \(a_s\).
Thus, \(I(\boldsymbol{\omega}_{\overline{s}})\) must be an interval \( [-A_s, -\frac{c}{\sqrt{k}}] \) for some constant $A_s>0$. 
Suppose that  $ -\frac{\sqrt\ell}{\sqrt k}\leq a_s\leq -\frac{c}{\sqrt{k}}$.
Then, since \(\boldsymbol{\omega}_{\overline{s}} \in \mathcal{A}\) and \(|\boldsymbol{v}_s|=1+O(\frac{1}{D})\) (by Lemma~\ref{lem:eigenvalues}), we have 
\[
\left|\pi_{[r]}(\boldsymbol{y})\right| \leq \left(\alpha_C - 2 + 1 + O\left(\frac{1}{D}\right)\right)\frac{\sqrt{\ell}}{\sqrt{k}} \leq \alpha_C\frac{\sqrt{\ell}}{\sqrt{k}},
\]
implying that $[-\frac{\sqrt\ell}{\sqrt k},-\frac{c}{\sqrt{k}}]\subseteq I(\boldsymbol{\omega}_{\overline{s}})$ and thus $A_s\geq \frac{\sqrt\ell}{\sqrt k}$. 
On the other hand, if $a_s<-\frac{2\alpha_C\sqrt{\ell}}{\sqrt{k}}$, then
\[
\left|\pi_{[r]}(\boldsymbol{y})\right| \geq |a_s \Bv_s|-\left|\sum_{i \in [r]\backslash \{s\} } a_i \boldsymbol{v}_i\right|\geq  \left( 2\alpha_C+O\left(\frac{1}{D}\right)-(\alpha_C-2)\right)\frac{\sqrt{\ell}}{\sqrt{k}} > \alpha_C \frac{\sqrt{\ell}}{\sqrt{k}}.
\]
Combining with the above bounds, we obtain that \(\frac{\sqrt{\ell}}{\sqrt{k}} \leq A_s \leq \frac{2\alpha_C\sqrt{\ell}}{\sqrt{k}},\) proving the claim. 

While fixing \( \boldsymbol{\omega}_{\overline{s}} = \bigl(a_1,\ldots,a_{s-1},a_{s+1},\ldots, a_r\bigr)\in \mathcal{A} \), 
the surface area measure satisfies
\[
\operatorname{Vol}\left(\left\{\boldsymbol{y}\in S^k : \pi_{[r]}(\boldsymbol{y})=\sum_{i\in [r]\backslash \{s\}}a_i\boldsymbol{v}_i+a_s\boldsymbol{v}_s,\ a_s\in[a,a+da]\right\}\right) \propto \left(1-\left| \pi_{[r]}(\boldsymbol{y})\right|^2\right)^{\frac{k-r-1}{2}}da.
\]
Therefore, \(\mathbb{E}\bigl[ a_s \mid \boldsymbol{\omega}_{\overline{s}} \bigr]\) can be written as
\begin{equation}\label{equ:Eaicalculation1}
\frac{\displaystyle\int_{-A_s}^{-\frac{c}{\sqrt{k}}} a_s \left(1 - |\pi_{[r]}(\boldsymbol{y})|^2\right)^{\frac{k-r-1}{2}} da_s}{\displaystyle\int_{-A_s}^{-\frac{c}{\sqrt{k}}} \left(1 - |\pi_{[r]}(\boldsymbol{y})|^2\right)^{\frac{k-r-1}{2}} da_s} = \frac{\displaystyle\int_{-A_s}^{-\frac{c}{\sqrt{k}}} a_s \exp\left(-\frac{k}{2}|\pi_{[r]}(\boldsymbol{y})|^2\right) da_s}{\displaystyle\int_{-A_s}^{-\frac{c}{\sqrt{k}}} \exp\left(-\frac{k}{2}|\pi_{[r]}(\boldsymbol{y})|^2\right) da_s} \left(1 + O\left(\frac{1}{D^2}\right)\right),
\end{equation}
where the equality holds since for all \(1 \leq r \leq C\ell\) and \(x:=|\pi_{[r]}(\boldsymbol{y})|=O(\frac{\sqrt{\ell}}{\sqrt{k}})\),
\begin{equation}\label{equ:exp(x^2)}
\begin{aligned}
\exp\left(\frac{k}{2}x^2\right)(1-x^2)^{\frac{k-r-1}{2}} 
&=\exp\left( \frac{k}{2}x^2 - \frac{k-r-1}{2}x^2+O(k x^4)\right)\\
&=\exp\left(O(\ell x^2+kx^4)\right)= 1 + O\left(\frac{1}{D^2}\right).  
\end{aligned}
\end{equation}
Here, \(\pi_{[r]}(\boldsymbol{y})\) is viewed as a function of \(a_s\), that is,
\[
\bigl|\pi_{[r]}(\boldsymbol{y})\bigr|^2 
=
a_s^2 \, \lvert \boldsymbol{v}_s \rvert^2 
+ 2a_s\lvert \boldsymbol{v}_s \rvert\cdot \left\langle  \frac{\boldsymbol{v}_s}{|\boldsymbol{v}_s|}, \sum_{i \neq s} a_i \boldsymbol{v}_i \right\rangle 
+ \sum_{i,j \ne s} a_i \, a_j \, \langle \boldsymbol{v}_i, \boldsymbol{v}_j \rangle.
\]

To simplify the calculation of \eqref{equ:Eaicalculation1}, we introduce the following shifted variables:
\[
t := a_s \, |\boldsymbol{v}_s| 
+ \left\langle \frac{\boldsymbol{v}_s}{|\boldsymbol{v}_s|}, \sum_{i \neq s} a_i \boldsymbol{v}_i \right\rangle,
\quad
A_s' := A_s \, |\boldsymbol{v}_s| 
- \left\langle \frac{\boldsymbol{v}_s}{|\boldsymbol{v}_s|}, \sum_{i \neq s} a_i \boldsymbol{v}_i \right\rangle,
\quad
B_s' := -\frac{c}{\sqrt{k}} \, |\boldsymbol{v}_s| 
+ \left\langle \frac{\boldsymbol{v}_s}{|\boldsymbol{v}_s|}, \sum_{i \neq s} a_i \boldsymbol{v}_i \right\rangle.
\]
Before substituting these parameters into \eqref{equ:Eaicalculation1}, we derive some estimates for them.
By Lemma~\ref{lem:eigenvalues}, 
\begin{equation}\label{equ:<vs,sum>}
\begin{aligned}
\left|\left\langle  \frac{\boldsymbol{v}_s}{|\boldsymbol{v}_s|}, \sum_{i \neq s} a_i \boldsymbol{v}_i \right\rangle \right|&=\left|\left\langle \frac{\pi_{V_r(s)}(\boldsymbol{v}_s)}{|\boldsymbol{v}_s|}, \sum_{i \neq s} a_i \boldsymbol{v}_i \right\rangle\right|\leq \frac{|\pi_{V_r(s)}(\boldsymbol{v}_s)|}{1+O\left(\frac{1}{D}\right)} \cdot \left|\sum_{i \neq s} a_i \boldsymbol{v}_i\right|\\
&=O\bigl(|\pi_{V_r(s)}(\boldsymbol{v}_s)|\bigr) \cdot \sqrt{\sum_{i \neq s} a_i^2}=O\left(\frac{\sqrt{\ell}}{\sqrt{k}}\cdot |\pi_{V_r(s)}(\boldsymbol{v}_s)|\right)=O\left(\frac{\sqrt{\ell}}{D\sqrt{k}}\right),
\end{aligned}
\end{equation}
and using the bound \(\frac{\sqrt{\ell}}{\sqrt{k}} \leq A_s \leq \frac{2\alpha_C\sqrt{\ell}}{\sqrt{k}}\), we have
\begin{equation}\label{orderAs}
0<A_s' = \left(1+O\left(\frac{1}{D}\right)\right)\cdot A_s + O\left(\frac{\sqrt{\ell}}{D\sqrt{k}}\right) = \Theta\left(\frac{\sqrt{\ell}}{\sqrt{k}}\right) \quad \text{and} \quad |B_s'| = O\left(\frac{\sqrt{\ell}}{D\sqrt{k}}\right).
\end{equation}
Now, substituting \(t\), \(A_s'\), and \(B_s'\) for \(a_s\), \(A_s\), and \(-\frac{c}{\sqrt{k}}\), respectively, in \eqref{equ:Eaicalculation1} yields
\begin{equation}\label{equ:Eaicalculation2}
\begin{aligned}
\mathbb{E}\bigl[ a_s \mid \boldsymbol{\omega}_{\overline{s}} \bigr]
&= \frac{\displaystyle\int_{-A_s'}^{B_s'} \left(t-\left\langle  \frac{\boldsymbol{v}_s}{|\boldsymbol{v}_s|}, \sum_{i \neq s} a_i \boldsymbol{v}_i \right\rangle\right) \exp\left(-\frac{k}{2}t^2\right) dt}{\displaystyle\int_{-A_s'}^{B_s'} \exp\left(-\frac{k}{2}t^2\right) dt} \left(1 + O\left(\frac{1}{D}\right)\right) \\
&=\frac{\displaystyle\int_{-A_s'}^{B_s'} t \exp\left(-\frac{k}{2}t^2\right) dt}{\displaystyle\int_{-A_s'}^{B_s'} \exp\left(-\frac{k}{2}t^2\right) dt} \left(1 + O\left(\frac{1}{D}\right)\right)+O\left(\left\langle \frac{\boldsymbol{v}_s}{|\boldsymbol{v}_s|}, \sum_{i \neq s} a_i \boldsymbol{v}_i \right\rangle\right).
\end{aligned}    
\end{equation}
Using the basic equality \( \int_{-\infty}^{\beta} t \exp\left(-\frac{t^2}{2}\right) dt= -\exp\left(-\frac{\beta^2}{2}\right)\),
we have 
\begin{align*}
\frac{\left|\displaystyle\int_{-\infty}^{-A_s'} t \exp\left(-\frac{k}{2}t^2\right) dt \right|}{\left|\displaystyle\int_{-\infty}^{B_s'} t \exp\left(-\frac{k}{2}t^2\right) dt\right|}
= \frac{\displaystyle\exp\left(-\frac{k}{2}(A_s')^2\right)}{\displaystyle\exp\left(-\frac{k}{2}(B_s')^2\right)}
= O\left(\exp\left(-\frac{\ell}{10}\right)\right)
= O\left(\frac{1}{D}\right),
\end{align*}
where the second equality follows from \eqref{orderAs} that $(A_s')^2-(B_s')^2=(A_s'-B_s')(A_s'+B_s')\geq \frac{(A_s)^2}{4}\geq \frac{\ell}{4k}$. 
Furthermore, \eqref{orderAs} gives
\(
\left(|B_s'|+\frac{\sqrt\ell}{D\sqrt k}\right)^2=O\left(\frac{\ell}{D^2k}\right)<\frac{\ell}{16k}\leq \frac{(A_s')^2}{4},
\)
which implies that 
\begin{align*}
\int_{-\infty}^{-A_s'} \exp\left(-\frac{k}{2}t^2\right) dt \leq & \frac{1}{|A_s'|}\int_{-\infty}^{-A_s'} (-t) \exp\left(-\frac{k}{2}t^2\right) dt = \frac{1}{|A_s'|k}\cdot\exp\left(-\frac{k}{2}(A_s')^2\right)\\
\leq&\frac{\sqrt \ell}{D\sqrt k}\cdot\exp\left(-\frac{k}{2}\left(|B_s'|+\frac{\sqrt\ell}{D\sqrt k}\right)^2\right)\cdot\exp\left(-\frac{k}{4}(A_s')^2\right)\\
\leq& \int_{-|B_s'|-\frac{\sqrt{\ell}}{D\sqrt{k}}}^{-|B_s'|} \exp\left(-\frac{k}{2}t^2\right) dt\cdot \exp\left(-\frac{\ell}{16}\right)\leq \int_{-\infty}^{B_s'} \exp\left(-\frac{k}{2}t^2\right) dt\cdot O\left(\frac{1}{D}\right).
\end{align*}
Putting the above two bounds into \eqref{equ:Eaicalculation2}, and using \eqref{equ:<vs,sum>}, we have
\begin{equation}\label{equ:Eaicalculation2.2}
\mathbb{E}\bigl[ a_s \mid \boldsymbol{\omega}_{\overline{s}} \bigr]
=\frac{\displaystyle\int_{-\infty}^{B_s'} t \exp\left(-\frac{k}{2}t^2\right) dt}{\displaystyle\int_{-\infty}^{B_s'} \exp\left(-\frac{k}{2}t^2\right) dt} \left(1 + O\left(\frac{1}{D}\right)\right)+ O\left(\frac{\sqrt{\ell}}{\sqrt{k}}\cdot |\pi_{V_r(s)}(\boldsymbol{v}_s)|\right).
\end{equation}

Next, applying \eqref{equ:<vs,sum>} and Lemma~\ref{lem:eigenvalues} once more, we obtain the following equation 
\[
B_s'=-\frac{c}{\sqrt{k}}|\boldsymbol{v}_s|+\left\langle\frac{\boldsymbol{v}_s}{|\boldsymbol{v}_s|},\sum_{i\neq s}a_i\boldsymbol{v}_i \right\rangle= -\frac{c}{\sqrt{k}}\cdot \left(1+O\left(\frac{1}{D}\right)\right)+O\left(\frac{\sqrt{\ell}}{\sqrt{k}}\cdot |\pi_{V_r(s)}(\boldsymbol{v}_s)|\right).
\] 
Let \( X \) be the standard normal random variable. 
By Lemma~\ref{lem:normal_conditional_expectation}, we have that
\begin{equation*}\label{normalderivate}
\left|\frac{d}{dx}\left(\frac{\displaystyle\int_{-\infty}^xt\exp(-\frac{k}{2}t^2)dt}{\displaystyle\int_{-\infty}^x\exp(-\frac{k}{2}t^2)dt}\right)\right|
=\left|\frac{1}{\sqrt{k}}\frac{d}{dx}\mathbb{E}[X|X\leq x\sqrt{k}]\right|\leq 100.
\end{equation*}
Hence, we can derive from \eqref{equ:Eaicalculation2.2} that
\begin{equation}\label{equ:Eaicalculation2.3}
\mathbb{E}\bigl[ a_s \mid \boldsymbol{\omega}_{\overline{s}} \bigr]
=\frac{\displaystyle\int_{-\infty}^{-\frac{c}{\sqrt{k}}} t \exp\left(-\frac{k}{2}t^2\right) dt}{\displaystyle\int_{-\infty}^{-\frac{c}{\sqrt{k}}} \exp\left(-\frac{k}{2}t^2\right) dt} \left(1 + O\left(\frac{1}{D}\right)\right) + O\left(\frac{1}{D\sqrt k}\right) + O\left(\frac{\sqrt{\ell}}{\sqrt{k}}\cdot |\pi_{V_r(s)}(\boldsymbol{v}_s)|\right).
\end{equation}
Recall that \(\Phi\) is the standard normal CDF.
By Lemma~\ref{prop:convergence_of_cpk}, 
\( c = \Phi^{-1}(1-p) + O\left(\frac{1}{k}\right) \).
This yields
\begin{equation}\label{equ:E[X|X>c]}
\frac{\displaystyle\int_{-\infty}^{-\frac{c}{\sqrt{k}}} t \exp\left(-\frac{k}{2}t^2\right) dt}{\displaystyle\int_{-\infty}^{-\frac{c}{\sqrt{k}}} \exp\left(-\frac{k}{2}t^2\right) dt}=-\frac{1}{\sqrt{k}}\mathbb{E}[X|X\geq c]=-\frac{1}{\sqrt{k}}\frac{\phi(c)}{1-\Phi(c)}=-\frac{e^{-c^2/2}}{p\sqrt{2\pi k}}+O\left(\frac{1}{k\sqrt k}\right),
\end{equation}
where the second equality follows from Lemma~\ref{lem:normal_conditional_expectation}
and the last equality holds by \(\phi(c)=\frac{e^{-c^2/2}}{\sqrt{2\pi}}\) and \(1-\Phi(c)=p+O(\frac{1}{k})\).
Combining \eqref{equ:Eaicalculation2.3} and \eqref{equ:E[X|X>c]}, we obtain \eqref{equ:E(a_s)}, implying \eqref{equ:Eai} (as shown earlier).

The proof of \eqref{equ:Eai3} proceeds analogously by considering the random vector 
\(\By\in \overline{N}_{\mathrm{per}}(\Bx[r])\) in place of \(N_{\mathrm{per}}(\Bx[r])\). 
In this blue case, the counterpart of \eqref{equ:E[X|X>c]} becomes 
\(\frac{1}{\sqrt{k}}\,\mathbb{E}[X \mid X \ge -c],\)
which contributes the main term 
\( \frac{e^{-c^2/2}}{(1-p)\sqrt{2\pi k}}.\)
All arguments carry over with corresponding changes, except for \eqref{equ:Eaicalculation1}, which requires some modification. 
We provide a detailed proof below.
Define \(\pi_{[r]}(\boldsymbol{y}) = \sum_{i=1}^r a_i \boldsymbol{v}_i\),
and the same event \(\mathcal{A}\) as before.
Then for any fixed \(\boldsymbol{\omega}_{\overline{s}} \in \mathcal{A}\),
in this case we have
\[
I(\boldsymbol{\omega}_{\overline{s}})=\left[-\frac{c}{\sqrt{k}}, A_s\right], \mbox{ where }
\frac{\sqrt{\ell}}{\sqrt{k}} \leq A_s \leq \frac{2\alpha_C\sqrt{\ell}}{\sqrt{k}}.
\]
Since this interval is not positive everywhere, 
the blue version of \eqref{equ:Eaicalculation1} cannot be obtained directly from the previous estimation \eqref{equ:exp(x^2)}.
However, we are able to derive the following (weaker) analogue of \eqref{equ:Eaicalculation1}, which nevertheless suffices for the purposes of the remaining proof:
\begin{equation*}\label{equ:Eaicalculation3}
\frac{\displaystyle\int_{-\frac{c}{\sqrt{k}}}^{A_s} a_s \left(1 - |\pi_{[r]}(\boldsymbol{y})|^2\right)^{\frac{k-r-1}{2}} da_s}{\displaystyle\int_{-\frac{c}{\sqrt{k}}}^{A_s} \left(1 - |\pi_{[r]}(\boldsymbol{y})|^2\right)^{\frac{k-r-1}{2}} da_s} = \frac{\displaystyle\int_{-\frac{c}{\sqrt{k}}}^{A_s} a_s \exp\left(-\frac{k}{2}|\pi_{[r]}(\boldsymbol{y})|^2\right) da_s}{\displaystyle\int_{-\frac{c}{\sqrt{k}}}^{A_s} \exp\left(-\frac{k}{2}|\pi_{[r]}(\boldsymbol{y})|^2\right) da_s} \left(1 + O\left(\frac{1}{D^2}\right)\right)+O\left(\frac{1}{D^2\sqrt{k}}\right).
\end{equation*}
To see this, we define
\begin{align*}
&M:=-\int_{-\frac{c}{\sqrt{k}}}^0 a_s\left(1 - |\pi_{[r]}(\boldsymbol{y})|^2\right)^{\frac{k-r-1}{2}} da_s, \quad
N:=\int_{0}^{A_s} a_s\left(1 - |\pi_{[r]}(\boldsymbol{y})|^2\right)^{\frac{k-r-1}{2}} da_s,\\
&M':=-\int_{-\frac{c}{\sqrt{k}}}^0 a_s\exp\left(-\frac{k}{2}|\pi_{[r]}(\boldsymbol{y})|^2\right) da_s, \quad
N':=\int_{0}^{A_s} a_s\exp\left(-\frac{k}{2}|\pi_{[r]}(\boldsymbol{y})|^2\right) da_s,\\
& P:=\int_{-\frac{c}{\sqrt{k}}}^{A_s} \left(1 - |\pi_{[r]}(\boldsymbol{y})|^2\right)^{\frac{k-r-1}{2}} da_s\quad\mbox{and}\quad
P':=\int_{-\frac{c}{\sqrt{k}}}^{A_s} \exp\left(-\frac{k}{2}|\pi_{[r]}(\boldsymbol{y})|^2\right) da_s.
\end{align*}
Using \eqref{equ:exp(x^2)}, we have \(M'=\bigl(1+O(\frac{1}{D^2})\bigr)M\), \(N'=\bigl(1+O(\frac{1}{D^2})\bigr)N\), and \(P'=\bigl(1+O(\frac{1}{D^2})\bigr)P\). 
Then 
\[
-M'+N'=(-M+N)\cdot \left(1+O\left(\frac{1}{D^2}\right)\right)+O\left(\frac{M}{D^2}\right).
\]
Now using  \(P'=\bigl(1+O(\frac{1}{D^2})\bigr)P\), we have
\[
\frac{-M'+N'}{P'}= \frac{-M+N}{P}\cdot \left(1+O\left(\frac{1}{D^2}\right)\right)+O\left(\frac{M}{D^2\cdot P}\right).
\]
This, together with an easy observation that \(\left|M\right|=O\left(\frac{1}{\sqrt{k}}\right)\left|P\right|\), implies the above desired equality.
We point out that the extra \( O\left(\frac{1}{D^2\sqrt{k}}\right) \) term is negligible for the remaining proof of the blue case, and therefore this analogue of \eqref{equ:Eaicalculation1} suffices for the proof of Lemma~\ref{lem:coefficient_expectations}. 
\end{proof}

We are ready to prove the main result - Lemma~\ref{lem:key-tech} of this subsection.

\begin{proof}[Proof of Lemma~\ref{lem:key-tech}.]
Let \(\boldsymbol{x}[r]\), \(\boldsymbol{v}_1, \ldots, \boldsymbol{v}_r\), and \(\boldsymbol{e}_1, \ldots, \boldsymbol{e}_r\) be as given in the conditions.
By symmetry, 
we only consider the vector \(\boldsymbol{y}\) sampled uniformly at random from \(N_{\mathrm{per}}(\boldsymbol{x}[r])\).
Recall that $\pi_{[r]}(\boldsymbol{y})=a_1\boldsymbol{v}_1+\cdots+a_r\boldsymbol{v}_r$. 
Using  Lemma~\ref{lem:dist_e_v} and Lemma~\ref{lem:coefficient_expectations},
we can show that for each $s\in [r]$,
\begin{align*}
\left|\left\langle\sum_{i\neq s}a_i\boldsymbol{v}_i,\pi_{V_r(s)}(\boldsymbol{e}_s)\right\rangle\right|\leq&\left|\sum_{i \neq s} a_i \boldsymbol{v}_i\right|\cdot|\pi_{V_r(s)}(\boldsymbol{e}_s)|
\leq\left(1+O\left(\frac{1}{D}\right)\right)\cdot\left(\sum_{i=1}^{r}a_i^2\right)^{\frac12}\cdot O\left(\frac{\sqrt{\ell}}{\sqrt{k}}\right)\\
\leq&\left(1+O\left(\frac{1}{D}\right)\right)\cdot\left|\sum_{i=1}^r a_i \boldsymbol{v}_i\right|\cdot O\left(\frac{\sqrt{\ell}}{\sqrt{k}}\right) \leq |\pi_{[r]}(\boldsymbol{y})| \cdot O\left(\frac{\sqrt{\ell}}{\sqrt{k}}\right)=O\left(\frac{1}{D\sqrt{k}} \right),
\end{align*}
where the final equality follows by \(|\pi_{[r]}(\boldsymbol{y})|=O\left(\frac{\sqrt{\ell}}{\sqrt{k}}\right) \) and \(\frac{\ell}{k}=\frac{1}{D\sqrt{k}}\). This implies that
\begin{align*}
\mathbb{E}[\langle\pi_{[r]}(\boldsymbol{y}),\boldsymbol{e}_s\rangle]=&\mathbb{E}[a_s]\cdot\langle\boldsymbol{v}_s,\boldsymbol{e}_s \rangle+\mathbb{E}\left[\left\langle\sum_{i\neq s}a_i\boldsymbol{v}_i,\boldsymbol{e}_s\right\rangle\right]\\
=&\mathbb{E}[a_s]\cdot\left(1+O\left(\frac{1}{D^2\ell}\right)\right)+\mathbb{E}\left[\left\langle\sum_{i\neq s}a_i\boldsymbol{v}_i,\pi_{V_r(s)}(\boldsymbol{e}_s)\right\rangle\right]\\
=&-\frac{e^{-c^2/2}}{p\sqrt{2\pi k}}\left(1 + O\left(\frac{1}{D}\right)\right) + O\left(\frac{\sqrt{\ell}}{\sqrt{k}}|\pi_{V_r(s)}(\boldsymbol{v}_s)|\right) + O\left(\frac{1}{D\sqrt{k}} \right)\\
=&-\frac{e^{-c^2/2}}{p\sqrt{2\pi k}}\left(1 + O\left(\frac{1}{D}\right)\right) + O\left(\frac{\sqrt{\ell}}{\sqrt{k}}|\pi_{V_r(s)}(\boldsymbol{v}_s)|\right),
\end{align*}
where the second equality follows from Lemma~\ref{lem:dist_e_v} and the fact $\sum_{i\neq s}a_i\boldsymbol{v}_i\in V_r(s)$, and the third equality holds by Lemma~\ref{lem:coefficient_expectations}.
This completes the proof of \eqref{eq:expection2} (and similarly, of \eqref{eq:expection2b}).
\end{proof}

\subsection{Estimate of $\mathbb{E}\left[\langle \pi_{[s]}(\boldsymbol{y}), \pi_{[s]}(\boldsymbol{z}) \rangle\right]$: Two Random Projections}

In this subsection, we complete the proof of Theorem~\ref{thm:projection-expectation}. To do so, we first establish the following approximate version, involving two independent random vectors.

\begin{lemma}\label{lem:projection-expectation2}
Let \(0 \le s \le r \le C\ell\) and fix a perfect sequence \(\boldsymbol{x}[r]\). 
Consider independent random vectors \(\boldsymbol{y}\) and \(\boldsymbol{z}\), where \(\boldsymbol{y}\) is uniformly distributed in \(N_{\mathrm{per}}(\boldsymbol{x}[r])\) and \(\boldsymbol{z}\) in \(N_{\mathrm{per}}(\boldsymbol{x}[s])\). Then
\begin{equation}\label{eq:proj-expectation}
\mathbb{E}\left[\langle \pi_{[s]}(\boldsymbol{y}), \pi_{[s]}(\boldsymbol{z}) \rangle\right]
=
\frac{e^{-c^2}}{2\pi p^2}\cdot \frac{s}{k}
+ O\left(\frac{\ell}{Dk}\right).
\end{equation}

Analogously, consider independent random vectors \(\boldsymbol{y}\) and \(\boldsymbol{z}\), where \(\boldsymbol{y}\) is uniformly distributed in \(\overline{N}_{\mathrm{per}}(\boldsymbol{x}[r])\) and \(\boldsymbol{z}\) in \(\overline{N}_{\mathrm{per}}(\boldsymbol{x}[s])\). Then
\begin{equation}\label{eq:proj-expectation_b}
\mathbb{E}\left[\langle \pi_{[s]}(\boldsymbol{y}), \pi_{[s]}(\boldsymbol{z}) \rangle\right]
=
\frac{e^{-c^2}}{2\pi (1-p)^2} \cdot \frac{s}{k}
+ O\left(\frac{\ell}{Dk}\right).
\end{equation}
\end{lemma}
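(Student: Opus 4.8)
The plan is to reduce the two-vector expectation to a sum of products of single-vector expectations, each of which is delivered by Lemma~\ref{lem:key-tech}, and then to control the resulting error sums using the spectral bounds of Lemma~\ref{lem:eigenvalues}. First I would dispose of the case $s=0$: there $\pi_{[0]}$ projects onto $\{\boldsymbol{0}\}$, so the left-hand side vanishes and the claim holds trivially. Assume $1\le s\le r$ from now on, and let $\boldsymbol{e}_1,\dots,\boldsymbol{e}_r$ be the orthonormal basis of $X_r=\operatorname{span}(\boldsymbol{x}_1,\dots,\boldsymbol{x}_r)$ from Definition~\ref{def:eigenvalues}. Each $\boldsymbol{e}_i$ depends only on $\boldsymbol{x}_1,\dots,\boldsymbol{x}_i$, so for $i\le s$ the vector $\boldsymbol{e}_i$ is the same whether formed from $\boldsymbol{x}[s]$ or from $\boldsymbol{x}[r]$; moreover $\boldsymbol{e}_i\in X_i\subseteq X_s\subseteq X_r$, whence $\langle\pi_{[s]}(\boldsymbol{y}),\boldsymbol{e}_i\rangle=\langle\pi_{[r]}(\boldsymbol{y}),\boldsymbol{e}_i\rangle$ and $\langle\pi_{[s]}(\boldsymbol{z}),\boldsymbol{e}_i\rangle=\langle\boldsymbol{z},\boldsymbol{e}_i\rangle$. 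Expanding $\pi_{[s]}(\boldsymbol{y})=\sum_{i=1}^s\langle\boldsymbol{y},\boldsymbol{e}_i\rangle\boldsymbol{e}_i$ and similarly for $\boldsymbol{z}$, and using that $\boldsymbol{y}$ and $\boldsymbol{z}$ are independent, one gets
\[
\mathbb{E}\bigl[\langle\pi_{[s]}(\boldsymbol{y}),\pi_{[s]}(\boldsymbol{z})\rangle\bigr]
=\sum_{i=1}^{s}\mathbb{E}\bigl[\langle\pi_{[r]}(\boldsymbol{y}),\boldsymbol{e}_i\rangle\bigr]\cdot\mathbb{E}\bigl[\langle\pi_{[s]}(\boldsymbol{z}),\boldsymbol{e}_i\rangle\bigr].
\]

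Next I would apply Lemma~\ref{lem:key-tech} twice: once to the perfect sequence $\boldsymbol{x}[r]$ with the vector $\boldsymbol{y}$, and once to its subsequence $\boldsymbol{x}[s]$ (perfect by monotonicity) with the vector $\boldsymbol{z}$. This gives, for each $i\le s$, that $\mathbb{E}[\langle\pi_{[r]}(\boldsymbol{y}),\boldsymbol{e}_i\rangle]=-\tfrac{e^{-c^2/2}}{p\sqrt{2\pi k}}\bigl(1+O(1/D)\bigr)+\varepsilon_i^{y}$ and $\mathbb{E}[\langle\pi_{[s]}(\boldsymbol{z}),\boldsymbol{e}_i\rangle]=-\tfrac{e^{-c^2/2}}{p\sqrt{2\pi k}}\bigl(1+O(1/D)\bigr)+\varepsilon_i^{z}$, where $\varepsilon_i^{y}=O\bigl(\tfrac{\sqrt{\ell}}{\sqrt{k}}|\pi_{V_r(i)}(\boldsymbol{v}_i)|\bigr)$ with $\boldsymbol{v}_i,V_r(i)$ formed from $\boldsymbol{x}[r]$, and $\varepsilon_i^{z}=O\bigl(\tfrac{\sqrt{\ell}}{\sqrt{k}}|\pi_{V_s(i)}(\boldsymbol{v}_i')|\bigr)$ with $\boldsymbol{v}_i',V_s(i)$ the analogues built from $\boldsymbol{x}[s]$. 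Multiplying and summing over $i=1,\dots,s$, the product of the two main terms contributes $\sum_{i=1}^s\tfrac{e^{-c^2}}{2\pi p^2 k}\bigl(1+O(1/D)\bigr)=\tfrac{e^{-c^2}}{2\pi p^2}\cdot\tfrac{s}{k}+O\bigl(\tfrac{\ell}{Dk}\bigr)$, since $s\le C\ell$.

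It then remains to check that the cross terms and the product of the error terms are $O(\ell/(Dk))$. Here I would use Cauchy--Schwarz together with item (2) of Lemma~\ref{lem:eigenvalues}, applied separately to $\boldsymbol{x}[r]$ and to $\boldsymbol{x}[s]$, to obtain $\sum_{i=1}^s|\pi_{V_r(i)}(\boldsymbol{v}_i)|\le\sqrt{s}\,\bigl(\sum_{i=1}^r|\pi_{V_r(i)}(\boldsymbol{v}_i)|^2\bigr)^{1/2}=O(\sqrt{\ell}/D)$, and the same for $\sum_{i=1}^s|\pi_{V_s(i)}(\boldsymbol{v}_i')|$; hence $\sum_i|\varepsilon_i^{y}|,\sum_i|\varepsilon_i^{z}|=O(\ell/(D\sqrt{k}))$ and $\sum_i|\varepsilon_i^{y}|^2,\sum_i|\varepsilon_i^{z}|^2=O(\ell/(D^2k))$. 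Each cross term is then $O(1/\sqrt{k})\cdot O(\ell/(D\sqrt{k}))=O(\ell/(Dk))$, and $\sum_i|\varepsilon_i^{y}\varepsilon_i^{z}|=O(\ell/(D^2k))$ by Cauchy--Schwarz, so collecting all contributions yields \eqref{eq:proj-expectation}. The blue estimate \eqref{eq:proj-expectation_b} would follow by the identical argument, invoking the second halves of Lemma~\ref{lem:key-tech} and replacing $p$ by $1-p$; the two single-vector expectations then share a common positive sign, so their product supplies the positive main term $\tfrac{e^{-c^2}}{2\pi(1-p)^2}\cdot\tfrac{s}{k}$.

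I expect the main obstacle to be purely organizational rather than conceptual: one must keep the two distinct dual-vector families straight ($\boldsymbol{v}_i$ coming from $\boldsymbol{x}[r]$ versus $\boldsymbol{v}_i'$ from $\boldsymbol{x}[s]$) and be careful to invoke item (2) of Lemma~\ref{lem:eigenvalues} for the appropriate sequence so that both error sums genuinely collapse to $O(\ell/(Dk))$. Once the reduction to single-vector expectations via the common $\boldsymbol{e}_i$-basis is in place and independence is used to factor each product, the remaining estimates are routine bookkeeping under the $O(\cdot)$ convention fixed at the start of Section~\ref{sec:estimation_q}.
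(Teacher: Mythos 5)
Your proposal is correct and follows essentially the same route as the paper: expand both projections in the orthonormal basis $\boldsymbol{e}_1,\ldots,\boldsymbol{e}_s$, factor via independence, feed both single-vector expectations into Lemma~\ref{lem:key-tech} (once for $\boldsymbol{x}[r]$, once for the perfect subsequence $\boldsymbol{x}[s]$ with its own dual basis, which the paper calls $\boldsymbol{w}_i$ and you call $\boldsymbol{v}_i'$), and then control the cross and error-product sums by Cauchy--Schwarz together with item (2) of Lemma~\ref{lem:eigenvalues}. The only cosmetic differences are your explicit handling of the trivial $s=0$ case and the notation for the second dual family.
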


\begin{proof}
We consider \eqref{eq:proj-expectation}. 
Fix \(0\leq s\leq r\leq C\ell \) and a perfect sequence $\Bx[r]=(\Bx_1,\ldots, \Bx_r)\in (S^k)^r$. 
As before, we define \( X_i = \operatorname{span}(\boldsymbol{x}_1, \ldots, \boldsymbol{x}_i) \) for each \( i \in [r] \).
Analogous to the basis \(\boldsymbol{v}_1, \ldots, \boldsymbol{v}_r\) of \(X_r\) defined in Definition~\ref{def:eigenvalues}, we define a corresponding basis \(\boldsymbol{w}_1, \ldots, \boldsymbol{w}_s\) for \(X_s\), such that
\[
\langle \boldsymbol{x}_i, \boldsymbol{w}_j \rangle
=
\begin{cases}
1, & \text{if } i = j;\\
0, & \text{if } i \ne j
\end{cases}
\]
for all \(1 \le i, j \le s\).
Similarly, analogous to the subspaces \( V_r(i) \) of $X_r$ for $i\in [r]$, we define \(W_s(i):=\operatorname{span}(\boldsymbol{w}_j:j\in [s]\backslash \{i\}).\) 
Finally, we recall the vectors $\boldsymbol{e}_1, \ldots, \boldsymbol{e}_r$ from Definition~\ref{def:eigenvalues}. 
Note that $\boldsymbol{e}_1, \ldots, \boldsymbol{e}_s$ form an orthonormal basis for $X_s$ and depend only on \(\boldsymbol{x}[s]\).

Let \(\boldsymbol{z}\) be a random vector uniformly distributed in \(N_{\mathrm{per}}(\boldsymbol{x}[s])\). By Lemma~\ref{lem:key-tech}, we have
\begin{equation}\label{equ:pi(z),ei}
    \mathbb{E}[\langle\pi_{[s]}(\boldsymbol{z}),\boldsymbol{e}_i\rangle]= -\frac{e^{-c^2/2}}{p\sqrt{2\pi k}}\left(1 + O\left(\frac{1}{D}\right)\right) + O\left(\frac{\sqrt{\ell}}{\sqrt{k}}|\pi_{W_s(i)}(\boldsymbol{w}_i)|\right)
\end{equation}
for all \(i\in [s]\).
Let \(\boldsymbol{y}\) be a random vector uniformly distributed in \(N_{\mathrm{per}}(\boldsymbol{x}[r])\).
For each \(i\in [s]\), we have \( \langle\pi_{[s]}(\boldsymbol{y}),\boldsymbol{e}_i\rangle=\langle\pi_{[r]}(\boldsymbol{y}),\boldsymbol{e}_i\rangle \), and thus, applying Lemma~\ref{lem:key-tech} again, we can derive
\begin{equation}\label{equ:pi(y)pi(z)}
\mathbb{E}[\langle\pi_{[s]}(\boldsymbol{y}),\boldsymbol{e}_i\rangle]=\mathbb{E}[\langle\pi_{[r]}(\boldsymbol{y}),\boldsymbol{e}_i\rangle]= -\frac{e^{-c^2/2}}{p\sqrt{2\pi k}}\left(1 + O\left(\frac{1}{D}\right)\right) + O\left(\frac{\sqrt{\ell}}{\sqrt{k}}|\pi_{V_r(i)}(\boldsymbol{v}_i)|\right).
\end{equation}
Since \(\boldsymbol{y}\) and \(\boldsymbol{z}\) are independent, we have
\begin{align*}
\mathbb{E}\left[\left\langle \pi_{[s]}(\boldsymbol{y}), \pi_{[s]}(\boldsymbol{z}) \right\rangle\right]=&\mathbb{E}\left[\left\langle \sum_{i=1}^{s}\langle\pi_{[s]}(\boldsymbol{y}),\boldsymbol{e}_i\rangle \boldsymbol{e_i}, \sum_{i=1}^{s}\langle\pi_{[s]}(\boldsymbol{z}),\boldsymbol{e}_i\rangle \boldsymbol{e}_i \right\rangle\right]=\sum_{i=1}^{s}\mathbb{E}[\left\langle\pi_{[s]}(\boldsymbol{y}),\boldsymbol{e}_i\right\rangle]\cdot \mathbb{E}[\left\langle\pi_{[s]}(\boldsymbol{z}),\boldsymbol{e}_i\right\rangle].
\end{align*}
Substituting \eqref{equ:pi(z),ei} and \eqref{equ:pi(y)pi(z)} into the right-hand side of the above expression and simplifying yield
\begin{align*}
\mathbb{E}\left[\left\langle \pi_{[s]}(\boldsymbol{y}), \pi_{[s]}(\boldsymbol{z}) \right\rangle\right] =& \sum_{i=1}^s \left(1+O\left(\frac{1}{D}\right)\right)\cdot \frac{e^{-c^2}}{p^2 (2\pi k)}+O\left(\frac{\ell}{k}\right)\cdot \sum_{i=1}^s |\pi_{V_r(i)}(\boldsymbol{v}_i)|\cdot|\pi_{W_s(i)}(\boldsymbol{w}_i)|\\
&+ O\left(\frac{\sqrt{\ell}}{k}\right)\cdot \sum_{i=1}^s \bigl(|\pi_{V_r(i)}(\boldsymbol{v}_i)|+|\pi_{W_s(i)}(\boldsymbol{w}_i)|\bigr)\\
=
& \left(\frac{e^{-c^2}}{2\pi p^2}\cdot \frac{s}{k}+ O\left(\frac{s}{Dk}\right)\right)
+ O\left(\frac{\ell}{k}\right)\cdot \left(\sum_{i=1}^{s}|\pi_{V_r(i)}(\boldsymbol{v}_i)|^2\right)^{\frac{1}{2}}\left(\sum_{i=1}^{s}|\pi_{W_s(i)}(\boldsymbol{w}_i)|^2\right)^{\frac{1}{2}}\\
&+ O\left(\frac{{\sqrt{\ell}}}{{k}}\right)\cdot\left( \sqrt{s}\cdot \left(\sum_{i=1}^{s}|\pi_{V_r(i)}(\boldsymbol{v}_i)|^2\right)^{\frac12}+ \sqrt{s}\cdot \left(\sum_{i=1}^{s}|\pi_{W_s(i)}(\boldsymbol{w}_i)|^2\right)^{\frac12}\right)\\
= &\frac{e^{-c^2}}{2\pi p^2} \cdot \frac{s}{k} + O\left(\frac{\ell}{Dk}\right),
\end{align*}
where the second equality follows from the Cauchy–Schwarz inequality, and the last equality uses the fact that \(s \le C\ell\) and item (2) of Lemma~\ref{lem:eigenvalues}, which gives 
\(\sum_{i=1}^{s} \left|\pi_{V_r(i)}(\boldsymbol{v}_i)\right|^2 = O\left(\frac{1}{D^2}\right)\) 
and 
\(\sum_{i=1}^{s} \left|\pi_{W_s(i)}(\boldsymbol{w}_i)\right|^2 = O\left(\frac{1}{D^2}\right)\).
This completes the proof of \eqref{eq:proj-expectation}, and similarly of \eqref{eq:proj-expectation_b}.
\end{proof}

Finally, we are ready to present the proof of Theorem~\ref{thm:projection-expectation}.

\begin{proof}[Proof of Theorem~\ref{thm:projection-expectation}]
We aim to show \eqref{eq:proj-expectation3}. 
Let us restate the setting:  
Given \(0 \le s < r \le C\ell\) and a perfect sequence \(\boldsymbol{x}[s]\) such that \(G_p(\boldsymbol{x}[s])\) forms a red clique, we independently sample \(\boldsymbol{x}_{s+1}, \ldots, \boldsymbol{x}_r\) uniformly from \(S^k\), and sample \(\boldsymbol{z}\) uniformly from \(N_{\mathrm{per}}(\boldsymbol{x}[s])\), independently of \(\{\boldsymbol{x}_{s+1}, \ldots, \boldsymbol{x}_r\}\).

We begin by considering a fixed perfect sequence \(\boldsymbol{x}[r-1]\) such that \(G_p(\boldsymbol{x}[r-1])\) forms a red clique.  
Under this assumption, the event \(A_{\mathrm{red},r} \wedge B_r\) occurs if and only if \(\boldsymbol{x}_r \in N_{\mathrm{per}}(\boldsymbol{x}[r-1])\), 
so \(\boldsymbol{x}_r\) corresponds to the random vector \(\boldsymbol{y}\) sampled uniformly from \(N_{\mathrm{per}}(\boldsymbol{x}[r-1])\).  
Applying Lemma~\ref{lem:projection-expectation2} for the fixed perfect sequence \(\boldsymbol{x}[r-1]\), we have
\begin{equation*}
\mathbb{E}\left[\langle \pi_{[s]}(\Bx_{r}), \pi_{[s]}(\boldsymbol{z}) \rangle \mid A_{\mathrm{red},r}\wedge B_r \wedge \{\Bx[r-1]\} \right]
= \mathbb{E}\left[\langle \pi_{[s]}(\By), \pi_{[s]}(\boldsymbol{z}) \rangle \right]=
\frac{e^{-c^2}}{2\pi p^2}\cdot \frac{s}{k}
+ O\left(\frac{\ell}{Dk}\right).
\end{equation*}
Integrating all such perfect sequences \(\boldsymbol{x}[r-1]\), we can get
\begin{equation}\label{eq:proj-expectation-fix-x[r-1]}
\mathbb{E}\left[\langle \pi_{[s]}(\Bx_{r}), \pi_{[s]}(\boldsymbol{z}) \rangle \mid A_{\mathrm{red},r}\wedge B_r \right]= \frac{e^{-c^2}}{2\pi p^2}\cdot \frac{s}{k}
+ O\left(\frac{\ell}{Dk}\right).
\end{equation}

Note that under this setting, we can derive from Definition~\ref{def:rsperfect} and Theorem~\ref{thm:perfect_nonperfect} that
\[
\frac{\mathbb{P}(A_{\mathrm{red},r}\wedge B_r)}{\mathbb{P}(A_{\mathrm{red},r})}=\frac{P_{\mathrm{red},r}^{\mathrm{per}}(\boldsymbol{x}{[s]})}{P_{\mathrm{red},r}(\boldsymbol{x}{[s]})}=1-O(2^{-C\ell}).
\]
This, together with the law of conditional probability and the fact $|\langle\pi_{[s]}(\boldsymbol{x}_{s+1}),\pi_{[s]}(\boldsymbol{z})\rangle|\leq 1$, implies
\begin{equation}\label{equ:expection-1}
\mathbb{E}[\langle\pi_{[s]}(\boldsymbol{x}_{s+1}),\pi_{[s]}(\boldsymbol{z})\rangle\mid A_{\mathrm{red},r}]=\mathbb{E}[\langle\pi_{[s]}(\boldsymbol{x}_{s+1}),\pi_{[s]}(\boldsymbol{z})\rangle\mid A_{\mathrm{red},r}\wedge{B_r}]+O(2^{-C\ell}).
\end{equation}

We now reorder the sequence \(\boldsymbol{x}[r]\) as \(\boldsymbol{y}[r] = (\boldsymbol{y}_1, \ldots, \boldsymbol{y}_r) := (\boldsymbol{x}_1, \ldots, \boldsymbol{x}_s, \boldsymbol{x}_{s+2}, \ldots, \boldsymbol{x}_r, \boldsymbol{x}_{s+1})\).  
Define \(A'_{\mathrm{red},r}\) as the event that \(G_p(\boldsymbol{y}[r])\) forms a red clique, and \(B_r'\) as the event that \(\boldsymbol{y}[r]\) forms a perfect sequence.
Evidently, \(A'_{\mathrm{red},r}= A_{\mathrm{red},r}\);
by the independence of random vectors, we deduce
\begin{equation}\label{equ:expection-2}
\mathbb{E}[\langle\pi_{[s]}(\boldsymbol{x}_{s+1}),\pi_{[s]}(\boldsymbol{z})\rangle\mid A_{\mathrm{red},r}]= \mathbb{E}[\langle\pi_{[s]}(\boldsymbol{y}_{r}),\pi_{[s]}(\boldsymbol{z})\rangle\mid A'_{\mathrm{red},r}].
\end{equation}
On the other hand, similar to the proof of \eqref{equ:expection-1}, we have
\begin{equation}\label{equ:expection-3}
\mathbb{E}[\langle\pi_{[s]}(\boldsymbol{y}_{r}),\pi_{[s]}(\boldsymbol{z})\rangle\mid A'_{\mathrm{red},r}]=\mathbb{E}[\langle\pi_{[s]}(\boldsymbol{y}_{r}),\pi_{[s]}(\boldsymbol{z})\rangle\mid A'_{\mathrm{red},r}\wedge B'_r]+O(2^{-C\ell}).
\end{equation}
The final equation we need is as follows, which holds because \(\boldsymbol{x}[r]\) and \(\boldsymbol{y}[r]\) are identically distributed:
\begin{equation}\label{equ:expection-4}
\mathbb{E}[\langle\pi_{[s]}(\boldsymbol{y}_{r}),\pi_{[s]}(\boldsymbol{z})\rangle\mid A'_{\mathrm{red},r}\wedge B'_r]= 
\mathbb{E}[\langle\pi_{[s]}(\boldsymbol{x}_{r}),\pi_{[s]}(\boldsymbol{z})\rangle\mid A_{\mathrm{red},r}\wedge B_r]
\end{equation}
Combining \eqref{equ:expection-1}, \eqref{equ:expection-2}, \eqref{equ:expection-3}, and \eqref{equ:expection-4},  
we obtain
\begin{equation*}
\mathbb{E}\bigl[\langle \pi_{[s]}(\boldsymbol{x}_{s+1}), \pi_{[s]}(\boldsymbol{z}) \rangle 
\mid A_{\mathrm{red},r} \wedge B_r \bigr]
=
\mathbb{E}\bigl[\langle \pi_{[s]}(\boldsymbol{x}_{r}), \pi_{[s]}(\boldsymbol{z}) \rangle 
\mid A_{\mathrm{red},r} \wedge B_r \bigr]
+ O(2^{-C\ell})
=
\frac{e^{-c^2}}{2\pi p^2} \cdot \frac{s}{k}
+ O\left(\frac{\ell}{Dk}\right),
\end{equation*}
where the last equality follows from \eqref{eq:proj-expectation-fix-x[r-1]} and the fact that 
\(2^{-C\ell} \ll \frac{1}{D^3 \ell} = \frac{\ell}{Dk}\).
This proves \eqref{eq:proj-expectation3}. 
The proof of \eqref{eq:proj-expectation4} proceeds similarly, 
and thus we complete the proof of Theorem~\ref{thm:projection-expectation}.
\end{proof}

\section{Proof of Theorem~\ref{thm2}}\label{sec:final-proof}
In this section, we complete the proof of Theorem~\ref{thm2}.
The proof proceeds in two parts.
First, we derive upper bounds for \( P_{\mathrm{red},r}^{\mathrm{per}} \) and \( \overline{P}_{\mathrm{blue},r}^{\mathrm{per}} \),
by estimating \( \kappa_{r}^{\mathrm{per}} \) and \( \overline{\kappa}_{r}^{\mathrm{per}} \) with the help of Theorems~\ref{ratiored} and~\ref{thm:projection-expectation}.
We then conclude the proof of Theorem~\ref{thm2} with some further calculations.

\subsection{Estimates of $P_{\mathrm{red},r}^{\mathrm{per}}$ and $\overline{P}_{\mathrm{blue},r}^{\mathrm{per}}$ (via $\kappa_{r}^{\mathrm{per}}$ and $\overline{\kappa}_{r}^{\mathrm{per}}$)}\label{subsec:kappa}

We first establish the following estimates for $\kappa_{r}^{\mathrm{per}}$ and $\overline{\kappa}_{r}^{\mathrm{per}}$. 

\begin{lemma}\label{lem:Erper}
For every $1\leq r\leq C\ell,$ we have
\begin{equation}\label{equ:kappa-final}
\kappa_r^{\mathrm{per}}\leq \prod_{i=0}^{r-1}\left(p - \left(\frac{e^{-c^2}}{2\pi}\right)^{\frac{3}{2}} \frac{1}{p^2}\cdot\frac{i}{\sqrt{k}} + O\left(\frac{1}{D^2}\right)\right),
\end{equation}
\begin{equation}\label{equ:kappa-final-2}
\overline{\kappa}_r^{\mathrm{per}}\leq \prod_{i=0}^{r-1}\left(1-p + \left(\frac{e^{-c^2}}{2\pi}\right)^{\frac{3}{2}} \frac{1}{(1-p)^2}\cdot\frac{i}{\sqrt{k}} + O\left(\frac{1}{D^2}\right)\right).
\end{equation}
\end{lemma}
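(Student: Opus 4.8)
The plan is to bound each ratio \(Q_{[s]}(\Bx_{s+1})\) from above by Theorem~\ref{ratiored}, plug the bounds into the product formula \(P_{\mathrm{per}}(\Bx{[r]})=\prod_{s=0}^{r-1}Q_{[s]}(\Bx_{s+1})\) from \eqref{equ:kappa=P_per}, and then estimate the resulting conditional expectation of a product. Write \(a_k:=\sqrt{k/(2\pi)}\,e^{-c^2/2}\) — bounded away from \(0\) and \(\infty\) by Lemma~\ref{prop:convergence_of_cpk} — and, for \(0\le s\le r-1\), set
\[
g_s:=\mathbb{E}_{\Bz}\bigl[\langle\pi_{[s]}(\Bx_{s+1}),\pi_{[s]}(\Bz)\rangle\bigr],
\]
where \(\Bz\) is uniform on \(N_{\mathrm{per}}(\Bx{[s]})\); thus \(g_s\) is a function of \(\Bx{[s+1]}\) only. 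Theorem~\ref{ratiored} gives \(0\le Q_{[s]}(\Bx_{s+1})\le p-a_kg_s+O(1/D^2)\) on perfect sequences, uniformly for \(s\le C\ell\), whence
\[
\kappa_r^{\mathrm{per}}\ \le\ \mathbb{E}\Bigl[\ \prod_{s=0}^{r-1}\bigl(p-a_kg_s+O(1/D^2)\bigr)\ \Bigm|\ A_{\mathrm{red},r}\wedge B_r\ \Bigr].
\]
The estimate for \(\overline\kappa_r^{\mathrm{per}}\) is entirely parallel, with \(1-p\) replacing \(p\) and the sign of \(g_s\) reversed, so I discuss only the red case.

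Next I check that the right-hand side is the claimed product up to an admissible error. Put \(\bar g_s:=\mathbb{E}[g_s\mid A_{\mathrm{red},r}\wedge B_r]\). Applying Theorem~\ref{thm:projection-expectation} for each fixed perfect red clique \(\Bx{[s]}\) and then averaging over \(\Bx{[s]}\) (its conclusion there does not depend on \(\Bx{[s]}\)) yields \(\bar g_s=\frac{e^{-c^2}}{2\pi p^2}\cdot\frac{s}{k}+O\!\bigl(\frac{\ell}{Dk}\bigr)\); multiplying by \(a_k\) and using \(\frac{e^{-c^2/2}}{\sqrt{2\pi}}\cdot\frac{e^{-c^2}}{2\pi}=\bigl(\frac{e^{-c^2}}{2\pi}\bigr)^{3/2}\) together with \(\frac{\ell}{D\sqrt k}=\frac{1}{D^2}\), we get \(p-a_k\bar g_s+O(1/D^2)=p-\bigl(\frac{e^{-c^2}}{2\pi}\bigr)^{3/2}\frac{1}{p^2}\cdot\frac{s}{\sqrt k}+O(1/D^2)\), which is exactly the \(i=s\) factor in \eqref{equ:kappa-final}. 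Since the target is a product of \(r\le C\ell\) factors each carrying its own \(O(1/D^2)\), it tolerates a multiplicative slack of \((1+O(1/D^2))^{r}=e^{O(r/D^2)}\). Using \(p-a_kg_s+O(1/D^2)\le p\exp(-\tfrac{a_k}{p}g_s+O(1/D^2))\) factorwise and comparing with the corresponding expansion of the target, the lemma reduces to the concentration inequality
\[
\mathbb{E}\Bigl[\ \exp\!\Bigl(-\tfrac{a_k}{p}\textstyle\sum_{s=0}^{r-1}(g_s-\bar g_s)\Bigr)\ \Bigm|\ A_{\mathrm{red},r}\wedge B_r\ \Bigr]\ \le\ e^{O(r/D^2)}.
\]

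Establishing this inequality is the main obstacle, precisely because of a scale mismatch: \(a_k/p=\Theta(\sqrt k)\) is large, while on perfect sequences \(|g_s|\le|\pi_{[s]}(\Bx_{s+1})|\cdot|\mathbb{E}_{\Bz}\pi_{[s]}(\Bz)|=O(\ell/k)\) is small, so the trivial bound \(|\sum_s(g_s-\bar g_s)|=O(1/D^2)\) only gives \(\exp(O(\ell/D))\) — of the same size as the genuine main effect \(\exp(-\tfrac{a_k}{p}\sum_s\bar g_s)=\exp(-\Theta(\ell/D))\) — and is useless. Instead one must show that \(\sum_s g_s\) concentrates far better. Writing \(g_s=\langle\pi_{[s]}(\Bx_{s+1}),\boldsymbol\mu_s\rangle\) with \(\boldsymbol\mu_s=\boldsymbol\mu_s(\Bx{[s]})=\mathbb{E}_{\Bz}[\pi_{[s]}(\Bz)]\), I would form the Doob martingale of \(\sum_{s}g_s\) relative to \(\bigl(\sigma(\Bx{[j]})\bigr)_{j}\) under the conditioned law, and bound its increments: revealing one vertex \(\Bx_j\) changes \(\mathbb{E}[g_{s}\mid\text{past}]\) by only \(O(1/k)\) for each \(s\ge j\) (since \(g_s\) depends on \(\Bx_j\) only through one of the \(\approx s\) nearly-orthogonal directions spanning \(X_s\)) and by at most \(O(\sqrt{\ell s}/k)\) for \(s=j-1\), so the \(j\)-th increment is \(O\!\bigl(\tfrac{\sqrt{\ell r}+r}{k}\bigr)\). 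Verifying these increment bounds with the required uniformity is the technically demanding step; it relies on the explicit descriptions of \(\boldsymbol\mu_s\), of the orthonormal basis \(\Be_1,\dots,\Be_s\) and of the dual vectors \(\Bv_1,\dots,\Bv_s\) from Section~\ref{sec:key-quan} — in particular Lemmas~\ref{lem:key-tech}, \ref{lem:eigenvalues}, \ref{lem:dist_e_v} and the fact \(|\boldsymbol\mu_s|^2=\Theta(s/k)\) — and on the estimate that the coordinates \(\langle\Bx_{s+1},\Be_i\rangle\) have conditional second moment \(O(1/k)\). Granting them, the Hoeffding–Azuma exponential inequality gives \(\mathbb{E}[\exp(-\tfrac{a_k}{p}\sum_s(g_s-\bar g_s))\mid\cdots]\le\exp\!\bigl((a_k/p)^2\cdot r\cdot O((\sqrt{\ell r}+r)^2/k^2)\bigr)=e^{O(r/D^2)}\), using \(r\le C\ell\) and \(k=D^2\ell^2\); the rare event (some \(|\pi_{[s]}(\Bx_{s+1})|\) far exceeding its typical value), which one removes before applying the martingale inequality, has super-exponentially small probability by Lemma~\ref{Volumenonperfect} and contributes negligibly.

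One auxiliary point runs through the above: conditioned on \(\Bx{[s]}\) and \(A_{\mathrm{red},r}\wedge B_r\), the law of \(\Bx_{s+1}\) is not exactly uniform on \(N_{\mathrm{per}}(\Bx{[s]})\), being reweighted by the probability that \(\Bx_{s+2},\dots,\Bx_r\) extend \(\Bx{[s+1]}\) to a perfect red clique. By Theorem~\ref{thm:perfect_nonperfect} together with the volume lower bounds of Lemma~\ref{volume}, this reweighting is a mild perturbation of the measures handled in Section~\ref{sec:key-quan}, so the moment and increment estimates go through with only \(O(2^{-C\ell})\) corrections. Combining the reduction of the second paragraph with the concentration estimate of the third then proves \eqref{equ:kappa-final}, and the symmetric argument (with \(1-p\) in place of \(p\)) proves \eqref{equ:kappa-final-2}.
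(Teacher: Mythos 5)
You identify the right ingredients (Theorems~\ref{ratiored} and \ref{thm:projection-expectation}, the decomposition $P_{\mathrm{per}}(\Bx[r])=\prod_s Q_{[s]}(\Bx_{s+1})$), and you even note that Theorem~\ref{thm:projection-expectation} holds uniformly over the fixed perfect prefix $\Bx[s]$. But you then fail to exploit that uniformity in the way that makes the proof short, and instead set up a martingale concentration problem that you do not solve.

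The paper's proof avoids any concentration bound. It inducts on $s$ downward from $r-1$ to $0$. At each step one fixes $\Bx[s]$; then $\prod_{j=0}^{s-1}Q_{[j]}(\Bx_{j+1})$ is a \emph{constant}, while Theorems~\ref{ratiored} and \ref{thm:projection-expectation} give
\[
\mathbb{E}\bigl[Q_{[s]}(\Bx_{s+1})\mid A_{\mathrm{red},r}\wedge B_r,\ \Bx[s]\bigr]\ \le\ p-\Bigl(\tfrac{e^{-c^2}}{2\pi}\Bigr)^{3/2}\tfrac{s}{p^2\sqrt{k}}+O\bigl(\tfrac{1}{D^2}\bigr),
\]
with a right-hand side that does \emph{not} depend on $\Bx[s]$. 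By the tower property one can therefore pull this deterministic factor out of the expectation, average over $\Bx[s]$, and apply the inductive hypothesis; the products decouple exactly. No comparison between $\mathbb{E}\prod$ and $\prod\mathbb{E}$ is needed, and no joint concentration of $(g_0,\dots,g_{r-1})$ is required. In your argument this observation only serves to compute the marginal means $\bar g_s$, so you are left needing the inequality $\mathbb{E}\bigl[\exp(-\tfrac{a_k}{p}\sum_s(g_s-\bar g_s))\bigr]\le e^{O(r/D^2)}$, which is at the awkward scale $a_k/p=\Theta(\sqrt k)$ versus $|g_s|=O(\ell/k)$ — you correctly observe the trivial bound is useless and that one must control a Doob martingale of $\sum_s g_s$ with increment bounds $O\bigl((\sqrt{\ell r}+r)/k\bigr)$. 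That is the heart of your proof, and you explicitly defer it (``verifying these increment bounds with the required uniformity is the technically demanding step''). As written this is a genuine gap: the key estimate is asserted, not proven, and it is not in the paper's toolkit. The paper never needs it because the sequential conditioning already kills the dependence. I would recommend replacing the whole concentration machinery by the one-line tower-property step above.
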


\begin{proof}
We focus on proving the first inequality. 
Let \(\boldsymbol{x}[r]\) be a random \(r\)-tuple in \((S^k)^r\).  
To proceed, we show the following inequality by induction on \(s\), descending from \(s = r-1\) to \(s = 0\):
\begin{equation}\label{eq:recursion2}
\kappa_r^{\mathrm{per}}\leq \prod_{i=s+1}^{r-1}\left(p - \left(\frac{e^{-c^2}}{2\pi}\right)^{\frac{3}{2}} \frac{1}{p^2}\cdot\frac{i}{\sqrt{k}} + O\left(\frac{1}{D^2}\right)\right)\cdot\mathbb{E}\left[\left.\prod_{j=0}^{s}Q_{[j]}(\boldsymbol{x}_{j+1})\right| A_{\mathrm{red},r}\wedge B_r\right].
\end{equation}
The base case $s = r-1$ follows immediately from \eqref{equ:kappa=P_per}.
We now assume that \eqref{eq:recursion2} holds for some $1\leq s\leq r-1$, and we need to establish its validity for $s-1$. 

We begin by considering \(\boldsymbol{x}[s]\) as a \emph{fixed} perfect sequence for which \(G_p(\boldsymbol{x}[s])\) forms a red clique.
Let \(\boldsymbol{x}_{s+1}, \ldots, \boldsymbol{x}_r\) be independent random vectors uniformly distributed on \(S^k\), and let \(\boldsymbol{z}\) be chosen uniformly from \(N_{\mathrm{per}}(\boldsymbol{x}[s])\), independently of \(\boldsymbol{x}_{s+1}, \ldots, \boldsymbol{x}_r\).
By Theorem~\ref{ratiored}, we have
\begin{equation*}
\mathbb{E}\left[Q_{[s]}(\boldsymbol{x}_{s+1})\mid A_{\mathrm{red},r}\wedge B_r \right]
\leq p-\sqrt{\frac{k}{2\pi}} e^{-\frac{c^{2}}{2}}\cdot \mathbb{E}[\langle\pi_{[s]}(\boldsymbol{x}_{s+1}),\pi_{[s]}(\boldsymbol{z})\rangle\mid A_{\mathrm{red},r}\wedge B_r]+O\left(\frac{1}{D^2}\right).
\end{equation*}
This, together with Theorem~\ref{thm:projection-expectation}, shows that
\begin{equation}\label{eq:expection-Q[s]}
\mathbb{E}\left[Q_{[s]}(\boldsymbol{x}_{s+1})\mid A_{\mathrm{red},r}\wedge B_r \right]\leq p - \left(\frac{e^{-c^2}}{2\pi}\right)^{\frac{3}{2}} \frac{s}{p^2\sqrt{k}} + O\left(\frac{1}{D^2}\right).
\end{equation}
Here, it is worth noting that the error term $O\left(\frac{\ell}{Dk}\right)$ of Theorem~\ref{thm:projection-expectation} contributes an error of order $O\left(\frac{\ell}{D\sqrt{k}}\right)=O\left(\frac{1}{D^2}\right)$ as well.
Given that the sequence $\boldsymbol{x}{[s]}$ is fixed, we further derive from \eqref{eq:expection-Q[s]} that
\begin{equation*}
\mathbb{E}\left[\left.\prod_{j=0}^{s}Q_{[j]}(\boldsymbol{x}_{j+1})\right| A_{\mathrm{red},r}\wedge B_r  \right]
\leq \left(\prod_{j=0}^{s-1}Q_{[j]}(\boldsymbol{x}_{j+1})\right)\cdot \left(p - \left(\frac{e^{-c^2}}{2\pi}\right)^{\frac{3}{2}} \frac{s}{p^2\sqrt{k}} + O\left(\frac{1}{D^2}\right)\right).
\end{equation*}
By averaging over all possible $\boldsymbol{x}{[s]}$, we have
\begin{equation*}
\mathbb{E}\left[\left.\prod_{j=0}^{s}Q_{[j]}(\boldsymbol{x}_{j+1})\right| A_{\mathrm{red},r}\wedge B_r\right]
\leq\left(p - \left(\frac{e^{-c^2}}{2\pi}\right)^{\frac{3}{2}} \frac{s}{p^2\sqrt{k}} + O\left(\frac{1}{D^2}\right)\right)\mathbb{E}\left[\left.\prod_{j=0}^{s-1}Q_{[j]}(\boldsymbol{x}_{j+1})\right| A_{\mathrm{red},r}\wedge B_r\right].
\end{equation*}
Combining this with the induction hypothesis for \(s\), we obtain
\begin{align*}
\kappa_r^{\mathrm{per}}\leq& \prod_{i=s+1}^{r-1}\left(p - \left(\frac{e^{-c^2}}{2\pi}\right)^{\frac{3}{2}} \frac{1}{p^2}\cdot \frac{i}{\sqrt{k}} + O\left(\frac{1}{D^2}\right)\right)\cdot\mathbb{E}\left[\left.\prod_{j=0}^{s}Q_{[j]}(\boldsymbol{x}_{j+1})\right| A_{\mathrm{red},r}\wedge B_r\right]\\
\leq& \prod_{i=s}^{r-1}\left(p - \left(\frac{e^{-c^2}}{2\pi}\right)^{\frac{3}{2}} \frac{1}{p^2}\cdot \frac{i}{\sqrt{k}} + O\left(\frac{1}{D^2}\right)\right)\cdot\mathbb{E}\left[\left.\prod_{j=0}^{s-1}Q_{[j]}(\boldsymbol{x}_{j+1})\right| A_{\mathrm{red},r}\wedge B_r\right].
\end{align*}
This completes the induction.  
Note that since \(Q_{[0]}(\Bx) = P_{\mathrm{per}}(\Bx) \le p\) for all \(\Bx \in S^k\), the case \(s = 0\) of \eqref{eq:recursion2} gives the desired inequality \eqref{equ:kappa-final}.
The proof of \eqref{equ:kappa-final-2} proceeds analogously.  
\end{proof}

Using the above lemma, we can readily obtain the following upper bounds for \( P_{\mathrm{red},\ell}^{\mathrm{per}} \) and \( \overline{P}_{\mathrm{blue},C\ell}^{\mathrm{per}} \).
It is worth pointing out that the term of order \(\frac{\ell}{\sqrt{k}}\) is actually of order \(\frac{1}{D}\), thus serving as the first-order correction in these bounds.

\begin{theorem}\label{thm:perfect-prob}
\[
P_{\mathrm{red},\ell}^{\mathrm{per}} \leq \left( 
p - \left( \frac{e^{-c^2}}{2\pi} \right)^{\frac{3}{2}} \frac{1}{p^2} \cdot \frac{\ell}{3\sqrt{k}} 
+ O\left( \frac{1}{D^2} \right) 
\right)^{\binom{\ell}{2}},
\]
\[
\overline{P}_{\mathrm{blue}, C\ell}^{\mathrm{per}} \leq  \left( 
1 - p + \left( \frac{e^{-c^2}}{2\pi} \right)^{\frac{3}{2}} \frac{C}{(1-p)^2} \cdot \frac{\ell}{3\sqrt{k}} 
+ O\left( \frac{1}{D^2} \right) 
\right)^{\binom{C\ell}{2}}.
\]
\end{theorem}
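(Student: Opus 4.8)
The plan is to chain together Lemma~\ref{lem:prob_of_perfect} and Lemma~\ref{lem:Erper} and then invoke the AM--GM inequality. By Lemma~\ref{lem:prob_of_perfect} (taking $r+1=\ell$), we have $P_{\mathrm{red},\ell}^{\mathrm{per}} = \kappa_1^{\mathrm{per}}\cdots\kappa_{\ell-1}^{\mathrm{per}}$, and substituting the bound of Lemma~\ref{lem:Erper} into each factor yields
\[
P_{\mathrm{red},\ell}^{\mathrm{per}} \;\le\; \prod_{r=1}^{\ell-1}\ \prod_{i=0}^{r-1}\left(p - \left(\tfrac{e^{-c^2}}{2\pi}\right)^{3/2}\tfrac{1}{p^2}\cdot\tfrac{i}{\sqrt k} + O\!\left(\tfrac{1}{D^2}\right)\right).
\]
The number of factors on the right is $\sum_{r=1}^{\ell-1} r = \binom{\ell}{2}$, which already matches the target exponent. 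Writing $a := \left(\tfrac{e^{-c^2}}{2\pi}\right)^{3/2}p^{-2}$ (a bounded positive constant, since $c$ is bounded by Lemma~\ref{prop:convergence_of_cpk}), each factor has the form $p - a\,i/\sqrt k + O(1/D^2)$; because $i\le\ell$ and $\ell/\sqrt k = 1/D$, every factor lies within $O(1/D)$ of $p$ and is therefore positive, which is exactly what makes AM--GM legitimate here.

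Next I would count the multiplicity of each index: the value $i$ occurs as an inner factor precisely for the $r$ with $i+1\le r\le\ell-1$, i.e.\ with multiplicity $\ell-1-i$. Hence the arithmetic mean of the $\binom{\ell}{2}$ factors equals $p - \tfrac{a}{\sqrt k}\cdot\tfrac{1}{\binom{\ell}{2}}\sum_{i=0}^{\ell-2}i(\ell-1-i) + O(1/D^2)$, and a routine evaluation gives the identity $\sum_{i=0}^{\ell-2}i(\ell-1-i) = \binom{\ell}{3}$, so the weighted index average is $\binom{\ell}{3}/\binom{\ell}{2} = (\ell-2)/3$. Applying AM--GM, $\prod_{j=1}^{N}t_j \le \big(\tfrac1N\sum_j t_j\big)^N$, to the $N=\binom{\ell}{2}$ positive factors, we obtain
\[
P_{\mathrm{red},\ell}^{\mathrm{per}} \;\le\; \left(p - a\cdot\frac{\ell-2}{3\sqrt k} + O\!\left(\frac{1}{D^2}\right)\right)^{\binom{\ell}{2}}.
\]
Since $a\cdot\tfrac{2}{3\sqrt k} = \tfrac{2a}{3D\ell} = O(1/D^2)$ (using $\ell\ge\ell_0\gg D$), we may replace $\tfrac{\ell-2}{3\sqrt k}$ by $\tfrac{\ell}{3\sqrt k}$ at the cost of adjusting the error term, and the monotonicity of $x\mapsto x^{\binom{\ell}{2}}$ on positive $x$ then delivers the claimed bound for $P_{\mathrm{red},\ell}^{\mathrm{per}}$.

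The estimate for $\overline{P}_{\mathrm{blue},C\ell}^{\mathrm{per}}$ is entirely analogous: Lemma~\ref{lem:prob_of_perfect} gives $\overline{P}_{\mathrm{blue},C\ell}^{\mathrm{per}} = \overline{\kappa}_1^{\mathrm{per}}\cdots\overline{\kappa}_{C\ell-1}^{\mathrm{per}}$, Lemma~\ref{lem:Erper} produces $\binom{C\ell}{2}$ positive factors of the form $1-p + \bar a\,i/\sqrt k + O(1/D^2)$ with $\bar a := \left(\tfrac{e^{-c^2}}{2\pi}\right)^{3/2}(1-p)^{-2}$, the weighted average index is $(C\ell-2)/3$, and AM--GM together with $\bar a\cdot\tfrac{2}{3\sqrt k}=O(1/D^2)$ and $\tfrac{C\ell-2}{3}\le\tfrac{C\ell}{3}$ yields the stated bound. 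I do not expect a genuine obstacle in this argument; the only points requiring care are bookkeeping ones, namely (i) confirming that the $\binom{\ell}{2}$ (resp.\ $\binom{C\ell}{2}$) individual $O(1/D^2)$ errors share a single implied constant, so that their average is again $O(1/D^2)$ with no hidden dependence on $\ell$ or $D$, and (ii) verifying uniform positivity of all factors so that both AM--GM and the final monotone relaxation of the base are valid — both of which follow from the hierarchy $\ell_0\gg D\gg C$ and the boundedness of $c$.
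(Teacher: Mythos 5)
Your proof is correct and follows the paper's strategy: apply Lemma~\ref{lem:prob_of_perfect} and Lemma~\ref{lem:Erper} to express $P_{\mathrm{red},\ell}^{\mathrm{per}}$ as a product of $\binom{\ell}{2}$ factors with multiplicity $\ell-1-i$, invoke the identity $\sum_{i=0}^{\ell-2}i(\ell-1-i)=\binom{\ell}{3}$, and collapse the product into a $\binom{\ell}{2}$-th power. The only (cosmetic) difference is that you use AM--GM to perform this collapse, whereas the paper factors out $p^{\binom{\ell}{2}}$, takes logarithms, Taylor-expands, and re-exponentiates; both steps are equally valid once one verifies, as you do, that the factors are uniformly positive and that the $O(1/D^2)$ errors carry a single implied constant independent of $\ell$ and $D$.
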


\begin{proof}
Using Lemmas~\ref{lem:prob_of_perfect} and \ref{lem:Erper}, we obtain that for any \( 2 \le r \le \ell \),
\begin{align*}
P_{\mathrm{red},r}^{\mathrm{per}}&=\prod_{i=1}^{r-1}\kappa_{i}^{\mathrm{per}}  \leq \prod_{i=0}^{r-1}\left(p - \left(\frac{e^{-c^2}}{2\pi}\right)^{\frac{3}{2}} \frac{1}{p^2} \cdot \frac{i}{\sqrt{k}} + O\left(\frac{1}{D^2}\right)\right)^{r-1-i}\\
&=p^{\binom{r}{2}}\cdot \exp\left(-\left(\frac{e^{-c^2}}{2\pi}\right)^{\frac{3}{2}}\cdot \sum_{i=0}^{r-1}\frac{i(r-1-i)}{p^3\sqrt{k}}+O\left(\frac{r^2}{D^2}\right)\right)\\
&=p^{\binom{r}{2}}\cdot \exp\left(-\left(\frac{e^{-c^2}}{2\pi}\right)^{\frac{3}{2}}\cdot\frac{1}{p^3\sqrt{k}} \cdot\binom{r}{3}+O\left(\frac{r^2}{D^2}\right)\right)\\
&=\left(p-\left(\frac{e^{-c^2}}{2\pi}\right)^{\frac{3}{2}} \frac{1}{p^2}\cdot \frac{r-2}{3\sqrt{k}}+O\left(\frac{1}{D^2}\right)\right)^{\binom{r}{2}}.
\end{align*}
It is clear that setting \( r = \ell \) in the above inequality gives the desired bound for \( P_{\mathrm{red},\ell}^{\mathrm{per}} \).
By the same reasoning, the desired bound for \( \overline{P}_{\mathrm{blue}, C\ell}^{\mathrm{per}} \) follows.
\end{proof}

\subsection{Proof Completion}
Before concluding the proof of Theorem~\ref{thm2}, we state a simple proposition (proved in Appendix \ref{app:p+1-p}).
Recall \(c=c_{k,p}\) from \eqref{equ:c_pk}.
Define \(c_0=c_{k,p_C}\) and  \( f(x):=\frac{1}{x^2}-\frac{C}{(1-x)^2} \).
Note that $f(p_C)>0$ by \eqref{equ:p^2logp}.

\begin{proposition}\label{prop:bound=p+1-p}
Let \(D\gg C\). Then there exists \(p\in \bigl(p_C,p_C+1/(p_C^2\cdot D)\bigl)\subseteq \bigl(p_C,1/2\bigr)\) such that
\begin{equation}\label{eq:bound_Fp}
p-\left(\frac{e^{-c^2}}{2\pi}\right)^{\frac{3}{2}}\frac{1}{3Dp^2}\leq p_C-\left(\frac{e^{-c_0^2}}{2\pi}\right)^{\frac{3}{2}}\frac{f(p_C)}{9D},
\end{equation}
\begin{equation}\label{equ:bound_1-p}
1-p+\left(\frac{e^{-c^2}}{2\pi}\right)^{\frac{3}{2}}\frac{C}{3D(1-p)^2}\leq 1-p_C-\left(\frac{e^{-c_0^2}}{2\pi}\right)^{\frac{3}{2}}\frac{f(p_C)}{9D}.
\end{equation}    
\end{proposition}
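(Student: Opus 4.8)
The plan is to choose $p$ of the form $p = p_C + \theta/D$ for a suitable constant $\theta = \theta(C) > 0$, and to verify the two inequalities by a one–variable Taylor expansion around $p_C$, allowing all implicit constants to depend on $C$ (hence on $p_C$ and $c_0$) but not on $D$. Set $a_0 := \left(e^{-c_0^2}/(2\pi)\right)^{3/2}$ and recall that $f(p_C) = \tfrac{1}{p_C^2} - \tfrac{C}{(1-p_C)^2} > 0$ by \eqref{equ:p^2logp}. By Lemma~\ref{prop:convergence_of_cpk} we have $c_{k,p} = \Phi^{-1}(1-p) + O(1/k)$, and since $\Phi^{-1}$ is Lipschitz on a fixed neighborhood of $1-p_C \in (1/2,1)$ while $k = D^2\ell^2 \gg D$, it follows that $c := c_{k,p} = c_0 + O(1/D)$ when $p = p_C + \theta/D$. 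Consequently $\left(e^{-c^2}/(2\pi)\right)^{3/2} = a_0\bigl(1 + O(1/D)\bigr)$, $p^{-2} = p_C^{-2}\bigl(1 + O(1/D)\bigr)$, and $(1-p)^{-2} = (1-p_C)^{-2}\bigl(1 + O(1/D)\bigr)$.

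Substituting $p = p_C + \theta/D$ into the left side of \eqref{eq:bound_Fp} and expanding gives $p_C + \tfrac{1}{D}\bigl(\theta - \tfrac{a_0}{3p_C^2}\bigr) + O(1/D^2)$, while the right side is $p_C - \tfrac{a_0 f(p_C)}{9D}$; hence \eqref{eq:bound_Fp} is equivalent to $\theta \le U - O(1/D)$, where $U := \tfrac{a_0}{3p_C^2} - \tfrac{a_0 f(p_C)}{9} = \tfrac{2a_0}{9p_C^2} + \tfrac{a_0 C}{9(1-p_C)^2} > 0$. In the same way \eqref{equ:bound_1-p} reduces to $\theta \ge L + O(1/D)$, where $L := \tfrac{a_0 C}{3(1-p_C)^2} + \tfrac{a_0 f(p_C)}{9} = \tfrac{2a_0 C}{9(1-p_C)^2} + \tfrac{a_0}{9p_C^2}$. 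The decisive point is the identity $U - L = \tfrac{a_0 f(p_C)}{9} > 0$, which is exactly where the strict inequality \eqref{equ:p^2logp} is used: it produces a window $[L,U]$ of positive constant length. Taking $\theta := \tfrac12(U+L) = \tfrac{a_0}{6}\bigl(\tfrac{1}{p_C^2} + \tfrac{C}{(1-p_C)^2}\bigr)$ places $\theta$ at distance $\tfrac{a_0 f(p_C)}{18}$ from each endpoint, which for $D \gg C$ dominates the $O(1/D)$ corrections, so both \eqref{eq:bound_Fp} and \eqref{equ:bound_1-p} hold.

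It then remains to confirm that $p$ lies in the required interval. Since $a_0 < (2\pi)^{-3/2} < 1$ and $\tfrac{C p_C^2}{(1-p_C)^2} < 1$ by \eqref{equ:p^2logp}, we get $\theta < \tfrac{a_0}{6}\cdot\tfrac{2}{p_C^2} = \tfrac{a_0}{3p_C^2} < \tfrac{1}{p_C^2}$, so $p \in \bigl(p_C,\, p_C + \tfrac{1}{p_C^2 D}\bigr)$; and $p_C + \tfrac{1}{p_C^2 D} < \tfrac12$ once $D$ is large relative to $C$ (equivalently, relative to $p_C$), giving $p \in (p_C, 1/2)$ as claimed.

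At bottom this is a routine Taylor expansion, so I do not expect a genuine obstacle; the one point requiring care is the bookkeeping of error terms — one must check that every $O(1/D)$ contribution (from $c - c_0$, from expanding $p^{-2}$ and $(1-p)^{-2}$, and from the $O(1/D^2)$ remainders after multiplying through by $D$) carries a constant depending only on $C$, so that it is genuinely beaten by the constant-order slack $\tfrac{a_0 f(p_C)}{18}$ for $D$ sufficiently large compared with $C$.
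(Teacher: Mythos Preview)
Your proposal is correct and follows essentially the same approach as the paper: both arguments linearize around $p_C$ to reduce the problem to choosing $\theta = D(p-p_C)$ in a window of constant length $\tfrac{a_0 f(p_C)}{9}$, which is positive precisely by \eqref{equ:p^2logp}. The only cosmetic difference is that the paper applies the intermediate value theorem to hit equality in \eqref{eq:bound_Fp} and then deduces \eqref{equ:bound_1-p} from that equality, whereas you pick the midpoint $\theta = \tfrac12(U+L)$ directly and verify both inequalities with slack; the underlying computation is the same.
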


\begin{proof}[\bf Proof of Theorem \ref{thm2}.]
Fix any constant \( C > 1 \).  
Let \( D = D(C) \) and \( \ell_0 = \ell_0(C) \) be constants satisfying \( \ell_0 \gg D \gg C \).  
Assume \( \ell \ge \ell_0 \) and set \( k = D^2 \ell^2 \).  
Let \( p \in \bigl(p_C, 1/2 \bigr) \) be as given in Proposition~\ref{prop:bound=p+1-p}. 
We consider the random sphere graph \( G_{k,p}(n) \).

By Lemma~\ref{prop:convergence_of_cpk}, we have \(c_0=\Phi^{-1}(1-p_C)+O(1/k)\) and thus
\begin{equation}\label{eq:phi_inverse_Phi}
\phi(\Phi^{-1}(p_C))= \frac{1}{\sqrt{2\pi}}\cdot e^{-\frac{c_0^2}{2}} + O\left(\frac{1}{k}\right)\leq \frac{1.1}{\sqrt{2\pi}}\cdot e^{-\frac{c_0^2}{2}}.
\end{equation}
Recall that $\frac{\sqrt{k}}{\ell}=D\gg C$ is sufficiently large. 
Applying Theorems~\ref{thm:perfect_nonperfect2} and \ref{thm:perfect-prob}, we have
\begin{align*}
P_{\mathrm{red},\ell}&\leq \left(1+2^{-C\ell}\right)P_{\mathrm{red},\ell}^{\mathrm{per}}\leq \left(1+2^{-C\ell}\right)\cdot 
\left( p - \left( \frac{e^{-c^2}}{2\pi} \right)^{\frac{3}{2}} \frac{1}{3Dp^2}  + O\left( \frac{1}{D^2} \right) 
\right)^{\binom{\ell}{2}}\\
& \leq \left(p_C-\left(\frac{e^{-c_0^2}}{2\pi}\right)^{\frac{3}{2}} \frac{f(p_C)}{9D} + O\left(\frac{1}{D^2}\right)\right)^{\binom{\ell}{2}}
\leq\left(p_C - \phi(\Phi^{-1}(p_C))^3\cdot \frac{f(p_C)}{18D}\right)^{\binom{\ell}{2}},
\end{align*}
where the third inequality follows from \eqref{eq:bound_Fp} and the fact that $2^{-C\ell}\ll 1/D^2$, and the last inequality holds by \eqref{eq:phi_inverse_Phi} and the fact that \( D \gg C \).
Similarly, we can derive 
\begin{align*}
\overline{P}_{\mathrm{blue},C\ell}&\leq \left(1+2^{-C\ell}\right)\overline{P}_{\mathrm{blue},C\ell}^{\mathrm{per}}\leq \left(1+2^{-C\ell}\right)\cdot \left( 
1 - p + \left( \frac{e^{-c^2}}{2\pi} \right)^{\frac{3}{2}} \frac{C}{3D(1-p)^2}
+ O\left( \frac{1}{D^2} \right) 
\right)^{\binom{C\ell}{2}}\\
&\leq \left(1-p_C - \left(\frac{e^{-c_0^2}}{2\pi}\right)^{\frac{3}{2}}\frac{f(p_C)}{9D}+O\left(\frac{1}{D^2}\right)\right)^{\binom{C\ell}{2}}
\leq\left(1-p_C - \phi(\Phi^{-1}(p_C))^3\cdot\frac{f(p_C)}{18D}\right)^{\binom{C\ell}{2}}.
\end{align*}
Hence, setting the positive constant
\begin{equation}\label{equ:epsilon_0}
\varepsilon_0 = \varepsilon_0(C) := \frac{1}{18} \, \phi\bigl(\Phi^{-1}(p_C)\bigr)^3 \cdot f(p_C).
\end{equation}
completes the proof of Theorem~\ref{thm2}.
\end{proof}

Let us comment on the value of \(\varepsilon(C)\) as given in Theorem~\ref{thm1}. 
A key related parameter is the constant \(D(C)\). 
To simplify the presentation and calculations in our proofs, 
we take \(D(C)\) sufficiently large relative to \(C\). 
However, a more careful analysis shows that it suffices to take
\[
D(C) = 10^5 \cdot \alpha_C^4 \cdot p_C^{-3} \cdot \bigl(f(p_C)\bigr)^{-1},
\]
where \(\alpha_C = \max\left\{1000,\, 20\sqrt{C \log(10/p_C)}\right\}\) is as defined in Lemma~\ref{Volumenonperfect}.
Combining \eqref{equ:ramsey-n} and~\eqref{equ:epsilon_0}, we can choose the parameter \(\varepsilon(C)\) in Theorem~\ref{thm1} as
\[
\varepsilon(C) = \frac{\varepsilon_0(C) \cdot p_C^{-3/2}}{6D(C)} 
= \frac{p_C^{-3/2} \cdot \phi(\Phi^{-1}(p_C))^3 \cdot f(p_C)}{108 \cdot D(C)} 
= \frac{\alpha_C^{-4} \cdot p_C^{3/2}}{1.08 \times 10^7} \cdot \phi(\Phi^{-1}(p_C))^3 \cdot f(p_C)^2.
\]
In particular, as \(C \to 1^+\), we have \(\varepsilon(C) = \Omega\bigl((C - 1)^2\bigr)\), whereas as \(C \to \infty\),
\(
\varepsilon(C) = \Omega\left(\frac{\operatorname{poly}(\log C)}{C^{5/2}}\right).
\)

\medskip

Based on the above expression of \(\varepsilon(C)\) as \(C \to 1^+\), 
we can derive an improvement for the Ramsey number \(r(\ell,k)\) over Erd\H{o}s’s probabilistic bound in the almost diagonal regime \(k = \ell + o(\ell)\); 
the explicit formula for this improvement is given in \eqref{equ:almost-diag} following Corollary~\ref{coro}. 
We believe that, with further development of this random model, nontrivial bounds may still be attainable for the diagonal Ramsey number. 
Finally, we note that our result remains fully compatible with the Lovász Local Lemma.

\section*{Acknowledgments}
The authors are grateful to B\'ela Bollob\'as, David Conlon, Jacob Fox, Simon Griffiths, Dhruv Mubayi, and Huy Tuan Pham for sharing their insights on random geometric graphs and providing relevant references, and to Boris Bukh, Marcelo Campos, Jacob Fox, Sam Mattheus, Sergey Norin, and Benny Sudakov for helpful comments and discussions that improved the presentation. They also thank Peter Frankl for drawing attention to explicit constructions of Ramsey numbers. The second author is particularly indebted to his advisor, Professor Shing-Tung Yau, for his unwavering support. 
This research was supported by the National Key Research and Development Program of China 2023YFA1010201, the National Natural Science Foundation of China 12125106, and 
Quantum Science and Technology-National Science and Technology Major Project 2021ZD0302902.

\section*{Note added in proof.}
We learned that Hunter, Milojević, and Sudakov \cite{HMS} obtained further developments that simplify and extend our main results.

\vspace{-0.2cm}

\appendix

\section*{Appendix}

\section{The Lower Bound from Erd\H{o}s’s Probabilistic Method}\label{app:Erd-pro}
Here, we present a proof that not only establishes the lower bound in \eqref{equ:original_bound} derived from Erd\H{o}s’s first moment method but also demonstrates its optimality.
Let $C\geq 1$ be a fixed constant.

For \( p \in (0, 1/2] \) and \( n \in \mathbb{N} \), consider the probability that a random edge-coloring of \( K_n \), where each edge is independently colored red with probability \( p \) and blue with probability \( 1-p \), contains either a red \( K_\ell \) or a blue \( K_{C\ell} \). This probability is clearly at most
\[
f(n, p) := A(n, p) + B(n, p), \quad \text{where } A(n, p) = \binom{n}{\ell} p^{\binom{\ell}{2}} \text{ and } B(n, p) = \binom{n}{C\ell} (1 - p)^{\binom{C\ell}{2}}.
\]
Hence, if \( f(n, p) \leq 0.99 \), there exists at least one such coloring with no monochromatic clique, implying \( r(\ell, C\ell) > n \). It thus suffices to find the maximum value of \( n = n(p) \) such that \( f(n, p) = 0.99 \). 
Assume this maximum is achieved at \( p = p_{C,\ell} \). Then, 
\[
\frac{\partial f(n,p_{C,\ell})}{\partial p}=\frac{\binom{\ell}{2}}{p_{C,\ell}}\binom{n}{\ell} p_{C,\ell}^{\binom{\ell}{2}}-\frac{\binom{C\ell}{2}}{1-p_{C,\ell}}\binom{n}{C\ell} (1-p_{C,\ell})^{\binom{C\ell}{2}}=0,
\]
implying that $\log A(n,p_{C,\ell})=\log B(n,p_{C,\ell})+O(\log \ell)$.
Solving this along with \( A(n,p_{C,\ell}) + B(n,p_{C,\ell}) = 0.99 \), 
we obtain that \( \log A(n,p_{C,\ell}) = O(\log \ell) \) and \( \log B(n,p_{C,\ell})=O(\log \ell) \), from which it follows
\[
-\log p_{C,\ell}=\frac{2\log (en/\ell)}{\ell-1}+O\left(\frac{\log \ell}{\ell^2}\right) \mbox{~~and~} -\log (1-p_{C,\ell})=\frac{2\log (en/C\ell)}{C\ell-1}+O\left(\frac{\log \ell}{\ell^2}\right). 
\]
We then derive that \( p_{C,\ell}=p_C+O\left(1/\ell\right)\), where the constant $p_C$ satisfies \( C = \frac{\log p_C}{\log(1 - p_C)} \).
It follows directly from the above that 
\( n = \frac{\ell}{e} \cdot p_{C,\ell}^{-(\ell-1)/2} 
\cdot e^{O\left(\frac{\log \ell}{\ell}\right)} 
= \Theta(\ell)\cdot \left(p_C + O\left(\frac{1}{\ell}\right)\right)^{-\ell/2} 
= \Theta\bigl(\ell \cdot M_C^{\ell}\bigr),\)
where \( M_C := p_C^{-1/2} \).
This establishes \( r(\ell, C\ell) = \Omega\bigl(\ell \cdot M_C^{\ell}\bigr) \). 
\qed

\bigskip

We remark that, with some additional work extending the proof above, the optimal lower bound (with the best leading constant) obtainable via Erdős’s probabilistic method can be shown to be 
\[
r(\ell, C\ell) \ge \bigl(\beta_C/e + o(1)\bigr)\cdot \ell \cdot M_C^{\ell - 1},
\]
where \(\beta_C := \exp\left(\bigl(\frac{C - 1}{2}\cdot\log(1-p_C) -\log C\bigr)\cdot(1 - p_C)\cdot\log(1 - p_C)/H(p_C)\right)\), 
and \(H(x) := -x \log x - (1 - x) \log(1 - x)\) denotes the entropy function. 

\section{Proof that \(u = |\pi_{X'}(\By)|^2\) follows a beta distribution}\label{app:beta}
In this appendix, we prove the following statement needed in the proof of Lemma~\ref{Volumenonperfect}: 
For any integer \(r\) with \(1 \leq r \leq k\),
let \(X'\) be a subspace of $\mathbb{R}^{k+1}$ of dimension \(r\). 
Let \(\By\) be uniformly distributed on \(S^k\).
Then $u:= |\pi_{X'}(\By)|^2$ follows the beta distribution \(\operatorname{Beta}\!\left(\frac{r}{2},\; \frac{k - r + 1}{2}\right)\).

\begin{proof}
Let \(\Bx = (x_1,\ldots,x_{k+1})\) follow the standard normal distribution on \(\mathbb{R}^{k+1}\). 
Define \(z = x_1^2 + \cdots + x_{r}^2\) and \(w = x_{r+1}^2 + \cdots + x_k^2\).
Then \(z \sim \chi^2(r)\) and \(w \sim \chi^2(k - r + 1)\) are independent, where \(\chi^2(d)\) denotes the chi-squared distribution with \(d\) degrees of freedom.
Since \(\By\) is uniformly distributed on \(S^k\), it has the same distribution as \(\frac{\Bx}{|\Bx|}\).
Therefore, for a subspace \(X'\) of dimension \(r\), we have
\[
u = |\pi_{X'}(\By)|^2 \sim \frac{z}{z + w}.
\]
Recall that the joint probability density function (PDF) of \(z\) and \(w\) is given by
\[
f(z,w) = f(z)f(w) = \left(\frac{\left(\frac{z}{2}\right)^{\frac{r}{2}-1}e^{-\frac{z}{2}}}{2\Gamma\bigl(\frac{r}{2}\bigr)}\right)\cdot \left(\frac{\left(\frac{w}{2}\right)^{\frac{k-r+1}{2}-1}e^{-\frac{w}{2}}}{2\Gamma\bigl(\frac{k-r+1}{2}\bigr)}\right).
\]
Set \(v = z + w\). Then \(z = uv\) and \(w = v(1-u)\). The joint PDF of \(u\) and \(v\) satisfies
\[
\begin{aligned}
f(u,v) &= \left|\frac{\partial(z,w)}{\partial(u,v)}\right| f(z,w)
=v\cdot  \left(\frac{\left(\frac{z}{2}\right)^{\frac{r}{2}-1}e^{-\frac{z}{2}}}{2\Gamma\bigl(\frac{r}{2}\bigr)}\right)\cdot \left(\frac{\left(\frac{w}{2}\right)^{\frac{k-r+1}{2}-1}e^{-\frac{w}{2}}}{2\Gamma\bigl(\frac{k-r+1}{2}\bigr)}\right)\\
&= \frac{v/2}{2\Gamma\bigl(\frac{r}{2}\bigr)\Gamma\bigl(\frac{k-r+1}{2}\bigr)}\cdot \left(\frac{uv}{2}\right)^{\frac{r}{2}-1}e^{-\frac{uv}{2}}\cdot \left(\frac{v(1-u)}{2}\right)^{\frac{k-r+1}{2}-1}e^{-\frac{v(1-u)}{2}}\\
&=\left(\frac{\Gamma\bigl(\frac{k+1}{2}\bigr)}{\Gamma\bigl(\frac{r}{2}\bigr)\Gamma\bigl(\frac{k-r+1}{2}\bigr)} u^{\frac{r}{2}-1}(1-u)^{\frac{k-r+1}{2}-1}\right)\cdot \left(\frac{\left(\frac{v}{2}\right)^{\frac{k+1}{2}-1}e^{-\frac{v}{2}}}{2\Gamma\bigl(\frac{k+1}{2}\bigr)}\right).
\end{aligned}
\]
This shows that \(u\) and \(v\) are independent, and that \(u\) follows the beta distribution
\(
\operatorname{Beta}\!\left(\frac{r}{2},\; \frac{k - r + 1}{2}\right).
\)
\end{proof}

In particular, for the case \(r = C\ell\) needed in the proof of Lemma~\ref{Volumenonperfect}, we have
\[
u = |\pi_{X'}(\By)|^2 \sim \operatorname{Beta}\!\left(\frac{C\ell}{2},\; \frac{k - C\ell + 1}{2}\right).
\]

\section{Proof of Proposition~\ref{prop:bound=p+1-p}}\label{app:p+1-p}

In this proof, besides the function \(f(x) = \frac{1}{x^2} - \frac{C}{(1 - x)^2}\), we define
\( g(x) = \frac{e^{-c_{k,x}^2/2}}{\sqrt{2\pi}}\), 
\( h(x) = \frac{1}{x^2},\)
and
\[
F(x) := x - \frac{1}{3D}\, g(x)^3\, h(x) 
= x - \left(\frac{e^{-c_{k,x}^2}}{2\pi}\right)^{3/2} \frac{1}{3D\, x^2}
\]
where \( x\in (0,1/2)\) and \(c_{k,x}\) is taken from \eqref{equ:c_pk}.
Since \(D\gg C\), and \(f(x),g(x)\) and \(h(x)\) are continuous on \((0,1/2)\) and are all positive at the point \(x=p_C\), we have
\(
0.99<\frac{f(x)}{f(p_C)},\, \frac{g(x)}{g(p_C)},\, \frac{h(x)}{h(p_C)}< 1.01
\)
for any \(x\in(p_C,p_C+1/(p_C^2\cdot D))\).
Let $c_0=c_{k,p_C}$, and define
\[
p_1=p_C+\left(\frac{e^{-c_{0}^2}}{2\pi}\right)^{\frac{3}{2}}\frac{1}{3D}\left(\frac{0.5}{p^2_C}-\frac{1}{3}f(p_C)\right)
\quad \mbox{ and } \quad 
p_2=p_C+\left(\frac{e^{-c_0^2}}{2\pi}\right)^{\frac{3}{2}}\frac{1}{3D}\left(\frac{1.5}{p^2_C}-\frac{1}{3}f(p_C)\right).
\]
Since \(0 < f(p_C) < h(p_C)= \frac{1}{p_C^2}\), it is easy to verify that \(p_C < p_1 < p_2 < p_C+1/(p_C^2\cdot D)\).
Then 
\begin{align*}
F(p_1)&= p_C+\left(\frac{e^{-c_0^2}}{2\pi}\right)^{\frac{3}{2}}\frac{1}{3D}\left(\frac{0.5}{p^2_C}-\frac{1}{3}f(p_C) \right)- \frac{1}{3D}(g(p_1))^3h(p_1)\\
&\leq p_C + \left(\frac{e^{-c_0^2}}{2\pi}\right)^{\frac{3}{2}}\frac{1}{3D}
\left(\frac{0.5}{p^2_C}-\frac{1}{3}f(p_C)-\frac{0.99^4}{p_C^2} \right)< p_C-\left(\frac{e^{-c_0^2}}{2\pi}\right)^{\frac{3}{2}}\frac{1}{9D}f(p_C);\\
F(p_2)&= p_C+\left(\frac{e^{-c_0^2}}{2\pi}\right)^{\frac{3}{2}}\frac{1}{3D}\left(\frac{1.5}{p^2_C}-\frac{1}{3}f(p_C) \right)- \frac{1}{3D}(g(p_2))^3h(p_2)\\
&\geq p_C+ \left(\frac{e^{-c_0^2}}{2\pi}\right)^{\frac{3}{2}}\frac{1}{3D}
\left(\frac{1.5}{p^2_C}-\frac{1}{3}f(p_C)-\frac{1.01^{4}}{p_C^2} \right)> p_C-\left(\frac{e^{-c_0^2}}{2\pi}\right)^{\frac{3}{2}}\frac{1}{9D}f(p_C).
\end{align*}
By the intermediate value theorem, there exists some \(p \in (p_1, p_2) \subseteq \bigl(p_C,\, p_C + \frac{1}{p_C^2 D}\bigr)\) such that
\begin{equation*}
F(p)=p-\left(\frac{e^{-c_{k,p}^2}}{2\pi}\right)^{\frac{3}{2}} \frac{1}{3Dp^2} = p_C-\left(\frac{e^{-c_0^2}}{2\pi}\right)^{\frac{3}{2}}\frac{1}{9D}f(p_C),
\end{equation*}
which implies \eqref{eq:bound_Fp}.
Using this equality, we obtain 
\begin{equation*}
\begin{aligned}
&\quad\ 1-p+\left(\frac{e^{-c_{k,p}^2}}{2\pi}\right)^{\frac{3}{2}}\frac{C}{3D(1-p)^2}
=1-\left(p-\left(\frac{e^{-c_{k,p}^2}}{2\pi}\right)^{\frac{3}{2}}\frac{1}{3Dp^2}\right)-\frac{1}{3D} (g(p))^3f(p)\\
&\leq 1-p_C + \left(\frac{e^{-c_0^2}}{2\pi}\right)^{\frac{3}{2}}\frac{1}{9D}f(p_C) - 0.99^4 (g(p_C))^3\frac{1}{3D}f(p_C)
\leq 1-p_C-\left(\frac{e^{-c_0^2}}{2\pi}\right)^{\frac{3}{2}}\frac{1}{9D}f(p_C),
\end{aligned}
\end{equation*}
which implies \eqref{equ:bound_1-p}. 
This completes the proof of Proposition~\ref{prop:bound=p+1-p}. \qed

\vspace{1.0cm}

\indent{\it Email address}: \texttt{jiema@ustc.edu.cn}
\vspace{0.3cm}

\indent{\it Email address}: \texttt{shenwj22@mails.tsinghua.edu.cn}
\vspace{0.3cm}

\indent{\it Email address}: \texttt{jeff\_776532@mail.ustc.edu.cn}

\end{document}